\def\UseSection{
        \numberwithin{equation}{section}
        \newtheorem{theorem}    {Theorem}[section]
        \DefineTheorems 
}
\newcommand{\black}{\black}
\numberwithin{equation}{section}
\newcommand{\bb}[1]{\mathbb{#1}}
\newcommand{\floor}[1]{\lfloor #1 \rfloor}
\newcommand{\ceil}[1]{\lceil #1\rceil}
\newcommand{\D}[0]{{\, \operatorname{d}}}
\newcommand{\blank}[1]{}
\newcommand{\E}{\bb E}
\newcommand{\Z}{\bb Z}
\newcommand{\C}{\bb C}
\newcommand{\N}{\bb N}
\renewcommand{\P}{\bb P}
\newcommand{\Q}{\bb Q}
\newcommand{\Bcal}   {\mathcal{B}}
\newcommand{\Ccal}   {\mathcal{C}}
\newcommand{\Gcal}   {\mathcal{G}}
\newcommand{\Lcal}   {\mathcal{L}}
\newcommand{\Mcal}   {\mathcal{M}}
\newcommand{\Wcal}   {\mathcal{W}}
\newcommand{\nnb}	{\nonumber \\}
\newcommand{\bubble}{{\sf B}}
\def\DefineTheorems{
	\newtheorem{lemma}      [theorem] {Lemma}
	\newtheorem{cor}        [theorem] {Corollary}

	\newtheorem{prop}        [theorem] {Proposition}
	\theoremstyle{definition}
	\newtheorem{defn}       [theorem] {Definition}
	\newtheorem{rk}       [theorem] {Remark}
}
\title{Self-avoiding walk on the hypercube}
 \author{
    Gordon Slade\thanks{Department of Mathematics,
     University of British Columbia,
     Vancouver, BC, Canada V6T 1Z2.
     \url{https://orcid.org/0000-0001-9389-9497}, {\tt slade@math.ubc.ca}.}}
 \date{\vspace{-5ex}} 
\begin{document}
\maketitle

\begin{abstract}
We study the number $c_n^{(N)}$
of $n$-step self-avoiding walks on the $N$-dimensional hypercube, and
identify an $N$-dependent \emph{connective constant} $\mu_N$ and amplitude $A_N$
such that $c_n^{(N)}$ is
$O(\mu_N^n)$ for all $n$ and $N$, and is asymptotically
$A_N \mu_N^n$
as long as $n\le 2^{pN}$ for any fixed $p< \frac 12$.
We refer to the regime $n \ll 2^{N/2}$
as the \emph{dilute phase}.
We discuss conjectures concerning different behaviours of $c_n^{(N)}$
when $n$ reaches and exceeds $2^{N/2}$, corresponding to a critical window
and a dense phase.
In addition, we prove that the connective constant has an asymptotic
expansion to all orders in $N^{-1}$, with integer coefficients, and we compute the first
five coefficients $\mu_N  = N-1-N^{-1}-4N^{-2}-26N^{-3}+O(N^{-4})$.
The proofs are based on generating function and Tauberian methods implemented via
the lace expansion, for which an introductory account is provided.
\end{abstract}

%

\section{Introduction}
\label{sec:1}

The self-avoiding walk in a much studied model in combinatorics, probability theory,
statistical physics, and polymer chemistry \cite{Hugh95,MS93}.
Typically it has been studied on an infinite
graph such as the hypercubic lattice $\Z^d$.  More recently its critical behaviour
has been analysed on finite graphs including the complete graph \cite{Slad20,DGGNZ19}
and (for weakly self-avoiding walk)
a discrete torus in dimensions $d >4$ \cite{Slad20_wsaw,MS22,Mich22}.  Our goal here is to
investigate the critical behaviour of the self-avoiding walk on the hypercube.
We analyse its dilute phase in detail using the lace expansion, and identify the
connective constant, whose reciprocal is the critical value.
We also raise open questions about its critical window
and dense phase.

\subsection{Self-avoiding walk on the hypercube}

Let $\Q^N = \Z_2^N$ denote the $N$-dimensional hypercube.
Thus an element $x \in \Q^N$ is a binary string of length $N$.
Addition on $\Q^N$ is defined coordinate-wise modulo $2$.
The \emph{volume}
of $\Q^N$ is
$V=V(N)=2^N$.  The Hamming norm $|x|$ of $x\in \Q^N$  is the number
of coordinates of $x$ which are equal to $1$.  In particular $|x|$ is an integer between $0$ and $N$.

An $n$-step \emph{walk} on $\Q^N$ is a function $\omega :\{0,1,\ldots,n\} \to \Q^N$ with
$|\omega(i)-\omega(i-1)| =1$ for $1 \le i \le n$.
An $n$-step \emph{self-avoiding walk} on $\Q^N$ is an $n$-step walk for which
$\omega(i) \neq \omega(j)$ for all $i \neq j$.  Typically we take $\omega(0)=0$.
Let $c_n^{(N)}$ be the number of $n$-step self-avoiding walks on $\Q^N$ with $\omega(0)=0$.
For $n=0$ we set $c_0^{(N)}=1$.

For example, the $3$-step walk $00000, \, 00100, \, 01100, \, 01000$ is counted in $c_3^{(5)}$,
and for any $N\ge 1$  we have $c_0^{(N)}=1$, $c_1^{(N)}=N$, $c_2^{(N)}=N(N-1)$, $c_3^{(N)}=N(N-1)^2$,
$c_4^{(N)}= N^2(N-1)(N-2)$.
Since an $n$-step self-avoiding walk visits $n+1$ distinct vertices,
$c_n^{(N)}=0$ if $n\ge V$.  Also, $c_{V-1}^{(N)}$ is the number of Hamilton paths
on $\Q^N$ which start at $0$.
Our aim is to study the asymptotic behaviour of $c_n^{(N)}$ for large $n$ and $N$.

The \emph{susceptibility} is the generating function for the sequence $c_n^{(N)}$
(for fixed $N$), and thus is the polynomial in $z \in \C$ defined by
\begin{equation}
    \chi_N(z) = \sum_{n=0}^\infty c_n^{(N)}z^N = \sum_{n=0}^{V-1} c_n^{(N)}z^N.
\end{equation}
Motivated by the definition of the critical value for self-avoiding walk
on a finite graph proposed in \cite{Slad20},
which itself was motivated by finite-graph percolation \cite{BCHSS05a},
given any $\lambda >0$ we define the \emph{critical value} $z_N = z_N(\lambda) >0$ by
\begin{equation}
\label{e:zNdef}
    \chi_N(z_N) = \lambda V^{1/2} = \lambda 2^{N/2}.
\end{equation}
To ensure that $z_N$ is well-defined, we always assume that $\lambda V^{1/2} \ge \chi_N(0)=1$.
Then we define the \emph{connective constant} $\mu_N=\mu_N(\lambda)$ to be the reciprocal
of the critical value:
\begin{equation}
\label{e:mudef}
    \mu_N = \mu_N(\lambda) = \frac{1}{z_N(\lambda)}.
\end{equation}
The term ``constant'' is used despite the dependence of $\mu_N$ on $N$ and $\lambda$.
By definition, $z_N$ is an increasing function of $\lambda$, and $\mu_N$ is decreasing.

\subsection{Main results}

Our main results are the following five theorems, Theorems~\ref{thm:cnbd}--\ref{thm:EL-new}.
We expect that with minor additional effort it would be possible to extend our results to more
general graphs including the Hamming graph, as in \cite{BCHSS05b}.
However we prefer to restrict attention to the hypercube to develop methods in a
concrete setting.

\subsubsection{Connective constant and number of self-avoiding walks}

As a first indication that the connective constant is useful,
the following theorem shows that it provides an exponential upper bound on the number of
$n$-step self-avoiding walks, valid for all $n \in \N$.

\begin{theorem}
\label{thm:cnbd}
There exist $\lambda_0>0$ and $K>0$ (depending on $\lambda_0$) such that for all $n,N \in \N$ (with $\lambda_0 V^{1/2}\ge 1$),
\begin{equation}
    c_n^{(N)} \le K  \mu_N(\lambda_0)^n.
\end{equation}
\end{theorem}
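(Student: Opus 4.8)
The plan is to exploit the definition of $\mu_N$ through the susceptibility and a submultiplicativity-type estimate for $c_n^{(N)}$. The key combinatorial input is that self-avoiding walks on $\Q^N$ satisfy a supermultiplicative lower bound $c_{m+n}^{(N)} \ge$ (something like) a constant times $c_m^{(N)} c_n^{(N)} / (\text{polynomial correction})$—but more usefully for an \emph{upper} bound, one wants the reverse. Concretely, I would try to show that for a suitable fixed $\lambda_0$, the radius of convergence issue is moot (the susceptibility is a polynomial since $c_n^{(N)}=0$ for $n \ge V$), and instead argue directly: pick a value $z_N = z_N(\lambda_0) = 1/\mu_N(\lambda_0)$, and observe that since every $c_n^{(N)} \ge 0$ and $\chi_N(z_N) = \lambda_0 V^{1/2}$, we get termwise $c_n^{(N)} z_N^n \le \chi_N(z_N) = \lambda_0 V^{1/2}$, hence $c_n^{(N)} \le \lambda_0 V^{1/2} \mu_N^n$. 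This alone is not good enough because of the $V^{1/2} = 2^{N/2}$ prefactor, which is not a constant $K$. So the crux is to remove this volume factor.

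To remove the $V^{1/2}$, the idea is a two-scale (or "renewal") decomposition. Split an $n$-step SAW into blocks of length roughly $\ell = \ell(N)$ chosen so that $c_\ell^{(N)}$ is still comparable to $\mu_N^\ell$ up to a \emph{constant} (not a volume) factor—this uses that $\ell$ lies well inside the dilute regime. Concatenating $\approx n/\ell$ such blocks and using that the number of ways to join blocks costs at most a bounded factor per junction (a SAW is in particular a subwalk of a nearest-neighbour walk, and the endpoint of one block forces the start of the next), one would get $c_n^{(N)} \le (\text{const})^{n/\ell} (c_\ell^{(N)})^{n/\ell} \le K^{n/\ell} \mu_N^n$. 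The subtlety is that this still leaves a factor growing in $n$ unless the per-block constant is genuinely $1 + o(1)$ or one absorbs it into $\mu_N$; so instead I would more likely use the \emph{inverse} direction: bound $c_n^{(N)}$ above directly by a generating-function comparison. Define $\tilde\chi_N(z) = \sum_n c_n^{(N)} z^n$ and note that for $z < z_N$ one has $\tilde\chi_N(z) < \lambda_0 V^{1/2}$ but as $z \uparrow z_N$ the sum stays bounded by $\lambda_0 V^{1/2}$; to beat the volume factor, use a Cauchy/Tauberian estimate at a slightly smaller radius, i.e. choose $z = z_N(\lambda_0/2)$ or similar so that $\chi_N$ at that point is $\le \frac12 \lambda_0 V^{1/2}$, still not constant.

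The honest route—and I expect this is what the paper does—is to prove the bound $c_n^{(N)} \le K\mu_N^n$ only \emph{after} establishing the precise asymptotics $c_n^{(N)} \sim A_N \mu_N^n$ in the dilute phase via the lace expansion, and then handle the remaining range $n \gtrsim 2^{N/2}$ separately: in that range $c_n^{(N)} \le V! \le$ crude bound, but more cleverly one uses that once $n \ge 2^{N/2}$ the total count is dominated by $\mu_N^n$ times at most $V^{O(1)}$, and $V^{O(1)} \le (\mu_N^{cN})$ for appropriate $c$, so the polynomial-in-$V$ overshoot is absorbed into $\mu_N^n$ because $n$ is then at least exponentially large in $N$. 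So the key steps are: (i) for $n \le 2^{N/2-\delta}$, invoke the sharp asymptotics $c_n^{(N)} = A_N\mu_N^n(1+o(1))$ with $A_N$ bounded, giving $c_n^{(N)} \le K\mu_N^n$; (ii) for $n$ in the critical window and beyond, use the trivial bound $c_n^{(N)} \le$ (number of self-avoiding walks) $\le$ something like $N^n$ or $V \cdot (N-1)^{n-1}$, combined with a lower bound $\mu_N \ge N - 1 - O(N^{-1})$ and the fact that $n \ge 2^{N/2}$ makes $N^n / \mu_N^n \le (1 + O(N^{-2}))^n$—this is where the argument is delicate, because $(1+N^{-2})^{2^{N/2}}$ is \emph{enormous}, not bounded.

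Hence the genuinely hard part is the critical-window-and-beyond range. I expect the resolution is to \emph{not} compare to $N^n$ but instead to use the polynomial identity/recursion for $c_n^{(N)}$ together with $c_n^{(N)} = 0$ for $n \ge V$: the total mass $\sum_n c_n^{(N)} z_N^n = \lambda_0 V^{1/2}$ is a sum of $V$ nonnegative terms, so \emph{on average} $c_n^{(N)} z_N^n \le \lambda_0 V^{1/2}/V \cdot (\text{number of nonneg.\ terms near the max})$, and a concentration/unimodality argument (the sequence $n \mapsto c_n^{(N)} z_N^n$ should be approximately log-concave, or at least have a single bump of width $\ll V$) then yields the uniform bound with a genuine constant $K$. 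So the main obstacle I anticipate is establishing enough regularity (log-concavity, or a tail bound) of the sequence $c_n^{(N)} z_N^n$ to convert the $\ell^1$-bound $\lambda_0 V^{1/2}$ into an $\ell^\infty$-bound of size $O(\mu_N^n)$ without the volume prefactor, and I would spend most of the effort there, likely leaning on the lace expansion to control the generating function $\chi_N(z)$ near $z_N$ (its behaviour of the form $(1 - z/z_N)^{-1}$ up to corrections) and then reading off coefficient asymptotics by a contour/Tauberian argument.
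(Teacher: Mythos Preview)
Your last paragraph lands on the correct idea---lace expansion to control $\chi_N(z)$ near $z_N$, then a Tauberian argument---but only tentatively and after detours through strategies that, as you yourself recognise, do not work. The paper's proof is exactly this route, with no case split on $n$. The lace expansion yields (Proposition~\ref{prop:FzN}) that for small $\lambda_0$, uniformly on the \emph{complex} disk $|z|\le z_N$, the reciprocal $F_N(z)=1/\chi_N(z)$ satisfies $|F_N(z)|\ge \tfrac12|1-z/z_N|$ and $|F_N'(z)|\le 2N$, together with $Nz_N\le 2$. Hence $|\chi_N'(z)| = |F_N'(z)/F_N(z)^2| \le 8N|1-z/z_N|^{-2}$, and the Flajolet--Odlyzko Tauberian lemma (Lemma~\ref{lem:Tauberian}) with exponent $b=2$ applied to $\chi_N'(z)=\sum_n (n+1)c_{n+1}^{(N)}z^n$ gives $(n+1)c_{n+1}^{(N)}\le C N n\, z_N^{-n}$, i.e.\ $c_n^{(N)}\le CNz_N\,\mu_N^n\le 2C\mu_N^n$.

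The technical point you are missing is the differentiation: applying the Tauberian lemma directly to $\chi_N$ would have $b=1$, which only gives $c_n^{(N)}\le C(\log n)\mu_N^n$. Passing to $\chi_N'$ bumps the exponent to $b=2$; the extra factor $n$ on the coefficient side cancels the $(n+1)$ from differentiation, and the extra $N$ in the numerator is absorbed by $Nz_N\le 2$. Your earlier attempts---the termwise bound with its $V^{1/2}$ prefactor, the renewal/block scheme, and the proposal to prove dilute asymptotics first and then patch large $n$---run into precisely the obstacles you identify and are not needed; the $\ell^1\to\ell^\infty$ conversion you seek is achieved not by structural properties of the sequence $c_n^{(N)}z_N^n$ but by the complex-analytic lower bound on $|F_N|$ over the full disk.
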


The next theorem establishes that the connective constant truly is the exponential growth
rate of the number of $n$-step self-avoiding walks as long as $n \le V^p$ for any fixed
$p\in (0,\frac 12)$.
We regard this range of $n$ as the regime in which the self-avoiding walk does not yet
``feel'' the finite volume of the hypercube.
A more detailed error estimate is given in Theorem~\ref{thm:dilute}.

\begin{theorem}
\label{thm:dilute-short}
There exists $\lambda_0>0$ such that with $\mu_N=\mu_N(\lambda)$ defined by \eqref{e:mudef}
for any $\lambda \in (0,\lambda_0]$,
and for any choice of $p \in (0,\frac 12)$,
there exists $\epsilon_p>0$ such that
\begin{equation}
\label{e:cnmr-short}
    c_{n}^{(N)} = A_N \mu_N^n
    \left[ 1   +O(  n^{-\epsilon_p})\right]
\end{equation}
for all $n,N$ such that $n \le V^p$ (and $\lambda V^{1/2}\ge 1$).
The sequence $A_N$ is independent of $n$ (but depends on $\lambda$ and $p$).
The constant in the error term depends on $\lambda$ and $p$ but not on $n$ or $N$
as long as $n \le V^p$.
\end{theorem}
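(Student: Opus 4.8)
The plan is to combine the lace expansion with a Tauberian extraction of the coefficients of the susceptibility, using the critical value $z_N=z_N(\lambda)$ of \eqref{e:zNdef} as the tilting parameter. First I would carry out the lace expansion for self-avoiding walk on $\Q^N$, producing coefficients $\pi_m^{(N)}(x)$ (defined combinatorially via laces on $\{0,\dots,m\}$) and the convolution identity $G_z^{(N)}=\delta_{0,\cdot}+(zD+\Pi_z^{(N)})\star G_z^{(N)}$ for the two-point function $G_z^{(N)}(x)=\sum_{n\ge0}c_n^{(N)}(x)z^n$, where $D(x)=\1[|x|=1]$ and $\Pi_z^{(N)}(x)=\sum_{m\ge2}\pi_m^{(N)}(x)z^m$. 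Summing over $x\in\Q^N$ gives $\chi_N(z)\bigl(1-zN-\hat\Pi_z^{(N)}(0)\bigr)=1$, with $\hat\Pi_z^{(N)}(0)=\sum_{m\ge2}\hat\pi_m^{(N)}(0)z^m$, equivalently the exact renewal recursion
\[
  c_n^{(N)}=N\,c_{n-1}^{(N)}+\sum_{m=2}^{n}\hat\pi_m^{(N)}(0)\,c_{n-m}^{(N)}\qquad(n\ge1),
\]
in which only the coefficients of index $m\le n$ appear.

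The technical core is to show that these coefficients are small. I would bound $|\pi_m^{(N)}(x)|$ by the usual sums of products of two-point functions — a bubble-type quantity controls $\hat\Pi_z^{(N)}(0)$, triangle/square diagrams the $x$-dependence and the $z$-derivatives — and close the standard lace-expansion bootstrap by induction on $n$: assume random-walk-type upper bounds on $c_n^{(N)}(x)$ (comparison with simple random walk on $\Q^N$, whose transition probabilities are explicit through Krawtchouk polynomials), deduce bounds on $\pi_m^{(N)}$, feed them back, using Theorem~\ref{thm:cnbd} to seed the induction. The small parameter replacing ``$d>4$'' on $\Z^d$ is that the relevant bubble diagram on $\Q^N$ is small — of size $O(N^{-1})$ up to a contribution $O(\lambda^2)$ that is controlled since $\lambda\le\lambda_0$ — and this smallness survives only until a birthday-paradox count forces the walk to revisit vertices \emph{merely because the volume is finite}, i.e.\ until $n\asymp V^{1/2}$; hence the bootstrap closes, with error constants deteriorating in $n$, precisely for $n\le V^{p}$ with $p<\tfrac12$. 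A refinement of the same estimates — using that in the finite hypercube a lace must eventually ``close up'', which, once the walk has mixed, costs a volume factor $\asymp V^{-1}$ — shows moreover that, with $T:=\lfloor V^{p}\rfloor$, one has $\sum_{m>T}\bigl|\hat\pi_m^{(N)}(0)\bigr|\,z_N^{\,m}=O(V^{-1/2})$; this bound, needed below, is the genuinely delicate finite-volume input.

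For the Tauberian step, tilt by $z_N$ (write $w=z/z_N$) and truncate: put $\hat F_T(w):=Nz_N\,w+\sum_{m=2}^{T}\hat\pi_m^{(N)}(0)\,z_N^{\,m}\,w^{m}$, a polynomial, and $\hat\chi_T(w):=\bigl(1-\hat F_T(w)\bigr)^{-1}=\sum_{n}\tilde c_n\,z_N^{\,n}\,w^{n}$, so that $\tilde c_n=c_n^{(N)}$ for all $n\le T$. By \eqref{e:zNdef}, $1-\hat F_T(1)=\tfrac1{\lambda V^{1/2}}+\sum_{m>T}\hat\pi_m^{(N)}(0)\,z_N^{\,m}$, which by the previous paragraph is positive and of size $O(V^{-1/2})$; since $1-\hat F_T$ has derivative $\approx-1$ at $w=1$ and, by Rouch\'e (using the smallness of the $\hat\pi_m^{(N)}$), a unique zero in a fixed disk about $1$, that zero is a simple zero $w^{\ast}=1+O(V^{-1/2})>1$ and is the one of smallest modulus. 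A renewal/contour argument then gives, for $n\le T$,
\[
  c_n^{(N)}\,z_N^{\,n}=\tilde c_n\,z_N^{\,n}=A_N\,(w^{\ast})^{-n}\bigl(1+O(n^{-\delta})\bigr),
\]
for some $\delta>0$, with $A_N=-\bigl(w^{\ast}\,(1-\hat F_T)'(w^{\ast})\bigr)^{-1}>0$ (depending on $\lambda$ through $z_N$ and $w^{\ast}$, and on $p$ through $T$). Finally $(w^{\ast})^{-n}=(1+O(V^{-1/2}))^{-n}=1+O(nV^{-1/2})$, and $n\le V^{p}$ forces $V\ge n^{1/p}$, hence $nV^{-1/2}\le n^{\,1-\frac1{2p}}$; with $\epsilon_p:=\min\bigl(\delta,\tfrac1{2p}-1\bigr)>0$ and $\mu_N=z_N^{-1}$ this is exactly \eqref{e:cnmr-short}.

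The main obstacle is the pair of diagrammatic inputs on $\Q^N$: the basic bootstrap bound on $\hat\pi_m^{(N)}(0)$, valid out to $m\asymp V^{1/2}$, which demands sharp control of the simple-random-walk Green's function on the hypercube — via the fact that $\binom{N}{k}$ vertices lie at Hamming distance $k$, together with the spectral structure of $\Q^N$ — and the finer $O(V^{-1/2})$ estimate on the tail $\sum_{m>T}\hat\pi_m^{(N)}(0)\,z_N^{\,m}$, which is where the finiteness of the volume truly enters. Granting these, the remaining steps are adaptations of by-now-standard lace-expansion and Tauberian technology.
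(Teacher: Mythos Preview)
Your architecture is right: lace expansion, diagrammatic control of $\hat\Pi$, extract the dominant singular term of $\chi_N$, apply a Tauberian/renewal argument. But two of your asserted inputs are not justified, and they are precisely where the paper's real work lies.

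\medskip
\textbf{The tail bound.} You claim $\sum_{m>T}|\hat\pi_m^{(N)}(0)|\,z_N^{\,m}=O(V^{-1/2})$ with $T=V^p$. The lace-expansion estimates available \emph{at} $z_N$ give only $\sum_m |\hat\pi_m|\,z_N^m=O(N^{-1}+\lambda^2)$ and $\sum_m m|\hat\pi_m|\,z_N^m=O(N^{-1}+\lambda^2)$; the second yields a tail of order $(N^{-1}+\lambda^2)V^{-p}$, not $V^{-1/2}$. Since $p<\tfrac12$ this tail dominates $\lambda^{-1}V^{-1/2}$ for large $N$, so you cannot conclude $w^\ast=1+O(V^{-1/2})$, nor even that $1-\hat F_T(1)>0$. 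Your heuristic that a long lace ``costs a volume factor'' is the right intuition but is not a bound: what is actually needed is a fractional-moment estimate $\sum_m m^{1+a}|\hat\pi_m|\,\rho^m<\infty$ for some $a>0$, and this \emph{fails} at $\rho=z_N$ because the zero-mode contribution to the diagrammatic bounds is $\chi_N(z_N)^2/V=\lambda^2$, which does not improve under differentiation --- indeed the analogue of the paper's Lemma~\ref{lem:Hfracder} at $z_N$ would produce a divergent factor $V^{a/2}$.

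\medskip
\textbf{The $O(n^{-\delta})$ error.} Your renewal/contour step asserts $\tilde c_n z_N^n=A_N(w^\ast)^{-n}(1+O(n^{-\delta}))$ without saying where $\delta$ comes from. Such an error requires either a $(1+\delta)$-moment of the truncated renewal sequence or a second zero of $1-\hat F_T$ bounded away from $w^\ast$. The first is the fractional-derivative input just discussed; for the second, note that $\hat F_T$ has degree $T=V^p$ and you only control its coefficients on $|w|\le1$, so a Rouch\'e comparison with $1-Nz_Nw$ on a circle $|w|=1+c$ with $c$ fixed fails (the perturbation carries a factor $(1+c)^T$), and taking $c$ of order $V^{-p}$ gives a contour error that does not decay in $n$.

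\medskip
\textbf{How the paper handles this.} The paper's device is to work not at $z_N$ but at the intermediate point $\zeta_p=z_N(1-V^{-p})$, where $\chi_N(\zeta_p)\asymp V^p$ and hence the zero-mode piece of the bubble is $\chi_N(\zeta_p)^2/V\asymp V^{2p-1}=o(1)$. At $\zeta_p$ one does get $\sum_m m^{1+a}|\hat\pi_m|\,\zeta_p^{\,m}\prec N^{-1}+V^{(2+a)p-1}$, bounded for $0<a<\tfrac1p-2$ (Corollary~\ref{cor:Pizder}). Rather than locating a zero, the paper then linearises $F_N$ at $\zeta_p$, writes $\chi_N=1/\Phi_N+H_N$ with $\Phi_N$ linear, reads off the coefficients of $1/\Phi_N$ exactly as $A_N\mu_N^n(1+O(V^{-p}))^n$, and bounds the coefficients of $H_N$ by applying the Tauberian Lemma~\ref{lem:Tauberian} to $H_N'$ on the disk $|z|\le\zeta_p$, using the fractional-derivative bound on the remainder $R_N=F_N-\Phi_N$ (Proposition~\ref{prop:F}). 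The stated Theorem~\ref{thm:dilute-short} then follows by applying this with any $p'\in(p,\tfrac12)$ when $n\le V^p$, which converts $(1+O(V^{-p'}))^n$ into $1+O(n^{-(p'-p)/p})$.

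\medskip
In short: your truncation idea is reasonable and does give $\tilde c_n=c_n^{(N)}$ for $n\le T$, but it does not by itself tame the zero mode in the diagrammatic bounds, and without that you have neither the tail bound nor the polynomial renewal error. The missing mechanism is to shift the expansion point from $z_N$ to $\zeta_p$ and use fractional derivatives there.
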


A possibly surprising feature of \eqref{e:cnmr-short} is that its
left-hand side does not depend on the choice of $\lambda$
but both $A_N$ and the exponential term $\mu_N^n$ on the right-hand side do
depend on $\lambda$.
This is not contradictory, as we will prove
in Section~\ref{sec:dzPi} (see \eqref{e:zratio})
that, for $0<\lambda' \le \lambda_1 < \lambda_2 \le \lambda_0$,
\begin{equation}
\label{e:muratio}
    \frac{\mu_N(\lambda_1)}{\mu_N(\lambda_2)} = 1 + O(V^{-1/2}),
\end{equation}
where the constant in the error term depends on $\lambda',\lambda_0$.
Thus the replacement of one fixed choice of $\lambda$ by another in $\mu_N^n$
produces a factor $[1+O(V^{-1/2})]^n$, and for $n \le V^p$ with $p<\frac 12$ this
is $1+O(nV^{-1/2})$ and hence can be absorbed by the error term $n^{-\epsilon_p}$
since when $n \le V^p$ we have
\begin{equation}
    \frac{n}{V^{1/2}} \le \frac{1}{n^{(1-2p)/(2p)}}.
\end{equation}
The next theorem gives another sense in which
the connective constant $\mu_N$ depends only weakly on $\lambda$
and the amplitude $A_N$ depends only weakly on $\lambda$ and $p$.

\begin{theorem}
\label{thm:expansion}
Let $\lambda_0>0$ be sufficiently small.
Let $m \in \N$, fix $c>0$
(independent of $N$ but possibly depending on $m$), and suppose that $z$ obeys
$\chi_N(z) \in [cN^{m}, \lambda_0 V^{1/2}]$.
Then there are integers
$a_n$ for $n \in \N$, which are universal constants that do not depend on the particular choice
of $z$, such that
\begin{equation}
\label{e:aeqn}
    z = \sum_{n=1}^m a_n N^{-n} + O(N^{-m-1})
    .
\end{equation}
The constant in the error term depends on $m,\lambda_0,c$, but does not
depend otherwise on $z$.
The first five terms are given by
\begin{equation}
\label{e:zNexpansion}
    z = \frac{1}{N} + \frac{1}{N^2} + \frac{2}{N^3} + \frac{7}{N^4}
    + \frac{39}{N^5}  + O\Big( \frac{1}{N^6} \Big)
\end{equation}
For any $\lambda \in (0,\lambda_0]$, $p \in (0,\frac 12)$, and $m \in \N$,
the amplitude $A_N$ in \eqref{e:cnmr-short} has an asymptotic expansion
\begin{equation}
    A_N = \sum_{n=1}^m a_n' N^{-n} + O(N^{-m-1})
    .
\end{equation}
with universal integer coefficients $a_n'$ (which in particular do not depend
on $p,\lambda$)
and with an error depending on $m,\lambda,p$.
The first five terms are given by
\begin{equation}
\label{e:ANexpansion}
    A_N = 1 + \frac{1}{N} + \frac{4}{N^2} + \frac{26}{N^3} + \frac{231}{N^4}
    + O\Big( \frac{1}{N^5} \Big).
\end{equation}
\end{theorem}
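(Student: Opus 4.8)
The plan is to deduce both asymptotic expansions from the lace expansion identity for the susceptibility together with the diagrammatic estimates established for the dilute phase (the proof of Theorem~\ref{thm:dilute}). Write the identity as $\chi_N(z)^{-1} = 1 - Nz - \Pi_N(z)$, where $\Pi_N(z) = \sum_{j\ge 1}(-1)^j\pi_j^{(N)}(z)$ is the alternating sum of lace diagrams. The hypothesis on $z$ translates at once: $\chi_N(z)\le\lambda_0 V^{1/2}$ means $z\le z_N(\lambda_0)$, placing $z$ in the region where the expansion converges and the dilute bounds hold, while $\chi_N(z)\ge cN^m$ gives $1 - Nz - \Pi_N(z) = \chi_N(z)^{-1}\le c^{-1}N^{-m}$. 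Rearranging and absorbing $N^{-1}\chi_N(z)^{-1}$ into the error, $z = N^{-1}\bigl(1 - \Pi_N(z)\bigr) + O(N^{-m-1})$, and since the dilute bounds already give $\Pi_N(z) = O(N^{-1})$ this forces $Nz = 1 + O(N^{-1})$, hence $z = N^{-1}+O(N^{-2})$ for every admissible $z$.

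The central step is that $\Pi_N(z)$, at any $z$ with $Nz = 1 + O(N^{-1})$, admits an asymptotic expansion $\Pi_N(z) = \sum_{k=1}^{m-1}p_k N^{-k} + O(N^{-m})$ whose coefficients $p_k$ are independent of the chosen $z$. This is the point at which the lace expansion is essential: each $\pi_j^{(N)}(z)$ is a polynomial in $z$ whose coefficients are integer-coefficient polynomials in $N$, being counts of lace configurations on $\Q^N$, and the dilute estimates, sharpened to record the $N$-dependence of each diagram and of its $z$-derivatives — a routine quantitative repackaging of bounds already in hand — show that only finitely many $j$, and for each such $j$ only finitely many Taylor orders in $z$ about $z = N^{-1}$, contribute to any fixed order in $1/N$. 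Equivalently, in the $z$-degree organization $\chi_N(z)^{-1} = \sum_n b_n^{(N)} z^n$ one has $b_n^{(N)}\in\Z[N]$, $b_0^{(N)} = 1$, $b_1^{(N)} = -N$ (since $c_0^{(N)} = 1$, $c_1^{(N)} = N$ and $c_n^{(N)}\in\Z[N]$), together with the degree bound $\deg_N b_n^{(N)}\le n-1$ for $n\ge 2$ and the geometric decay of the top-degree coefficients that makes $\sum_n b_n^{(N)} z^n$ reorganize into a convergent $1/N$-series at $z\sim N^{-1}$; all of this is furnished by the dilute-phase analysis.

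Granting this, \eqref{e:aeqn} follows by a bootstrap, i.e.\ an application of the implicit function theorem over the ring of formal power series in $N^{-1}$: substituting $z = \sum_k a_k N^{-k}$ into $Nz + \Pi_N(z) = 1 + O(N^{-m})$ and matching the coefficient of $N^{-(n-1)}$, the unknown $a_n$ appears with coefficient $1$ (from the $Nz$ term) while the remaining terms at that order form a polynomial with integer coefficients in $a_1,\dots,a_{n-1}$ and in the coefficients of the $b_j^{(N)}$; hence each $a_n$ is an integer, and since the error $O(N^{-m})$ does not affect orders up to $N^{-(m-1)}$, the coefficients $a_1,\dots,a_m$ do not depend on the particular admissible $z$. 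A mean value estimate, using $(\chi_N^{-1})'(z) = -N(1+o(1))$ on the admissible range, then upgrades this to $z = \sum_{n=1}^m a_n N^{-n} + O(N^{-m-1})$. The amplitude is handled the same way: the proof of Theorem~\ref{thm:dilute} provides an explicit representation of $A_N$ as a rational expression in lace-expansion quantities evaluated at an effective critical point $z_\ast$ (agreeing with $z_N(\lambda)$ up to an error exponentially small in $N$, hence sharing its $1/N$-expansion), schematically $A_N = [z_\ast(N + \Pi_N'(z_\ast))]^{-1}$; inserting the expansion of $z_\ast$ and the similarly obtained expansion of $\Pi_N'$ at $z_\ast\sim N^{-1}$, and using once more the integrality of the $c_n^{(N)}$ and $b_n^{(N)}$, yields the expansion of $A_N$ with universal integer coefficients $a_n'$ that, like the $a_n$, do not involve $\lambda$ or $p$ (these enter only the error term, compatibly with \eqref{e:muratio}). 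Finally, the explicit first five coefficients in \eqref{e:zNexpansion} and \eqref{e:ANexpansion} are obtained by evaluating the small lace diagrams — equivalently, from the known polynomial-in-$N$ values of $c_n^{(N)}$ for small $n$ — and carrying out the bootstrap, a finite but lengthy computation best organized with computer algebra.

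The principal obstacle is the uniformity in the central step: one must have the dilute-phase diagrammatic estimates in a form explicit enough that, uniformly over all $z$ with $\chi_N(z)$ in the stated window, the diagram sum defining $\Pi_N(z)$ truncates at each order in $1/N$ with a genuine $o(N^{-m})$ remainder and with coefficients blind to the choice of $z$. This is a quantitative sharpening of estimates underlying Theorem~\ref{thm:dilute} rather than a new input, but it is the step requiring care; everything downstream — the bootstrap and the integrality argument — is bookkeeping, and the five explicit coefficients are a mechanical if tedious calculation.
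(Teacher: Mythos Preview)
Your approach is essentially the paper's: both start from the lace expansion identity $\chi_N(z)^{-1}=1-Nz-\hat\Pi_z(0)$, both advance an induction on the order $m$ by substituting the known expansion of $z$ into $\hat\Pi_z(0)$ and reading off the next coefficient, and both handle $A_N$ via its representation in terms of $\zeta_p$ and $\partial_z\hat\Pi_z(0)$. Your organisation via the coefficients $b_n^{(N)}\in\Z[N]$ with $\deg_N b_n^{(N)}\le n-1$ is exactly the paper's ``dimension decomposition'' $\pi_k^{(M)}=\sum_{\delta=1}^{k-1}\pi_{k,\delta}^{(M)}\prod_{j=0}^{\delta-1}(N-j)$ in a different dress, and your mean-value argument for $z$-independence is the paper's Proposition~\ref{prop:p}.

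One expository wobble: your sentence that ``$\Pi_N(z)$, at any $z$ with $Nz=1+O(N^{-1})$, admits an expansion $\sum p_kN^{-k}$ whose coefficients $p_k$ are independent of the chosen $z$'' is not literally true and not what you actually use --- the higher $p_k$ do depend on the lower-order $a_j$ already determined for $z$, and your subsequent bootstrap paragraph treats this correctly. More substantively, the ``principal obstacle'' you flag at the end is exactly the content of the paper's Proposition~\ref{prop:Piforexpansion}, namely the two truncation bounds $\sum_{k\ge 2}\sum_{M\ge j}k\pi_k^{(M)}\zeta_p^k\prec N^{-j}$ and $\sum_{k\ge j}k\pi_k^{(M)}\zeta_p^k\prec N^{-\lceil j/2\rceil}$; these are not quite a ``routine repackaging'' of the dilute-phase bounds but require the random-walk input of Lemma~\ref{lem:Dsum} applied line-by-line to the lace diagrams, and your sketch would need to supply them to be complete.
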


By Theorem~\ref{thm:expansion}, any choice of $z$ for which
$\chi_N(z) \in [cN^m, \lambda_0 V^{1/2}]$ has the same expansion
up to an error $O(N^{-m-1})$, with the error independent of
the particular choice made for $z$.
The expansion \eqref{e:aeqn} is valid simultaneously to all orders $m$ if we choose
an $N$-dependent sequence $z$ for which $\chi_N(z)$ lies eventually in all intervals
$[cN^{m}, \lambda_0 V^{1/2}]$.
In particular, \eqref{e:aeqn} holds
simultaneously for all $m$ when $z=z_N(\lambda)$ with $\lambda \in (0,\lambda_0]$,
with the coefficients $a_n$ independent of $\lambda$.
It also holds if $z$ is chosen, e.g., to satisfy $\chi_N(z)=2^{\sqrt{N}}$.
The connective constant therefore also has an asymptotic expansion in $N^{-1}$ to all
orders and with integer coefficients, and in particular by taking the reciprocal
of \eqref{e:zNexpansion} we find that, for any $\lambda \in (0,\lambda_0]$,
\begin{equation}
    \mu_N
    = N - 1 - \frac{1}{N} - \frac{4}{N^2} - \frac{26}{N^3}
    + O\Big( \frac{1}{N^4}\Big) .
\end{equation}
The existence proof for the expansions for $z_N$ and $A_N$
presents an algorithm for the
computation of any number of coefficients, and
more terms could be computed with computer assistance as
has been done for $\Z^d$ (see
Section~\ref{sec:infinite-graphs}, in fact the hypercube computations appear to be
substantially easier than for $\Z^d$).

\subsubsection{Susceptibility and expected length}

The following theorem
provides upper and lower bounds on the susceptibility.
As the proof will show, the lower bound in \eqref{e:chiasy-new}
is a general consequence of submultiplicativity
and holds on any finite or infinite transitive graph,
while the upper bound relies on the proof of a ``bubble condition.''

\begin{theorem}
\label{thm:chiN-new}
Fix $\lambda \in (0,\lambda_0]$, assume that $\lambda V^{1/2} \ge 1$,
and let $z_N=z_N(\lambda)$.
Let $\beta=N^{-1}+\lambda^2$.
For all $z \in [0,z_N]$,
\begin{equation}
\label{e:chiasy-new}
    \frac{1}{\lambda^{-1} V^{-1/2} + 1-z/z_N}
    \le
    \chi_N(z)
    \le
    \frac{2-z/z_N}{\lambda^{-1} V^{-1/2} + (1-O(\beta))(1-z/z_N)}.
\end{equation}
\end{theorem}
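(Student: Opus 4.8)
The plan is to derive the lower bound from submultiplicativity alone, and the upper bound from the lace expansion together with a bubble-condition estimate (both of which are used elsewhere in the paper and may be invoked). For the lower bound, I would start from the submultiplicativity $c_{n+m}^{(N)} \le c_n^{(N)} c_m^{(N)}$, which holds on any transitive graph because a self-avoiding walk of length $n+m$ restricts to self-avoiding walks of lengths $n$ and $m$ after translating the second piece to the origin. This gives $\chi_N(z)^2 \ge \sum_{k} \big(\sum_{n+m=k} c_n^{(N)} c_m^{(N)}\big) z^k \ge \sum_k c_k^{(N)} z^k \cdot (\text{something})$; more cleanly, the standard consequence is that $\chi_N$ satisfies a differential/functional inequality whose integration yields $\chi_N(z) \ge \chi_N(z_N)/\big(1 + \chi_N(z_N)(1 - z/z_N)\big)$ for $z \le z_N$. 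Since $\chi_N(z_N) = \lambda V^{1/2}$ by \eqref{e:zNdef}, dividing numerator and denominator by $\lambda V^{1/2}$ turns this into exactly $\big(\lambda^{-1}V^{-1/2} + 1 - z/z_N\big)^{-1}$, the claimed lower bound.

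For the upper bound, I would use the lace expansion for the two-point function / susceptibility. Summing the lace expansion identity over the vertex variable produces a relation of the schematic form $\chi_N(z) = \frac{1 + \Pi_z^{(1)}}{1 - z \widehat{D} \,\Theta_z}$ or, after the standard rearrangement, $\chi_N(z)^{-1} = 1 - z(\cdots) - \widehat{\Pi}_z$ where $\widehat{\Pi}_z$ collects the lace-expansion coefficients evaluated at total momentum zero. The key input is that, under the bubble condition, $\widehat{\Pi}_z$ and its $z$-derivative are $O(\beta)$ uniformly for $z \in [0,z_N]$, with $\beta = N^{-1} + \lambda^2$ the natural small parameter here (the $N^{-1}$ coming from the bare bubble on $\Q^N$, the $\lambda^2$ from the volume/critical-scaling contribution $\chi_N(z_N)^{-2} = \lambda^{-2} V^{-1}$ entering the bubble at criticality). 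One then shows that $\chi_N(z)^{-1}$, as a function of $z$, is concave-like and pinned by its value $(\lambda V^{1/2})^{-1}$ at $z_N$ and by $\chi_N(0)^{-1} = 1$ at $z = 0$; interpolating with the $O(\beta)$ control on the derivative gives $\chi_N(z)^{-1} \ge \frac{1}{2 - z/z_N}\big(\lambda^{-1}V^{-1/2} + (1 - O(\beta))(1 - z/z_N)\big)$, which inverts to the stated upper bound.

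Concretely, the steps in order are: (1) prove the submultiplicativity functional inequality and integrate it to get the lower bound, identifying $\chi_N(z_N) = \lambda V^{1/2}$; (2) recall the lace-expansion formula for $\chi_N(z)^{-1}$ from the relevant earlier section; (3) invoke the bubble-condition bounds $|\widehat{\Pi}_z| = O(\beta)$ and $|z\tfrac{d}{dz}\widehat{\Pi}_z| = O(\beta)$ on $[0,z_N]$; (4) combine these to sandwich $\chi_N(z)^{-1}$ between affine functions of $z/z_N$ matching the endpoint values at $z=0$ and $z=z_N$ up to $O(\beta)$ corrections, using convexity/monotonicity of $z \mapsto \chi_N(z)^{-1}$; (5) invert to obtain \eqref{e:chiasy-new}, and track constants to confirm the ``$2 - z/z_N$'' in the numerator and the ``$1 - O(\beta)$'' in the denominator.

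The main obstacle I anticipate is step~(4): turning pointwise $O(\beta)$ control of $\widehat{\Pi}_z$ and its derivative into the precise affine sandwich with the correct coefficient ``$2$'' in the numerator. This requires being careful that the error $O(\beta)$ multiplies $(1 - z/z_N)$ and not a constant, which in turn forces me to expand $\widehat{\Pi}_z$ around $z = z_N$ rather than around $z = 0$, and to use that $\widehat{\Pi}_{z_N}$ itself is $O(\beta)$ so that the endpoint value at $z_N$ is $\lambda^{-1}V^{-1/2}(1 + O(\beta))$ — this is where the $\lambda^2$ term in $\beta$ genuinely enters. Verifying that the bubble condition actually holds for $z \le z_N$ on $\Q^N$ (equivalently, that the bubble diagram is $O(\beta)$) is the other substantive ingredient, but that is the content of the ``bubble condition'' proof promised in the theorem's preamble and so I would cite it rather than reprove it here.
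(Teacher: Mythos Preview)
Your lower bound is essentially the paper's argument: the paper also integrates the submultiplicativity-based differential inequality $\partial_z[z\chi_N(z)]\le \chi_N(z)^2$ (equivalently $-z\partial_z F_N\le 1-F_N$) from $z$ to $z_N$ and obtains exactly your formula $\chi_N(z)\ge \chi_N(z_N)/(1+\chi_N(z_N)(1-z/z_N))$.

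For the upper bound you diverge from the paper. The paper does \emph{not} go back to the lace-expansion formula for $F_N$; instead it uses the companion differential inequality $\partial_z[z\chi_N(z)]\ge \chi_N(z)^2/\bubble(z)$ (equivalently $-z\partial_z F_N\ge \bubble(z)^{-1}-F_N$), bounds $-z_N\partial_z F_N(w)\ge \bubble(z_N)^{-1}-F_N(z)$ for $w\in[z,z_N]$, and integrates over $[z,z_N]$. The resulting inequality $(2z_N-z)F_N(z)\ge z_NF_N(z_N)+\bubble(z_N)^{-1}(z_N-z)$ is precisely where the factor $2-z/z_N$ comes from: it is an artifact of this integration, not something your direct approach would reproduce. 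Your route via $F_N(z)=1-zN-\hat\Pi_z(0)$, expanding around $z_N$ with $|\partial_z\hat\Pi_z(0)|\prec N\beta$ and $z_NN=1+O(\beta)$, gives instead $F_N(z)\ge \lambda^{-1}V^{-1/2}+(1-O(\beta))(1-z/z_N)$ with numerator $1$ --- a sharper bound that still implies \eqref{e:chiasy-new}. This is in fact the computation behind Proposition~\ref{prop:chiasy-bis} (see \eqref{e:Fnzlb1}), so your approach is already in the paper, just deployed for a different purpose. The differential-inequality route has the advantage of needing only the single scalar input $\bubble(z_N)=1+O(\beta)$ rather than the full $\hat\Pi$-derivative bound from Lemma~\ref{lem:Pizder}.

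Two small corrections: the bubble diagram satisfies $\bubble(z)=1+O(\beta)$, not $O(\beta)$; and the $\lambda^2$ in $\beta$ arises from the zero-mode contribution $\chi_N(z_N)^2/V=\lambda^2$, not $\chi_N(z_N)^{-2}$. Your appeal to ``convexity/monotonicity of $z\mapsto\chi_N(z)^{-1}$'' in step~(4) is vague and not obviously helpful (convexity of $F_N$ would put the secant \emph{above} $F_N$, the wrong direction), but you do not actually need it: the Taylor-with-remainder argument you describe in the final paragraph suffices on its own.
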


The expected length of a self-avoiding walk is defined as follows.
The \emph{length} $L$ is the
discrete random variable with $z$-dependent probability mass function
\begin{equation}
\label{e:PzNdef}
    \P_z^{(N)}(L=n+1) = \frac{1}{\chi_N(z)} c_n^{(N)}z^n,
\end{equation}
with fixed $N$ and fixed $z \ge 0$, and for all nonnegative integers $n$.
With this definition using $n+1$ on the left-hand side
of \eqref{e:PzNdef}, $L$ reflects the number of vertices in the walk rather than
the number of steps.
The \emph{expected length} is
\begin{equation}
\label{e:ELdef}
    \E_z^{(N)} L
    = \sum_{n=0}^\infty (n+1) \P_z^{(N)}(L=n+1)
    = \frac{1}{\chi_N(z)} \partial_z [z\chi_N(z)].
\end{equation}
The next theorem
concerns the asymptotic behaviour of the expected length.
The upper bound is a consequence of submultiplicativity and holds on any
finite or infinite transitive graph, while the lower bound
is a consequence of the bubble condition.

\begin{theorem}
\label{thm:EL-new}
Fix $\lambda \in (0,\lambda_0]$, assume that $\lambda V^{1/2} \ge 1$, and let $z_N=z_N(\lambda)$.
Let $\beta=N^{-1}+\lambda^2$.
For $z \in [0, z_N]$,
\begin{equation}
\label{e:ELasy-new}
    [1-O(\beta)] \chi_N(z)
    \le
    \E_{z}^{(N)} L
    \le
    \chi_N(z)
    .
\end{equation}
In particular, at the critical value,
\begin{equation}
    \E_{z_N}^{(N)} L
    =
    \lambda V^{1/2} [1+O(\beta)].
\end{equation}
\end{theorem}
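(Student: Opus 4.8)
The plan is to begin from the exact identity $\E_z^{(N)}L = \chi_N(z)^{-1}\,\partial_z[z\chi_N(z)] = 1 + z\chi_N'(z)/\chi_N(z)$ recorded in \eqref{e:ELdef}, and to obtain the two inequalities in \eqref{e:ELasy-new} by the two separate mechanisms flagged in the paragraph preceding the theorem: pure submultiplicativity for the upper bound $\E_z^{(N)}L \le \chi_N(z)$, and the lace expansion together with the bubble condition for the lower bound.

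For the upper bound I would use only vertex-transitivity of $\Q^N$ and $c_0^{(N)}=1$. These give submultiplicativity $c_n^{(N)} = c_{k+(n-k)}^{(N)} \le c_k^{(N)}c_{n-k}^{(N)}$ for all $0\le k\le n$, since the map sending an $n$-step self-avoiding walk from $0$ to the pair consisting of its first $k$ steps and of its last $n-k$ steps, translated back to the origin, is injective (not all such pairs yield self-avoiding walks, whence the inequality). Comparing coefficients of $z^n$ in $\partial_z[z\chi_N(z)] = \sum_{n\ge0}(n+1)c_n^{(N)}z^n$ and in $\chi_N(z)^2 = \sum_{n\ge0}\big(\sum_{k=0}^n c_k^{(N)}c_{n-k}^{(N)}\big)z^n$, submultiplicativity gives $\sum_{k=0}^n c_k^{(N)}c_{n-k}^{(N)} \ge (n+1)c_n^{(N)}$, hence $\partial_z[z\chi_N(z)] \le \chi_N(z)^2$ for $z\ge0$. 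Dividing by $\chi_N(z)\ge1$ yields $\E_z^{(N)}L \le \chi_N(z)$, and nothing about the hypercube beyond transitivity was used.

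For the lower bound I would invoke the lace expansion underlying Theorem~\ref{thm:chiN-new}. Write $\hat\Pi_z(0)$ for the zero-momentum lace-expansion coefficient; the lace expansion furnishes an identity of the form $\chi_N(z)^{-1} = 1 - Nz - \hat\Pi_z(0)$ (the precise placement of signs being a matter of convention) with $\hat\Pi_0(0)=0$. Differentiating this relation in $z$ and using it once more to eliminate $Nz$ produces the exact formula $\E_z^{(N)}L = \chi_N(z)\big[1 - \hat\Pi_z(0) + z\,\partial_z\hat\Pi_z(0)\big]$. The diagrammatic estimates of Section~\ref{sec:dzPi} — the bubble condition together with its differentiated version — give $|\hat\Pi_z(0)| = O(\beta)$ and $|z\,\partial_z\hat\Pi_z(0)| = O(\beta)$ uniformly for $z\in[0,z_N]$, where $\beta = N^{-1}+\lambda^2$; substituting, $\E_z^{(N)}L = \chi_N(z)[1+O(\beta)]$, which in particular gives $[1-O(\beta)]\chi_N(z) \le \E_z^{(N)}L$. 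The displayed consequence is then immediate: taking $z=z_N$ and using $\chi_N(z_N)=\lambda V^{1/2}$ from \eqref{e:zNdef} gives $\E_{z_N}^{(N)}L = \lambda V^{1/2}[1+O(\beta)]$.

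I expect the main obstacle to be the differentiated bubble bound $|z\,\partial_z\hat\Pi_z(0)| = O(\beta)$ for all $z\le z_N$: applying $z\partial_z$ to a lace-expansion diagram replaces one of its self-avoiding-walk two-point-function factors $G_z(x)=\sum_n c_n^{(N)}(0,x)z^n$ by the weighted two-point function $z\partial_z G_z=\sum_n n\,c_n^{(N)}(0,x)z^n$, and one must check that the resulting weighted-bubble sums remain $O(\beta)$ throughout the dilute phase — i.e. that differentiation costs only a bounded factor, not a divergent one. This is exactly the content developed in Section~\ref{sec:dzPi}; granting it, the rest of the proof is the short computation above.
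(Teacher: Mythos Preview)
Your proof is correct, and the upper bound is exactly the paper's argument (submultiplicativity gives $\partial_z[z\chi_N(z)]\le\chi_N(z)^2$). For the lower bound, however, you take a genuinely different route. The paper quotes the differential inequality $\partial_z[z\chi_N(z)]\ge\chi_N(z)^2/\bubble(z)$ from \cite{BFF84} and then invokes only the bubble condition $\bubble(z_N)=1+\|H_{z_N}\|_2^2=1+O(\beta)$ of Remark~\ref{rk:bubble}. You instead differentiate the lace-expansion identity \eqref{e:chiPi} directly to obtain the exact formula $\E_z^{(N)}L=\chi_N(z)[1-\hat\Pi_z(0)+z\partial_z\hat\Pi_z(0)]$, and then feed in the bounds $|\hat\Pi_z(0)|\prec\beta_z$ of Lemma~\ref{lem:Pibds} and $|z\partial_z\hat\Pi_z(0)|\le z_N\cdot N\beta_z\prec\beta_z$ of Lemma~\ref{lem:Pizder}. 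Your approach is more direct once the lace-expansion machinery is in place, and in fact yields both inequalities at once; the paper's approach has the advantage of being model-independent (the differential inequality holds on any transitive graph) and of using only the bubble diagram rather than the stronger derivative estimate on $\hat\Pi_z$.
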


\subsection{Notation}

We write $f \sim g$ to mean $\lim f/g =1$, $f\prec g$ to mean $f \le c_1 g$ with $c_1>0$ and $f \succ  g$ to mean  $g \prec  f$. We also write $f \asymp g$ when $ g \prec f \prec g$.
Constants in these relations are not permitted to depend on $N$ but may depend on the
choice of $\lambda$ used to define $z_N$, and also on $p\in (0,\frac 12)$ when it
is part of the discussion.

\subsection{Conjectured phase transition}

In the hypotheses of Theorem~\ref{thm:dilute-short} it is assumed that $p \in (0, \frac 12)$. At the upper limit $p=\frac 12$, which Theorem~\ref{thm:dilute-short} does not address, the
error estimate is no longer small.  We believe that this is not an artifact of our proof but
that the asymptotic behaviour does change once $n$ reaches $V^{1/2}$.
The nature of this conjectured change can be anticipated by comparison with
self-avoiding walk on the complete graph, which is exactly solvable---its susceptibility
is essentially an incomplete Gamma function---and which has been analysed recently in
\cite{Slad20} (see also \cite{DGGNZ19}).  In \cite{Slad20}, it is conjectured that
the susceptibility $\chi_N(z)$ for the hypercube
remains of order $V^{1/2}$ throughout the \emph{critical
window} consisting of ($N$-dependent) $z$ values such that
$|1- z/z_N|$ is of order $V^{-1/2}$.   A related conjecture for self-avoiding
walk on a discrete torus of dimension $d>4$ is discussed in \cite{MS22}.

On the complete graph on $V$ vertices, the number $k_n$ of $n$-step self-avoiding walks starting
from a fixed vertex is simply
\begin{equation}
    k_n = \frac{v!}{(v-n)!},
\end{equation}
where $v=V-1$.
In the limit in which $v \to \infty$, and assuming for simplicity
that $n=o(v^{2/3})$ (so in particular $v-n\to \infty$), it follows from Stirling's formula
that
\begin{equation}
    k_n = v^n e^{-n^2/2v} [1+o(1)].
\end{equation}
We expect similar asymptotics to apply to the hypercube in and around the critical
window, with dominant behaviour
$\mu_N^n e^{-\alpha n^2/V}$ for $c_n^{(N)}$, for some $\alpha>0$.  This is consistent with
the susceptibility remaining of order $V^{1/2}$ in the critical window.

By analogy with the theory of self-avoiding walk on the complete graph
developed in detail in \cite{Slad20} (see also \cite{DGGNZ19}),  we are led
to the
conjecture for the hypercube that the interval $z \in (0,\infty)$ is
divided into three regimes.  With $z$ written as $z=z_N(1+\epsilon)$
with $\epsilon \in (-1,\infty)$, these regimes are:
\begin{itemize}
\item the \emph{dilute phase} $\epsilon \ll - V^{-1/2}$:
\[
    \chi_N \asymp \epsilon^{-1},
    \qquad
    c_n^{(N)} \sim A_N\mu_N^n \;\; \text{for $n \ll V^{1/2}$},
    \qquad
    \E_z^{(N)}L \asymp \epsilon^{-1} ;
\]
\item the \emph{critical window} $|\epsilon| \asymp V^{-1/2}$:
\[
     \chi_N \asymp V^{1/2}, \qquad
    c_n^{(N)} \asymp  \mu_N^n \;\; \text{for $n \asymp V^{1/2}$},
    \qquad
    \E_z^{(N)}L \asymp V^{1/2};
\]
\item the \emph{dense phase} $\epsilon \gg V^{-1/2}$:
\[
    \chi_N  \;\;\text{exponential in $V$},
    \qquad
    c_n^{(N)} \ll \mu_N^n \;\;\text{for $n \gg V^{1/2}$},
    \qquad
    \E_z^{(N)}L \asymp V \frac{\epsilon}{1+\epsilon}
    .
\]
\end{itemize}
In particular, if $\epsilon = V^{-p}$ with $p \in (0,\frac 12)$ then the above
states that $\E_z^{(N)}L \asymp V^{1-p}$, whereas if $\epsilon \ge c >0$
then it states that $\E_z^{(N)}L \asymp V $.
For the case $\epsilon = -V^{-p}$ with $p \in (0,\frac 12)$, the above states
that $\chi_N \asymp V^p \asymp \E_z^{(N)}L$.

Theorem~\ref{thm:chiN-new} proves the above behaviour for the susceptibility
in the dilute phase and in the critical window up to and including $z=z_N$.
Theorem~\ref{thm:dilute-short} proves the dilute behaviour of
$c_n^{(N)}$ as long as $n \le V^p$ for some $p< \frac 12$.
Theorem~\ref{thm:EL-new} proves the above behaviour for the expected length
in the dilute phase and in the critical window up to and including $z=z_N$.
It is an open problem to prove (or disprove) any of the remaining statements.

For general graphs, the mathematical analysis of the dense phase of self-avoiding walk is not yet very
well developed.  Various aspects of the dense phase are studied in \cite{BGJ05,DKY14,GI95,Yadi16}.

For percolation on the hypercube, a related and much-studied parallel to the above picture
is developed in  \cite{AKS82,BKL92,BCHSS04c,HH17book,HN20,HN17,HS05,HS06}.
Our analysis takes inspiration in particular from the general study of the percolation
phase transition on finite graphs including the hypercube from \cite{BCHSS05a}, though
we also rely on complex analytic methods that were not used for percolation.

\subsection{The connective constant on infinite graphs}
\label{sec:infinite-graphs}

It is something of a misnomer to refer to $\mu_N$ as the connective ``constant'' since
it depends on $N$ and also on the choice of $\lambda$.  However the terminology is natural
in the sense that on an infinite lattice the term ``connective constant'' is used for
the exponential growth rate for the number $c_n$ of $n$-step self-avoiding walks
started from a given vertex.
On any transitive graph, finite or infinite, $c_n$ obeys $c_{n+m}\le c_nc_m$
and by Fekete's lemma this implies existence of the limit
\begin{equation}
\label{e:cnlim}
    \mu = \lim_{n\to \infty} c_n^{1/n} = \inf_{n \ge 0}c_n^{1/n},
\end{equation}
where $\mu$ of course depends on the graph.
However on a finite graph, such as the hypercube,
$c_n$ is eventually zero so $\mu$ takes the
uninformative value $\mu=0$.
On an infinite lattice such as $\Z^d$ or the hexagonal lattice, $\mu$ is not zero
and it gives the exponential growth rate of $c_n$ in the sense of \eqref{e:cnlim}.
There are numerical estimates and rigorous bounds for the value of $\mu(\Z^d)$ but its
exact value is not known for any $d \ge 2$.
Exceptionally, for the hexagonal lattice it was predicted in \cite{Nien82} and proved
in \cite{D-CS12h} that $\mu({\rm Hex}) = \sqrt{2+\sqrt{2}}$.  Connective constants for more general graphs are
studied in \cite{AJ90,GL19,LL20,MW05,Pana19}.
Expansions for the connective constant have been considered
in other settings, e.g., two terms
were computed in
\cite{Pana19} for hyperbolic graphs.
The lace expansion (when applicable) provides a systematic method
for computation of many terms.

Indeed, for $\Z^d$ it is proved in \cite{HS95} that the connective constant has
an asymptotic expansion to all orders in $(2d)^{-1}$, with integer coefficients,
and in \cite{CLS07} thirteen of these coefficients are computed with the result that
\begin{eqnarray}
\label{e:mud1}
    \mu(\Z^d)  \hspace{-2mm} &=
    2d -1 -\frac{1}{2d} -\frac{3}{(2d)^2} -\frac{16}{(2d)^3} -
    \frac{102}{(2d)^4} -\frac{729}{(2d)^5}
    -\frac{5\,533}{(2d)^6}
    - \frac{42\,229}{(2d)^7}\nonumber\\
    & \qquad - \frac{288\,761}{(2d)^8}
- \frac{1\,026\,328}{(2d)^9}
+ \frac{21\,070\,667}{(2d)^{10}} + \frac{780\,280\,468}{(2d)^{11}}
+ O\big( \frac{1}{(2d)^{12}} \big).
\end{eqnarray}
Equivalently, the \emph{critical value} $z_c(\Z^d)=1/\mu(\Z^d)$ satisfies
\begin{eqnarray}
\label{e:zcd}
 z_c(\Z^d) \hspace{-2mm}  &= \frac{1}{2d} + \frac{1}{(2d)^2} + \frac{2}{(2d)^3}+ \frac{6}{(2d)^4}+
\frac{27}{(2d)^5}+ \frac{157}{(2d)^6}+ \frac{1\,065}{(2d)^7}
+ \frac{7\,865}{(2d)^8}+ \frac{59\,665}{(2d)^9}
\nonumber\\
&+ \frac{422\,421}{(2d)^{10}}+
\frac{1\,991\,163}{(2d)^{11}} -\frac{16\,122\,550}{(2d)^{12}} -\frac{805\,887\,918}{(2d)^{13}}
+ O\big( \frac{1}{(2d)^{14}} \big).
\end{eqnarray}
Also, in the asymptotic formula
$c_n = A\mu^n[1+O(n^{-\epsilon})]$ for $\Z^d$ with $d \ge 5$ proved in \cite{HS92a},
 the amplitude $A$
is proved in \cite{CLS07} to have an asymptotic
expansion to all orders, with integer coefficients, and in particular
\begin{eqnarray}
\label{A1d}
    A(\Z^d)  \hspace{-4mm}
    &=
     1   +\frac{1}{2d}  +\frac{4}{(2d)^2}  + \frac{23}{(2d)^3}
     +\frac{178}{(2d)^4}   +\frac{1\,591}{(2d)^5}  +
    \frac{15\,647}{(2d)^6}
    +   \frac{164\,766}{(2d)^7} + \frac{1\,825\,071}{(2d)^8}
     \nnb
     & \qquad
    +\frac{20\,875\,838}{(2d)^9}
    +\frac{240\,634\,600}{(2d)^{10}} +\frac{2\,684\,759\,873}{(2d)^{11}}
    +\frac{26\,450\,261\,391}{(2d)^{12}}
     + O\left( \frac{1}{(2d)^{13}}\right).
\end{eqnarray}

The possibility that the above series are Borel summable is investigated
but not resolved in  \cite{Grah10}.  See \cite{Soka80} for a sufficient
condition for Borel summability.  We believe that these series
and also the series for the hypercube in Theorem~\ref{thm:expansion}
have radius of convergence zero but are Borel summable;
to prove any of these statements is an open problem.
Numerical results of Pad\'e--Borel resummation \cite{KS01} of the above series
for $\mu(\Z^d)$ and $A(\Z^d)$ are reported in \cite[Table~15]{CLS07}.
For the related question
of the $1/d$ expansion for the critical point for the Berlin--Kac spherical model,
it is resolved affirmatively
in \cite{GF74} that the radius of convergence of the expansion
is zero.
There is a substantial literature concerning such $1/d$ expansions going
back as early as 1964 where the
first six coefficients of \eqref{e:mud1} were determined  \cite{FG64},
and decades later confirmed with rigorous error estimate \cite{HS95}.
Earlier expansions for the amplitude $A(\Z^d)$
including terms up to and including order $(2d)^{-2}$ (with rigorous error
estimate) and to $(2d)^{-5}$ (without rigorous error estimate) were given
respectively in \cite{HS95} and in \cite{Gaun86,NFID92}.

Such expansions have also been studied for other models including lattice animals \cite{MS13}
and percolation \cite{HS95,HS05,HS06}.  In particular, a theorem analogous to
Theorem~\ref{thm:expansion} is proved for the critical value of percolation on the
hypercube and on $\Z^d$, this time with
rational rather than integer coefficients, in \cite{HS05,HS06}.

\subsection{Organisation}

Sections~\ref{sec:chi}--\ref{sec:expansion} provide the proofs of Theorems~\ref{thm:cnbd}--\ref{thm:EL-new},
which are organised as follows.

In Section~\ref{sec:chiub} we state Proposition~\ref{prop:FzN} which gives a lower
bound on the reciprocal of the susceptibility as a function of complex $z$ in the disk $|z|\le z_N$,
where $z_N=z_N(\lambda)$ for a sufficiently small choice of $\lambda>0$.
In conjunction with the elementary Tauberian theorem stated in Lemma~\ref{lem:Tauberian},
this leads to a short proof of the general upper bound on $c_n^{(N)}$ stated in
Theorem~\ref{thm:cnbd}.
In Section~\ref{sec:dilute}, a version of Theorem~\ref{thm:dilute-short} with a more
accurate error estimate is stated as Theorem~\ref{thm:dilute}, and the proof of
Theorem~\ref{thm:dilute} is given subject to Propositions~\ref{prop:alphabeta}--\ref{prop:F}.
These two propositions give more refined information on the reciprocal of the susceptibility
than Proposition~\ref{prop:FzN}
but in a smaller disk $|z|\le z_N(1-V^{-p})$ for arbitrary but fixed $p \in (0,\frac 12)$.
This detailed information allows for the extraction of a leading term from the susceptibility,
and thereby from its coefficients $c_n^{(N)}$,
with an error that can be estimated using the Tauberian theorem.  This proves
Theorems~\ref{thm:cnbd}--\ref{thm:dilute-short} subject to the control of the
reciprocal of the susceptiblity stated in Propositions~\ref{prop:FzN},
\ref{prop:alphabeta}, and \ref{prop:F}, which are all proved using the lace expansion.

The lace expansion was introduced by Brydges and Spencer in 1985
to study weakly self-avoiding walk on $\Z^d$ in dimensions $d >4$  \cite{BS85}.
Since then, it has been developed into a flexible
method for the analysis of critical behaviour in
many high-dimensional settings, including
self-avoiding walk, lattice trees, lattice animals, percolation on finite and
infinite graphs, oriented percolation, the contact process,
and spin systems (Ising and $\varphi^4$ models).
In Section~\ref{sec:lace}, we review the lace expansion in
our present context of self-avoiding
walk on the hypercube.

The convergence of the lace expansion employs some elementary
estimates for simple random walk on the hypercube which are proved in Section~\ref{sec:rw}.
The convergence of the lace expansion is established in Section~\ref{sec:le-conv}
for complex $z$ in the disk $|z|\le z_N$, via the
Fourier approach used previously for percolation in \cite{BCHSS05b} and
adapted to self-avoiding walk in \cite{Slad06}.  The zero mode of the Fourier transform
plays a special and key role, and is what forces
the choice of a small $\lambda$ for the definition of
the critical value $z_N=z_N(\lambda)$.
The fact that we work on the hypercube results in a convergence proof that is strikingly simple.
The centrepiece for high-dimensional percolation is the triangle condition \cite{AN84,HH17book};
its role is played here by the bubble condition which is established in
Section~\ref{sec:bootstrap}.  The importance of the bubble condition for self-avoiding
walk goes back at least as far as \cite{BFF84}.  The bulk of our analysis would apply
generally to other transitive graphs for which the bubble condition holds.

Once the convergence of the lace expansion has been proved, it is short work in
Section~\ref{sec:manypfs} to prove Propositions~\ref{prop:FzN} and
\ref{prop:alphabeta}, as well as the estimates for the susceptibility and
expected length in Theorems~\ref{thm:chiN-new} and \ref{thm:EL-new}.
The proof of Proposition~\ref{prop:F} makes use of the fractional derivative
methodology developed in \cite{HS92a}, which is briefly reviewed in
Section~\ref{sec:fracder}, before proving Proposition~\ref{prop:F} in
Section~\ref{sec:pfpropF}.

Finally, in Section~\ref{sec:expansion} we prove the existence of the $1/N$ expansions
for $z_N$ and $A_N$
stated in Theorem~\ref{thm:expansion} and compute the first five coefficients.
The general approach to the existence proof is related to the approach used for $\Z^d$
in \cite{HS95}, but improvements to that approach which were introduced in \cite{CLS07}
are adapted here to the hypercube to obtain a relatively simple existence proof.  The computation
of the expansion coefficients follows a straightforward iterative procedure and could
be extended to more terms with further effort to enumerate lace graphs on the hypercube.
For small lace graphs,
enumeration on the hypercube is not difficult to adapt from the
enumerations on $\Z^d$ provided in \cite{CLS06arXiv}, and in this way
we avoid any difficult counting in the computation of the five coefficients
given in Theorem~\ref{thm:expansion}.

\section{Analysis of the susceptibility}
\label{sec:chi}

In this section, we prove Theorems~\ref{thm:cnbd} and \ref{thm:dilute-short} subject
to Proposition~\ref{prop:FzN} (for Theorem~\ref{thm:cnbd})
and Propositions~\ref{prop:alphabeta}--\ref{prop:F} (for Theorem~\ref{thm:dilute-short}).
These propositions give estimates on the susceptibility which can be converted into
estimates for $c_n^{(N)}$ via the Tauberian theorem in Lemma~\ref{lem:Tauberian}.

\subsection{Upper bound:  proof of Theorem~\ref{thm:cnbd}}
\label{sec:chiub}

\subsubsection{Use of the Tauberian theorem}

The susceptibility is a polynomial, so its reciprocal
\begin{equation}
    F_N(z) = \frac{1}{\chi_N(z)}
\end{equation}
is a meromorphic function of $z\in\C$.  Since $\chi_N$ is a polynomial with positive
coefficients, $F_N$ has no poles on the nonnegative real axis.
We will prove the following proposition in Section~\ref{sec:pfF} using the lace expansion.

\begin{prop}
\label{prop:FzN}
There is a $\lambda_0 >0$ such that, with $z_N=z_N(\lambda)$ for any
$\lambda \in (0,\lambda_0]$, and with $N$ sufficiently large depending on $\lambda$,
the function $F_N$ obeys
the bounds $|F_N'(z)|\le 2  N$ and $|F_N(z) | \ge \frac 12 |1-z/z_N|$
uniformly in $z\in\C$ with $|z|\le z_N$.  In addition, $z_N \le 2N^{-1}$.
\end{prop}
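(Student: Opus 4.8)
The plan is to derive Proposition~\ref{prop:FzN} from the lace expansion. That expansion is set up in Section~\ref{sec:lace} and shown to converge on the closed disk $\{|z|\le z_N\}$ in Section~\ref{sec:le-conv} (using the elementary random-walk estimates of Section~\ref{sec:rw} and the bubble condition of Section~\ref{sec:bootstrap}); granting that input, the three assertions follow by elementary estimates. Concretely, the lace expansion for the self-avoiding-walk two-point function on $\Q^N$ produces a convolution identity whose zero Fourier mode reads
\[
    F_N(z) \;=\; \frac{1}{\chi_N(z)} \;=\; 1 - N z - \hat\Pi_z(0),
\]
valid wherever the expansion converges; here $N$ is the coordination number and $\hat\Pi_z(0)=\sum_x \Pi_z(x)$ is the lace-expansion coefficient. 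The convergence proof, in which the smallness of $\lambda$ is forced precisely by the zero mode $\hat G_z(0)=\chi_N(z)$ being as large as $\lambda V^{1/2}$ on the boundary of the disk, should give: for $\lambda_0$ small and $N$ large depending on $\lambda$, the function $\hat\Pi_z(0)$ is analytic on $\{|z|\le z_N\}$ and obeys there the diagrammatic bounds $|\hat\Pi_z(0)|\le C\beta$ and $|z\,\partial_z\hat\Pi_z(0)|\le C\beta$, with $\beta=N^{-1}+\lambda^2$. I would take these two bounds as the only input.

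First I would read off the statements about $z_N$ and about $F_N'$. Evaluating the identity at the real point $z_N$ and using that $\hat\Pi_{z_N}(0)$ is real (since $\chi_N$ has real coefficients) gives $N z_N = 1 - \hat\Pi_{z_N}(0) - \lambda^{-1}V^{-1/2}$; because $|\hat\Pi_{z_N}(0)|\le C\beta$ and $\lambda^{-1}V^{-1/2}\le 1$, for $\lambda_0$ small and $N$ large this yields $N z_N = 1 + O(\beta)$, hence $z_N\le 2N^{-1}$ and $z_N^{-1}\le 2N$. Next, $\partial_z\hat\Pi_z(0)$ is analytic on the disk, and on its boundary $|\partial_z\hat\Pi_z(0)| = z_N^{-1}\,|z\,\partial_z\hat\Pi_z(0)|\le C\beta\, z_N^{-1}\le 2CN\beta$, so by the maximum principle the same bound holds on all of $\{|z|\le z_N\}$; for $N$ large this is at most $N$, and since $F_N'(z)=-N-\partial_z\hat\Pi_z(0)$ we get $|F_N'(z)|\le 2N$.

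For the lower bound I would expand $F_N$ about $z_N$ along the segment to $z$. Put $w=1-z/z_N$; for $|z|\le z_N$ one has $\operatorname{Re} w = 1-\operatorname{Re}(z)/z_N\ge 0$. Integrating $F_N'$ along $[z_N,z]$ (which lies in the disk) and using $z-z_N=-z_N w$,
\[
    F_N(z) \;=\; \frac{1}{\lambda V^{1/2}} + \bigl(N z_N + z_N\Delta(z)\bigr)\,w, \qquad \Delta(z):=\int_0^1 \partial_\zeta\hat\Pi_\zeta(0)\Big|_{\zeta=z_N+t(z-z_N)}\,dt,
\]
where $|\Delta(z)|\le 2CN\beta$ by the previous step, so that $|z_N\Delta(z)|\le 2C\beta\,(N z_N)=O(\beta)$. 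Since $\lambda^{-1}V^{-1/2}$ and $N z_N$ are positive reals and $\operatorname{Re} w\ge 0$, we have $|\lambda^{-1}V^{-1/2}+N z_N w|\ge N z_N|w|$, whence
\[
    |F_N(z)| \;\ge\; N z_N|w| - |z_N\Delta(z)|\,|w| \;\ge\; \bigl(N z_N - O(\beta)\bigr)|w| \;\ge\; \bigl(1-O(\beta)-\lambda^{-1}V^{-1/2}\bigr)|w| \;\ge\; \tfrac12|w|,
\]
the last step for $\lambda_0$ small and $N$ large. This is precisely $|F_N(z)|\ge\frac12|1-z/z_N|$.

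The real work is the input invoked in the first paragraph: proving convergence of the lace expansion, with the uniform $O(\beta)$ bounds, on the \emph{full} closed disk $\{|z|\le z_N\}$ — in particular at the boundary, where $\hat G_z(0)=\chi_N(z)\asymp V^{1/2}$. This large zero mode is the feature that distinguishes the hypercube from $\Z^d$ and is what makes the choice of small $\lambda$ unavoidable, since a not-small value of $\chi_N/V^{1/2}$ would let the zero-mode lines in the lace-expansion diagrams grow out of control. Once that bootstrap is in place (Sections~\ref{sec:rw}--\ref{sec:bootstrap}), the deduction above is routine complex analysis.
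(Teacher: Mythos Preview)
Your proposal is correct and follows essentially the same route as the paper. The paper likewise reduces Proposition~\ref{prop:FzN} to the lace-expansion bounds (Lemma~\ref{lem:Pibds} and Lemma~\ref{lem:Pizder}), obtains $z_NN\le 2$ from $f_1(z_N)\le 1+O(\beta)$, bounds $F_N'$ directly from $|\partial_z\hat\Pi_z(0)|\prec N\beta$, and proves the lower bound on $|F_N(z)|$ via the Fundamental Theorem of Calculus along $[z,z_N]$ together with the geometric fact that a point in the unit disk is closer to $1$ than to $1+\epsilon$ --- which is exactly your inequality $|\lambda^{-1}V^{-1/2}+Nz_Nw|\ge Nz_N|w|$ rephrased. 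Your detour through the maximum principle to pass from a bound on $z\partial_z\hat\Pi_z(0)$ to one on $\partial_z\hat\Pi_z(0)$ is unnecessary (the paper establishes the latter directly in Lemma~\ref{lem:Pizder}) but harmless.
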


To prove Theorem~\ref{thm:cnbd},
we use Proposition~\ref{prop:FzN} in combination with the
Tauberian theorem from \cite[Theorem~4]{FO90} stated in the next lemma.

\begin{lemma}
\label{lem:Tauberian}
Let $b>1$.
Suppose that the power series
$f(z)=\sum_{n=0}^\infty a_n z^n$ obeys $|f(z)| \le K_1|1-z/\rho|^{-b}$
for all $|z|< \rho$.  Then   $|a_n| \le K_2K_1 n^{b-1}\rho^{-n}$
with $K_2$ depending only on $b$.
\end{lemma}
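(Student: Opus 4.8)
The plan is to use Cauchy's integral formula on a circle of radius $r<\rho$ and then optimise over $r$. First I would write, for any $0<r<\rho$,
\begin{equation}
    a_n = \frac{1}{2\pi i} \oint_{|z|=r} \frac{f(z)}{z^{n+1}} \D z,
\end{equation}
so that
\begin{equation}
    |a_n| \le \frac{1}{r^n} \max_{|z|=r} |f(z)| \le \frac{K_1}{r^n} \max_{|z|=r} |1-z/\rho|^{-b}.
\end{equation}
On the circle $|z|=r$ the quantity $|1-z/\rho|$ is minimised at $z=r$, where it equals $1-r/\rho$, so $\max_{|z|=r}|1-z/\rho|^{-b} = (1-r/\rho)^{-b}$. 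This gives the bound $|a_n| \le K_1 r^{-n}(1-r/\rho)^{-b}$.

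Next I would choose $r$ to make this bound near-optimal. The natural choice is $r = \rho(1-\tfrac{1}{n})$ (or $r=\rho\,\tfrac{n}{n+1}$), for $n\ge 1$; with this choice $r^{-n} = \rho^{-n}(1-\tfrac1n)^{-n}$, and $(1-\tfrac1n)^{-n}$ is bounded above by $e$ for all $n\ge 1$ (it increases to $e$), while $(1-r/\rho)^{-b} = n^b$. Hence
\begin{equation}
    |a_n| \le K_1 \rho^{-n} \cdot e \cdot n^{b} \cdot \frac{1}{n} \cdot n = \ldots
\end{equation}
more carefully: $(1-r/\rho)^{-b}=(1/n)^{-b}=n^b$, so $|a_n|\le e\,K_1\, n^{b}\,\rho^{-n}$. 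To get the stated exponent $n^{b-1}$ rather than $n^b$, I would instead split the contour estimate: write $f(z)=\sum_k a_k z^k$ and use the stronger consequence that the bound $|f(z)|\le K_1|1-z/\rho|^{-b}$ controls not just $f$ on one circle but, via a standard Cauchy-estimate-with-derivatives or via the $L^1$ mean of $f$ on the circle, the average of $|f|$. Concretely, integrating $|f(re^{i\theta})|$ over $\theta$ and using $\int_0^{2\pi}|1-(r/\rho)e^{i\theta}|^{-b}\,d\theta \asymp (1-r/\rho)^{1-b}$ for $b>1$ (this is the key elementary integral: near $\theta=0$ the integrand behaves like $((1-r/\rho)^2+\theta^2)^{-b/2}$, whose integral scales as $(1-r/\rho)^{1-b}$), we get $|a_n|\le r^{-n}\cdot\frac{1}{2\pi}\int_0^{2\pi}|f(re^{i\theta})|\,d\theta \le C_b K_1 r^{-n}(1-r/\rho)^{1-b}$. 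Then taking $r=\rho(1-\tfrac1n)$ as before yields $|a_n|\le C_b' K_1 n^{b-1}\rho^{-n}$, which is the claim with $K_2=C_b'$ depending only on $b$. For $n=0$ the bound is immediate from $|a_0|=|f(0)|\le K_1$.

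The main obstacle is getting the sharp exponent $b-1$ rather than the crude $b$: the naive sup-bound on a single circle loses a factor of $n$, so one must exploit that $|1-z/\rho|^{-b}$ is only large on a small arc of angular width $\asymp (1-r/\rho)$, i.e. one needs the mean-value (Hardy–Littlewood-type) estimate $\frac{1}{2\pi}\int_0^{2\pi}|1-(r/\rho)e^{i\theta}|^{-b}\,d\theta \le C_b(1-r/\rho)^{1-b}$ for $b>1$. Verifying this integral asymptotic is the one genuinely non-trivial computation; everything else is Cauchy's formula plus the choice $r=\rho(1-1/n)$. Since this is a restatement of \cite[Theorem~4]{FO90}, I would in fact just cite that reference and include the short contour argument above as a reminder of the mechanism.
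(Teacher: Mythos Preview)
Your approach is correct and is exactly the mechanism behind \cite[Theorem~4]{FO90}, which the paper cites without reproving; the key step---replacing the crude sup bound by the $L^1$ mean $\tfrac{1}{2\pi}\int_0^{2\pi}|1-(r/\rho)e^{i\theta}|^{-b}\,d\theta \le C_b(1-r/\rho)^{1-b}$ for $b>1$, then taking $r=\rho(1-1/n)$---is what recovers the sharp exponent $n^{b-1}$. The paper gives no independent argument, so your sketch is in fact more detailed than what appears there.
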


\begin{proof}[Proof of Theorem~\ref{thm:cnbd}]
By Proposition~\ref{prop:FzN},
\begin{equation}
    |\chi_N'(z)| = \left| \frac{F_N'(z)}{F_N(z)^2} \right| \le  \frac{8N}{|1-z/z_N|^2}
\end{equation}
holds uniformly in $|z| \le z_N$.
Since the coefficient of $z^n$ in $\chi_N'(z)$ is $(n+1)c_{n+1}^{(N)}$, it follows from
Lemma~\ref{lem:Tauberian} (with $f=\chi_N'$, $\rho=z_N$ and $b=2$)
that there is a constant $K$ such that
\begin{equation}
    (n+1)c_{n+1}^{(N)} \le K N n^{2-1} z_N^{-n}
\end{equation}
for all $n$.  Since $Nz_N \le 2$ by Proposition~\ref{prop:FzN},
\begin{equation}
\label{e:cnpf}
    c_n^{(N)} \le KN  z_N^{-(n-1)} = KNz_N   \mu_N^n \le2K \mu_N^n
\end{equation}
which is the desired upper bound.

In the above we have assumed that $N$ is sufficiently large, say $N \ge N_0(\lambda_0)$.
However, for $N<N_0$ there are only finitely many choices of $(n,N)$ and we can
therefore obtain \eqref{e:cnpf} for all $(n,N)$ (with $\lambda V^{1/2}\ge 1$)
by increasing $K$.
\end{proof}

\subsubsection{Remarks on Tauberian theorems}
\label{sec:Trk}

\begin{enumerate}
\item
Extensions of Lemma~\ref{lem:Tauberian} in \cite[Lemma~3.2]{DS98}
include the case $b=1$ which instead has upper bound $\rho^{-n}\log n$.
This is the reason why $\chi_N'$
appears rather than $\chi_N$ in the above application of
Lemma~\ref{lem:Tauberian} to obtain Theorem~\ref{thm:cnbd}:
applied directly to $\chi_N$, the extension to
Lemma~\ref{lem:Tauberian} would produce an unwanted logarithm in the upper bound.
Lemma~\ref{lem:Tauberian} is false for $b<1$, a counterexample is given in the Remark
following \cite[Lemma~6.3.3]{MS93}.

\item
We have chosen to prove Theorem~\ref{thm:cnbd} using Lemma~\ref{lem:Tauberian}
because Lemma~\ref{lem:Tauberian} is also required for the proof of Theorem~\ref{thm:dilute-short}.
However, for Theorem~\ref{thm:cnbd} we could instead have applied Hutchcroft's
Tauberian theorem \cite[Lemma~3.4]{Hutc19} for submultiplicative sequences (since
we do have $c_{n+m}^{(N)} \le c_{n}^{(N)}c_{m}^{(N)}$), which implies that
for all $n \ge 1$ and all $z \ge w >0$ it is the case that
\begin{align}
\label{e:TomTauberian}
    c_{n}^{(N)} &
    \le \frac{z^n}{w^{2n}} \left( \frac{\chi_N(w)}{n+1} \right)^2
    .
\end{align}
With the choices $z=z_N$ and $w=\frac{n}{n+1}z_N$, and with the upper bound
$\chi_N(w) \le 2 |1-w/z_N|^{-1}$ of Proposition~\ref{prop:FzN}, the upper bound
of  Theorem~\ref{thm:cnbd} follows from \eqref{e:TomTauberian} and without the need to consider
the derivative $\chi_N'$ nor to consider complex $z$.
However the application of Lemma~\ref{lem:Tauberian} cannot be replaced by
\cite[Lemma~3.4]{Hutc19} in
Section~\ref{sec:dilute} because the generating function used in that application is
not for a submultiplicative sequence, and also \eqref{e:TomTauberian} fails to provide sharp
powers of $n$
for generating functions that diverge faster than linearly.
\end{enumerate}

\subsection{Asymptotic formula: proof of Theorem~\ref{thm:dilute-short}}
\label{sec:dilute}

\subsubsection{Extended version of Theorem~\ref{thm:dilute-short}}

The following theorem, whose statement is not limited to $n \le V^p$
as in Theorem~\ref{thm:dilute-short}, implies
Theorem~\ref{thm:dilute-short}.

\begin{theorem}
\label{thm:dilute}
There exists $\lambda_0>0$ such that with $\mu_N=\mu_N(\lambda)$ defined by \eqref{e:mudef}
for any $\lambda \in (0,\lambda_0]$,
with any choice of $p \in (0,\frac 12)$ and $a\in (0,1)$, and for all $n,N \in \N$
(with $\lambda V^{1/2}\ge 1$),
\begin{equation}
\label{e:cnmr}
    c_{n}^{(N)} = A_N \mu_N^n
    \left[ 1   +O\big(  n^{-a} (N^{-1} + V^{(2+a)p-1} )\big)\right] \, \left[1 + O( V^{-p})  \right]^n
    .
\end{equation}
The sequence $A_N$ is independent of $n$ (but depends on $\lambda$ and $p$)
and obeys $A_N=1+O(N^{-1})$.
The constants in error terms depend on $p$, $a$ and $\lambda$.
\end{theorem}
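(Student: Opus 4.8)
We prove Theorem~\ref{thm:dilute}, which contains Theorem~\ref{thm:dilute-short} as the special case $n\le V^p$. The method is the classical one of extracting the dominant pole and estimating the remainder by a Tauberian theorem, as in the proof of Theorem~\ref{thm:cnbd} above, but now retaining the leading term rather than only an upper bound. Write $F_N=1/\chi_N$ as in Section~\ref{sec:chiub}, fix $p\in(0,\tfrac12)$ and $a\in(0,1)$, and set $r_N=z_N(1-V^{-p})$. On the disk $\{|z|\le r_N\}$ one has $|1-z/z_N|\ge V^{-p}$ and, by Proposition~\ref{prop:FzN}, $|F_N(z)|\ge\tfrac12|1-z/z_N|$ and $|F_N'|\le 2N$; in particular $\chi_N$ is analytic and $O(V^p)$ on this disk. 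The crude Cauchy bound $|c_n^{(N)}|=|[z^n]\chi_N|\le O(V^p)\,r_N^{-n}=O(V^p)\,\mu_N^n[1+O(V^{-p})]^n$ already reproves Theorem~\ref{thm:cnbd}, but with an amplitude too large by a factor $V^p$; to remove this factor and identify the true amplitude we must use the fine structure of $F_N$ near $z_N$.

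The main input, supplied later via the lace expansion (Propositions~\ref{prop:alphabeta}--\ref{prop:F}), is a quantitative second-order Taylor expansion of $F_N$ about $z_N$,
\begin{equation}
    F_N(z)=\alpha_N+\beta_N(1-z/z_N)+E_N(z)\qquad\text{on }\{|z|\le r_N\},
\end{equation}
where necessarily $\alpha_N=F_N(z_N)=\lambda^{-1}V^{-1/2}$ by \eqref{e:zNdef}, $\beta_N=-z_NF_N'(z_N)$, and $E_N$ is analytic on $\{|z|<z_N\}$ with a double zero at $z_N$. We expect Proposition~\ref{prop:alphabeta} to provide the sharp normalisation $\beta_N=1+O(N^{-1})$ (obtained by matching the first lace-expansion coefficients, the remainder being controlled by $z_N\le 2N^{-1}$ and $|F_N'|\le 2N$), together with a pointwise bound $|E_N(z)|\le C\big(N^{-1}|1-z/z_N|+V^{-1}|1-z/z_N|^{-1}\big)$ whose first term is the ``infinite-volume'' contribution and whose second is the finite-volume contribution, governed by $\chi_N/V\asymp|1-z/z_N|^{-1}/V$; and Proposition~\ref{prop:F} to provide the matching estimate for a fractional derivative of order $1+a$ (Section~\ref{sec:fracder}), namely $\|\partial^{1+a}E_N\|\le C\big(N^{-1}+V^{(2+a)p-1}\big)$ uniformly on $|z|=r_N$, the exponent $(2+a)p-1$ appearing because $\partial^{1+a}$ of $V^{-1}|1-z/z_N|^{-1}$ is of size $V^{-1}V^{(2+a)p}$ at distance $V^{-p}$ from $z_N$. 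These sharpen the cruder bound $\beta_N=1+O(\beta)$, $\beta=N^{-1}+\lambda^2$, behind Theorem~\ref{thm:chiN-new}: the stray $\lambda^2$ is really the zero-Fourier-mode term $\asymp\chi_N/V$, which on $\{|z|\le r_N\}$ is genuinely $O(V^{2p-1})=o(1)$ rather than $O(1)$.

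Granting this, we divide to obtain, on $\{|z|\le r_N\}$,
\begin{equation}
    \chi_N(z)=\frac{1}{F_N(z)}=\frac{A_N}{1-z/z_N}+R_N(z),\qquad A_N=\frac{1}{\beta_N}=1+O(N^{-1}),
\end{equation}
with $R_N(z)=-(\alpha_N+E_N(z))/(\beta_N(1-z/z_N)F_N(z))$, analytic on $\{|z|<z_N\}$. Since $z_N$ lies outside $\{|z|\le r_N\}$, the pole term contributes exactly $[z^n]\tfrac{A_N}{1-z/z_N}=A_N\mu_N^n$. We split $R_N=R_N^{(\alpha)}+R_N^{(E)}$ into the pieces carrying $\alpha_N$ and $E_N$. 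Using $|F_N|\ge\tfrac12|1-z/z_N|$ and $\beta_N\asymp1$ one checks $|R_N^{(\alpha)}(z)|\lesssim\alpha_N|1-z/z_N|^{-2}\lesssim\alpha_N|1-z/r_N|^{-2}$ on $\{|z|<r_N\}$, so Lemma~\ref{lem:Tauberian} with $b=2$ and $\rho=r_N$ gives $|[z^n]R_N^{(\alpha)}|\lesssim\alpha_N\,n\,r_N^{-n}=O(nV^{-1/2})\,\mu_N^n[1+O(V^{-p})]^n$, a relative error $O(nV^{-1/2})$ that is absorbed into the factor $[1+O(V^{-p})]^n$ since $V^{-1/2}\le V^{-p}$. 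For $R_N^{(E)}$ the bare pointwise bound would only yield a relative error $O(N^{-1}\log n+V^{2p-1})$ (with an unwanted logarithm from the $b=1$ case of Lemma~\ref{lem:Tauberian}); instead we use the fractional-derivative bound of Proposition~\ref{prop:F} together with the Tauberian theorem for fractional derivatives of Section~\ref{sec:fracder}, which turns a bound $M$ on $\|\partial^{1+a}g\|$ over $|z|=r_N$ into $|[z^n]g|\lesssim M\,n^{-a}\,r_N^{-n}$, and hence $|[z^n]R_N^{(E)}|\lesssim(N^{-1}+V^{(2+a)p-1})\,n^{-a}\,\mu_N^n[1+O(V^{-p})]^n$. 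Summing the three contributions to $A_N\mu_N^n$ yields \eqref{e:cnmr}; the bound holds for all $n$, with the understanding that once $n\gtrsim V^{1/2}$ its error is no longer small (consistently with $c_n^{(N)}=0$ for $n\ge V$). Theorem~\ref{thm:dilute-short} then follows by applying Theorem~\ref{thm:dilute} with a slightly larger exponent $p'\in(p,\tfrac12)$ and with $a$ small enough that $(2+a)p'<1$: for $n\le V^p$ both $n^{-a}(N^{-1}+V^{(2+a)p'-1})$ and $[1+O(V^{-p'})]^n-1$ are then $O(n^{-\eps_p})$ for a suitable $\eps_p>0$.

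The main difficulty is expected to lie not in the bookkeeping above but in the two lace-expansion inputs it rests on, proved in later sections. The first is the \emph{sharp} normalisation $\beta_N=1+O(N^{-1})$, which requires isolating the zero-Fourier-mode contribution to the lace-expansion coefficient and recognising it as $\asymp\chi_N/V$, hence of size $V^{2p-1}$ rather than $O(1)$ on $\{|z|\le r_N\}$ --- this is precisely where the small parameter $\lambda$ is forced upon us. The second, and harder, is the fractional-derivative estimate of Proposition~\ref{prop:F}: controlling $\partial^{1+a}$ of the lace-expansion coefficient, and hence of $F_N$, uniformly on the circle $|z|=r_N$ at distance $V^{-p}$ from the would-be singularity, where the hypercube random-walk bounds of Section~\ref{sec:rw} and the convergence of the lace expansion on $\{|z|\le z_N\}$ (Section~\ref{sec:le-conv}) must be combined carefully to produce the sharp exponent $(2+a)p-1$. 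Granting these two propositions, Theorem~\ref{thm:dilute} reduces to the Tauberian manipulations sketched above.
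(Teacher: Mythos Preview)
Your high-level plan---extract a linear approximation from $F_N$, take the simple pole of its reciprocal as the leading term, and bound the remainder via Lemma~\ref{lem:Tauberian}---is exactly the paper's strategy.  But there is one substantive discrepancy: you Taylor-expand $F_N$ about $z_N$, whereas the paper expands about $\zeta_p=r_N$.  This is not a cosmetic choice, and your version does not close as written.

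The fractional-derivative input (Lemma~\ref{lem:Taylepsilon}) bounds the Taylor remainder \emph{at the point} $\rho$ where the norm $A_2^{(1+a)}(\rho)=\sum_{n\ge2}n^{1+a}|a_n|\rho^n$ is evaluated.  The lace-expansion bounds (Corollary~\ref{cor:Pizder}) control this norm at $\rho=\zeta_p$, giving $N^{-1}+V^{(2+a)p-1}$, because the zero-mode contribution is governed by $\chi_N(\zeta_p)\asymp V^{p}$.  If you instead take $\rho=z_N$, the zero mode is governed by $\chi_N(z_N)=\lambda V^{1/2}$ and the same computation yields $A_2^{(1+a)}(z_N)\asymp N^{-1}+\lambda^{2+a}V^{a/2}$, which is exponentially large in $N$ for every $a>0$.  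So your claimed bound $\|\partial^{1+a}E_N\|\le C(N^{-1}+V^{(2+a)p-1})$ is not what the expansion about $z_N$ delivers; what Lemma~\ref{lem:Taylepsilon} at $\rho=\zeta_p$ actually bounds is $R_N(z)=F_N(z)-F_N(\zeta_p)-F_N'(\zeta_p)(z-\zeta_p)$, which differs from your $E_N$ by a nontrivial affine function.  Propositions~\ref{prop:alphabeta} and \ref{prop:F} are stated for precisely this $R_N$ (expansion at $\zeta_p$), not for your $E_N$.

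Two smaller points.  First, there is no separate ``Tauberian theorem for fractional derivatives'' in Section~\ref{sec:fracder}; the paper instead bounds the ordinary derivative $H_N'(z)$ pointwise by a multiple of $|1-z/\zeta_p|^{-(2-a)}$ (Corollary~\ref{cor:susceptibility}, obtained by combining the bounds on $R_N$ and $R_N'$ with $|F_N|,|\Phi_N|\succ|1-z/\zeta_p|$) and then applies the standard Lemma~\ref{lem:Tauberian} with $b=2-a$.  Second, once one expands at $\zeta_p$, your extra piece $R_N^{(\alpha)}$ disappears: the leading term $1/\Phi_N(z)$ has its pole at $\alpha_N/\beta_N$, and the identity $\beta_N/\alpha_N=\mu_N(1+O(V^{-p}))$ (Corollary~\ref{cor:varphin}) is the source of the factor $[1+O(V^{-p})]^n$ in \eqref{e:cnmr}.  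The amplitude is then $A_N=1/\alpha_N$ with $\alpha_N=F_N(\zeta_p)-\zeta_pF_N'(\zeta_p)$, which depends on $p$ as stated in the theorem.
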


Theorem~\ref{thm:dilute} has most significance for the largest values of $n$
which give a small error, so $n \le V^p$
for $p$ close to $\frac 12$.  To understand this,
consider first the factor $[1 + O( V^{-p})  ]^n$, which is bounded for $n \le V^p$ but
is not close to $1$ when $n=V^p$.  However when $n \le V^p$
we can also apply Theorem~\ref{thm:dilute} for any choice of $p'\in (p,\frac 12)$ and in this case
$V^{-p'}=(V^{-p})^{p'/p} \le n^{-p'/p}$ and hence
\begin{equation}
    [1 + O( V^{-p'})  ]^n
    =
    1+O(nV^{-p'}) \le 1 + O(n n^{-p'/p}) = 1 + O(n^{-(p'-p)/p}).
\end{equation}
Also, given any
$p'\in (\frac 13,\frac 12)$, we can choose $a=\frac{1-2p'}{p'}\in (0,1)$ in which
case $V^{(2+a)p'-1}=1$.
Thus, \eqref{e:cnmr} can be simplified in this case of $n \le V^p$ as
(with $p'$ and $a$ as above)
\begin{equation}
    c_{n}^{(N)} = A_N \mu_N^n
    \left[ 1   +O(  n^{-a}) + O(n^{-(p'-p)/p})\right]
    \qquad (n \le V^p)
    .
\end{equation}
Therefore, as long as $n \le V^p$ for some $p<\frac 12$, the leading asymptotic behaviour
of $c_{n}^{(N)}$ is $A_N \mu_N^n$ and hence $\mu_N$ is the
exponential growth rate in this regime.
In this way, Theorem~\ref{thm:dilute} implies Theorem~\ref{thm:dilute-short}.
We will therefore prove Theorem~\ref{thm:dilute}.
It suffices to consider $N$ large in the proof, since \eqref{e:cnmr} holds for any finite
set of $(n,N)$ by adjusting the constants.

\subsubsection{Proof of Theorem~\ref{thm:dilute}}

The proof of Theorem~\ref{thm:dilute}
also uses Lemma~\ref{lem:Tauberian}, but for this it is necessary to extract
leading behaviour and then apply the Tauberian theorem to bound the remainder term.
This requires an extension of Proposition~\ref{prop:FzN} in which the linear part
of $F_N$ is extracted with a higher-order remainder.
In this section, we reduce the proof of Theorem~\ref{thm:dilute} to
Propositions~\ref{prop:alphabeta}--\ref{prop:F}, which
are proved in Section~\ref{sec:pfF}
using the lace expansion.
We always assume that $N$ is large enough that $\lambda V^{1/2} \ge 1$ so that $z_N(\lambda)$
is well defined.

To extract the linear term, our method gives useful results only if we restrict
$z$ to a smaller disk than the disk $|z|\le z_N$ of Proposition~\ref{prop:FzN}.
Thus, for $p>0$, we define $\zeta_p=\zeta_p(N,\lambda) >0$  by
\begin{equation}
\label{e:zetapdef}
    \zeta_p = z_N(\lambda) (1-V^{-p}),
\end{equation}
and we will work in the disk $|z|\le \zeta_p$.  It will be necessary
to restrict to $p\in (0,\frac 12)$.
The linear approximation to $F_N(z)$ near $\zeta_p$ is the linear function
\begin{equation}
    \Phi_N(z) = F_N(\zeta_p) + F_N'(\zeta_p)(z-\zeta_p),
\end{equation}
with remainder
\begin{equation}
    R_N(z) = F_N(z) - \Phi_N(z).
\end{equation}
Thus we have
\begin{equation}
    \chi_N(z) = \frac{1}{F_N(z)} = \frac{1}{\Phi_N(z)} +H_N(z),
    \qquad H_N(z) = - \frac{R_N(z)}{\Phi_N(z)F_N(z)} .
\end{equation}
We write the coefficients of the power series representations of $1/\Phi_N(z)$ and
$H(z)$ as
\begin{equation}
    \frac{1}{\Phi_N(z)} = \sum_{n=0}^\infty \varphi_n z^n,
    \qquad
    H_N(z)  = \sum_{n=0}^\infty h_n z^n .
\end{equation}
Both $\varphi_n$ and $h_n$ depend on $N$.
By definition,
\begin{equation}
    c_n^{(N)} = \varphi_n + h_n.
\end{equation}
The next proposition provides what is needed for good estimates on the linear approximation
$\Phi_N$ to $F_N$.

\begin{prop}
\label{prop:alphabeta}
There is a $\lambda_0>0$ such that for any $\lambda\in (0, \lambda_0]$,
 for any $p \in (0,\frac 12)$,
with
$\zeta_p=z_N(\lambda)(1-V^{-p})$, and with $\lambda$-dependent error bounds,
\begin{equation}
    \zeta_p = N^{-1}[1+O(N^{-1})], \qquad
    F_N(\zeta_p) \asymp V^{-p} , \qquad F_N'(\zeta_p) = -N +O(1).
\end{equation}
\end{prop}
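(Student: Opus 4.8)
The plan is to derive all three estimates in Proposition~\ref{prop:alphabeta} from Proposition~\ref{prop:FzN} together with the defining relation $\chi_N(z_N)=\lambda V^{1/2}$ and a Taylor expansion of $F_N$ around $z_N$. The key point is that Proposition~\ref{prop:FzN} already gives us $|F_N'(z)| \le 2N$ on the disk $|z| \le z_N$, and since $F_N$ is meromorphic with no poles there (the susceptibility has positive coefficients), Cauchy's estimate on a slightly smaller disk gives control of $F_N''$ as well; this is what lets us pass from the value and first derivative of $F_N$ at $z_N$ to the value and first derivative at the nearby point $\zeta_p = z_N(1-V^{-p})$.

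First I would establish $\zeta_p = N^{-1}[1+O(N^{-1})]$. This should follow purely from the expansion of $z_N$: since $\zeta_p = z_N(1-V^{-p})$ and $V^{-p} = 2^{-pN}$ is superpolynomially small, it suffices to show $z_N = N^{-1}[1+O(N^{-1})]$. This in turn comes from $\chi_N(z_N) = \lambda V^{1/2} = \sum_n c_n^{(N)} z_N^n$ combined with the low-order values $c_0 = 1$, $c_1^{(N)} = N$, $c_2^{(N)} = N(N-1)$, and the upper bound $c_n^{(N)} \le K\mu_N^n$ of Theorem~\ref{thm:cnbd} to control the tail; expanding $1 + Nz_N + O(N^2 z_N^2) = \lambda V^{1/2}$ and noting that $z_N \le 2N^{-1}$ from Proposition~\ref{prop:FzN} forces $z_N = (\lambda V^{1/2}-1)/N \cdot [1+o(1)]$ — wait, this gives a large value unless $\lambda V^{1/2}$ is close to $1$. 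The cleaner route is instead to invert: $F_N(z_N) = 1/(\lambda V^{1/2})$ is exponentially small, so $\chi_N$ is large at $z_N$, meaning $z_N$ is close to the radius of convergence-like threshold; since $\chi_N(z) = 1 + Nz + \cdots$ and we need this to equal $\lambda V^{1/2}$, actually the correct statement is that the relevant $z_N$ where $\chi_N$ first becomes large is governed by the dominant singularity structure, and the lace expansion analysis (which proves Proposition~\ref{prop:FzN}) already yields $z_N \le 2N^{-1}$; the matching lower bound $z_N \ge N^{-1}(1-O(N^{-1}))$ and the precise coefficient come from the lace expansion identity for $F_N$, i.e. $F_N(z) = 1 - Nz\,(1 + \text{corrections})$-type formula, whose linear coefficient is $-N$ up to $O(1)$. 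I would extract this directly from the lace-expansion representation rather than from elementary bounds.

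Next, for $F_N(\zeta_p) \asymp V^{-p}$: the lower bound is immediate from Proposition~\ref{prop:FzN}, which gives $|F_N(\zeta_p)| \ge \tfrac12|1 - \zeta_p/z_N| = \tfrac12 V^{-p}$. For the matching upper bound $F_N(\zeta_p) \le C V^{-p}$, I would write $F_N(\zeta_p) = F_N(z_N) + \int_{z_N}^{\zeta_p} F_N'(t)\,dt$; since $F_N(z_N) = 1/(\lambda V^{1/2})$, which is $O(V^{-p})$ because $p < \tfrac12$, and since $|F_N'| \le 2N$ on the segment while $|z_N - \zeta_p| = z_N V^{-p} \le 2N^{-1}V^{-p}$, the integral is $O(V^{-p})$, giving $F_N(\zeta_p) = O(V^{-p})$ as required. (Here I use that $F_N$ is real and positive on $[0,z_N]$ since $\chi_N$ has positive coefficients and $\chi_N(z_N) < \infty$, so $F_N(\zeta_p) > 0$.)

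Finally, $F_N'(\zeta_p) = -N + O(1)$: I would Taylor-expand $F_N'$ around the point where we have the best information. The lace expansion analysis underlying Proposition~\ref{prop:FzN} should yield not just $|F_N'| \le 2N$ but the leading behaviour $F_N'(z) = -N(1 + O(\beta))$ for $z$ in the disk, with $\beta = N^{-1}+\lambda^2$; evaluating at $\zeta_p$ and absorbing $N\beta = O(1 + N\lambda^2)$ into the $O(1)$ error (using that $\lambda \le \lambda_0$ is fixed) gives the claim. If instead only $|F_N'| \le 2N$ is available a priori, I would use a Cauchy estimate for $F_N''$ on the disk $|z| \le z_N$: on the slightly smaller disk $|z| \le \zeta_p$ one gets $|F_N''(z)| \le 2N/(z_N - \zeta_p) = 2N \cdot (z_N V^{-p})^{-1} \le N^2 V^{p}$, which unfortunately blows up — so the bare bound is insufficient and one genuinely needs the refined lace-expansion output. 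I therefore expect the main obstacle to be extracting the sharp constant $-N$ (rather than merely $O(N)$) for $F_N'(\zeta_p)$, which requires reading off the linear coefficient from the lace expansion representation of $F_N$ and showing the remaining lace-graph contributions to $F_N'$ are $O(1)$ uniformly on the disk — precisely the content that Propositions~\ref{prop:alphabeta}--\ref{prop:F} are designed to package, and which is deferred to Section~\ref{sec:pfF}.
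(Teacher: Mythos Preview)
Your argument for $F_N(\zeta_p)\asymp V^{-p}$ is correct and matches the paper: the lower bound comes from Proposition~\ref{prop:FzN}, and the upper bound from $F_N(\zeta_p)=F_N(z_N)+\int_{z_N}^{\zeta_p}F_N'(t)\,\D t$ with $F_N(z_N)=\lambda^{-1}V^{-1/2}\le V^{-p}$ and $|F_N'|\le 2N$ on the segment of length $z_NV^{-p}\le 2N^{-1}V^{-p}$.

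There is, however, a genuine gap in your treatment of $F_N'(\zeta_p)=-N+O(1)$. You invoke the lace-expansion bound $F_N'(z)=-N(1+O(\beta))$ with $\beta=N^{-1}+\lambda^2$ and then claim $N\beta=O(1+N\lambda^2)$ can be absorbed into $O(1)$. It cannot: $\lambda\in(0,\lambda_0]$ is fixed independently of $N$, so $N\lambda^2\to\infty$. The generic bound $\beta_z\le N^{-1}+\lambda^2$ is what holds uniformly for $z\in[0,z_N]$ (the $\lambda^2$ coming from the zero-mode contribution $\chi_N(z)^2/V$ at $z=z_N$), and it is too weak here. The same issue infects your first estimate: from $\zeta_p=N^{-1}[1-\hat\Pi_{\zeta_p}(0)-F_N(\zeta_p)]$ you would only get $\zeta_p=N^{-1}[1+O(\lambda^2)]$, not $N^{-1}[1+O(N^{-1})]$.

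The fix is to observe that the order of the three estimates matters. Once you have established $\chi_N(\zeta_p)\asymp V^p$ (your second estimate), the definition $\beta_{\zeta_p}=N^{-1}+\chi_N(\zeta_p)^2/V$ gives $\beta_{\zeta_p}\prec N^{-1}+V^{2p-1}\prec N^{-1}$, since $p<\tfrac12$ makes $V^{2p-1}$ exponentially small in $N$. With this refined $\beta_{\zeta_p}$, the lace-expansion bounds $|\hat\Pi_{\zeta_p}(0)|\prec\beta_{\zeta_p}$ and $|\partial_z\hat\Pi_z(0)|_{z=\zeta_p}|\prec N\beta_{\zeta_p}$ then yield the first and third estimates with honest $O(N^{-1})$ and $O(1)$ errors respectively. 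This is exactly how the paper proceeds: prove $F_N(\zeta_p)\asymp V^{-p}$ first, deduce $\beta_{\zeta_p}\prec N^{-1}$, and only then extract the sharp constants in the other two estimates.
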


The next proposition gives a bound on the remainder term $R_N(z)$
and its derivative in the disk $|z|\le \zeta_p$ in the complex plane.

\begin{prop}
\label{prop:F}
There is a $\lambda_0>0$ such that for any $\lambda\in (0, \lambda_0]$,
any $p \in (0,\frac 12)$, any $a\in (0,1)$,
 any  $z\in\C$ with $|z| \le \zeta_p=z_N(\lambda)(1-V^{-p})$, and with $\lambda$-dependent error bounds,
\begin{align}
\label{e:frac-der-OK}
    |R_N(z)| &\prec  N^{-1} (1 + NV^{(2+a)p-1}) |1-z/\zeta_p|^{1+a}   ,
    \\
\label{e:varphiderbd}
    |R_N'(z)|  &\prec  (1 +NV^{(2+a)p-1}) |1-z/\zeta_p|^a
    .
\end{align}
\end{prop}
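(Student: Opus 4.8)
\emph{Strategy and reduction.}
The plan is to derive Proposition~\ref{prop:F} from the lace-expansion representation of $F_N$, together with the simple-random-walk estimates of Section~\ref{sec:rw}, via the fractional-derivative method of \cite{HS92a} reviewed in Section~\ref{sec:fracder}. The lace expansion (Section~\ref{sec:le-conv}) provides an identity $F_N(z)=1-Nz-\hat\Pi_z(0)$ for $|z|\le z_N$, where $\hat\Pi_z(0)=\sum_{x\in\Q^N}\Pi_z(x)$ is the zero Fourier mode of the lace-expansion coefficient, expressed diagrammatically through the self-avoiding two-point function $G_z$ (the precise affine part of $F_N$ is immaterial). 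Since $\Phi_N$ is the first-order Taylor polynomial of $F_N$ at $\zeta_p$, we have $\Phi_N'(z)=F_N'(\zeta_p)$ and $R_N(\zeta_p)=0$, so using $F_N'(z)=-N-\partial_z\hat\Pi_z(0)$,
\[
    R_N'(z)=F_N'(z)-F_N'(\zeta_p)=-\bigl(\partial_z\hat\Pi_z(0)-\partial_z\hat\Pi_z(0)\big|_{z=\zeta_p}\bigr),
    \qquad
    R_N(z)=\int_{\zeta_p}^{z}R_N'(w)\,dw .
\]
Thus the whole proposition reduces to the single H\"older-type estimate
\[
    \Bigl|\,\partial_z\hat\Pi_z(0)-\partial_z\hat\Pi_z(0)\big|_{z=\zeta_p}\Bigr|
    \ \prec\ \bigl(1+NV^{(2+a)p-1}\bigr)\,|1-z/\zeta_p|^{a}
    \qquad(|z|\le\zeta_p),
\]
which is exactly \eqref{e:varphiderbd}; and \eqref{e:frac-der-OK} follows by integrating it along the segment from $\zeta_p$ to $z$ (which lies in $\{|w|\le\zeta_p\}$, with $|1-w/\zeta_p|\le|1-z/\zeta_p|$ along it), using $|z-\zeta_p|=\zeta_p|1-z/\zeta_p|$ and $\zeta_p\asymp N^{-1}$ from Proposition~\ref{prop:alphabeta}.

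\emph{The H\"older estimate via fractional derivatives.}
To prove the displayed bound I would first use the diagrammatic bounds of the lace expansion, together with the bootstrap argument of Section~\ref{sec:bootstrap}, to dominate $\partial_z\hat\Pi_z(0)-\partial_z\hat\Pi_z(0)|_{z=\zeta_p}$ by a finite sum of differences $h(z)-h(\zeta_p)$, where each $h$ is a product of random-walk two-point functions and bubbles $B_z=\sum_{x\in\Q^N}G_z(x)^2$, possibly with one extra factor of $N$ (from a degree-$N$ vertex of a lace graph or from $\partial_z$ acting on a step). The gain is that each such $h$ has a power series $h(z)=\sum_n h_nz^n$ with $h_n\ge0$ and radius of convergence $>\zeta_p$, since $G_z$ has nonnegative coefficients. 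For such $h$ the fractional-derivative lemma of \cite{HS92a} bounds $|h(z)-h(\zeta_p)|$ on $|z|\le\zeta_p$ by $|1-z/\zeta_p|^{a}$ times a fractional derivative $D^{(a)}h$ of order $a$ evaluated at $\zeta_p$ (nonnegativity of $h_n$ making $\zeta_p$ extremal), via $|1-\omega^{n}|\le2n^{a}|1-\omega|^{a}$ for $|\omega|=1$ and the analogous radial inequality. It therefore suffices to show $D^{(a)}h(\zeta_p)\prec1+NV^{(2+a)p-1}$ for each bounding series $h$.

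\emph{Bounding the fractional derivative: the zero mode.}
By Parseval on $\Q^N$, $B_z=V^{-1}\sum_k\hat G_z(k)^2$, split into the zero mode $V^{-1}\chi_N(z)^2$ and the nonzero modes. The nonzero-mode sum, and its fractional derivative, are $O(1)$ uniformly on $|z|\le\zeta_p$ by the random-walk estimates of Section~\ref{sec:rw} and the lace-expansion control of $\hat\Pi_z(k)$ (this is the bubble condition). The only near-singular contribution is the zero mode, which has nonnegative coefficients: by Proposition~\ref{prop:alphabeta}, $\chi_N(\zeta_p)\asymp V^{p}$, and applying Theorem~\ref{thm:EL-new} at $z=\zeta_p$ (equivalently, arguing directly from the lace expansion) gives $\zeta_p\chi_N'(\zeta_p)\asymp V^{2p}$, so the weighted mean $\sum_n n\,(\chi_N^2)_n\zeta_p^{n}\big/\sum_n(\chi_N^2)_n\zeta_p^{n}=2\zeta_p\chi_N'(\zeta_p)/\chi_N(\zeta_p)\asymp V^{p}$. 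By Jensen's inequality (concavity of $t\mapsto t^{a}$) this yields $\sum_n n^{a}(\chi_N^2)_n\zeta_p^{n}\prec V^{ap}\chi_N(\zeta_p)^2\asymp V^{(2+a)p}$, i.e.\ $D^{(a)}\bigl(V^{-1}\chi_N^2\bigr)(\zeta_p)\prec V^{(2+a)p-1}$. The extra factor $N$ in $NV^{(2+a)p-1}$ is the worst-case prefactor multiplying a bubble carrying the zero mode; combining this with the $O(1)$ contributions of the remaining diagram pieces over the finitely many lace graphs yields the H\"older estimate, hence \eqref{e:frac-der-OK}--\eqref{e:varphiderbd}.

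\emph{Main obstacle.}
The bulk of the work is propagating the fractional derivative through the full diagrammatic expansion so that, in each diagram, the near-singular zero mode appears at most once (contributing the single factor $V^{(2+a)p-1}$) while every other factor, together with its fractional derivative, remains $O(1)$ uniformly in the disk $|z|\le\zeta_p$; this needs a fractional product rule for $D^{(a)}$ of products of two-point functions, the sharp two-sided control of $\chi_N(z)$ for complex $z$ in $\{|z|\le\zeta_p\}$ from Propositions~\ref{prop:FzN} and \ref{prop:alphabeta}, and care that the interaction of the $\asymp V$ nonzero Fourier modes with the single divergent zero mode does not leak additional powers of $V$ or $N$. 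A secondary difficulty is checking that the diagrammatic bounds genuinely present $\partial_z\hat\Pi_z(0)-\partial_z\hat\Pi_z(0)|_{z=\zeta_p}$ as a combination of \emph{differences} of nonnegative-coefficient series, so that the fractional-derivative lemma applies, rather than merely bounding $|\partial_z\hat\Pi_z(0)|$.
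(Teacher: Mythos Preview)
Your overall strategy matches the paper's: reduce both bounds to an order-$a$ H\"older estimate on $\partial_z\hat\Pi_z(0)$ via the fractional Taylor lemma (Lemma~\ref{lem:Taylepsilon}), then control the resulting $(1+a)$-th fractional moment of the $\pi_m$'s, with the zero Fourier mode producing the factor $V^{(2+a)p-1}$. Deriving \eqref{e:frac-der-OK} by integrating \eqref{e:varphiderbd} is fine; the paper instead applies \eqref{e:Taylepbound} directly to $F_N$, but the two are equivalent. Your ``secondary difficulty'' is also not a real obstacle: Lemma~\ref{lem:Taylepsilon} is stated with $A_j^{(\epsilon)}(\rho)=\sum n^\epsilon|a_n|\rho^n$, so one simply bounds $|\pi_m|\le\sum_M\pi_m^{(M)}$ and there is no need to express anything as a difference of nonnegative-coefficient series.

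There is, however, a genuine gap in the heart of your argument. The diagrammatic bounds (Propositions~\ref{prop:PiM}--\ref{prop:Pidots}) do \emph{not} present $\partial_z\hat\Pi_z(0)$ as a product of bubbles $B_z$; they reduce $\sum_M\|\delta_z^{1+a}\Pi_z^{(M)}\|_1$ to a bound on $\|\delta_{\zeta_p}^{1+a}H_z\|_\infty$ (this is \eqref{e:Pizder} with $\epsilon=1+a$). Your Jensen computation $D^{(a)}(V^{-1}\chi_N^2)(\zeta_p)\prec V^{(2+a)p-1}$ is correct but does not address this quantity. If instead you apply Jensen to $\delta^{1+a}H_z=\delta^a(z\partial_zH_z)$ with the concave map $t\mapsto t^a$, you obtain
\[
\|\delta_{\zeta_p}^{1+a}H_z\|_\infty
\le
\|z\partial_zH_z\|_\infty^{1-a}\,\|(z\partial_z)^2H_z\|_\infty^{a}
\prec
N^{-(1-a)}\bigl(N^{-1}+V^{3p-1}\bigr)^{a},
\]
using $z\partial_zH_z\le H_z*G_z$ and $(z\partial_z)^2H_z\prec H_z*H_z*G_z$, whose zero mode is $V^{-1}\chi_N^3\asymp V^{3p-1}$. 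For $p\in(\tfrac13,\tfrac12)$ and $a\in(0,1)$ one checks $(3p-1)a>(2+a)p-1$, so this Jensen bound is strictly weaker than the required $N^{-1}+V^{(2+a)p-1}$ and the argument fails. The paper avoids this loss by using the integral representation \eqref{e:frderividentity} of $\delta^a$ rather than Jensen: writing $\delta_{\zeta_p}^{1+a}H_z$ as $\int_0^\infty\partial_{\eta_t}[z\partial_zH_z]\,e^{-t}t^{-a}\,dt$ (with $\eta_t=\zeta_p e^{-t}$) converts the zero-mode contribution into $V^{-1}\int_0^{\log 4}\chi_N(\eta_t)^3\,t^{-a}\,dt$, and the explicit bound $\chi_N(\eta_t)\prec(V^{-p}+t)^{-1}$ from \eqref{e:Flower1} then yields exactly $V^{(2+a)p-1}$. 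The integral captures the correct interaction between the near-singular scale $V^{-p}$ and the fractional weight $t^{-a}$; a single application of Jensen collapses this and loses the sharp exponent.
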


 An indication of deterioration for
$p \ge \frac 12 $ can be seen from the term $V^{(2+a)p-1}$ in \eqref{e:frac-der-OK}.
We desire a remainder $R_N$ of higher order than linear, so $a>0$, and when $p \ge \frac 12$
the term $V^{(2+a)p-1}$ grows exponentially in $N$ and spoils control unless $a \le 0$
which we do not permit.

We prove Theorem~\ref{thm:dilute} by using the Tauberian theorem
Lemma~\ref{lem:Tauberian} in conjunction with
Propositions~\ref{prop:alphabeta}--\ref{prop:F}.
To prepare for this we have the following two
corollaries of the above propositions.  The first corollary is for the leading
behaviour of $c_n^{(N)}$.

\begin{cor}
\label{cor:varphin}
With $z_N=z_N(\lambda)$ for $\lambda\in (0,\lambda_0]$, for  all
$p \in (0,\frac 12)$, all $N$ sufficiently large, and
all $n\in \N$,
the coefficient $\varphi_n$ of $z^n$ in $1/\Phi_N(z)$ obeys
\begin{equation}
    \varphi_n =
    A_N \mu_N^n (1+O(V^{-p}))^n
\end{equation}
with $A_N= 1+O(N^{-1})$ independent of $n$ (but dependent on $p$ and $\lambda$),
and with $\lambda$-dependent error bounds.
\end{cor}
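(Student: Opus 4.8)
The plan is to analyse the linear function $\Phi_N(z) = F_N(\zeta_p) + F_N'(\zeta_p)(z-\zeta_p)$ explicitly. Since $\Phi_N$ is linear in $z$, its reciprocal $1/\Phi_N(z)$ is a simple rational function with a single pole, located at the point $\rho_N$ where $\Phi_N(\rho_N)=0$, namely $\rho_N = \zeta_p - F_N(\zeta_p)/F_N'(\zeta_p)$. First I would write $1/\Phi_N(z)$ in the form $\frac{1}{F_N'(\zeta_p)}\cdot\frac{1}{z-\rho_N} = \frac{-1}{F_N'(\zeta_p)\rho_N}\cdot\frac{1}{1-z/\rho_N}$, so that reading off the coefficient of $z^n$ gives immediately the exact geometric expression
\begin{equation}
\varphi_n = \frac{-1}{F_N'(\zeta_p)\,\rho_N}\;\rho_N^{-n} = A_N\,\rho_N^{-n},
\qquad A_N \coloneqq \frac{-1}{F_N'(\zeta_p)\,\rho_N}.
\end{equation}
This is a clean closed form, with no error term at this stage — the only task remaining is to relate $\rho_N$ to $\mu_N = 1/z_N$ and to control $A_N$.

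Next I would plug in the estimates of Proposition~\ref{prop:alphabeta}: $F_N(\zeta_p)\asymp V^{-p}$, $F_N'(\zeta_p) = -N + O(1)$, and $\zeta_p = z_N(1-V^{-p}) = N^{-1}[1+O(N^{-1})]$. From these, $F_N(\zeta_p)/F_N'(\zeta_p) = O(V^{-p}/N)$, hence
\begin{equation}
\rho_N = \zeta_p + O(V^{-p}/N) = z_N(1-V^{-p}) + O(z_N V^{-p}) = z_N\big(1 + O(V^{-p})\big),
\end{equation}
using $z_N \asymp N^{-1}$ to absorb the $O(V^{-p}/N)$ term into $O(z_N V^{-p})$. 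Taking reciprocals, $\rho_N^{-1} = z_N^{-1}(1+O(V^{-p})) = \mu_N(1+O(V^{-p}))$, and therefore $\rho_N^{-n} = \mu_N^n(1+O(V^{-p}))^n$, which is exactly the factor appearing in the statement. For the amplitude, $A_N = \frac{-1}{F_N'(\zeta_p)\rho_N} = \frac{1}{(N+O(1))\cdot N^{-1}(1+O(V^{-p}))} = 1 + O(N^{-1})$, using that $V^{-p} = O(N^{-1})$ is dominated by the $O(N^{-1})$ error (or more precisely that both $1/N$ and $V^{-p}$ are captured once $N$ is large, since for any fixed $p>0$ eventually $V^{-p}\ll N^{-1}$). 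This gives $A_N = 1+O(N^{-1})$ as claimed, with $A_N$ manifestly independent of $n$ (it depends only on $\zeta_p$, hence on $N$, $\lambda$, $p$).

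The only subtlety — and the step I would be most careful about — is verifying that $\rho_N$ genuinely has positive real part and modulus $\asymp N^{-1}$ so that "the coefficient of $z^n$ in $1/\Phi_N$" really is the geometric series coefficient rather than something requiring a different expansion; this is immediate from $\rho_N = \zeta_p(1+o(1))$ with $\zeta_p>0$ real, but it should be stated. A second minor point is consistency of the $O(V^{-p})$ versus $O(N^{-1})$ bookkeeping: since $p\in(0,\tfrac12)$ is fixed, for $N$ large we have $V^{-p} = 2^{-pN} \ll N^{-1}$, so the error in $\rho_N$ relative to $\zeta_p$ is genuinely lower order than the $V^{-p}$ already present in $\zeta_p/z_N$, and the error in $A_N$ is $O(N^{-1})$ with no competing term; I would make this comparison explicit so the reader sees that writing $(1+O(V^{-p}))^n$ in the displayed conclusion (rather than a mix of $V^{-p}$ and something else) is justified. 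No genuinely hard analysis arises here — everything is a routine manipulation of the linear function $\Phi_N$ once Proposition~\ref{prop:alphabeta} is in hand — so the corollary is, as the text says, a short consequence.
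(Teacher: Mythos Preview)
Your proof is correct and follows essentially the same approach as the paper. The paper writes $\Phi_N(z)=\alpha_N-\beta_N z$ with $\alpha_N=F_N(\zeta_p)-\zeta_pF_N'(\zeta_p)$ and $\beta_N=-F_N'(\zeta_p)$, then expands $1/\Phi_N$ as a geometric series to get $\varphi_n=\alpha_N^{-1}(\beta_N/\alpha_N)^n$; your root $\rho_N$ is exactly $\alpha_N/\beta_N$ and your $A_N=-1/(F_N'(\zeta_p)\rho_N)$ equals $1/\alpha_N$, so the two arguments are the same computation in different notation, with identical use of Proposition~\ref{prop:alphabeta} for the estimates.
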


\begin{proof}
We define
\begin{equation}
    \alpha_N = F_N(\zeta_p) - \zeta_p F_N'(\zeta_p), \qquad \beta_N = - F_N'(\zeta_p),
\end{equation}
which are both positive for large $N$ since then $F_N'(\zeta_p)$ is negative
by Proposition~\ref{prop:alphabeta}.
By definition,
$\Phi(z) = \alpha_N - \beta_N z$, so
expansion of the geometric series gives
\begin{equation}
    \frac{1}{\Phi_N(z)}
    =
    \frac{1}{\alpha_N }\sum_{n=0}^\infty \left(\frac{\beta_N z}{\alpha_N}\right)^n
    \qquad
    (|z| < \alpha_N/\beta_N)
    .
\end{equation}
Let $A_N=1/\alpha_N$, so $A_N$ depends on $\lambda$ and $p$.  Then $A_N=1+O(N^{-1})$ by Proposition~\ref{prop:alphabeta}
and we have
\begin{equation}
    \varphi_n = A_N \left(\frac{\beta_N}{\alpha_N}\right)^n.
\end{equation}
By definition and by Proposition~\ref{prop:alphabeta},
\begin{align}
    \frac{\beta_N}{\alpha_N}
    &= \mu_N \frac{z_N\beta_N}{\alpha_N}
    =
    \mu_N \frac{1}{1-V^{-p}}\frac{\zeta_p\beta_N}{\alpha_N}
    \nnb &=
    \mu_N \frac{1}{1-V^{-p}}\frac{1}{1 + \frac{F_N(\zeta_p)}{- \zeta_p F_N'(\zeta_p)}}
    =\mu_N (1+O(V^{-p})).
\label{e:abratio}
\end{align}
This gives the desired result
\begin{equation}
    \varphi_n =
    A_N \left(\frac{\beta_N }{\alpha_N}\right)^n
    =
    A_N \mu_N^n (1+O(V^{-p}))^n
\end{equation}
and the proof is complete.
\end{proof}

To prove Theorem~\ref{thm:dilute}, it now suffices to prove that
\begin{equation}
\label{e:hmu}
     h_n \mu_N^{-n} =
    O( n^{-a}(N^{-1}+V^{(2+a)p-1})) (1+O(V^{-p}))^n.
\end{equation}
To do so, we will use the following corollary of Propositions~\ref{prop:FzN} and
\ref{prop:alphabeta}--\ref{prop:F}.

\begin{cor}
\label{cor:susceptibility}
With $z_N=z_N(\lambda)$ for $\lambda\in (0,\lambda_0]$, and for all
$p \in (0,\frac 12)$, all $a\in (0,1)$, and
all $z\in \C$ with $|z| \le \zeta_p$,
\begin{equation}
\label{e:Hprime}
    |H_N'(z)| \prec \frac{1+NV^{(2+a)p-1} }{|1-z/\zeta_p|^{2-a}}.
\end{equation}
\end{cor}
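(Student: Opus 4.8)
\textbf{Proof proposal for Corollary~\ref{cor:susceptibility}.}

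The plan is to differentiate the identity $H_N(z) = -R_N(z)/[\Phi_N(z)F_N(z)]$ and then bound each factor on the disk $|z|\le\zeta_p$ using the estimates already available. First I would write out the product rule. Since $H_N = -R_N \cdot \Phi_N^{-1} \cdot F_N^{-1}$, we get
\begin{equation}
    H_N'(z) = -\frac{R_N'(z)}{\Phi_N(z)F_N(z)}
    + \frac{R_N(z)\Phi_N'(z)}{\Phi_N(z)^2 F_N(z)}
    + \frac{R_N(z)F_N'(z)}{\Phi_N(z)F_N(z)^2}.
\end{equation}
So it suffices to obtain lower bounds on $|\Phi_N(z)|$ and $|F_N(z)|$, and upper bounds on $|R_N(z)|$, $|R_N'(z)|$, $|\Phi_N'(z)|$, and $|F_N'(z)|$, all valid for $|z|\le\zeta_p$, and then combine them.

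The inputs are as follows. For the numerators: Proposition~\ref{prop:F} gives $|R_N(z)| \prec N^{-1}(1+NV^{(2+a)p-1})|1-z/\zeta_p|^{1+a}$ and $|R_N'(z)| \prec (1+NV^{(2+a)p-1})|1-z/\zeta_p|^{a}$; moreover $\Phi_N'(z) = F_N'(\zeta_p) = -N+O(1)$ is constant, so $|\Phi_N'(z)| \prec N$; and $|F_N'(z)| = |\Phi_N'(z)+R_N'(z)| \prec N$ as well (the $R_N'$ term is lower order since $|1-z/\zeta_p|\le 2$ on the disk). For the denominators, the key point is a lower bound $|\Phi_N(z)| \succ N|1-z/\zeta_p|$. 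This follows because $\Phi_N(z) = \alpha_N - \beta_N z$ with $\beta_N = N+O(1) \succ N$ and root $\alpha_N/\beta_N = \zeta_p/(1-V^{-p}) = \zeta_p(1+O(V^{-p}))$, so $\Phi_N(z) = \beta_N(\alpha_N/\beta_N - z)$ and on $|z|\le\zeta_p$ one has $|\alpha_N/\beta_N - z| = |\zeta_p - z + O(V^{-p})\zeta_p| \ge \zeta_p|1-z/\zeta_p| - O(V^{-p})\zeta_p$; since $|1-z/\zeta_p|\ge V^{-p}$ is \emph{not} guaranteed on the whole disk, I would instead argue directly from $|\alpha_N/\beta_N|>\zeta_p$ that $|\alpha_N/\beta_N - z|\succ |\alpha_N/\beta_N - z|$... more carefully: writing $w=\alpha_N/\beta_N$, for $|z|\le\zeta_p<|w|$ we have $|w-z|\ge |w|-|z|$ but also $|w-z|\ge \zeta_p|1-z/\zeta_p| - |w-\zeta_p| = \zeta_p|1-z/\zeta_p|-O(V^{-p}\zeta_p)$, and combining the two (the first handles $z$ near $\zeta_p$, the second handles $z$ elsewhere) yields $|w-z|\succ \zeta_p|1-z/\zeta_p|$, hence $|\Phi_N(z)|\succ N\zeta_p|1-z/\zeta_p| \asymp |1-z/\zeta_p|$ using $\zeta_p\asymp N^{-1}$. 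Finally, from $F_N(z)=\Phi_N(z)+R_N(z)$ and the bound on $R_N$ we get $|F_N(z)| \ge |\Phi_N(z)| - |R_N(z)| \succ |1-z/\zeta_p| - N^{-1}(1+NV^{(2+a)p-1})|1-z/\zeta_p|^{1+a}$; for $z$ on a disk slightly inside $z_N$ one checks the first term dominates (this uses $V^{(2+a)p-1}\to 0$ when $p<\tfrac12$ and $a$ small, or more robustly one invokes Proposition~\ref{prop:FzN}'s bound $|F_N(z)|\ge\tfrac12|1-z/z_N|$ together with $|1-z/z_N|\succ|1-z/\zeta_p|$ on $|z|\le\zeta_p$), giving $|F_N(z)|\succ|1-z/\zeta_p|$.

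Assembling: the three terms in $H_N'$ are bounded respectively by
\begin{equation}
    \frac{(1+NV^{(2+a)p-1})|1-z/\zeta_p|^{a}}{|1-z/\zeta_p|^2},
    \quad
    \frac{N^{-1}(1+NV^{(2+a)p-1})|1-z/\zeta_p|^{1+a}\cdot N}{|1-z/\zeta_p|^{3}},
    \quad
    \frac{N^{-1}(1+NV^{(2+a)p-1})|1-z/\zeta_p|^{1+a}\cdot N}{|1-z/\zeta_p|^{3}},
\end{equation}
each of which equals $(1+NV^{(2+a)p-1})|1-z/\zeta_p|^{-(2-a)}$ up to constants, which is exactly the claimed bound \eqref{e:Hprime}. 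I expect the only real obstacle to be the lower bound $|\Phi_N(z)|\succ|1-z/\zeta_p|$ on the \emph{closed} disk $|z|\le\zeta_p$ (as opposed to a neighborhood of $\zeta_p$ on the real axis): one must handle the region where $z$ is far from $\zeta_p$ but $|z|$ is still close to $\zeta_p$, where $|1-z/\zeta_p|$ can be as large as $\approx 2$ and the naive estimate $|w|-|z|$ is too weak; the fix is the two-sided comparison sketched above, exploiting that the root $w=\alpha_N/\beta_N$ lies just outside the disk at distance $O(V^{-p}\zeta_p)$ from $\zeta_p$.
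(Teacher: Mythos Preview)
Your proposal is correct and follows essentially the same route as the paper: differentiate $H_N = -R_N/(\Phi_N F_N)$, bound the numerators via Proposition~\ref{prop:F}, and bound the denominators by linear lower bounds $|\Phi_N(z)|,\,|F_N(z)| \succ |1-z/\zeta_p|$ on $|z|\le\zeta_p$.

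The one place where the paper is tidier is the lower bound on $|\Phi_N(z)|$. Your two-case argument does work (with constant $\tfrac12$: if $|\zeta_p-z|\ge 2(w-\zeta_p)$ use the triangle inequality $|w-z|\ge|\zeta_p-z|-(w-\zeta_p)\ge\tfrac12|\zeta_p-z|$; otherwise $|w-z|\ge w-|z|\ge w-\zeta_p>\tfrac12|\zeta_p-z|$), though your description of which inequality handles which region is a little muddled. The paper avoids cases entirely by writing $\Phi_N(z)=\zeta_p\beta_N\bigl(\tfrac{\alpha_N}{\zeta_p\beta_N}-\tfrac{z}{\zeta_p}\bigr)$ with $\tfrac{\alpha_N}{\zeta_p\beta_N}\ge 1$ and invoking the elementary geometric fact that for real $a\ge 1$ and complex $|w|\le 1$ one has $|a-w|\ge|1-w|$. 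This yields $|\Phi_N(z)|\ge\tfrac12|1-z/\zeta_p|$ in one line, with no need to know quantitatively how far the root $\alpha_N/\beta_N$ sits outside the disk. The same geometric fact is what justifies $|z_N-z|\ge|\zeta_p-z|$ in your $|F_N|$ bound. For $|F_N'(z)|$, the paper simply cites Proposition~\ref{prop:FzN} (which already gives $|F_N'(z)|\le 2N$ on the larger disk $|z|\le z_N$) rather than writing $F_N'=\Phi_N'+R_N'$; either is fine.
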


\begin{proof}
Let $|z| \le \zeta_p$.
By definition,
\begin{equation}
\label{e:Hder}
    H_N'(z) = -\frac{R_N'(z)} {F_N(z) \Phi_N(z)}
    + \frac{R_N(z)F_N'(z)} {F_N(z)^2 \Phi_N(z)} -
    \frac{\beta_N R_N(z)} {F_N(z) \Phi_N(z)^2}.
\end{equation}
To bound the denominators of \eqref{e:Hder} we proceed as follows.
With the notation from the proof of Corollary~\ref{cor:varphin},
it follows from the facts that $\beta_N \zeta_p \sim 1$ and
\begin{equation}
    \frac{\alpha_N}{\zeta_p\beta_N} = 1 + \frac{F_N(\zeta_p)}{-\zeta_pF_N'(\zeta_p)} \ge 1
\end{equation}
that for $|z| \le \zeta_p$ and for large $N$ we have
\begin{align}
\label{e:disk}
    |\Phi_N(z)| & = |\alpha_N-\beta_Nz|
    = \zeta_p\beta_N \Big|\frac{\alpha_N}{\zeta_p\beta_N} - \frac{z}{\zeta_p} \Big|
    \ge \frac 12 |1 - z/\zeta_p |,
\end{align}
where in the last inequality we used the geometric fact that if $a>1$
and $|w| \le 1$ then
$|a-w|\ge |1-w|$.
Similarly, it follows from the linear lower bound on $F_N$ from
Proposition~\ref{prop:FzN} that on the disk $|z| \le \zeta_p$ we have
\begin{align}
\label{e:varphilb}
    |F_N(z)| \succ z_N^{-1}|z_N-z| \ge z_N^{-1}|\zeta_p -z|
    \succ |1-z/\zeta_p|
\end{align}
where we used $z_N > \zeta_p$ for the second inequality and $z_N \sim \zeta_p$ for the third.
Proposition~\ref{prop:FzN} also gives $|F_N'(z)| \prec N$.
Therefore, by \eqref{e:Hder}, \eqref{e:disk}, \eqref{e:varphilb},
$\beta_N \le 2 N$, and Proposition~\ref{prop:F},
\begin{align}
\label{e:Hderbd}
    |H_N'(z)|
    & \prec
    \frac{|R_N'(z)|} {|1-z/\zeta_p|^2}
    + \frac{N|R_N(z)|} {|1-z/\zeta_p|^3}
    \nnb &\prec
    \frac{1 +NV^{(2+a)p-1}}{|1-z/\zeta_p|^{2-a}}
    +
    \frac{1+NV^{(2+a)p-1}}{|1-z/\zeta_p|^{2-a}}
    ,
\end{align}
and the proof is complete.
\end{proof}

We now apply Lemma~\ref{lem:Tauberian} to prove Theorem~\ref{thm:dilute}.
Corollary~\ref{cor:susceptibility} is formulated for $H'$, rather than for $H$, for the
reason mentioned in the first remark of Section~\ref{sec:Trk}.

\begin{proof}[Proof of Theorem~\ref{thm:dilute}]
The combination of Corollary~\ref{cor:susceptibility} with Lemma~\ref{lem:Tauberian}
(with $f=H'$, $\rho=\zeta_p$, $b=2-a$)
immediately gives
\begin{equation}
    (n+1)|h_{n+1}| \prec
    n^{2-a-1}
    \zeta_p^{-n} (1 +NV^{(2+a)p-1}).
\end{equation}
With $\zeta_p \sim N^{-1}$, this implies that
\begin{equation}
    |h_{n}| \prec
    n^{-a}
    \zeta_p^{-(n-1)} (1 +NV^{(2+a)p-1})
    \prec
     n^{-a}
    \zeta_p^{-n} (N^{-1} +V^{(2+a)p-1}).
\end{equation}
It suffices now to observe that
\begin{equation}
    (\mu_N \zeta_p)^{-n} =   (1-  V^{-p} )^{-n}.
\end{equation}
This proves \eqref{e:hmu} and therefore completes the proof.
\end{proof}

Thus to prove Theorem~\ref{thm:dilute} (and thereby prove
Theorem~\ref{thm:dilute-short}) it suffices to prove Propositions~\ref{prop:alphabeta}--\ref{prop:F}.
The proofs of  Propositions~\ref{prop:FzN}, \ref{prop:alphabeta} and \ref{prop:F},
as well as of Theorems~\ref{thm:expansion}--\ref{thm:EL-new},
are based on the lace expansion
for self-avoiding walk, which we discuss next.

\section{The lace expansion}
\label{sec:lace}

In this section,
we summarise the derivation of the lace expansion as well as its diagrammatic estimates.
More extensive treatments can be found in
the original paper by Brydges and Spencer \cite{BS85} or in the books \cite{MS93,Slad06}.
The setting in those references is $\Z^d$ rather than the hypercube but the differences
for the derivation of the expansion and for its diagrammatic estimates in these two settings
are merely superficial.
Although we do not adopt this perspective here, the lace expansion can alternatively be
understood as arising from repeated application of the inclusion-exclusion relation
(see \cite[Section~5.1]{MS93}).

\subsection{Fourier transform on the hypercube}

The proofs of Propositions~\ref{prop:FzN},
\ref{prop:alphabeta} and \ref{prop:F} rely
heavily on Fourier transformation on the hypercube.
Given a function $f:\Q^N \to \C$, its Fourier transform is
\begin{equation}
\label{e:FTdef}
    \hat f(k) =  \sum_{x \in \Q^N} f(x) (-1)^{k\cdot x}
    \qquad
    (k \in \Q^N),
\end{equation}
where the dot product is defined by $k\cdot x = \sum_{i=1}^N k_ix_i$
with $k_i$ and $x_i$ respectively the $i^{\rm th}$ components
of $k$ and $x$.  The inverse
Fourier transform is
\begin{equation}
\label{e:IFT}
    f(x) = \frac{1}{V} \sum_{k \in \Q^N} \hat f(k) (-1)^{k\cdot x}\qquad
    (x \in \Q^N).
\end{equation}
The convolution $(f*g)(x) = \sum_{y\in\Q^N}f(x-y)g(y)$ obeys  $\widehat{f*g}=\hat f \hat g$.

An important example is when $f$ is the transition probability $D$ for simple random walk,
defined by
\begin{equation}
\label{e:Ddef}
    D(x) =
    \begin{cases}
    N^{-1} & |x|=1
    \\
    0 & |x| \neq 1
    \end{cases}
    \qquad (x \in \Q^N).
\end{equation}
Its Fourier transform is
\begin{equation}
\label{e:Dhat}
    \hat D(k) = 1 - \frac{2|k|}{N}  \qquad (k \in \Q^N).
\end{equation}

\subsection{The recursion relation}
\label{sec:rr}

Let $c_0(x) = \delta_{0,x}$, and, for $n \geq 1$,
let $c_n(x)$ denote the number of $n$-step self-avoiding walks that begin at the
origin and end at $x \in \Q^N$.
The \emph{two-point function} is the generating function for the sequence $c_n^{(N)}(x)$,
defined by
\begin{equation}
    G_z(x) = \sum_{n=0}^\infty c_n^{(N)}(x) z^n \qquad (x \in \Q^N,\; z\in \C).
\end{equation}
Since $c_n(x) =0$ for all $n \ge V$, the two-point function is a polynomial in $z$.

For $m \ge 2$, the lace expansion produces a function $\pi_m: \Q^N \to \Z$,
which we will define below.  We write its generating function, which is not a polynomial,
as $\Pi_z(x) = \sum_{m=2}^\infty \pi_m(x) z^m$.
The following proposition is a statement of the lace expansion.
The detailed derivation of the formulas in Proposition~\ref{prop:lace} can be
found in \cite[(3.14), (3.27)]{Slad06}, following the original proof in \cite{BS85}.
Although these references are for $\Z^d$,
the discussion in \cite{Slad06} applies verbatim
to the hypercube, or indeed to any finite or infinite
transitive graph after suitable adaptation of the
transition function $D$.
Each of $D, G_z,\pi_m, \Pi_z$ depends on
$N$ but to lighten the notation we do not make this explicit.

\begin{prop}
\label{prop:lace}
For $n \ge 1$ and for $x \in \Q^N$,
\begin{equation}
\label{e:cnrec1}
    c_n^{(N)}(x) = N(D * c_{n-1}^{(N)})(x) + \sum_{m=2}^n (\pi_m * c_{n-m}^{(N)})(x),
\end{equation}
and hence, for $z \in \C$ such that $\Pi_z(x)$ converges for all $x$,
\begin{equation}
\label{e:Gzx}
    G_z(x) = \delta_{0,x} + zN(D *G_z)(x) + (\Pi_z * G_z)(x).
\end{equation}
\end{prop}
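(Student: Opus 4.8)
The plan is to derive the recursion \eqref{e:cnrec1} by the Brydges--Spencer lace expansion and then pass to generating functions to obtain \eqref{e:Gzx}. The point to keep in mind is that none of the combinatorics uses the adjacency structure of $\Q^N$: it uses only that the step distribution $D$ is translation invariant and that a walk never immediately reverses. So the derivation is the one of \cite{BS85,Slad06} transcribed to the hypercube, which is why those references apply verbatim.

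First I would set up the combinatorial weights. For an $n$-step walk $\omega$ on $\Q^N$ and integers $0\le s<t\le n$, set $U_{st}[\omega]=-\1_{\{\omega(s)=\omega(t)\}}$, and for $0\le a\le b\le n$ set $K[a,b][\omega]=\prod_{a\le s<t\le b}(1+U_{st}[\omega])$ with $K[a,a]=1$. Since $1+U_{st}=\1_{\{\omega(s)\ne\omega(t)\}}$ we have $K[0,n][\omega]=\1_{\{\omega\ \text{self-avoiding}\}}$, so that, writing $\Wcal_n(x)$ for the set of $n$-step walks with $\omega(0)=0$ and $\omega(n)=x$,
\[
    c_n^{(N)}(x)=\sum_{\omega\in\Wcal_n(x)}K[0,n][\omega].
\]
Note that $K[a,b]$ depends only on $\omega$ restricted to $\{a,\dots,b\}$ and is invariant under translation of $\Q^N$, a fact used repeatedly below.

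The combinatorial core comes next. Expanding the product gives $K[a,b]=\sum_{\Gamma}\prod_{st\in\Gamma}U_{st}$, the sum over all edge sets $\Gamma$ on $\{a,\dots,b\}$. Call $\Gamma$ \emph{connected on $[a,b]$} if for every $a\le c<b$ some $\{s,t\}\in\Gamma$ has $s\le c<t$, and put $J[a,b]=\sum_{\Gamma\ \text{connected on}\ [a,b]}\prod_{st\in\Gamma}U_{st}$; then $J[a,b][\omega]$ is a finite sum of terms in $\{-1,0,1\}$, hence an integer. The key identity, valid on any walk, is
\[
    K[0,n]=K[1,n]+\sum_{m=2}^{n}J[0,m]\,K[m,n].
\]
It follows by classifying each graph $\Gamma$ on $\{0,\dots,n\}$ according to the part of $\Gamma$ attached to $0$: if no edge meets $0$ one gets $K[1,n]$; otherwise $\Gamma$ splits uniquely as (a graph connected on $[0,m]$) $\cup$ (an arbitrary graph on $[m,n]$), where $m$ is the right endpoint of the component of $0$ and one checks that no edge $\{s,t\}$ of $\Gamma$ has $s<m<t$. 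The value $m=1$ would contribute only $U_{01}$, which vanishes on every walk since $\omega(1)\ne\omega(0)$; hence the sum may start at $m=2$, and $\pi_m$ is needed only for $m\ge2$.

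Summing this identity over $\omega\in\Wcal_n(x)$ yields \eqref{e:cnrec1}. For the $K[1,n]$ term, which depends only on $\omega\restr{\{1,\dots,n\}}$, summing over the first step $\omega(1)=y$ (so $|y|=1$) and over the remaining $(n-1)$-step walk and using translation invariance gives $\sum_{|y|=1}c_{n-1}^{(N)}(x-y)=N(D*c_{n-1}^{(N)})(x)$, by \eqref{e:Ddef}. For each $m\ge2$, splitting $\omega$ at step $m$ into an $m$-step walk $0\to w$ and an $(n-m)$-step walk $w\to x$, and using that $J[0,m]$ depends only on the first piece while $K[m,n]$ depends only on the second, gives $(\pi_m*c_{n-m}^{(N)})(x)$ with $\pi_m(w):=\sum_{\omega\in\Wcal_m(w)}J[0,m][\omega]$, which is the $\Z$-valued function of the proposition; for $n=1$ the $\pi$-sum is empty and \eqref{e:cnrec1} reduces correctly to $c_1^{(N)}(x)=N(D*c_0^{(N)})(x)$. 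Finally, to get \eqref{e:Gzx} I would multiply \eqref{e:cnrec1} by $z^n$, sum over $n\ge1$, and add the $n=0$ term $c_0^{(N)}(x)=\delta_{0,x}$: the first term sums to $zN(D*G_z)(x)$, and reindexing the double sum by $n=m+k$ with $m\ge2$, $k\ge0$ turns the second into $\sum_{m\ge2}z^m(\pi_m*G_z)(x)=(\Pi_z*G_z)(x)$. These rearrangements are legitimate because $\Q^N$ is finite, $c_k^{(N)}$ is supported on $k<V$, and $\Pi_z(x)$ converges absolutely by the hypothesis of the proposition, so all sums converge absolutely and Fubini applies; the convergence of $\Pi_z$ itself is not addressed here but is precisely what the later analysis beginning in Section~\ref{sec:le-conv} supplies. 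The main obstacle is the displayed algebraic identity for $K[0,n]$: one must verify that the classification of graphs by the component of $0$ is a bijection — that $m$ is well defined, that no edge crosses $m$, and that conversely each pair (connected graph on $[0,m]$, graph on $[m,n]$) occurs exactly once — together with the observation that the identity is needed only on walks, so that the degenerate unit-edge terms drop out. This is the combinatorial heart of the lace expansion; everything else (translation invariance, factorisation at the cut point, generating-function bookkeeping) is routine, and since the adjacency structure of $\Q^N$ never enters, the argument is that of \cite{BS85,Slad06} verbatim.
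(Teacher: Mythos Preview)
Your proposal is correct and follows essentially the same approach as the paper: expand $\prod(1+U_{st})$ over graphs, partition according to whether $0$ lies in an edge and, if so, by the support $[0,m]$ of the connected component of $0$, then pass to generating functions. The paper does not give a self-contained proof but refers to \cite{BS85,Slad06} and sketches exactly this partition (Cases~(a) and~(b) in Section~\ref{sec-pidef}); your write-up simply fills in the details of that outline.
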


Consequently,
\begin{equation}
    \hat G_z(k) = 1 + zN\hat D (k)\hat G_z(k) + \hat \Pi_z(k) \hat G_z(k).
\end{equation}
This can be rewritten as
\begin{equation}
\label{e:GFk}
    \hat G_z(k) = \frac{1}{1-zN\hat D(k) - \hat\Pi_z(k)}.
\end{equation}
Since the susceptibility is equal to $\chi_N(z)=\hat G_z(0)$, we obtain the identity
\begin{equation}
\label{e:chiPi}
    \chi_N(z) = \frac{1}{F_N(z)} = \frac{1}{1-zN  - \hat\Pi_z(0)}
\end{equation}
which is central to the proof of our main results
Theorems~\ref{thm:cnbd}--\ref{thm:EL-new}.
In order to make use of \eqref{e:GFk} and
\eqref{e:chiPi}, it will be necessary to obtain good estimates
on $\hat\Pi_z(k)$.  These will be achieved via diagrammatic estimates
in Section~\ref{sec:diag-est}, where
the convergence of $\Pi_z(x)$ will be studied.

\subsection{Graphs and laces}

The derivation of \eqref{e:cnrec1} uses the following definitions.
More detailed discussion and interpretation of these definitions can be
found in \cite[Section~3.3]{Slad06}.

\begin{defn}
\label{def-graph}
{\rm (i)}
Given an interval $I = [a,b]$ of positive integers, an \emph{edge} is a pair
$\{ s, t\}$
of elements of $I$, often written $st$ (with $s<t$).
A set of edges (possibly the empty set) is called a \emph{graph}.
Let $\Bcal[a,b]$ denote the set of all graphs.
\newline
{\rm (ii)}
A graph $\Gamma$ is \emph{connected}
if both $a$ and $b$ are
endpoints of edges in $\Gamma$, and if in addition, for any $c \in (a,b)$,
there is an edge $st \in \Gamma$ such that $s < c < t$.
Let $\Gcal[a,b]$ denote the set of all connected graphs on $[a,b]$.
\newline
{\rm (iii)}
A \emph{lace}
is a minimally connected graph: a connected graph for which
the removal of any edge would result in a disconnected graph.  The set of
laces on $[a,b]$
is denoted by $\Lcal [a,b]$, and the set of laces on
$[a,b]$ which consist of exactly $M$ edges is denoted $\Lcal ^{(M)} [a,b]$.
Figure~\ref{fig:LM} shows laces in $\Lcal^{(M)}[0,m]$ for $M=1,2,3,4$.
\end{defn}

The above definition of connectivity  is not the usual notion
of path-connectivity in graph
theory.  Instead, connected graphs are those $\Gamma$
for which the union of open real intervals
$\cup_{st \in \Gamma}(s,t)$ is equal to the connected
interval $(a,b)$.  This is the useful concept of connectivity for the lace expansion.

\begin{figure}
\begin{center}
\setlength{\unitlength}{0.0090in}%
\begin{picture}(215,150)(20,580)
\thicklines
\put(-100,680){\makebox(0,0)[lb]{\raisebox{0pt}[0pt][0pt]{\circle*{2}}}}
\put(-90,680){\makebox(0,0)[lb]{\raisebox{0pt}[0pt][0pt]{\circle*{2}}}}
\put(-80,680){\makebox(0,0)[lb]{\raisebox{0pt}[0pt][0pt]{\circle*{2}}}}
\put(-70,680){\makebox(0,0)[lb]{\raisebox{0pt}[0pt][0pt]{\circle*{2}}}}
\put(-60,680){\makebox(0,0)[lb]{\raisebox{0pt}[0pt][0pt]{\circle*{2}}}}
\put(-50,680){\makebox(0,0)[lb]{\raisebox{0pt}[0pt][0pt]{\circle*{2}}}}
\put(-40,680){\makebox(0,0)[lb]{\raisebox{0pt}[0pt][0pt]{\circle*{2}}}}
\put(-30,680){\makebox(0,0)[lb]{\raisebox{0pt}[0pt][0pt]{\circle*{2}}}}
\put(-20,680){\makebox(0,0)[lb]{\raisebox{0pt}[0pt][0pt]{\circle*{2}}}}
\put(-10,680){\makebox(0,0)[lb]{\raisebox{0pt}[0pt][0pt]{\circle*{2}}}}
\put(0,680){\makebox(0,0)[lb]{\raisebox{0pt}[0pt][0pt]{\circle*{2}}}}
\put(10,680){\makebox(0,0)[lb]{\raisebox{0pt}[0pt][0pt]{\circle*{2}}}}
\put(20,680){\makebox(0,0)[lb]{\raisebox{0pt}[0pt][0pt]{\circle*{2}}}}
\put(30,680){\makebox(0,0)[lb]{\raisebox{0pt}[0pt][0pt]{\circle*{2}}}}
\put(40,680){\makebox(0,0)[lb]{\raisebox{0pt}[0pt][0pt]{\circle*{2}}}}
\put(50,680){\makebox(0,0)[lb]{\raisebox{0pt}[0pt][0pt]{\circle*{2}}}}
\put(60,680){\makebox(0,0)[lb]{\raisebox{0pt}[0pt][0pt]{\circle*{2}}}}
\put(70,680){\makebox(0,0)[lb]{\raisebox{0pt}[0pt][0pt]{\circle*{2}}}}
\put(80,680){\makebox(0,0)[lb]{\raisebox{0pt}[0pt][0pt]{\circle*{2}}}}
\put(90,680){\makebox(0,0)[lb]{\raisebox{0pt}[0pt][0pt]{\circle*{2}}}}
\qbezier(-100,680)(-5,710)(90,680)
\put(-100,660){\makebox(0,0)[lb]{\raisebox{0pt}[0pt][0pt]{$s_1$}}}
\put(90,660){\makebox(0,0)[lb]{\raisebox{0pt}[0pt][0pt]{$t_1$}}}

\put(140,680){\makebox(0,0)[lb]{\raisebox{0pt}[0pt][0pt]{\circle*{2}}}}
\put(150,680){\makebox(0,0)[lb]{\raisebox{0pt}[0pt][0pt]{\circle*{2}}}}
\put(160,680){\makebox(0,0)[lb]{\raisebox{0pt}[0pt][0pt]{\circle*{2}}}}
\put(170,680){\makebox(0,0)[lb]{\raisebox{0pt}[0pt][0pt]{\circle*{2}}}}
\put(180,680){\makebox(0,0)[lb]{\raisebox{0pt}[0pt][0pt]{\circle*{2}}}}
\put(190,680){\makebox(0,0)[lb]{\raisebox{0pt}[0pt][0pt]{\circle*{2}}}}
\put(200,680){\makebox(0,0)[lb]{\raisebox{0pt}[0pt][0pt]{\circle*{2}}}}
\put(210,680){\makebox(0,0)[lb]{\raisebox{0pt}[0pt][0pt]{\circle*{2}}}}
\put(220,680){\makebox(0,0)[lb]{\raisebox{0pt}[0pt][0pt]{\circle*{2}}}}
\put(230,680){\makebox(0,0)[lb]{\raisebox{0pt}[0pt][0pt]{\circle*{2}}}}
\put(240,680){\makebox(0,0)[lb]{\raisebox{0pt}[0pt][0pt]{\circle*{2}}}}
\put(250,680){\makebox(0,0)[lb]{\raisebox{0pt}[0pt][0pt]{\circle*{2}}}}
\put(260,680){\makebox(0,0)[lb]{\raisebox{0pt}[0pt][0pt]{\circle*{2}}}}
\put(270,680){\makebox(0,0)[lb]{\raisebox{0pt}[0pt][0pt]{\circle*{2}}}}
\put(280,680){\makebox(0,0)[lb]{\raisebox{0pt}[0pt][0pt]{\circle*{2}}}}
\put(290,680){\makebox(0,0)[lb]{\raisebox{0pt}[0pt][0pt]{\circle*{2}}}}
\put(300,680){\makebox(0,0)[lb]{\raisebox{0pt}[0pt][0pt]{\circle*{2}}}}
\put(310,680){\makebox(0,0)[lb]{\raisebox{0pt}[0pt][0pt]{\circle*{2}}}}
\put(320,680){\makebox(0,0)[lb]{\raisebox{0pt}[0pt][0pt]{\circle*{2}}}}
\put(330,680){\makebox(0,0)[lb]{\raisebox{0pt}[0pt][0pt]{\circle*{2}}}}
\qbezier(140,680)(200,710)(260,680)
\qbezier(200,680)(265,710)(330,680)
\put(140,660){\makebox(0,0)[lb]{\raisebox{0pt}[0pt][0pt]{$s_1$}}}
\put(200,660){\makebox(0,0)[lb]{\raisebox{0pt}[0pt][0pt]{$s_2$}}}
\put(260,660){\makebox(0,0)[lb]{\raisebox{0pt}[0pt][0pt]{$t_1$}}}
\put(330,660){\makebox(0,0)[lb]{\raisebox{0pt}[0pt][0pt]{$t_2$}}}

\put(-100,600){\makebox(0,0)[lb]{\raisebox{0pt}[0pt][0pt]{\circle*{2}}}}
\put(-90,600){\makebox(0,0)[lb]{\raisebox{0pt}[0pt][0pt]{\circle*{2}}}}
\put(-80,600){\makebox(0,0)[lb]{\raisebox{0pt}[0pt][0pt]{\circle*{2}}}}
\put(-70,600){\makebox(0,0)[lb]{\raisebox{0pt}[0pt][0pt]{\circle*{2}}}}
\put(-60,600){\makebox(0,0)[lb]{\raisebox{0pt}[0pt][0pt]{\circle*{2}}}}
\put(-50,600){\makebox(0,0)[lb]{\raisebox{0pt}[0pt][0pt]{\circle*{2}}}}
\put(-40,600){\makebox(0,0)[lb]{\raisebox{0pt}[0pt][0pt]{\circle*{2}}}}
\put(-30,600){\makebox(0,0)[lb]{\raisebox{0pt}[0pt][0pt]{\circle*{2}}}}
\put(-20,600){\makebox(0,0)[lb]{\raisebox{0pt}[0pt][0pt]{\circle*{2}}}}
\put(-10,600){\makebox(0,0)[lb]{\raisebox{0pt}[0pt][0pt]{\circle*{2}}}}
\put(0,600){\makebox(0,0)[lb]{\raisebox{0pt}[0pt][0pt]{\circle*{2}}}}
\put(10,600){\makebox(0,0)[lb]{\raisebox{0pt}[0pt][0pt]{\circle*{2}}}}
\put(20,600){\makebox(0,0)[lb]{\raisebox{0pt}[0pt][0pt]{\circle*{2}}}}
\put(30,600){\makebox(0,0)[lb]{\raisebox{0pt}[0pt][0pt]{\circle*{2}}}}
\put(40,600){\makebox(0,0)[lb]{\raisebox{0pt}[0pt][0pt]{\circle*{2}}}}
\put(50,600){\makebox(0,0)[lb]{\raisebox{0pt}[0pt][0pt]{\circle*{2}}}}
\put(60,600){\makebox(0,0)[lb]{\raisebox{0pt}[0pt][0pt]{\circle*{2}}}}
\put(70,600){\makebox(0,0)[lb]{\raisebox{0pt}[0pt][0pt]{\circle*{2}}}}
\put(80,600){\makebox(0,0)[lb]{\raisebox{0pt}[0pt][0pt]{\circle*{2}}}}
\put(90,600){\makebox(0,0)[lb]{\raisebox{0pt}[0pt][0pt]{\circle*{2}}}}
\put(-100,580){\makebox(0,0)[lb]{\raisebox{0pt}[0pt][0pt]{$s_1$}}}
\put(-70,580){\makebox(0,0)[lb]{\raisebox{0pt}[0pt][0pt]{$s_2$}}}
\put(-40,580){\makebox(0,0)[lb]{\raisebox{0pt}[0pt][0pt]{$t_1$}}}
\put(0,580){\makebox(0,0)[lb]{\raisebox{0pt}[0pt][0pt]{$s_3$}}}
\put(40,580){\makebox(0,0)[lb]{\raisebox{0pt}[0pt][0pt]{$t_2$}}}
\put(90,580){\makebox(0,0)[lb]{\raisebox{0pt}[0pt][0pt]{$t_3$}}}
\qbezier(-100,600)(-70,630)(-40,600)
\qbezier(-70,600)(-15,630)(40,600)
\qbezier(0,600)(45,630)(90,600)

\put(140,580){\makebox(0,0)[lb]{\raisebox{0pt}[0pt][0pt]{$s_1$}}}
\put(170,580){\makebox(0,0)[lb]{\raisebox{0pt}[0pt][0pt]{$s_2$}}}
\put(200,580){\makebox(0,0)[lb]{\raisebox{0pt}[0pt][0pt]{$t_1$}}}
\put(230,580){\makebox(0,0)[lb]{\raisebox{0pt}[0pt][0pt]{$s_3$}}}
\put(260,580){\makebox(0,0)[lb]{\raisebox{0pt}[0pt][0pt]{$t_2$}}}
\put(290,580){\makebox(0,0)[lb]{\raisebox{0pt}[0pt][0pt]{$s_4$}}}
\put(310,580){\makebox(0,0)[lb]{\raisebox{0pt}[0pt][0pt]{$t_3$}}}
\put(330,580){\makebox(0,0)[lb]{\raisebox{0pt}[0pt][0pt]{$t_4$}}}
\qbezier(140,600)(170,630)(200,600)
\qbezier(170,600)(215,630)(260,600)
\qbezier(230,600)(270,630)(310,600)
\qbezier(290,600)(310,630)(330,600)
\put(140,600){\makebox(0,0)[lb]{\raisebox{0pt}[0pt][0pt]{\circle*{2}}}}
\put(150,600){\makebox(0,0)[lb]{\raisebox{0pt}[0pt][0pt]{\circle*{2}}}}
\put(160,600){\makebox(0,0)[lb]{\raisebox{0pt}[0pt][0pt]{\circle*{2}}}}
\put(170,600){\makebox(0,0)[lb]{\raisebox{0pt}[0pt][0pt]{\circle*{2}}}}
\put(180,600){\makebox(0,0)[lb]{\raisebox{0pt}[0pt][0pt]{\circle*{2}}}}
\put(190,600){\makebox(0,0)[lb]{\raisebox{0pt}[0pt][0pt]{\circle*{2}}}}
\put(200,600){\makebox(0,0)[lb]{\raisebox{0pt}[0pt][0pt]{\circle*{2}}}}
\put(210,600){\makebox(0,0)[lb]{\raisebox{0pt}[0pt][0pt]{\circle*{2}}}}
\put(220,600){\makebox(0,0)[lb]{\raisebox{0pt}[0pt][0pt]{\circle*{2}}}}
\put(230,600){\makebox(0,0)[lb]{\raisebox{0pt}[0pt][0pt]{\circle*{2}}}}
\put(240,600){\makebox(0,0)[lb]{\raisebox{0pt}[0pt][0pt]{\circle*{2}}}}
\put(250,600){\makebox(0,0)[lb]{\raisebox{0pt}[0pt][0pt]{\circle*{2}}}}
\put(260,600){\makebox(0,0)[lb]{\raisebox{0pt}[0pt][0pt]{\circle*{2}}}}
\put(270,600){\makebox(0,0)[lb]{\raisebox{0pt}[0pt][0pt]{\circle*{2}}}}
\put(280,600){\makebox(0,0)[lb]{\raisebox{0pt}[0pt][0pt]{\circle*{2}}}}
\put(290,600){\makebox(0,0)[lb]{\raisebox{0pt}[0pt][0pt]{\circle*{2}}}}
\put(300,600){\makebox(0,0)[lb]{\raisebox{0pt}[0pt][0pt]{\circle*{2}}}}
\put(310,600){\makebox(0,0)[lb]{\raisebox{0pt}[0pt][0pt]{\circle*{2}}}}
\put(320,600){\makebox(0,0)[lb]{\raisebox{0pt}[0pt][0pt]{\circle*{2}}}}
\put(330,600){\makebox(0,0)[lb]{\raisebox{0pt}[0pt][0pt]{\circle*{2}}}}
\end{picture}
\end{center}
\caption{\label{fig:LM} Laces in $\Lcal^{(M)}[0,m]$
for $M = 1,2,3,4$, with $s_1=0$ and $t_M=m$.}
\end{figure}
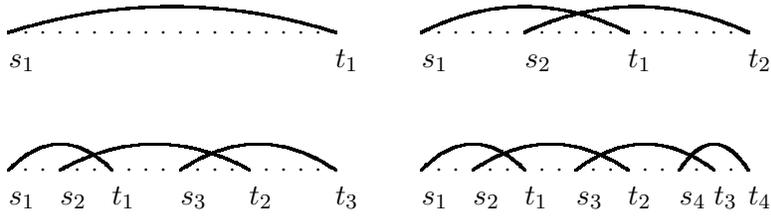

A lace
$L \in \Lcal^{(M)}[a,b]$ can be written by listing its edges as $L = \{ s_1t_1, \ldots , s_Mt_M\}$,
with $s_l < t_l$ for each $l$.  For $M=1$, there is a unique lace and
$a=s_1<t_1=b$.   For $M \geq 2$, a graph is a lace
$L \in \Lcal^{(M)}[a,b]$
if and only if its edge endpoints can be ordered as
\begin{equation}
\label{e:LMij}
    a=s_1<s_2, \quad
    s_{l+1}<t_l\leq s_{l+2} \hspace{5mm} (l=1,\ldots,M-2), \quad
    s_M < t_{M-1} <t_M = b
\end{equation}
(for $M=2$ the middle inequalities are absent).  Thus $L$ divides $[a,b]$ into
$2M-1$ subintervals:
\begin{equation}
\label{e:LMint}
    [s_1,s_2], \; [s_2,t_1], \; [t_1,s_3], \; [s_3,t_2], \;
    \ldots \; ,[s_M,t_{M-1}], \; [t_{M-1},t_M].
\end{equation}
Of these, intervals number 3, 5, \ldots, $(2M-3)$ can have zero
length for $M \geq 3$,
whereas all others have length at least $1$.
This last fact will be important, as intervals which cannot have zero length
yield good factors for convergence of the lace expansion.

\begin{defn}
\label{def:lace_presc}
Given a connected graph $\Gamma$ on $[a,b]$,
the following prescription associates to $\Gamma$ a
lace ${\sf L}_\Gamma \subset \Gamma$:  The lace ${\sf L}_\Gamma$ consists
of edges $s_1 t_1, s_2 t_2, \ldots$, with $t_1,s_1,t_2,s_2, \ldots$
determined, in that order, by
\[
    t_1 = \max \{t : at \in \Gamma \} ,  \;\;\;\; s_1 = a,
\]
\[
    t_{i+1} = \max \{ t: \exists s < t_i \mbox{ such that }
    st \in \Gamma  \}, \;\;\;\;
    s_{i+1} = \min \{ s : st_{i+1} \in \Gamma \}  .
\]
The procedure terminates when $t_{i+1}=b$.
Given a lace $L$, the set of all edges $st \not\in L$ such that
${\sf L}_{L\cup \{st\} } = L $ is denoted  $\Ccal (L)$. Edges in
$\Ccal (L)$ are said to be \emph{compatible}
with $L$.
\end{defn}

Given a lace $L$ and the closed intervals \eqref{e:LMint} it determines, any edge $st$ with
each of $s,t$ lying in the same
one of those closed intervals is a compatible bond in $\Ccal(L)$.

\subsection{Definition of $\pi_m(x)$}
\label{sec-pidef}

For $m \ge 1$ and $x\in \Q^N$,
let $\Wcal_m(x)$ denote the set of all $m$-step walks
$\omega = (\omega(0),\omega(1),\ldots , \omega(m))$ on $\Q^N$
(possibly self-intersecting), with $|\omega(i)-\omega(i-1)|=1$
for $i =1,\ldots,m$, and with $\omega(0)=0$ and $\omega(m)=x$.
Given $\omega \in \Wcal_m(x)$, let
\begin{align}
\label{Ust}
    U_{st}(\omega) = \left\{ \begin{array}{rl}
    -1 & \mbox{if}\,\, \omega(s)=\omega(t)
    \\
    0 & \mbox{if}\,\,  \omega(s)\neq \omega(t).
    \end{array}
    \right.
\end{align}
Then
\begin{equation}
\label{clam}
    c_n (x) = \sum_{\omega \in \Wcal_n(x)} \prod_{0 \leq s<t \leq n}
    (1+ U_{st}(\omega)),
\end{equation}
since the product is equal to $1$
if $\omega$ is a self-avoiding walk and is equal to $0$ otherwise.
By expanding the product in \eqref{clam} we obtain
\begin{equation}
\label{e:cnexp}
    c_n (x) = \sum_{\omega \in \Wcal_n(x)}
    \sum_{\Gamma \in \Bcal[0,n]} \prod_{st \in \Gamma}
    U_{st}(\omega).
\end{equation}

In the sum over all graphs $\Gamma$ in \eqref{e:cnexp},  we partition
according to whether:
\[
\text{ (a)~~$0$ does not occur in an edge in $\Gamma$, \;\;\; or \;\;\;
(b)~~$0$ does occur in an edge in $\Gamma$.}
\]
This gives the identity
\eqref{e:cnrec1} (see \cite[p.~22]{Slad06} for details), namely
\begin{equation}
\label{e:cnrec}
    c_n(x) = N(D * c_{n-1})(x) + \sum_{m=2}^n (\pi_m * c_{n-m})(x),
\end{equation}
with, for $m \ge 2$,
\begin{equation}
\label{pimxconn}
    \pi_m(x) = \sum_{\omega \in \Wcal_m(x)}
    \sum_{\Gamma \in \Gcal[0,m]} \prod_{st \in \Gamma}
    U_{st}(\omega).
\end{equation}
Indeed, Case~(a) gives rise to the first term on the right-hand side of
\eqref{e:cnrec}, and Case~(b) gives rise to the second term with
$[0,m]$ the support of the connected component of $\Gamma$ containing $0$.

The sum over connected graphs can be reorganised by summing over laces $L$ and over
connected graphs for which the prescription of Definition~\ref{def:lace_presc}
produces $L$.  Then a resummation
of the sum over those connected graphs leads to the formula
\begin{equation}
\label{e:pimdef}
    \pi_m (x) =
        \sum_{\omega \in \Wcal_m( x)}
        \sum_{L \in \Lcal[0,m] } \prod_{st \in L}  U_{st}(\omega)
    \prod_{s't' \in \Ccal (L) } ( 1 + U_{s't'}(\omega) ).
\end{equation}
More details of this resummation can be found in
\cite[Section~3.3]{Slad06} or in either of \cite{BS85,MS93}.
The formula \eqref{e:pimdef} is the useful formula for
application of Proposition~\ref{prop:lace}.

A refinement of \eqref{e:pimdef} is obtained by
restricting the sum in \eqref{e:pimdef} to laces with $M$ edges, and we define
\begin{equation}
\label{e:PiMdef}
    \pi_m^{(M)} (x) =
        \sum_{\omega \in \Wcal_m( x)}
        \sum_{L \in \Lcal ^{(M)}[0,m]} \prod_{st \in L} (- U_{st}(\omega))
    \prod_{s't' \in \Ccal (L) } ( 1 + U_{s't'}(\omega) ).
\end{equation}
The minus sign has been introduced in the first product
of \eqref{e:PiMdef} in order to make $\pi_m^{(M)} (x)$
a nonnegative integer.
The right-hand side of \eqref{e:PiMdef} is zero unless $M < m$ (since
$U_{st}(\omega)=0$ if $t=s+1$ and the subset of $\Lcal^{(M)}[0,m]$ consisting
of laces with all edges of length at least two is empty if $M \ge m$),
and hence
\begin{equation}
\label{Pidef}
    \pi_m (x) = \sum_{M=1}^{m-1} (-1)^M \pi_m^{(M)} (x).
\end{equation}
Each term in the double sum \eqref{e:PiMdef} is either $0$ or $1$,
with the first product in \eqref{e:PiMdef} equal to $1$ if and only if
$\omega(s)=\omega(t)$ for each edge $st \in L$, while the second product is equal
to $1$ if and only if $\omega(s')\neq \omega(t')$ for each $s't' \in \Ccal(L)$.
Thus $\pi_m^{(M)}(x)$ counts the $m$-step ``lace graphs'' starting at the origin
and ending at $x$, with the specific
self-intersections that are enforced by the lace and with
the specific self-avoidance conditions
enforced by the compatible edges.
The required self-intersections are illustrated in Figure~\ref{fig:lacetop}.
For $M \ge 1$, the generating function of $\pi_m^{(M)}(x)$ is written as
\begin{equation}
    \Pi_z^{(M)}(x) = \sum_{m=2}^\infty \pi_m^{(M)}(x)z^m.
\end{equation}

\begin{figure}
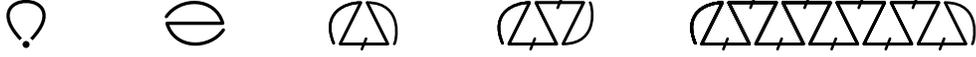

\begin{center}
\includegraphics[scale = 1.0]{figlacea-new.eps}
\includegraphics[scale = 1.0]{figlaceb-new.eps}
\includegraphics[scale = 1.0]{figlacec.eps}
\includegraphics[scale = 1.0]{figlaced.eps}
\includegraphics[scale = 1.0]{figlacee.eps}
\end{center}
\caption{
Self-intersections required for a
walk $\omega$ with $\prod_{st \in L}U_{st}(\omega) \neq 0$,
for the laces with $M=1,2,3,4$ edges depicted in Figure~\ref{fig:LM}.
The configuration for $M=11$ is also shown.  A slashed subwalk may have length zero,
whereas subwalks which are not slashed must take at least one step.}
\label{fig:lacetop}
\end{figure}

The simplest term is $\pi_m^{(1)}(x)$, which
is zero if $x \neq 0$.  Since every edge except $0m$ is compatible with
the unique $1$-edge lace $L=0m$, $\pi_m^{(1)}(x)$ is the number of $m$-step self-avoiding returns
to the origin when $x =0$.
Thus $\pi_m^{(1)}(0)$ is simply equal to $\sum_{i=1}^Nc_{m-1}(e_i)$, where here
the unit vector $e_i$ represents the penultimate vertex visited by the
self-avoiding return before it takes its final step to the
origin.
Its generating function is therefore
\begin{equation}
\label{e:Pi1-bis}
    \Pi_z^{(1)}(x) = \delta_{0,x} \sum_{m=2}^\infty \pi_m^{(1)}(x)z^m
    = \delta_{0,x} zN (D*G_z)(0).
\end{equation}

For $M \geq 2$, $\pi_m^{(M)}(x)$ counts $m$-step $M$-loop
walk configurations as indicated in Figure~\ref{fig:lacetop}.
The number of loops
in a diagram is equal to the number of edges in the corresponding lace.
Each of the $2M-1$ subwalks in a diagram is self-avoiding due to the compatible edges.
The compatible edges also enforce specific mutual avoidances between subwalks,
which can be neglected in upper bounds but which must be taken into account
to compute the coefficients $a_n$ in the asymptotic expansion of Theorem~\ref{thm:expansion}
(we return to this point in Section~\ref{sec:coefficients}).
The slashed lines in Figure~\ref{fig:lacetop} correspond to subwalks which may
consist of zero steps, but the others correspond to subwalks consisting
of at least one step (recall the discussion below \eqref{e:LMint}).

As an example of how to estimate $\Pi_z^{(M)}(x)$,
we consider the case $M=2$ in further detail.
A walk giving a contribution to $\pi_m^{(2)}(x)$ must travel from $0$ to $x$,
then back to $0$, and then finally return to $x$, as in the two-loop diagram in
Figure~\ref{fig:lacetop}.  Due to the product over compatible bonds in
\eqref{e:PiMdef}, each of these three subwalks must itself be self-avoiding, and $x$ cannot
equal $0$.  By relaxing the
avoidance between the three subwalks
we obtain an upper bound
\begin{equation}
\label{e:pi2}
    \pi_m^{(2)}(x) \le \sum_{\substack{m_1+m_2+m_3=m\\m_1,m_2,m_3 \ge 1}}
    c_{m_1}(x)c_{m_2}(x)c_{m_3}(x).
\end{equation}
We define the generating function
\begin{equation}
    H_z(x) = G_z(x) - \delta_{0,x} = \sum_{n=1}^\infty c_n^{(N)}(x) z^n
\end{equation}
for the sequence $c_n^{(N)}(x)$ with its  $n=0$ term omitted.
The generating function for $\pi_m^{(2)}(x)$ converts the convolution
in \eqref{e:pi2} into a product, so that
(since $\pi_m^{(2)}(x)=0$ when $x = 0$)
\begin{equation}
\label{e:Pi2H3}
    \Pi_z^{(2)}(x) \le  H_z(x)^3.
\end{equation}
We can then estimate the sum over $x$ of $\Pi_z^{(2)}(x)$ using
\begin{equation}
\label{e:Pi2a}
    \sum_{x\in\Q^N} \Pi_z^{(2)}(x) \le \sum_{x\in \Q^N}H_z(x)^3
    =
    (H_z*H_z^2)(0) \le (G_z*H_z^2)(0).
\end{equation}
This is the $M=2$ version of the inequality \eqref{e:PiMconv} that will appear below.
The right-hand side of \eqref{e:Pi2a} can be further estimated as
\begin{equation}
\label{e:Pi2b}
    \sum_{x\in\Q^N} \Pi_z^{(2)}(x) \le \|H_z\|_\infty (G_z*H_z)(0)
    \le \|H_z\|_\infty \|G_z * H_z\|_\infty,
\end{equation}
which is the $M=2$ version of the inequality \eqref{e:PizMbd} that will appear below.

\subsection{Diagrammatic estimates}
\label{sec:diag-est}

We define the multiplication operator $\Mcal_z$ and the convolution operator
$\Gcal_z$ by
\begin{align}
\label{e:Mcal}
    (\Mcal_z f)(x) & =  H_z(x)f(x),
    \\
\label{e:Gcal}
    (\Gcal_z f)(x) & =  (G_z *f)(x)
    ,
\end{align}
for $f: \Q^N \to \C$ and $x \in \Q^N$.
For such functions $f$,
we use the norms $\|f\|_\infty = \max_{x\in \Q^N}|f(x)|$ and
$\|f\|_p = [\sum_{x\in\Q^N}|f(x)|^p]^{1/p}$ for $p\in [1,\infty)$.

A proof of the following diagrammatic estimate can be found at \cite[(4.40)]{Slad06}.
For $\Pi_z^{(1)}$ it is \eqref{e:Pi1-bis}, since $G_z$ in \eqref{e:Pi1-bis} can
be replaced by $H_z$ since $D(0)=0$.
Although presented in \cite{Slad06} for $\Z^d$, the proof applies to the
hypercube mutatis mutandis.
Each of the $2M+1$ factors on the right-hand side of \eqref{e:PiMconv} arises
from one of the $2M+1$ lines in the $M$-loop diagrams depicted in Figure~\ref{fig:lacetop}.

\begin{prop}
\label{prop:PiM}
For $z \ge 0$,
\begin{equation}
\label{e:Pi1}
    \Pi^{(1)}_z(x)
    =
    \delta_{0,x} zN (D*H_z)(0),
\end{equation}
and for $M \ge 2$,
\begin{equation}
\label{e:PiMconv}
    \| \Pi^{(M)}_z\|_1
    \le
    \left[ (\Gcal_z \Mcal_z)^{M-1}  H_z\right] (0).
\end{equation}
\end{prop}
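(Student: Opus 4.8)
The plan is to read both bounds directly off the defining formulas \eqref{e:pimdef}--\eqref{e:PiMdef}, which is the standard diagrammatic estimate of the lace expansion; the present hypercube version differs from the $\Z^d$ version of \cite[(4.40)]{Slad06} only superficially. The only structural inputs are $D(0)=0$, the summability of $D$ and of $H_z$, and the reflection identity $f(x)=f(-x)$; on $\Q^N$ one has $-x=x$, so $f(x)=f(-x)$ holds for \emph{every} $f:\Q^N\to\C$, which makes the argument if anything simpler than on $\Z^d$. Throughout we take $z\ge0$, so that every series below has nonnegative terms, every rearrangement of summations is justified, and all manipulations remain valid even when both sides are $+\infty$ (finiteness of the right-hand side is a separate matter, settled later by the bubble condition).

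The case $M=1$ is immediate. The unique lace on $[0,m]$ is $L=\{0m\}$, and every other edge of $[0,m]$ is compatible with it, so in \eqref{e:PiMdef} the product over $\Ccal(L)$ is the indicator that $\omega\restr{[1,m]}$ is self-avoiding while $-U_{0m}(\omega)$ forces $\omega(m)=\omega(0)=0$. Summing over $\omega$ and over $m$ with weight $z^m$ and peeling off the first step gives $\Pi^{(1)}_z(x)=\delta_{0,x}\,zN(D*G_z)(0)$ as in \eqref{e:Pi1-bis}, and $(D*G_z)(0)=(D*H_z)(0)$ because $D(0)=0$ annihilates the $\delta_{0,\cdot}$ part of $G_z$.

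For $M\ge2$ I would argue in two stages. First, in \eqref{e:PiMdef} discard every compatible bond except those with both endpoints in a common closed subinterval of the partition \eqref{e:LMint}: since each factor $1+U_{s't'}(\omega)$ lies in $\{0,1\}$, keeping only a subset of them can only increase the product, so $\pi_m^{(M)}(x)$ is bounded above by the same double sum with $\Ccal(L)$ replaced by this set of ``internal'' bonds. For a fixed lace $L$ the internal-bond product then factorises over the $2M-1$ subintervals of \eqref{e:LMint}: on each subinterval it becomes the indicator that the corresponding subwalk is self-avoiding, while the $M$ factors $-U_{s_lt_l}(\omega)$ impose the pinch conditions $\omega(s_l)=\omega(t_l)$. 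Summing over $\omega\in\Wcal_m(x)$ with the $2M-1$ division points held fixed therefore produces a product of two-point-function coefficients $c_{m_j}(\cdot)$, one per subinterval, whose spatial arguments are the subwalk displacements and whose $M$ pinch constraints weave them into the $M$-loop topology drawn in Figure~\ref{fig:lacetop}; the $M+1$ subintervals that cannot have zero length (all of \eqref{e:LMint} except numbers $3,5,\ldots,2M-3$) carry $m_j\ge1$. Now sum over the division points, over $m$ with weight $z^m$, and --- for $\|\cdot\|_1$ --- over $x\in\Q^N$. Using $\sum_{y\in\Q^N}f(y)g(y)=(f*g)(0)$, the spatial sums collapse into iterated convolutions; the never-empty subwalks contribute generating functions $H_z$ and the possibly-empty ones contribute $G_z$. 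This is exactly the computation carried out for $M=2$ in \eqref{e:pi2}--\eqref{e:Pi2a}.

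The remaining --- and only genuinely delicate --- point is to recognise the iterated convolution produced above as $[(\Gcal_z\Mcal_z)^{M-1}H_z](0)$. I would do this by induction on $M$, exploiting the recursive structure of the $M$-loop diagram of Figure~\ref{fig:lacetop}: it consists of the $(M-1)$-loop diagram with one further loop attached at an end vertex (reflecting the fact that deleting the first edge $s_1t_1$ from a lace on $[0,m]$ leaves a lace on $[s_2,m]$ with $M-1$ edges). Writing out the associated iterated convolution, refactoring it, and applying the coefficientwise bound $0\le H_z\le G_z$ to close the recursion --- precisely as in the passage $(H_z*H_z^2)(0)\le(G_z*H_z^2)(0)$ of \eqref{e:Pi2a} --- contributes exactly the operator $\Gcal_z\Mcal_z$: the multiplication $\Mcal_z$ by $H_z$ at the attaching vertex from \eqref{e:Mcal}, followed by the convolution $\Gcal_z$ against $G_z$ along the returning strand from \eqref{e:Gcal}, with the evaluation at the origin coming from the closing pinch $\omega(t_1)=\omega(0)=0$. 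Keeping the bookkeeping of the loops, and of which line carries $H_z$ rather than $G_z$, exactly straight is where the care is needed; with that done the whole argument is the one in \cite[Section~4]{Slad06}, which goes through on the hypercube with only cosmetic changes.
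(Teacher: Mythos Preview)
Your proposal is correct and follows essentially the same approach as the paper: the paper does not give a self-contained proof but simply cites \cite[(4.40)]{Slad06} and notes that the $\Z^d$ argument carries over to the hypercube mutatis mutandis, with the $M=1$ case handled exactly as you do via \eqref{e:Pi1-bis} and $D(0)=0$. Your outline of the $M\ge 2$ argument (relax to internal compatible bonds, factorise over the $2M-1$ subintervals, pass to generating functions, then relax one $H_z$ to $G_z$ to obtain the clean operator form) is the standard diagrammatic estimate, and your acknowledgement that the bookkeeping of which lines carry $H_z$ versus $G_z$ is the delicate part is apt and matches the reference.
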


Note that estimates for $z \ge 0$ as in Proposition~\ref{prop:PiM} also imply bounds for
complex $z\in \C$ via
\begin{equation}
\label{e:PizC}
    |\Pi_z^{(M)}(x)| \le \Pi_{|z|}^{(M)}(x)
\end{equation}
since $\pi_m^{(M)}(x) \ge 0$.
The following lemma
provides a way to bound the right-hand side of \eqref{e:PiMconv}.
For its elementary proof see, e.g., \cite[Lemma~4.6]{Slad06};
the assumption there that the $f_i$ be even functions is vacuous for $\Q^N$ since
$x=-x$ for all $x\in\Q^N$.

\begin{lemma}
\label{lem:convol_bound}
Given nonnegative
functions $f_0,f_1,\ldots,f_{2q}$ on $\Q^N$, for $j=1,\ldots,q$ let $\Ccal_{j}$
and $\Mcal_{j}$ be the operators $(\Ccal_j f)(x) = (f_{2j}*f)(x)$ and
$(\Mcal_j f)(x) = f_{2j-1}(x)f(x)$.  Then for any
$k \in \{0,\ldots, 2q\}$,
\begin{equation}
\label{e:HMjboundaz}
    \|\Ccal_{q}\Mcal_{q}\cdots \Ccal_{1}\Mcal_{1} f_0\|_\infty
    \leq \|f_k\|_\infty \prod  \|f_{i}*f_{i'}\|_\infty ,
\end{equation}
where the product is over disjoint consecutive pairs
$ii'$ taken from the set $\{0,\ldots, 2q\}\setminus \{k\}$
(e.g., for $k=3$ and $q=3$, the product has factors with
$ii'$ equal to $01$, $24$, $56$).
\end{lemma}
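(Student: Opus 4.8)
The plan is to prove the bound by unwinding the composition into an explicit multiple sum and then performing the vertex sums one at a time. Expanding the definitions of $\Ccal_j$ and $\Mcal_j$ gives
\begin{equation}
(\Ccal_{q}\Mcal_{q}\cdots \Ccal_{1}\Mcal_{1} f_0)(x)
= \sum_{y_1,\ldots,y_q \in \Q^N} f_0(y_1)\, f_1(y_1)
\,\prod_{j=1}^{q-1}\big[\, f_{2j}(y_{j+1}-y_j)\, f_{2j+1}(y_{j+1})\,\big]\,
f_{2q}(x-y_q),
\end{equation}
so that the $2q+1$ functions $f_0,\ldots,f_{2q}$ appear, \emph{in this order}, along a chain of vertices $y_1,\ldots,y_q,x$: the factors $f_0,f_1$ sit at $y_1$; for $1\le j\le q-1$ the factor $f_{2j}$ sits on the edge $\{y_j,y_{j+1}\}$ and $f_{2j+1}$ at $y_{j+1}$; and $f_{2q}$ sits on the edge $\{y_q,x\}$. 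This is the same chain one reads off from the $M$-loop diagrams in Figure~\ref{fig:lacetop}.

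First I would bound the distinguished factor $f_k$ by $\|f_k\|_\infty$ — immediate, since $f_k$ is either evaluated at a single vertex ($k$ odd or $k=0$) or is an edge factor $f_k(\,\cdot-\cdot\,)$, and in both cases $f_k\le\|f_k\|_\infty$ pointwise — and pull $\|f_k\|_\infty$ out of the sum. Deleting $f_k$ leaves some vertex touched by only two of the remaining factors. I would then carry out the surviving vertex sums by repeatedly choosing a vertex $y$ that carries only two still-active factors $g(y-u)$ and $h(y-v)$ (for already-summed $u,v$, with $u$ or $v$ absent when a factor sits at $y$ itself) and summing over it: the identity $\sum_y g(y-u)h(y-v)=(g*h)(u-v)$ — valid because $-w=w$ on $\Q^N$, which makes $*$ symmetric and makes the evenness hypothesis of \cite{Slad06} vacuous — replaces the two factors by a single convolution, which is bounded by $\|g*h\|_\infty$ and thereby made inactive; at the last step the two remaining factors sit at a common vertex, and $\sum_y g(y)h(y)=(g*h)(0)\le\|g*h\|_\infty$. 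The pair $\{i,i'\}$ consumed at each step consists of consecutive indices of $\{0,\ldots,2q\}\setminus\{k\}$, so iterating produces exactly the asserted product.

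To make this rigorous I would induct on $q$, proving a slightly more general statement that covers both the chains above (with a free base point $x$, whose sup-norm is sought) and the ``closed'' chains obtained once $x$ has been summed out (whose total sum is sought), and allowing the end-factors to be arbitrary nonnegative functions; then one peeling step always reduces a chain to strictly shorter chains of these types (two of them if $f_k$ was an interior edge). The whole argument uses only the triangle inequality, the definition of $\|\cdot\|_\infty$, and the elementary convolution identity above — this is why the lemma is ``elementary,'' and for $\Z^d$ it is recorded as \cite[Lemma~4.6]{Slad06}. The one delicate point, and hence the main obstacle, is the bookkeeping: one must verify, over the cases $k=0$, $k$ odd, and $k$ even, and within those over the location of the removed factor along the chain, that the pairs generated by the peeling are exactly the consecutive pairs of the index set with $k$ deleted, and that no factor is ever stranded.
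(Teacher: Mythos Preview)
Your proposal is correct and is essentially the standard proof the paper points to (\cite[Lemma~4.6]{Slad06}): write the composition as an explicit chain sum, extract $\|f_k\|_\infty$, and peel off vertices pairwise via the convolution identity, the evenness hypothesis being vacuous on $\Q^N$ since $x=-x$. The paper gives no independent argument, so there is nothing further to compare.
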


Given a function $f:\Q^N\to \C$ and $k \in \Q^N$, we define $f_k:\Q^N \to \C$ by
\begin{equation}
\label{e:fkdef}
    f_k(x) = (1-(-1)^{k\cdot x})f(x).
\end{equation}
Also, given a
power series $f(z) = \sum_{n=0}^\infty a_n z^n$ and a real number $\epsilon > 0$,
we define the ``fractional derivative''
\begin{equation}
\label{e:depdef}
	\delta_z^\epsilon f(z) = \sum_{n=1}^\infty n^\epsilon a_n z^n.
\end{equation}
For $\epsilon$ equal to
a positive integer, $\delta_z^\epsilon$ does
not give the usual derivative but gives instead $( z \partial_z )^\epsilon$.

The following proposition gives norm estimates for $\Pi_z$, for $\Pi_{z,k}$
(defined by taking $f=\Pi_z$ in \eqref{e:fkdef}),
and for fractional $z$-derivatives of $\Pi_z$.
Its proof is a very minor modification of the proof of \cite[Theorem~4.1]{Slad06}
(which is inspired by \cite{BS85})
to which we refer the interested reader for the somewhat lengthy details.
Rather than repeating those details here,
we instead illustrate the ideas in the proof of \eqref{e:Pizder}--\eqref{e:Pikder}
by focussing on the cases $M=1,2$.

\begin{prop}
\label{prop:Pidots}
Let $z \ge 0$, $k\in\Q^N$, and $\epsilon \ge 1$.
For $M=1$, $\Pi_{z,k}^{(1)}(x)=0$ and
\begin{equation}
\label{e:Pi1bd}
    \|   \Pi^{(1)}_z \|_1
    \le zN\| H_z\|_\infty, \qquad
    \|  \delta_z^\epsilon   \Pi^{(1)}_z \|_1
    \le N\|\delta_z^\epsilon (zH_z)\|_\infty.
\end{equation}
For $M \ge 2$,
\begin{align}
\label{e:PizMbd}
    \|     \Pi^{(M)}_z \|_1
    &\le
    \| H_z\|_\infty
    \|H_z * G_z \|_\infty^{M-1}
    ,
\\
\label{e:Pizder}
    \|  \delta_z^\epsilon   \Pi^{(M)}_z \|_1
    &\le
    (2M-1)^{\epsilon}
    \|\delta_z^{\epsilon} H_z\|_\infty
    \|H_z * G_z \|_\infty^{M-1}
    ,
\\
\label{e:Pikder}
    \|\Pi^{(M)}_{z,k} \|_1
    &\le
    \floor{M/2}
    \|H_{z,k}\|_\infty
    \|H_z * G_z \|_\infty^{M-1}
    .
\end{align}
\end{prop}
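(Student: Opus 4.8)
The plan is to derive every bound from the diagrammatic identity \eqref{e:Pi1} and inequality \eqref{e:PiMconv} of Proposition~\ref{prop:PiM}, from the convolution bound of Lemma~\ref{lem:convol_bound}, and from the elementary fact that both $\delta_z^\epsilon$ and the operation $f\mapsto f_k$ of \eqref{e:fkdef} annihilate the term of lowest degree, so that $\delta_z^\epsilon G_z=\delta_z^\epsilon H_z$ and $G_{z,k}=H_{z,k}$ (using $G_z=\delta_{0,\cdot}+H_z$ and $1-(-1)^{k\cdot 0}=0$). I would first dispose of $M=1$. By \eqref{e:Pi1}, $\Pi_z^{(1)}$ is supported at $x=0$, so $\Pi_{z,k}^{(1)}(x)=(1-(-1)^{k\cdot x})\Pi_z^{(1)}(x)$ vanishes at $x=0$ (where $1-(-1)^{k\cdot 0}=0$) and at every $x\neq 0$ (where $\Pi_z^{(1)}(x)=0$), hence $\Pi_{z,k}^{(1)}\equiv 0$. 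For the two norm bounds, $\|\Pi_z^{(1)}\|_1=zN(D*H_z)(0)\le zN\|D\|_1\|H_z\|_\infty=zN\|H_z\|_\infty$ by Young's inequality and $\|D\|_1=1$; and since $zN(D*H_z)(0)=N(D*(zH_z))(0)$ has $z^m$-coefficient $N(D*c_{m-1})(0)$, applying $\delta_z^\epsilon$ gives $\delta_z^\epsilon\Pi_z^{(1)}(0)=N(D*\delta_z^\epsilon(zH_z))(0)$, whence $\|\delta_z^\epsilon\Pi_z^{(1)}\|_1\le N\|D\|_1\|\delta_z^\epsilon(zH_z)\|_\infty=N\|\delta_z^\epsilon(zH_z)\|_\infty$.

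For $M\ge 2$, the plain estimate \eqref{e:PizMbd} would follow from \eqref{e:PiMconv} by a single application of Lemma~\ref{lem:convol_bound}. In the notation of that lemma, $(\Gcal_z\Mcal_z)^{M-1}H_z$ corresponds to $q=M-1$ with $f_0=H_z$, with $f_i=H_z$ for odd $i$ (the factors introduced by $\Mcal_z$) and $f_i=G_z$ for even $i\ge 2$ (the factors introduced by $\Gcal_z$); taking $k=1$, the remaining index set $\{0,2,3,\dots,2M-2\}$ pairs consecutively into the $M-1$ pairs $(0,2),(3,4),\dots,(2M-3,2M-2)$, each contributing $\|H_z*G_z\|_\infty$. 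Since the value at $0$ is at most the $\|\cdot\|_\infty$-norm, this yields $\|\Pi_z^{(M)}\|_1\le[(\Gcal_z\Mcal_z)^{M-1}H_z](0)\le\|H_z\|_\infty\|H_z*G_z\|_\infty^{M-1}$.

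For \eqref{e:Pizder} and \eqref{e:Pikder} I would return to the diagrammatic representation \eqref{e:PiMdef}, in which a lace graph contributing to $\pi_m^{(M)}(x)$ is cut along the lace into its $2M-1$ self-avoiding subwalks of lengths $m_1,\dots,m_{2M-1}$ with $\sum_j m_j=m$ (recall \eqref{e:LMint}); this is the bookkeeping behind \cite[Theorem~4.1]{Slad06}, whose proof I would follow with two modifications. For \eqref{e:Pizder}, $\delta_z^\epsilon$ multiplies the $z^m$-coefficient by $m^\epsilon=(\sum_j m_j)^\epsilon\le(2M-1)^{\epsilon-1}\sum_j m_j^\epsilon$ (power-mean inequality, since $\epsilon\ge 1$); for each $j$ the term carrying $m_j^\epsilon$ replaces the generating function of the $j$-th subwalk by its $\delta_z^\epsilon$, and because that generating function is one of $H_z$ or $G_z$ and $\delta_z^\epsilon G_z=\delta_z^\epsilon H_z$, this distinguished factor contributes $\|\delta_z^\epsilon H_z\|_\infty$ when extracted in $\|\cdot\|_\infty$, while the remaining $2M-2$ lines organise, via Lemma~\ref{lem:convol_bound}, into $M-1$ bubbles each bounded by $\|H_z*G_z\|_\infty$ (absorbing any stray $H_z*H_z$ using $H_z\le G_z$); summing over the $2M-1$ choices of $j$ produces the prefactor $(2M-1)^{\epsilon-1}(2M-1)=(2M-1)^\epsilon$. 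For \eqref{e:Pikder}, the endpoint $x$ of the diagram is a telescoping sum of the displacements of a subcollection of $\floor{M/2}$ of the subwalks, so $1-(-1)^{k\cdot x}\le\sum_j(1-(-1)^{k\cdot x_j})$ over those $\floor{M/2}$ indices; each summand converts the $j$-th subwalk's generating function $f$ into $f_k$ (with $G_{z,k}=H_{z,k}$), extracted as $\|H_{z,k}\|_\infty$, the rest again giving $M-1$ bubbles bounded by $\|H_z*G_z\|_\infty$, and the sum over the $\floor{M/2}$ indices gives the prefactor. The illustrative cases $M=1,2$ (for the latter using $\Pi_z^{(2)}(x)\le H_z(x)^3$) make all of this explicit, and I would cite \cite[Theorem~4.1]{Slad06} and \cite{BS85} for the combinatorics of general $M$.

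I expect the main obstacle to be precisely this last combinatorial point: one must verify that, for every choice of the distinguished subwalk carrying the $\delta_z^\epsilon$ factor (or the $1-(-1)^{k\cdot x_j}$ factor), the remaining $2M-2$ subwalk lines genuinely pair up into $M-1$ bubbles each of the controllable type $\|H_z*G_z\|_\infty$, and never into an uncontrolled pairing such as $\|G_z*G_z\|_\infty$ or an isolated $\|G_z\|_\infty$. This is where the precise topology of the $M$-loop diagrams of Figure~\ref{fig:lacetop} enters, together with the three reductions $\delta_z^\epsilon G_z=\delta_z^\epsilon H_z$, $G_{z,k}=H_{z,k}$, and $H_z\le G_z$; everything else is routine application of Young's inequality and Lemma~\ref{lem:convol_bound}.
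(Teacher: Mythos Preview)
Your proposal is correct and follows essentially the same approach as the paper's proof. Both handle $M=1$ via the explicit formula \eqref{e:Pi1}, obtain \eqref{e:PizMbd} by applying Lemma~\ref{lem:convol_bound} to \eqref{e:PiMconv} (you take the distinguished index $k=1$, the paper takes $k=0$, with identical output), and for \eqref{e:Pizder}--\eqref{e:Pikder} both distribute the weight over the $2M-1$ subwalks via H\"older/power-mean for $m^\epsilon$ and via $1-(-1)^{k\cdot x}\le\sum_j(1-(-1)^{k\cdot x_j})$ for the $k$-difference, then invoke Lemma~\ref{lem:convol_bound} with the modified line distinguished and cite \cite{Slad06} for the general-$M$ combinatorics. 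Your explicit flagging of the pairing concern (that no residual $\|G_z*G_z\|_\infty$ appears) is apt: the paper's phrasing ``the modification of \eqref{e:PiMconv}'' glosses over exactly this point, and the resolution in both cases lies in the detailed diagrammatic organisation of \cite[Theorem~4.1]{Slad06} rather than in a na\"ive application of Lemma~\ref{lem:convol_bound} to the fixed sequence underlying \eqref{e:PiMconv}.
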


\begin{proof}
Since $\Pi_z^{(1)}(x)=\delta_{0,x} zN(D*H_z)(0)$ by \eqref{e:Pi1},
it follows that $\Pi_{z,k}^{(1)}(x)=0$ as claimed, and also
the second bound of \eqref{e:Pi1bd} follows from
the identity
\begin{equation}
    \|  \delta_z^\epsilon   \Pi^{(1)}_z \|_1
    =
    \delta_z^\epsilon \big[zN(D*H_z)(0)\big]
    = N \sum_{i=1}^N N^{-1} \delta_z^\epsilon [zH_z(e_i)],
\end{equation}
where the $e_i$ are the unit vectors in $\Q^N$.
The first estimate of \eqref{e:Pi1bd} follows similarly, with $\delta_z^\epsilon$ omitted.

We restrict attention now to $M \ge 2$.  The bound \eqref{e:PizMbd}
is a consequence of \eqref{e:PiMconv} and Lemma~\ref{lem:convol_bound}
(with $k=0$), since
the right-hand side of \eqref{e:PiMconv}
is bounded by the left-hand side of \eqref{e:HMjboundaz} with $f_0=H_z$
and with $f_1,\ldots,f_{2(M-1)}$ alternating between $H_z$ and $G_z$.

For \eqref{e:Pizder}, by definition,
\begin{equation}
    \delta_z^\epsilon \Pi^{(M)}_z(x) = \sum_{m=2}^\infty m^\epsilon \pi_m^{(M)}(x) z^m.
\end{equation}
In the diagrammatic representation, $\pi_m^{(M)}(x)$ is represented by a diagram
with $2M-1$ subwalks of total length $m$.  Let $m_i$ be the length of the $i^{\rm th}$
subwalk.  By H\"older's inequality with exponents $\epsilon$ and $\frac{\epsilon}{\epsilon-1}$ (here is where the restriction
$\epsilon \ge 1$ is convenient),
\begin{equation}
\label{e:mHolder}
    m^\epsilon = \left( \sum_{i=1}^{2M-1} m_i\cdot 1 \right)^\epsilon
    \le (2M-1)^{\epsilon- 1} \sum_{i=1}^{2M-1} m_i^\epsilon.
\end{equation}
To see how this can be used in the simplest example, consider the case $M=2$.  In this case,
\eqref{e:pi2} and \eqref{e:mHolder} give
\begin{equation}
\label{e:pi2ep}
    m^\epsilon \pi_m^{(2)}(x)
    \le 3^{\epsilon-1}   \sum_{i=1}^3
    \sum_{\substack{m_1+m_2+m_3=m\\m_1,m_2,m_3 \ge 1}}
    m_i^\epsilon \;  c_{m_1}(x)c_{m_2}(x)c_{m_3}(x).
\end{equation}
We can bound the sum over $x$ of the
generating function of the left-hand side, term-by-term in the sum
over $i$ as in \eqref{e:Pi2a}, by
\begin{align}
\label{e:depPiH}
    \|  \delta_z^\epsilon   \Pi^{(2)}_z \|_1
    \le
    3^\epsilon \sum_{x\in \Q^N} \big(\delta_z^\epsilon H_z(x) \big) H_z(x)^2
    .
\end{align}
Observe that, along with the factor $3^\epsilon$,
one of the $H_z$ factors in \eqref{e:Pi2a} has now been replaced by
$\delta_z^\epsilon H_z$.  As in \eqref{e:Pi2a}--\eqref{e:Pi2b}, we can continue the estimate
with
\begin{align}
    \|  \delta_z^\epsilon   \Pi^{(2)}_z \|_1
    \le
    3^\epsilon \|\delta_z^\epsilon H_z\|_\infty \sum_{x\in \Q^N}H_z(x)^2
    \le
    3^\epsilon \|\delta_z^\epsilon H_z\|_\infty \|H_z*G_z\|_\infty
    ,
\end{align}
in agreement with \eqref{e:Pizder} for $M=2$.
For general $M\ge 3$, use of the inequality
\eqref{e:mHolder} leads to an upper bound for $\|\delta_z^\epsilon \Pi_z^{(M)}\|_1$ equal to
$(2M-1)^{\epsilon-1}$ times a sum of $2M-1$ terms with the $i^{\rm th}$ term
being the modification of \eqref{e:PiMconv} in which the $i^{\rm th}$ of the factors
$\Gcal_z,\Mcal_z,H_z$ has its function ($G_z$ or $H_z$) replaced by
$\delta_z^\epsilon H_z$ (note that $\delta_z^\epsilon G_z=\delta_z^\epsilon H_z$ by definition).
Consequently, with \eqref{e:HMjboundaz}
and choosing the modified factor as the distinguished
one in \eqref{e:HMjboundaz}, we see that
\begin{equation}
    \|\delta_z^\epsilon \Pi^{(M)}_z \|_1
    \le
    (2M-1)^{\epsilon} \|\delta_z^{\epsilon} H_z\|_\infty
    \|H_z * G_z \|_\infty^{M-1}
\end{equation}
as claimed.

Finally, for \eqref{e:Pikder},
we again illustrate this for the case $M=2$, as follows.  By definition,
and by \eqref{e:Pi2H3},
\begin{align}
        \|\Pi^{(2)}_{z,k} \|_1
        & =
        \sum_{x\in \Q^N}[1-(-1)^{k\cdot x}]\Pi_z^{(2)}(x)
        \nnb & \le
        \sum_{x\in \Q^N}\left([1-(-1)^{k\cdot x}]H_z(x)\right) H_z(x)^2
        =
        \sum_{x\in \Q^N} H_{z,k}(x)  H_z(x)^2.
\end{align}
This is reminiscent of \eqref{e:depPiH}, with the difference that one factor
on the right-hand side is $H_{z,k}(x)$ rather than $\delta_z^\epsilon H_z(x)$.
By using the supremum norm on that factor, we can similarly obtain an upper bound
\begin{equation}
        \|\Pi^{(2)}_{z,k} \|_1
        \le
        \|H_{z,k}\|_\infty \|H_z*G_z\|_\infty,
\end{equation}
which is the $M=2$ case of \eqref{e:Pikder}.
For general $M\ge 3$, the proof is a very small adaptation of the proof of
\cite[(4.10)]{Slad06}.
We divide the displacement $x$ in $\pi_m(x)$ as
a sum $x=\sum_{i=1}^{\floor{M/2}}x_i$ over displacements $x_i$ along the subwalks along
the bottom of the $M$-loop diagram depicted in Figure~\ref{fig:lacetop}.  We use the inequality
\begin{equation}
    1-(-1)^{k\cdot x} \le \sum_{i=1}^{\floor{M/2}}[1-(-1)^{k\cdot x_i}]
\end{equation}
which holds if $k\cdot x$ is even since the left-hand side is then zero, and holds
if $k\cdot x$ is odd since then at least one of the $k\cdot x_i$ must also be odd.
Use of this inequality leads to
an upper bound for $\|\Pi_{z,k}^{(M)}\|_1$ consisting of a sum of $\lfloor M/2\rfloor$
terms, each of which is the modification of \eqref{e:PiMconv} in which one of the
factors $\Gcal_z,\Mcal_z,H_z$, has its function $(G_z$ or $H_z$) replaced by
$H_{z,k}$ (note that $H_{z,k}=G_{z,k}$ by definition).  With \eqref{e:HMjboundaz}
and with the modified factor chosen as the distinguished
one in \eqref{e:HMjboundaz}, this leads to \eqref{e:Pikder}.
\end{proof}

\section{Random walk on the hypercube}
\label{sec:rw}

The convergence proof for the lace expansion makes use of a comparison with
simple random walk on the hypercube.  In this section, we prove the two estimates
needed for that task, in Lemma~\ref{lem:Dsum}.

Recall from \eqref{e:Ddef} that
$D(x) = N^{-1} \delta_{|x|,1}$ is the transition probability for simple random walk
on the hypercube.
Its Fourier transform is given in \eqref{e:Dhat} as
$\hat D(k) = 1 - \frac{2|k|}{N}$.
The following lemma, which is similar to but simpler
than what appears in \cite[Section~2]{BCHSS05b}
due to our restriction to the hypercube,
provides essential estimates for
the convergence proof for the lace expansion in Section~\ref{sec:le-conv}.

\begin{lemma}
\label{lem:Dsum}
For $i \ge 0$ there is a constant $c_i$ such that
\begin{equation}
\label{e:tranprob}
    \max_{x \in \Q^N}
    \frac{1}{V} \sum_{k \in \Q^N} \hat D(k)^i (-1)^{k \cdot x}
    \le c_{i} N^{-\lceil i/2\rceil}.
\end{equation}
For $i,j \ge 0$ there is a constant $c_{i,j}$ such that for all $t \in [0,1]$
\begin{equation}
\label{e:Dijsum}
    \frac{1}{V} \sum_{k \in \Q^N: k \neq 0} \frac{|\hat D(k)^i|}{[1-t\hat D(k)]^j}
    \le
    c_{i,j} N^{-i/2}.
\end{equation}
\end{lemma}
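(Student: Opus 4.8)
The plan is to reduce both bounds to estimates on the binomial distribution, exploiting that $\hat D(k)=1-2|k|/N$ depends on $k$ only through its Hamming weight and that exactly $\binom{N}{\ell}$ vectors $k\in\Q^N$ have $|k|=\ell$. Throughout it suffices to prove the estimates for $N$ large, since the remaining finitely many $N$ can be absorbed by enlarging $c_i$ and $c_{i,j}$.

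For \eqref{e:tranprob} I would first note, via the inverse Fourier transform \eqref{e:IFT} and the identity $\widehat{f*g}=\hat f\hat g$, that the left-hand side equals the $i$-fold convolution $D^{*i}(x)$, and that $D^{*i}(x)=N^{-i}W_i(x)$ where $W_i(x)$ is the number of $i$-step walks on $\Q^N$ from $0$ to $x$. Writing $\ell=|x|$, such a walk exists only if $\ell\le i$ and $\ell\equiv i\pmod 2$; I would classify it by the number $r$ of distinct coordinates it flips, noting that exactly $\ell$ of these lie in $\mathrm{supp}(x)$ and are flipped an odd ($\ge 1$) number of times while the remaining $r-\ell$ are flipped an even ($\ge 2$) number of times. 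Counting steps gives $i\ge \ell+2(r-\ell)$, i.e.\ $r-\ell\le (i-\ell)/2$ (an integer), and hence
\begin{equation}
    W_i(x)\le \sum_{r=\ell}^{(i+\ell)/2}\binom{N-\ell}{r-\ell}\, r^{\,i}
    \le i^{\,i}\sum_{p=0}^{(i-\ell)/2}N^{p}\le 2\,i^{\,i}\,N^{(i-\ell)/2},
\end{equation}
so $D^{*i}(x)\le 2\,i^{\,i}\,N^{-(i+\ell)/2}$. Maximising over $x$: the exponent $(i+\ell)/2$ is minimised at $\ell=0$ when $i$ is even and at $\ell=1$ when $i$ is odd, both giving $N^{-\lceil i/2\rceil}$, which yields \eqref{e:tranprob}.

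For \eqref{e:Dijsum} the first ingredient is the elementary lower bound
$1-t\hat D(k)=(1-t)+\tfrac{2t|k|}{N}\ge \tfrac{|k|}{N}$ for all $k\neq 0$ and $t\in[0,1]$: writing $s=|k|/N\in(0,1]$, one has $(1-t)+2ts-s=(1-t)-(1-2t)s\ge (1-t)-(1-2t)=t\ge 0$ when $t\le\tfrac12$, and $(1-t)+2ts-s=(1-t)+(2t-1)s\ge 1-t\ge 0$ when $t\ge\tfrac12$. Thus $[1-t\hat D(k)]^{-j}\le (N/|k|)^{j}$, and it remains to bound $\frac1V\sum_{\ell=1}^{N}\binom{N}{\ell}\,|1-2\ell/N|^{i}(N/\ell)^{j}$. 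I would split at $|k|\in\{N/4,\,3N/4\}$. In the bulk range $N/4\le\ell\le 3N/4$ one has $(N/\ell)^{j}\le 4^{j}$, so this part is at most $4^{j}\,\frac1V\sum_{k}|\hat D(k)|^{i}=4^{j}\,\E\,|1-2B/N|^{i}$ with $B\sim\mathrm{Bin}(N,\tfrac12)$; since $B-N/2=\tfrac12\sum_{\alpha=1}^{N}Y_\alpha$ with $Y_\alpha$ i.i.d.\ uniform on $\{-1,+1\}$, only even index-patterns contribute to $\E(\sum_\alpha Y_\alpha)^{2m}$, giving $\E(\sum_\alpha Y_\alpha)^{2m}\le C_m N^{m}$, hence $\E\,|1-2B/N|^{2m}=N^{-2m}\E(\sum_\alpha Y_\alpha)^{2m}\le C_m N^{-m}$, and the odd case $i=2m+1$ follows by Cauchy--Schwarz, so the bulk part is $O(N^{-i/2})$. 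In the two tail ranges $\ell<N/4$ and $\ell>3N/4$, the standard entropy bound gives $\binom{N}{\ell}\le 2^{NH(1/4)}$ with $H(1/4)<1$, so $\binom{N}{\ell}/2^{N}\le 2^{-cN}$; bounding $(N/\ell)^{j}\le N^{j}$ (and noting $1-t\hat D(k)\ge 1$ when $\ell>N/2$, so the factor $(N/\ell)^j$ is not even needed there), these ranges contribute at most $N^{j+1}2^{-cN}\le N^{-i/2}$ for $N$ large. Adding the pieces gives \eqref{e:Dijsum}.

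The main obstacle is precisely the denominator in \eqref{e:Dijsum}: the crude bound $1-t\hat D(k)\ge 1/N$ loses a factor $N^{j}$, so one must exploit that $1-t\hat D(k)$ is small only where $|k|$ is small, and that there are exponentially few such $k$. Balancing the entropy bound on $\binom{N}{\ell}$ in the tail against the binomial moment estimate in the bulk is the crux of the argument; the rest is bookkeeping.
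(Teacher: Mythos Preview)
Your proof is correct. For \eqref{e:tranprob} it is essentially identical to the paper's: both interpret the left-hand side as the transition probability $D^{*i}(x)$, classify walks by the number of distinct coordinates they flip, and use the parity constraint to extract the ceiling in the exponent.

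For \eqref{e:Dijsum} your route differs from the paper's. The paper first applies Cauchy--Schwarz to decouple numerator and denominator,
\[
    \frac{1}{V}\sum_{k\neq 0}\frac{|\hat D(k)|^i}{[1-t\hat D(k)]^j}
    \le
    \Big(\frac{1}{V}\sum_{k}\hat D(k)^{2i}\Big)^{1/2}
    \Big(\frac{1}{V}\sum_{k\neq 0}\frac{1}{[1-t\hat D(k)]^{2j}}\Big)^{1/2},
\]
recycles \eqref{e:tranprob} at $x=0$ for the first factor, and then proves the second factor is $O(1)$ via the same lower bound $1-t\hat D(k)\ge |k|/N$ (for $t\ge\tfrac12$) and the same bulk/tail split with a Chernoff bound for the tail. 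You instead keep the numerator inside, establish $1-t\hat D(k)\ge |k|/N$ uniformly in $t\in[0,1]$ (a slightly sharper observation than the paper needs), and in the bulk invoke the Rademacher moment bound $\E(\sum Y_\alpha)^{2m}\le C_m N^m$ directly rather than going through $D^{*2i}(0)$. The ingredients are the same; the paper's Cauchy--Schwarz step has the aesthetic advantage of reusing the first part of the lemma, while your argument is marginally more self-contained and avoids doubling the exponent $j$. Neither approach is materially shorter or stronger than the other.
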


\begin{proof}
By inverse Fourier transformation, the normalised sum in \eqref{e:tranprob}
is the transition probability for simple random walk to travel from
$0$ to $x$ in $i$ steps:
\begin{equation}
    D^{*i}(x)  = \frac{1}{V} \sum_{k \in \Q^N} \hat D(k)^i (-1)^{k \cdot x}
\end{equation}
and hence it is nonnegative and equals zero if $i$ and $|x|$ have different parity.
It is equal to $\delta_{0,x}$ for $i=0$ so we may assume that $i \ge 1$.
Closely related explicit transition probabilities are written in terms of
Krawtchouk polynomials in \cite{DGM90} but we can instead proceed crudely here
with an elementary counting argument.
 Without loss of generality we may assume by symmetry
that $x$ consists of a string of $|x|$ 1's followed by $(N-|x|)$ 0's.
There are $N^{i}$ possible $i$-step walks starting from $0$.  The number of those that end
at $x$ can be bounded as follows.
First we observe that the first $|x|$ coordinates of $x$ must flip an odd number
of times (each at least once),
whereas the remaining coordinates must flip an even number of times (possibly zero).
Let $\delta$ be the total number of coordinates that do flip.  Since the first $|x|$ coordinates
flip at least once and the remaining $\delta-|x|$ flip at least twice, it must be the case
that $|x|+ 2(\delta-|x|) \le i$, which implies that $\delta -|x| \le \frac 12 (i-|x|)$.
The number of ways to choose which of the $N-|x|$ coordinates are the $\delta-|x|$
coordinates that flip a positive even number of times is at most
$(N-|x|)^{\delta-|x|} \le N^{(i-|x|)/2}$.  Since the number of $i$-step walks that
flip $\delta$ specific coordinates is $\delta^i$,
we find that the transition probability obeys the inequality
\begin{align}
    D^{*i}(x) & \le \frac{1}{N^i}\sum_{\delta=|x|}^i N^{(i -|x|)/2} \delta^i
    \le \frac{1}{N^{(i +|x|)/2}}  (i+1) i^i.
\end{align}
When $i$ is even the factor $N^{-(i +|x|)/2}$ on the right-hand side is
at most $N^{-i/2}$ (since $|x|\ge 0$), whereas for $i$ odd it is at
most $N^{-(i+1)/2}$ (since $|x|$ must also be odd so at least $1$).  This completes the proof of \eqref{e:tranprob}.

Next we consider \eqref{e:Dijsum}.
By the Cauchy--Schwarz inequality,
\begin{equation}
\label{e:DijCS}
    \frac{1}{V} \sum_{k \in \Q^N: k \neq 0} \frac{|\hat D(k)^i|}{[1-t\hat D(k)]^j}
    \le
    \left[\frac{1}{V} \sum_{k \in \Q^N: k \neq 0} \hat D(k)^{2i}\right]^{1/2}
    \left[\frac{1}{V} \sum_{k \in \Q^N: k \neq 0} \frac{1}{[1-t\hat D(k)]^{2j}}\right]^{1/2}
    .
\end{equation}
The first factor on the right-hand side is at most a multiple of $N^{-i/2}$ by
\eqref{e:tranprob} (applied just for $x=0$), so it suffices to prove that for any $j \ge 1$
(the case $j =0$ is clear)
\begin{align}
    \frac{1}{V} \sum_{k \neq 0} \frac{1}{[1-t\hat D(k)]^j} &\le c_{j}
\end{align}
for some positive $c_j$.
Since $\hat D(k) \in [-1,1]$ the left-hand side is bounded above by $2^j$ if
$t \in [0,\frac 12]$.  For the more substantial case of
$t \in [\frac 12,1]$ we have
\begin{equation}
    1-t\hat D(k) = 1-t + \frac{2t|k|}{N} \ge \frac{|k|}{N}
\end{equation}
and therefore in this case
\begin{align}
     \frac{1}{V} \sum_{k \neq 0} \frac{1}{[1-t\hat D(k)]^j}
    &\le
    N^j \frac{1}{V}\sum_{k \neq 0} \frac{1}{|k|^j}
    =
    N^j \frac{1}{V}\sum_{m=1}^N \binom{N}{m} \frac{1}{m^j}
    .
\end{align}
We divide the sum over $m$ according to whether $m \le \frac 14 N$ or $m> \frac 14 N$.
For the second case we use
\begin{align}
    N^j \frac{1}{V}\sum_{m=N/4}^{N} \binom{N}{m} \frac{1}{m^j}
    & \le
    N^j \frac{4^j}{N^j} = 4^j
    .
\end{align}
For the first case, with $X_N$ a random variable with Bin($N,\frac 12$) distribution, we have
\begin{align}
    N^j \frac{1}{V}\sum_{m=1}^{N/4} \binom{N}{m} \frac{1}{m^j}
    & \le
    N^j \frac{1}{V}\sum_{m=0}^{N/4} \binom{N}{m}
    =
    N^j \P(X_N \le N/4)
    \le
    N^j e^{-N/8}
    ,
\end{align}
where we used the Chernoff bound
$\P(X_N \le a) \le \exp[-(N-2a)^2/(2N)]$ for $2a<N$
in the last step (see, e.g, \cite{AS00}).  Since the right-hand
side is bounded by a $j$-dependent constant, the proof is complete.
\end{proof}

\section{Convergence of the lace expansion}
\label{sec:le-conv}

In this section we prove the convergence of the lace expansion,
using the strategy of \cite[Section~5.2]{Slad06} which itself is based on the
strategy used in \cite{BCHSS05b}.
Like most lace expansion convergence proofs, we use a bootstrap argument.
The bootstrap argument is presented in Section~\ref{sec:bootstrap}.
The fact that we are working on the hypercube
makes for considerable simplification and this convergence proof is simpler
than that in \cite{BCHSS05b,Slad06} (for a different kind of simplification see
\cite{Slad22_lace} for weakly self-avoiding walk on $\Z^d$ for $d>4$).

\subsection{Preparation}

We recall from \eqref{e:fkdef} the definition
\begin{equation}
\label{e:fkdef-bis}
    f_k(x) = (1-(-1)^{k\cdot x})f(x).
\end{equation}
In particular, $f_0(x)=0$ for all $f$.
The Fourier transform of $f_k$ is
\begin{equation}
\label{e:fkFT}
    \hat f_k(\ell) = \sum_{x\in \Q^N} (1-(-1)^{k\cdot x})f(x) (-1)^{\ell\cdot x}
    =
    \hat f(\ell) - \hat f(k+\ell)
    \qquad
    (k,\ell \in \Q^N).
\end{equation}
In particular, $\hat f_k(0)=\hat f(0)-\hat f(k)$, and we will use bounds on
$\hat f_k(\ell)$ to control differences of this type (see
\eqref{e:fkl1}--\eqref{e:fkl2} and \eqref{e:fkl3}--\eqref{e:fkl4}).

For $x \in \Q^N$, let $w_n(x)$ denote the number of $n$-step walks (not necessarily
self-avoiding) from $0$ to $x$.  Then $w_n(x) = N^n D^{*n}(x)$
and $\hat w_n(k) = [N\hat D(k)]^n$.  For $p \in [0,1/N)$
we define the generating function
\begin{equation}
\label{e:Cpdef}
    C_p(x) = \sum_{n=0}^\infty w_n(x)p^n.
\end{equation}
The Fourier transform of $C_p$ is (recall \eqref{e:Dhat})
\begin{equation}
\label{e:Cphatdef}
    \hat C_p(k) = \frac{1}{1-pN\hat D(k)} = \frac{1}{1-pN + 2p|k|}.
\end{equation}
If we evaluate the above right-hand side at $p=1/N$ then it becomes
$N/(2|k|)$, and
the \emph{zero mode} (namely the case $k=0$) is divergent.
This is a symptom of the recurrence of simple random walk
on the hypercube.  The zero mode
was excluded in  \eqref{e:Dijsum}
where the denominator is zero for $k=0$ if $t=1$.
The zero mode will play an important role in the bootstrap argument and also subsequently
in Section~\ref{sec:pfF}.

For later use, we observe that for $p \in [0,1/N)$ it follows from
\eqref{e:fkFT} with $f= C_p$ and from \eqref{e:Cphatdef} that
\begin{align}
    \hat C_{p,k}(\ell) &= pN[\hat D(\ell) - \hat D(k+\ell)]\hat C_p(\ell) \hat C_p(k+\ell)
    \nnb & \le
    [1-\hat D(k)] \hat C_p(\ell) \hat C_p(k+\ell)
    \nnb &
    = \bar C_p(k,\ell),
\label{e:Cbardef}
\end{align}
where the last equality defines $\bar C_p(k,\ell)$.

\subsection{The bootstrap argument}
\label{sec:bootstrap}

The following lemma is the basis for the bootstrap argument.

\begin{lemma}
\label{lem:P4}
Let $a<b$, let $f$ be a continuous
function on the interval $[z_1,z_2]$, and assume that $f(z_1) \leq a$.
Suppose for each $z \in (z_1,z_2]$ that if $f(z) \leq b$
    then in fact $f(z) \leq a$.
Then $f(z) \leq a$ for all $z \in [z_1,z_2]$.
\end{lemma}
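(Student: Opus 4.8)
The plan is to run the standard continuity (``sup'') argument that underlies every bootstrap. Set $S = \{ z \in [z_1,z_2] : f(w) \le a \text{ for all } w \in [z_1,z]\}$, and let $z^* = \sup S$. Since $f(z_1) \le a$ we have $z_1 \in S$, so $S$ is nonempty and $z^*$ is well defined with $z^* \in [z_1,z_2]$. The conclusion of the lemma is exactly the assertion that $z^* = z_2$ and $z^* \in S$, so this is what I would establish.

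First I would verify that in fact $f(z) \le a$ for every $z \in [z_1, z^*]$, i.e.\ that $z^* \in S$. For $z < z^*$ this is immediate: by the definition of the supremum there is some $z' \in S$ with $z < z' \le z^*$, and $z \in [z_1, z']$ then gives $f(z) \le a$. At the endpoint $z = z^*$ itself, continuity of $f$ gives $f(z^*) = \lim_{z \uparrow z^*} f(z) \le a$ (and if $z^* = z_1$ there is nothing to prove). Hence $f \le a$ on $[z_1,z^*]$ and $z^* \in S$.

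Next I would show $z^* = z_2$ by contradiction. If $z^* < z_2$, then $f(z^*) \le a < b$, so by continuity of $f$ at $z^*$ there is a $\delta > 0$ with $z^* + \delta \le z_2$ such that $f(z) < b$ for all $z \in [z^*, z^* + \delta]$. Any such $z$ with $z > z^*$ satisfies $z \in (z_1, z_2]$ (since $z > z^* \ge z_1$), so the hypothesis of the lemma applies and $f(z) \le b$ forces $f(z) \le a$. Combining this with $f \le a$ on $[z_1, z^*]$ shows $f \le a$ on all of $[z_1, z^* + \delta]$, i.e.\ $z^* + \delta \in S$, contradicting $z^* = \sup S$. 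Therefore $z^* = z_2$, and since $z^* \in S$ we conclude $f \le a$ on $[z_1,z_2]$.

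The argument is elementary and I do not expect a genuine obstacle; the only place requiring a little care is the boundary bookkeeping at $z_1$, since the hypothesis is assumed only for $z \in (z_1,z_2]$ and not at $z_1$ itself — one must make sure that the points produced in the contradiction step are strictly larger than $z_1$ (they are, being larger than $z^* \ge z_1$) and treat the degenerate case $z^* = z_1$ separately as noted. An alternative, equally short route is to observe directly that $S$ is nonempty, relatively closed in $[z_1,z_2]$ by continuity of $f$, and relatively open by the hypothesis together with continuity, and then invoke connectedness of $[z_1,z_2]$; I would present whichever reads more cleanly in context.
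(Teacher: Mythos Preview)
Your proof is correct. The paper's own argument is a two-sentence version: it first observes that the hypothesis forces $f(z) \notin (a,b]$ for every $z \in (z_1,z_2]$ (since $f(z) \le b$ would give $f(z) \le a$), and then concludes by continuity and $f(z_1)\le a$ that $f$ can never cross above $a$. Your supremum argument and your alternative open-and-closed formulation are the standard unpackings of that same continuity/connectedness idea, just written out in more detail; there is no substantive difference in content or difficulty.
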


\begin{proof}
By hypothesis, $f(z)$ cannot lie
in the interval $(a,b]$  for
any $z \in (z_1,z_2]$.
Since $f(z_1) \leq a$, it follows by continuity that $f(z) \leq a$
for all $z \in [z_1,z_2]$.
\end{proof}

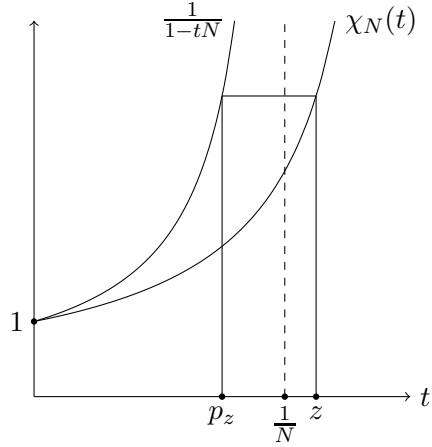
\begin{figure}[t]

\centering
\begin{tikzpicture}[xscale=5/3, yscale=1]
\draw[->] (0,0) -- (3,0) node[right] {$t$};
\draw[->] (0,0) -- (0,5) node[above] {};

\draw[domain =0:1.6, smooth] plot (\x ,{2/(2-\x)}) node[left] {$\frac{1}{1-tN}$};
\draw[domain =0:2.4, smooth] plot (\x ,{3/(3-\x)}) node[right] {$\chi_N(t)$};

\draw[-] (2.25, 0) -- (2.25, 4);
\draw[-] (2.25, 4) -- (1.5, 4);
\draw[-] (1.5, 4)   -- (1.5, 0);
\draw[dashed] (2, 0) -- (2,5);

\filldraw[black] (0,1) ellipse (0.6pt and 1pt) node[anchor=east] { $1$};
\filldraw[black] (2.25,0) ellipse (0.6pt and 1pt) node[anchor=north] { $z$};
\filldraw[black] (2,0)      ellipse (0.6pt and 1pt) node[anchor=north] { $\tfrac{1}{N}$};
\filldraw[black] (1.5,0)   ellipse (0.6pt and 1pt) node[anchor=north] { $p_z$};

\end{tikzpicture}
  \caption{The definition of $p_z$, illustrated for $z> 1/N$.}
  \label{fig:pofz}
\end{figure}

For $z \in [0,\infty)$, we define $p_z\in [0,1/N)$ as in Figure~\ref{fig:pofz} by
\begin{equation}
    \hat G_z(0)= \chi_N(z) = \frac{1}{1-p_zN} = \hat C_{p_z}(0).
\end{equation}
Equivalently, from \eqref{e:chiPi} we see that
\begin{equation}
\label{e:pzdef}
    p_zN = 1- \frac{1}{\chi_N(z)}
    =
    1-F_N(z) = zN +\hat\Pi_z(0).
\end{equation}
Our choice of $f$ in Lemma~\ref{lem:P4} is motivated by the intuition that
$\hat G_z(k)$ and $\hat C_{p_z}(k)$ are comparable in size, not just for $k=0$ where
they are equal by definition but also for all $k \in \Q^N$.
We also anticipate that $\hat G_{z,k}(\ell)$ and $\hat C_{p_z,k}(\ell)$
should be comparable, but
it is convenient and also sufficient to compare instead  $\hat G_{z,k}(\ell)$ and
the upper bound
$\bar C_{p_z}(k,\ell)$ for $\hat C_{p_z,k}(\ell)$ from \eqref{e:Cbardef}.

We will apply Lemma~\ref{lem:P4} with $z_1=0$,
$z_2 =z_N$, $a=2$, $b=4$,
and
\begin{equation}
    \label{e:fdef}
    f(z) = \max\{f_1(z),f_2(z), f_3(z)\},
\end{equation}
where
\begin{equation}
    f_1(z)=zN,
    \qquad
    f_2(z) =
    \max_{k\in \Q^N}
    \frac{|\hat G_z(k)|}{\hat{C}_{p_z}(k)},
    \qquad
\label{e:f3def}
    f_3(z) =
    \max_{k \in \Q^N\setminus \{0\}}
    \max_{l \in \Q^N}
    \frac{|\hat G_{z,k}(\ell)|}
    {\bar C_{p_z}(k,\ell)}.
\end{equation}
The omission of $k=0$ in the definition of $f_3$ avoids the ratio $\frac 00$.
By definition $p_0=0$, and since $\hat G_0(k)=\hat C_0(k)=1$, it follows that
$f_1(0)=0$, $f_2(0)=1$, $f_3(0)=0$ and hence $f(0)=1\le 2$.
The continuity of $f$ on $[0,z_N]$ also
follows easily from the continuity of $z \mapsto p_z$ and the
continuity (for fixed $k$) of $\hat G_z(k)$ and $\hat C_p(k)$ in $z$ and $p$.
We will verify that the remaining and substantial bootstrap
hypotheses of Lemma~\ref{lem:P4} holds
when $z_N=z_N(\lambda)$ is defined with a $\lambda$ that is sufficiently small.
From this, we can conclude that $f(z) \leq a=2$
uniformly in $z \in [0,z_N]$.

Once we conclude that $f(z) \le 2$ we
of course also know that $f(z) \le 4$
and hence all the conclusions of
Lemmas~\ref{lem:TDa}, \ref{lem:Pibds}, \ref{lem:43} and Remark~\ref{rk:bubble},
which are initially conditional on $f(z) \le 4$,
in fact hold unconditionally.
One of those conclusions, in the proof of Lemma~\ref{lem:43}, is that the ratio
$\hat G_z(k)/\hat{C}_{p_z}(k)$ is close to $1$ \emph{without} the absolute value taken
in the definition of $f_2$.
In particular $\hat G_z(k) \ge 0$; absolute values are included in $f_2$
since this is not obvious a priori.
The upper bound $\hat G_z(k)\le 2\hat{C}_{p_z}(k)$ is an example of what
is known as an \emph{infrared bound}, so named to place emphasis on its
significance for $k$ near zero (low frequency).

We define
\begin{equation}
\label{e:betazdef}
    \beta_z = \frac 1N + \frac{\chi_N(z)^2}{V} .
\end{equation}
If we assume that $z\le z_N$ with $z_N=z_N(\lambda)$ defined
by some $\lambda>0$ in \eqref{e:zNdef},
then for all $z \in [0,z_N]$ we have $\beta_z
\le N^{-1} + \lambda^2$.  Thus we can use $\beta_z$ as a small parameter,
assuming (as we will) that
$N^{-1} + \lambda^2$ is indeed small by demanding that $\lambda\in (0,\lambda_0]$
for sufficiently small $\lambda_0$.

For $p \in [1,\infty)$, we use the norms
$\|\hat f \|_{\hat p} = [V^{-1}\sum_{k \in \Q^N}|\hat f(k)|^p ]^{1/p}$ for the Fourier transform
as well as $\|f\|_p = [\sum_{x \in \Q^N}| f(x)|^p ]^{1/p}$ for untransformed functions,
so it is necessary to notice hats with norms to distinguish between the presence
or not of the volume factor.  We also use $\|f\|_\infty = \max_{x\in\Q^N}|f(x)|$.
The bound $\|f\|_\infty \leq \|\hat f\|_{\hat 1}$ follows from \eqref{e:IFT}.
The Parseval relation asserts that $\|f\|_2=\|\hat f\|_{\hat 2}$.
The convolution $(f*g)(x) = \sum_{y\in\Q^N}f(x-y)g(y)$ obeys
$\|f*g\|_\infty \leq \|f\|_2\|g\|_2$ by the Cauchy--Schwarz inequality,
and $\widehat{f*g}=\hat f \hat g$.

\begin{lemma}
\label{lem:TDa}
Fix $z \in (0,z_N]$ and assume that $f$ of \eqref{e:fdef} obeys
$ f(z) \leq K$.
Then there is a constant $c_K$, independent of $z$, such that
\begin{equation}
\label{e:Hbd}
   \|H_{z,k}\|_\infty \leq c_K (1+\lambda^2)  [ 1 -  \hat D(k) ] ,
    \qquad
    \| H_z\|_2^2 \leq c_K \beta_z ,
    \qquad
     \| H_z\|_\infty \leq c_K \beta_z .
\end{equation}
\end{lemma}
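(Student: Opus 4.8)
The plan is to argue entirely in Fourier space, using the bootstrap hypothesis $f(z)\le K$ only through the two pointwise bounds $|\hat G_z(k)|\le K\hat C_{p_z}(k)$ (from $f_2$) and $|\hat G_{z,k}(\ell)|\le K\bar C_{p_z}(k,\ell)$ (from $f_3$), together with the random walk estimates of Lemma~\ref{lem:Dsum} applied with $t=p_zN\in[0,1)$. Throughout I use $\|g\|_\infty\le\|\hat g\|_{\hat1}$, Parseval $\|g\|_2^2=\|\hat g\|_{\hat2}^2$, the fact that $\chi_N$ is increasing in $z\ge 0$ so that $\chi_N(z)^2/V\le\lambda^2$ for $z\le z_N$, and $H_z(0)=0$ (a self-avoiding walk cannot return to its start).

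First I prove the bound on $\|H_{z,k}\|_\infty$, which drives the other two. For $k=0$ both sides vanish, so fix $k\ne0$. Since $G_z=\delta_0+H_z$ and $(\delta_0)_k\equiv 0$ we have $\hat H_{z,k}=\hat G_{z,k}$, hence
\begin{equation}
  \|H_{z,k}\|_\infty\le \|\hat H_{z,k}\|_{\hat1}
  =\frac1V\sum_{\ell\in\Q^N}|\hat G_{z,k}(\ell)|
  \le K[1-\hat D(k)]\,\frac1V\sum_{\ell\in\Q^N}\hat C_{p_z}(\ell)\hat C_{p_z}(k+\ell).
\end{equation}
The last sum equals $\sum_{x\in\Q^N}C_{p_z}(x)^2(-1)^{k\cdot x}\le\|C_{p_z}\|_2^2$, and by Parseval $\|C_{p_z}\|_2^2=\chi_N(z)^2/V+V^{-1}\sum_{\ell\ne0}\hat C_{p_z}(\ell)^2\le\lambda^2+c_{0,2}$ using \eqref{e:Dijsum} with $i=0,j=2$. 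This gives $\|H_{z,k}\|_\infty\le K(c_{0,2}+\lambda^2)[1-\hat D(k)]$, i.e.\ the claim with $c_K=K\max\{c_{0,2},1\}$. From this I deduce the bound on $\|H_z\|_\infty$ by an elementary observation: the maximum of $H_z$ is attained at some $x\ne0$ (the case $H_z\equiv0$ being trivial), and for such $x$ there is a coordinate $j$ with $x_j=1$, so $e_j\cdot x$ is odd and $H_{z,e_j}(x)=2H_z(x)$. Hence $2\|H_z\|_\infty\le\|H_{z,e_j}\|_\infty\le c_K(1+\lambda^2)[1-\hat D(e_j)]=2c_K(1+\lambda^2)/N$; since $\beta_z\ge N^{-1}$ and $1+\lambda^2\le 2$, enlarging $c_K$ gives $\|H_z\|_\infty\le c_K\beta_z$.

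The bound on $\|H_z\|_2^2$ is the main point. By Parseval, $\|H_z\|_2^2=V^{-1}\sum_k|\hat G_z(k)-1|^2$; the $k=0$ term is $(\chi_N(z)-1)^2/V\le\chi_N(z)^2/V\le\beta_z$, so it remains to treat $k\ne0$. Here I invoke the lace-expansion identity \eqref{e:GFk} in the form $\hat G_z(k)-1=\bigl(zN\hat D(k)+\hat\Pi_z(k)\bigr)\hat G_z(k)$ and split into two pieces. For the first, $|zN\hat D(k)\,\hat G_z(k)|^2\le K^2(zN)^2\hat D(k)^2\hat C_{p_z}(k)^2$; since $zN=f_1(z)\le K$, summing over $k\ne0$ and applying \eqref{e:Dijsum} with $i=j=2$ yields a contribution $\le K^4 c_{2,2}N^{-1}\le c_K\beta_z$ — it is the \emph{square} $\hat D(k)^2$ that produces the gain $N^{-1}$. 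For the second piece, $V^{-1}\sum_{k\ne0}\hat\Pi_z(k)^2\hat G_z(k)^2\le K^2\,V^{-1}\sum_{k\ne0}\hat\Pi_z(k)^2\hat C_{p_z}(k)^2$, one feeds the bounds just established for $\|H_z\|_\infty$ and $\|H_{z,k}\|_\infty$ into the diagrammatic estimates of Propositions~\ref{prop:PiM}--\ref{prop:Pidots} to get $|\hat\Pi_z(k)|\le\|\Pi_z\|_1=O(\beta_z)$ uniformly in $k$, whence this piece is $\le K^2 c_{0,2}\,O(\beta_z^2)\le c_K\beta_z$.

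The delicate step — and the main obstacle — is exactly the estimate $\|\Pi_z\|_1=O(\beta_z)$: controlling the tail $\sum_{M\ge2}\|\Pi_z^{(M)}\|_1$ through \eqref{e:PizMbd} requires $\|H_z\ast G_z\|_\infty\le\|H_z\|_\infty+\|H_z\|_2^2$ to be small, which reintroduces $\|H_z\|_2^2$. I would break this apparent circularity in the usual way: first extract from $|\hat G_z(k)|\le K\hat C_{p_z}(k)$ and \eqref{e:Dijsum} a crude (order-one) bubble bound $\|H_z\|_2^2\le c_K$, then note that $\beta_z$ being small (for $\lambda_0$ small) makes the geometric series in $M$ converge once $\|H_z\|_\infty$ is small; alternatively, one runs the whole second bound directly from the recursion $H_z=zN(D\ast G_z)+\Pi_z\ast G_z$, so that $\|H_z\|_2^2=(H_z\ast H_z)(0)=zN(D\ast G_z\ast H_z)(0)+(\Pi_z\ast G_z\ast H_z)(0)$, bounding the first term via $\|D\ast G_z\|_2=O(\beta_z^{1/2})$ (from $f_2$ and \eqref{e:Dijsum} with $i=j=2$) and Cauchy--Schwarz, and the second by the diagrammatic estimates. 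All estimates are uniform in $z$ as required, and the finitely many small $N$ are absorbed by enlarging $c_K$.
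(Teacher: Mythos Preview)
Your argument for $\|H_{z,k}\|_\infty$ is essentially the paper's, and your deduction of $\|H_z\|_\infty$ from it via $H_{z,e_j}(x)=2H_z(x)$ for a coordinate $j$ with $x_j=1$ is a correct and pleasant shortcut specific to the hypercube (the paper instead iterates the submultiplicativity bound $H_z\le zN(D*G_z)$ twice).

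The bound on $\|H_z\|_2^2$, however, has a genuine gap. You split $\hat H_z(k)=zN\hat D(k)\hat G_z(k)+\hat\Pi_z(k)\hat G_z(k)$ and then need $\|\Pi_z\|_1=O(\beta_z)$, which in the paper is Lemma~\ref{lem:Pibds} and is proved \emph{using} Lemma~\ref{lem:TDa}: controlling $\sum_{M\ge 2}\|\Pi_z^{(M)}\|_1$ via \eqref{e:PizMbd} requires the ratio $\|H_z*G_z\|_\infty\le\|H_z\|_\infty+\|H_z\|_2^2$ to be \emph{small}, not merely bounded, for the geometric series in $M$ to converge. Your proposed fix---a ``crude order-one bubble bound'' from $|\hat G_z|\le K\hat C_{p_z}$---does not help: it yields $\|H_z\|_2^2\le c_K'$ with $c_K'$ a constant depending on $c_{0,2}$ and $K$ that is in general $\ge 1$, so the ratio $\|H_z*G_z\|_\infty$ is not small and the sum over $M$ is not controlled. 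Your alternative via the recursion $H_z=zN(D*G_z)+\Pi_z*G_z$ reintroduces the same circular dependence on the diagrammatic estimates for the second term.

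The paper sidesteps all of this with a single elementary observation you never invoke: the term-by-term inequality $c_n(x)\le N(D*c_{n-1})(x)$ gives $H_z(x)\le zN(D*G_z)(x)$ \emph{without any reference to $\Pi_z$}. Then directly
\[
\|H_z\|_2^2\le (zN)^2\|D*G_z\|_2^2=(zN)^2\|\hat D\hat G_z\|_{\hat 2}^2\le K^4\|\hat D\hat C_{p_z}\|_{\hat 2}^2
=K^4\Big(\tfrac{\chi_N(z)^2}{V}+O(N^{-1})\Big)=O(\beta_z),
\]
using $f_1,f_2\le K$ and \eqref{e:Dijsum}. This is the missing idea: use submultiplicativity to kill the $\Pi$-term entirely, rather than trying to estimate it.
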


\begin{proof}
Special attention is required for the zero mode, which is
the origin of the term $V^{-1}\chi_N(z)^2$ in $\beta_z$.

Since $f_3(z) \le K$, it follows from the
definition of $\bar C_p(k,\ell)$ in \eqref{e:Cbardef}
and  the Cauchy--Schwarz inequality that
\begin{align}
   \|H_{z,k}\|_\infty
   =
   \|G_{z,k}\|_\infty
   \le \|\hat G_{z,k}\|_{\hat 1}
   \le K \|\bar C_{p_z}(k,\cdot)\|_{\hat 1}
    & \le  K [1- \hat D(k)]\|\hat C_{p_z}\|_{\hat 2}^2 .
\end{align}
Now we apply the definition of $p_z$, Lemma~\ref{lem:Dsum},
and the fact that $z\le z_N$ (so $\chi_N(z) \le \lambda V^{1/2}$) to see that
\begin{align}
    \|\hat C_{p_z}\|_{\hat 2}^2
    & = \frac 1V \sum_{k\in\Q^N} \hat C_{p_z}(k)^2
    = \frac 1V \hat C_{p_z}(0)^2 + \frac 1V \sum_{k\neq 0} \frac{1}{(1-p_zN\hat D(k))^2}
    \nnb & =
    \frac {\chi_N(z)^2}{V} + O(1) \le \lambda^2 + O(1) .
\end{align}
This proves the first bound of \eqref{e:Hbd}.

To estimate $\| H_z\|_2^2$,
we first use submultiplicativity in the form of the inequality
$c_n(x) \le (c_1*c_{n-1})(x) = N(D*c_{n-1})(x)$, along with $f_1(z) \leq K$, to obtain
\begin{equation}
\label{e:HDG}
    H_z(x) \leq zN( D*G_z)(x) \leq K (D*G_z)(x).
\end{equation}
With the Parseval relation and $f_2(z) \leq K$, this implies that
\begin{align}
    \| H_z\|_2^2 & \leq  K^2 \|D*G_z\|_2^2 = K^2 \|\hat D  \hat G_z\|_{\hat 2}^2
    \leq  K^4 \|\hat D  \hat C_{p_z}\|_{\hat 2}^2
     .
\end{align}
Then we estimate the right-hand side by extracting the zero mode and
using Lemma~\ref{lem:Dsum} for the nonzero $k$, as we did above.  This gives
\begin{align}
    \|\hat D \hat C_{p_z}\|_{\hat 2}^2
    &
    = \frac 1V \hat C_{p_z}(0)^2 + \frac 1V \sum_{k\neq 0} \frac{\hat D(k)^2}{(1-p_zN\hat D(k))^2}
    =
    \frac {\chi_N(z)^2}{V} + O(N^{-1})
    \le
    O(\beta_z)
    ,
\end{align}
which proves the second bound of \eqref{e:Hbd}, for a suitable constant $c_K$.

Iteration of \eqref{e:HDG}
using $G_z(x)=\delta_{0,x}+H_z(x)$ gives $H_z(x) \leq KD(x) + K^2 (D*D*G_z)(x)$.
Therefore,
\begin{align}
    \|H_z\|_\infty
    & \leq
    K\|D\|_\infty + K^2 \|\hat{D}^2 \hat{G}_z\|_{\hat 1}
    \leq
    K N^{-1}  + K^3  \|\hat{D}^2 \hat C_{p_z}\|_{\hat 1}
    ,
\end{align}
where we used $D(x) \le N^{-1}$ and
our assumption $f_2(z) \le K$ to bound $\|\hat{D}^2 \hat{G}_z\|_{\hat 1}$.
The norm on the right-hand side is equal to
\begin{align}
    \frac{\chi_N(z)}{V} + \frac 1V \sum_{k \neq 0} \frac{\hat D(k)^2}{1-p_zN\hat{D}(k)}
    \leq
    \frac{\chi_N(z)^2}{V} + O(N^{-1}) \le O(\beta_z),
\end{align}
where we used the inequality \eqref{e:Dijsum} of Lemma~\ref{lem:Dsum}
(with $i=2$ and $j=1$)
to bound the sum in the last line.
This proves the third bound of \eqref{e:Hbd}.
\end{proof}

\begin{rk}
\label{rk:bubble}
As mentioned below \eqref{e:f3def}, once the bootstrap proof is complete, statements
that follow from the assumption $f(z) \le K$ with $K=4$ in fact will then
be known to hold unconditionally with $K=2$.  In particular, we can conclude from
\eqref{e:Hbd}
(together with the fact that $H_z(0)=0$ by definition)
that for any $z\in [0,z_N]$ the \emph{bubble diagram}
\begin{equation}
    \|G_{z}\|_2^2 = 1  + \|H_z\|_2^2
\end{equation}
is bounded above by $1+O(\beta_z)$.  When $z=z_N$, this is a statement of
a \emph{bubble condition} analogous to the triangle condition for percolation
on a finite graph studied in \cite{BCHSS05a}.
\end{rk}

\begin{lemma}
\label{lem:Pibds}
Fix $z \in (0,z_N]$, and suppose that $f$ of \eqref{e:fdef} obeys
$ f(z) \leq K$.
Then there is a constant $\bar c_K$
(independent of $z$) such that if $\lambda \in (0,\lambda_0]$ with $\lambda_0$
sufficiently small (independent of $z$) then
\begin{equation}
\label{e:diffPibd}
    \| \Pi_z \|_1
    \leq \bar{c}_K  \beta_z,
    \qquad
    \| \Pi_{z,k}\|_1
    \leq \bar{c}_K  \beta_z [1- \hat D(k)].
\end{equation}
\end{lemma}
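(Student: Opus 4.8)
The plan is to sum the diagrammatic bounds of Proposition~\ref{prop:Pidots} over the number of loops $M$ and insert the estimates \eqref{e:Hbd} of Lemma~\ref{lem:TDa}. Since $\pi^{(M)}_m(x)\ge 0$ and $z\ge 0$, the identity \eqref{Pidef} gives $\Pi_z=\sum_{M\ge 1}(-1)^M\Pi^{(M)}_z$, hence $|\Pi_z(x)|\le\sum_{M\ge 1}\Pi^{(M)}_z(x)$ and $\|\Pi_z\|_1\le\sum_{M\ge 1}\|\Pi^{(M)}_z\|_1$; since $\Pi_{z,k}(x)=(1-(-1)^{k\cdot x})\Pi_z(x)$ with $1-(-1)^{k\cdot x}\ge 0$, and $\Pi^{(1)}_{z,k}\equiv 0$ by Proposition~\ref{prop:Pidots}, we also get $\|\Pi_{z,k}\|_1\le\sum_{M\ge 2}\|\Pi^{(M)}_{z,k}\|_1$.

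The only quantity not already controlled is $\|H_z*G_z\|_\infty$, which enters to the power $M-1$ in the $M\ge 2$ bounds. The key observation is that $(H_z*G_z)(x)=H_z(x)+(H_z*H_z)(x)$, so the Cauchy--Schwarz bound $\|f*g\|_\infty\le\|f\|_2\|g\|_2$ together with the second and third estimates of \eqref{e:Hbd} gives
\[
    \|H_z*G_z\|_\infty\le\|H_z\|_\infty+\|H_z\|_2^2\le 2c_K\beta_z .
\]
This is $O(\beta_z)$, rather than merely the $O(\beta_z^{1/2})$ that a direct application of Cauchy--Schwarz to $H_z*G_z$ would give, and this is what ultimately produces the factor $\beta_z$ in the $k$-dependent bound. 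Since $\beta_z\le N^{-1}+\lambda^2$, we fix $\lambda_0$ small (and, as throughout, $N$ large) so that $2c_K\beta_z\le\tfrac{1}{2}$, which makes all the series below converge.

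For the first bound, \eqref{e:Pi1bd} with $zN=f_1(z)\le K$ gives $\|\Pi^{(1)}_z\|_1\le K\|H_z\|_\infty$, while \eqref{e:PizMbd} gives $\|\Pi^{(M)}_z\|_1\le\|H_z\|_\infty\|H_z*G_z\|_\infty^{M-1}$ for $M\ge 2$; summing and using $\sum_{M\ge 2}(2c_K\beta_z)^{M-1}\le 1$ together with the third bound of \eqref{e:Hbd},
\[
    \|\Pi_z\|_1\le K\|H_z\|_\infty+\|H_z\|_\infty\le(K+1)c_K\beta_z .
\]
For the $k$-dependent bound, \eqref{e:Pikder} gives $\|\Pi^{(M)}_{z,k}\|_1\le\floor{M/2}\,\|H_{z,k}\|_\infty\|H_z*G_z\|_\infty^{M-1}$ for $M\ge 2$; using $\floor{M/2}\le M/2$, the elementary bound $\sum_{M\ge 2}\tfrac{M}{2}r^{M-1}\le r(1-r)^{-2}\le 4r$ valid for $r=2c_K\beta_z\le\tfrac{1}{2}$, and the first bound of \eqref{e:Hbd} with $1+\lambda^2\le 2$,
\[
    \|\Pi_{z,k}\|_1\le 8c_K\beta_z\,\|H_{z,k}\|_\infty\le 16c_K^2\,\beta_z\,[1-\hat D(k)] .
\]
Both claims then hold with $\bar c_K=\max\{(K+1)c_K,\,16c_K^2\}$.

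The one place where some care is needed is the $k$-dependent estimate: the factor $\|H_{z,k}\|_\infty$ supplies the required $1-\hat D(k)$ but no factor of $\beta_z$, so the $\beta_z$ must be extracted from a single loop factor $\|H_z*G_z\|_\infty$, and this is exactly why the sharper identity $H_z*G_z=H_z+H_z*H_z$ (giving $\|H_z*G_z\|_\infty=O(\beta_z)$) is essential; one must also verify that the $\floor{M/2}$ weights do not spoil convergence of the series, which they do not once $\|H_z*G_z\|_\infty$ is small.
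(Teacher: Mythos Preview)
Your proof is correct and follows essentially the same approach as the paper's: sum the diagrammatic bounds of Proposition~\ref{prop:Pidots} over $M$, using the identity $H_z*G_z=H_z+H_z*H_z$ together with Lemma~\ref{lem:TDa} to get $\|H_z*G_z\|_\infty\le 2c_K\beta_z$ and hence a geometric series in $\beta_z$. Your exposition is in fact a bit more explicit than the paper's about the constants and about why the $O(\beta_z)$ (rather than $O(\beta_z^{1/2})$) bound on $\|H_z*G_z\|_\infty$ is the crucial point for the $k$-dependent estimate.
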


\begin{proof}
From Proposition~\ref{prop:Pidots} we have that
$\Pi_{z,k}^{(1)}(x)=0$ and
\begin{equation}
\label{e:2Pi1bd}
    \|\Pi^{(1)}_z\|_1     \le zN\| H_z \|_\infty,
\end{equation}
and, for $M \ge 2$, also that
\begin{align}
\label{e:2Pizder}
    \| \Pi^{(M)}_z \|_1
    &\le
    \| H_z\|_\infty
    \|H_z * G_z \|_\infty^{M-1}
    ,
\\
\label{e:2Pikder}
    \| \Pi^{(M)}_{z,k}\|_1
    &\le
    \floor{M/2}
    \|H_{z,k}\|_\infty
    \|H_z * G_z \|_\infty^{M-1}
    .
\end{align}
Since $H_z*G_z = H_z + (H_z*H_z)$ by definition,
the Cauchy--Schwarz inequality and Lemma~\ref{lem:TDa} give
\begin{equation}
\label{e:HstarG}
    \|H_z * G_z \|_\infty \le \|H_z\|_\infty + \|H_z*H_z\|_\infty
    \le \|H_z\|_\infty + \|H_z\|_2^2 \le 2c_K \beta_z .
\end{equation}
Since $zN \le K$ by assumption,
\begin{align}
    \|\Pi_z\|_1 \le \sum_{M=1}^\infty \|\Pi_z^{(M)}\|_1 \le \bar c_K \beta_z.
\end{align}
For the second bound of \eqref{e:diffPibd}, we similarly use
\begin{align}
    \|\Pi_{z,k}\|_1
    & \le
    \sum_{M=2}^\infty \lfloor M/2 \rfloor \|H_{z,k}\|_\infty \|H_z * G_z \|_\infty^{M-1}
    \nnb & \le
    \sum_{M=2}^\infty \lfloor M/2 \rfloor c_K(1+\lambda^2)[1-\hat{D}(k)]
    (2c_K \beta_z)^{M-1}
    \le
    \bar{c}_K  \beta_z [1- \hat D(k)].
\end{align}
Here we have taken $\lambda_0$ sufficiently small to control the geometric sum over $M$
(and we always consider large $N$).
\end{proof}

The next lemma completes the bootstrap argument by establishing the substantial
hypothesis of Lemma~\ref{lem:P4} for small $\lambda_0$.

\begin{lemma}
\label{lem:43}
Fix $z \in (0,z_N]$ and suppose that $f(z) \leq 4$.
For $\lambda\in (0,\lambda_0]$ with $\lambda_0$ sufficiently small
(independent of $z$),  it is in fact the case that $f(z) \leq 1+c\beta_z$
for some $c>0$ independent of $z$.
\end{lemma}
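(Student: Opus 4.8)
The plan is to improve each of the three functions $f_1,f_2,f_3$ in \eqref{e:fdef} separately, from the a priori bound $f(z)\le 4$ to a bound of the shape $1+c\beta_z$, and then take the maximum. The crucial new input, available once we assume $f(z)\le 4$, is Lemma~\ref{lem:Pibds}, which makes the lace-expansion quantities small: $\|\Pi_z\|_1\le\bar c_4\beta_z$ and $\|\Pi_{z,k}\|_1\le\bar c_4\beta_z[1-\hat D(k)]$, provided $\lambda_0$ is small; we keep $N$ large and $\lambda\in(0,\lambda_0]$ so that $\beta_z\le N^{-1}+\lambda^2$ is as small as required. I would carry out the three improvements \emph{in the order} $f_1,\,f_2,\,f_3$, since the estimate for $f_3$ will use the improved bound for $f_2$, not merely $f_2\le 4$.

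For $f_1$, the identity \eqref{e:pzdef} gives $zN=p_zN-\hat\Pi_z(0)$, and since $p_zN<1$ and $|\hat\Pi_z(0)|\le\|\Pi_z\|_1$ we get at once $f_1(z)=zN\le 1+\bar c_4\beta_z$. For $f_2$, I would compare $1/\hat G_z(k)$ with $1/\hat C_{p_z}(k)$. Combining \eqref{e:GFk}, $zN=p_zN-\hat\Pi_z(0)$, \eqref{e:Cphatdef} and the rewriting $\hat\Pi_z(0)\hat D(k)-\hat\Pi_z(k)=\hat\Pi_{z,k}(0)-\hat\Pi_z(0)[1-\hat D(k)]$ (the last step using \eqref{e:fkFT} at $\ell=0$) gives
\[
    \frac{1}{\hat G_z(k)}=\frac{1}{\hat C_{p_z}(k)}+E_k,\qquad |E_k|\le 2\bar c_4\beta_z\,[1-\hat D(k)],
\]
the bound on $E_k$ coming from Lemma~\ref{lem:Pibds}. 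The key elementary point is then that $\hat C_{p_z}(k)E_k=E_k/[1-p_zN\hat D(k)]$ is small: since $p_zN<1$ one has $[1-\hat D(k)]/[1-p_zN\hat D(k)]\le 2$ for every $k$ (the ratio is $\le1$ when $\hat D(k)\ge0$ and $\le2$ when $\hat D(k)<0$), so $|\hat C_{p_z}(k)E_k|\le 4\bar c_4\beta_z\le\frac12$, and inverting $1+\hat C_{p_z}(k)E_k$ yields $|\hat G_z(k)/\hat C_{p_z}(k)-1|\le 8\bar c_4\beta_z$, i.e. $f_2(z)\le 1+8\bar c_4\beta_z$.

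The step for $f_3$ is the main obstacle. The plan is to use the algebraic identity, which follows from \eqref{e:fkFT} and \eqref{e:GFk},
\[
    \hat G_{z,k}(\ell)=\hat G_z(\ell)-\hat G_z(k+\ell)=\hat G_z(\ell)\,\hat G_z(k+\ell)\,\Big[zN\big(\hat D(\ell)-\hat D(k+\ell)\big)+\hat\Pi_{z,k}(\ell)\Big],
\]
and to bound each piece. The two prefactors $\hat G_z(\ell)$ and $\hat G_z(k+\ell)$ are bounded by $(1+8\bar c_4\beta_z)\hat C_{p_z}(\cdot)$ using the \emph{already improved} bound for $f_2$ --- this is essential, since the a priori bound $f_2\le 4$ would produce a factor $4^2=16>4$ and the bootstrap would fail to close. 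The bracket is bounded by $(1+2\bar c_4\beta_z)[1-\hat D(k)]$ using the improved bound for $f_1$, the elementary estimate $|\hat D(\ell)-\hat D(k+\ell)|\le 2|k|/N=1-\hat D(k)$ (from $\big||k+\ell|-|\ell|\big|\le|k|$ for the Hamming norm), and Lemma~\ref{lem:Pibds} applied to $|\hat\Pi_{z,k}(\ell)|\le\|\Pi_{z,k}\|_1$. Multiplying, and recalling $\bar C_{p_z}(k,\ell)=[1-\hat D(k)]\hat C_{p_z}(\ell)\hat C_{p_z}(k+\ell)$ from \eqref{e:Cbardef}, gives $|\hat G_{z,k}(\ell)|\le(1+8\bar c_4\beta_z)^2(1+2\bar c_4\beta_z)\,\bar C_{p_z}(k,\ell)$, hence $f_3(z)\le 1+c'\beta_z$ for a constant $c'$ depending only on $\bar c_4$. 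Taking the maximum of the three improved bounds gives $f(z)\le 1+c\beta_z$ with $c=\max\{\bar c_4,8\bar c_4,c'\}$, independent of $z$, as claimed.
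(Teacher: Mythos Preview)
Your proof is correct and follows essentially the same three-step structure as the paper's argument. Your treatment of $f_2$ is slightly cleaner: by bounding $\hat C_{p_z}(k)/\hat G_z(k)-1$ rather than the paper's $\hat G_z(k)/\hat C_{p_z}(k)-1$, you place the elementary denominator $1-p_zN\hat D(k)$ in the key ratio $[1-\hat D(k)]/[1-p_zN\hat D(k)]\le 2$ and thereby avoid the paper's case split on $z\le\tfrac{1}{2N}$ versus $z\ge\tfrac{1}{2N}$.
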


\begin{proof}
We consider $f_1,f_2,f_3$ in that order.

\smallskip\noindent \emph{Bound on $f_1(z)$.}
For $f_1(z)$, we simply note that $\chi_N(z) >0$ and hence also
\begin{equation}
    \chi_N(z)^{-1} = 1-zN -\hat\Pi_z(0) >0.
\end{equation}
Therefore, by Lemma~\ref{lem:Pibds} and the fact that
any function $h$ obeys $|\hat h(k)|\le \|h\|_1$ for all
$k$ (including the present case of $k=0$),
\begin{equation}
    f_1(z) = z N
    <
    1-\hat{\Pi}_z(0) \leq 1+\bar c_4\beta_z,
\end{equation}
assuming $\lambda_0$ is sufficiently small.

\smallskip\noindent \emph{Bound on $f_2(z)$.}
For $f_2$, we first recall \eqref{e:GFk} and write
\begin{equation}
\label{e:Fzhatdef}
    \hat{F}_z(k) = \frac{1}{\hat G_z(k)} = 1-zN \hat D(k) - \hat \Pi_z(k)
\end{equation}
(this gives an alternate notation $\hat F_z(0)$ for the reciprocal $F_N(z)$
of the susceptibility), so that
\begin{equation}
\label{e:GC1}
    \frac{\hat G_z(k)}{\hat C_{p_z}(k)}
    =
    \frac{1-p_zN\hat{D}(k)}{\hat F_z(k)}
    =
    1 + \hat E_z(k), \qquad \hat E_z(k) = \frac{1-p_zN\hat{D}(k)-\hat F_z(k)}{\hat F_z(k)}.
\end{equation}
We will show that $\hat E_z(k)=O(\beta_z)$,
which implies that $f_2(z) = 1+O(\beta_z)$.
By \eqref{e:pzdef},
 $p_zN = 1-\hat{F}_z(0) = zN +\hat\Pi_z(0)$, and thus by \eqref{e:GFk}
the numerator of $\hat E_z(k)$ is
\begin{align}
    1-p_zN\hat{D}(k)-\hat F_z(k)
    & =
    -\hat\Pi_z(0)\hat{D}(k) + \hat\Pi_z(k)
    = \hat\Pi_z(0)[1-\hat D(k)] - \hat\Pi_{z,k}(0).
\end{align}
We can now use our bound on $\hat\Pi_{z,k}(0)=\hat\Pi_z(0)-\hat\Pi_z(k)$.
Indeed, by \eqref{e:diffPibd}
 (again with $|\hat h(k)| \le \|h\|_1$)
\begin{align}
\label{e:fkl1}
    |\hat E_z(k)|
    &
    \le 2 \bar{c}_4\beta_z \frac{1-\hat{D}(k)}{|\hat F_z(k)|}.
\end{align}
For $z \leq \frac{1}{2N}$, we
can use the crude bound $c_n^{(N)}\le N^n$ to see that $|\hat G_z(k)| \le
\chi_N(z) \le \hat C_z(0) \le 2$ and
hence that
$|\hat E_z(k) | \le 8\bar c_4\beta_z$.
For $\frac{1}{2N} \leq z \le z_N$, we use
\begin{align}
    |\hat F_z(k)| & =  |\hat F_z(0) + [\hat F_z(k) - \hat F_z(0)]|
    \nnb
    & =  |\hat F_z(0) + zN[1-\hat D(k)] + \hat \Pi_{z,k}(0) |
    \nnb &\geq \hat F_z(0) + \frac 12[1-\hat{D}(k)]
   - \bar{c}_4 \beta_z[1-  \hat{D}(k)]
   \nnb & \geq
   \frac 14   [1- \hat{D}(k)] .
\label{e:fkl2}
\end{align}
Therefore  $|\hat E_z(k)| \le 16\bar c_4\beta_z$ (for small $\lambda_0$), and we
have proved that $f_2(z) = 1+O(\beta_z)$.

\smallskip\noindent \emph{Bound on $f_3(z)$.}
Although we only need a bound for $k\neq 0$, the following applies in fact for all $k \in \Q^N$.
We write
\begin{equation}
\label{e:gdef}
    \hat g_z(k) = zN \hat D(k) + \hat\Pi_z(k),
\end{equation}
so that
\begin{equation}
    \hat{G}_z(k) = \frac 1{1-\hat{g}_z(k)}.
    \label{e:Fdef}
\end{equation}
By \eqref{e:fkFT}, for all $k,\ell \in \Q^N$,
\begin{align}
    |\hat G_{z,k}(\ell)|
    & =
    |\hat G_z(\ell)|\, | \hat G_z(k+\ell)|\,  \left| \hat g_z(\ell) - \hat g_z(k+\ell) \right|
    \nnb & \le
    |\hat G_z(\ell)|\, |\hat G_z(k+\ell) |\,  \sum_{x\in\Q^N} [1-(-1)^{k\cdot x}] |g_z(x) |,
\end{align}
where
\begin{equation}
    g_z(x) = zN D(x) + \Pi_z(x)
\end{equation}
is the inverse Fourier transform of $\hat g_z(k)$ (recall \eqref{e:IFT}).
Since $f_2(z) \leq 1 + O(\beta_z)$, we can bound each factor
of $|\hat{G}_z|$ by $[1+O(\beta_z)]\hat{C}_{p_z}$.  With $f_1(z) =zN \leq 1 + O(\beta_z)$,
and with the definition of the Fourier transform in \eqref{e:FTdef},
we obtain
\begin{align}
    \sum_{x\in\Q^N} [1-(-1)^{k\cdot x}] |g_z(x) |
    & \leq  \sum_{x\in \Q^N} [1-(-1)^{k\cdot x}][zN D(x) + |\Pi_z(x)|]
    \nnb
    & \leq
    (1+O(\beta_z))[1-\hat{D}(k)]
    +  \sum_{x\in \Q^N} [1-(-1)^{k\cdot x}] |\Pi_z(x)|
    .
\label{e:fkl3}
\end{align}
In the last term the absolute values inside the sum prevent a direct application
of \eqref{e:diffPibd}, but the proof of \eqref{e:diffPibd} bounds the sum over $M$ absolutely,
so \eqref{e:diffPibd} also holds for the above sum and we see that
\begin{align}
\label{e:fkl4}
    \sum_{x\in\Q^N} [1-(-1)^{k\cdot x}] |g_z(x) |
    & \leq
    [1+O(\beta_z)][1- \hat D(k)].
\end{align}
Together, for all $k,\ell\in\Q^N$ these bounds give
\begin{equation}
    |\hat G_{z,k}(\ell)|
    \le
    [1+O(\beta_z)][1- \hat D(k)] \hat C_{p_z}(\ell) \hat C_{p_z}(k+\ell)
    =
    [1+O(\beta_z)] \bar C_{p_z}(k,\ell)
\end{equation}
which implies that $f_3(z) \leq 1+O(\beta_z)$.

\smallskip \noindent
This completes the proof that $f(z) \leq 1+O(\beta_z)$.
\end{proof}

\section{Consequences of lace expansion convergence}
\label{sec:pfF}

In this section, we first prove Theorems~\ref{thm:chiN-new} and \ref{thm:EL-new},
which follow from a well-known differential inequality together with the
bubble condition discussed in Remark~\ref{rk:bubble}.
We also complete the proofs of Theorems~\ref{thm:cnbd}
and \ref{thm:dilute-short}
by proving Propositions~\ref{prop:FzN}, \ref{prop:alphabeta} and  \ref{prop:F}
which we have seen in Section~\ref{sec:chi} imply Theorems~\ref{thm:cnbd}
and \ref{thm:dilute-short}.  The proofs of Propositions~\ref{prop:alphabeta} and  \ref{prop:F}
make use of fractional derivatives.
Once this has been accomplished only Theorem~\ref{thm:expansion} remains; its
proof is given in Section~\ref{sec:expansion}.

Note that $F_N(z)$ which was natural notation in Section~\ref{sec:chi}
is identical to $\hat F_z(0)$ which is natural in the
context of the lace
expansion where we also used the Fourier transform $\hat F_z(k)$
(recall \eqref{e:Fzhatdef}).
In this section, we favour the notation $F_N(z)$ since the Fourier transform
reappears only within the proof of Lemma~\ref{lem:Hfracder}.

Our concern is with small positive values of the parameter $\lambda$
used to define
$z_N=z_N(\lambda)$ by $\chi_N(z_N)=\lambda V^{1/2}$.  By definition of
$\beta_z$ in \eqref{e:betazdef}, we see that for $z \in [0,z_N]$ we have
\begin{equation}
    \beta_z \le N^{-1}+\lambda^2 .
\end{equation}
We take $\lambda \in (0,\lambda_0]$ where $\lambda_0$ will be chosen to be sufficiently small.

\subsection{Proof of Theorems~\ref{thm:chiN-new} and \ref{thm:EL-new}}

To prove Theorems~\ref{thm:chiN-new} and \ref{thm:EL-new}, we recall a differential inequality
from \cite{BFF84} which we state here as in \cite[Theorem~2.3]{Slad06}
(or \cite[Lemma~1.5.2]{MS93}), namely
\begin{equation}
\label{e:diffeq}
    \frac{1}{\bubble(z)} \chi_N(z)^2
    \le
     \partial_z[z\chi_N(z)]
    \le
    \chi_N(z)^2
\end{equation}
where
\begin{equation}
    \bubble(z) = \|G_z\|_2^2
\end{equation}
is the bubble diagram introduced in Remark~\ref{rk:bubble}.
The upper bound is a very elementary consequence of submultiplicativity, as we discuss
below \eqref{e:Gderivbds}.  Both inequalities in \eqref{e:diffeq} hold
on any finite or infinite transitive graph, but for the lower bound to be useful
it is necessary to have control of the bubble diagram.

\begin{proof}[Proof of Theorem~\ref{thm:EL-new}]
Let $z \in [0,z_N]$.
By \eqref{e:diffeq}, the definition \eqref{e:ELdef} of the expected length,
and the monotonicity of the bubble diagram,
\begin{equation}
    \frac{1}{\bubble(z_N)} \chi_N(z)
    \le
    \E_z^{(N)}L
    \le
    \chi_N(z)  .
\end{equation}
This gives the upper bound claimed in \eqref{e:ELasy-new}, and the lower bound
follows from the fact that $\|G_{z_N}\|_2^2 = 1 + \|H_{z_N}\|_2^2 = 1+ O(\beta_{z_N})$
(see Remark~\ref{rk:bubble}).
\end{proof}

\begin{proof}[Proof of Theorem~\ref{thm:chiN-new}]
The inequality \eqref{e:diffeq} can equivalently be written
in terms of the reciprocal $F_N$ of $\chi_N$ as
\begin{equation}
\label{e:diffeq1}
    \frac{1}{\bubble(z)} - F_N(z)  \le -z \partial_z F_N(z) \le 1 -F_N(z).
\end{equation}
Integration of the upper bound over the interval $[z,w]$ with $w > z >0$ leads to
\begin{equation}
    \log\left( \frac{1-F_N(w)}{1-F_N(z)}\right) \le \log\left( \frac{w}{z}\right),
\end{equation}
which rearranges to the general lower bound
\begin{equation}
\label{e:chiasy-general-lb}
    \chi_N(z)
    \ge
    \frac{1}{\chi_N(w)^{-1}z/w + 1-z/w}.
\end{equation}
The choice $w=z_N$, together with replacement of $z/w$ by $1$ for the first ratio
in the denominator to give a further lower bound,
proves the lower bound of \eqref{e:chiasy-new}.

For the lower bound of \eqref{e:diffeq1}, for $z\in [z',z_N]$ we observe that
\begin{equation}
    -z_N\partial_z F_N(z) \ge \frac{1}{\bubble(z_N)} - F_N(z')
\end{equation}
and integrate to obtain
\begin{equation}
    z_N(F_N(z')-F_N(z_N)) \ge \Big( \frac{1}{\bubble(z_N)} - F_N(z')\Big) (z_N-z').
\end{equation}
After replacement of $z'$ by $z$, this rearranges to
\begin{equation}
    (2z_N-z)F_N(z) \ge z_NF_N(z_N) + \frac{1}{\bubble(z_N)} (z_N-z)
\end{equation}
which is equivalent to the upper bound on $\chi_N$ of
\eqref{e:chiasy-new} since $\bubble(z_N) = 1+ O(\beta_{z_N})$.
\end{proof}

\subsection{First derivative of  $\Pi$}
\label{sec:dzPi}

Next, we prove a bound
on the $z$-derivative $\partial_z\hat\Pi_z(k)$.
For the proof of Proposition~\ref{prop:F} we will also
consider fractional derivatives in Section~\ref{sec:pfpropF}.

We begin with the observation that for any $z \ge 0$ and $j\in\N$,
\begin{equation}
\label{e:Gderivbds}
	z^j \partial_z^j H_z (x)  \leq j! (H_{z}^{*j} * G_z) (x) .
\end{equation}
A proof can be found in \cite[Lemma~6.2.8]{MS93} and we illustrate the idea for
$j=1$ as follows (we will only use $j=1,2$).  For $j=1$,
\begin{equation}
    z \partial_z H_z (x)
    =
    \sum_{n=1}^\infty nc_n^{(N)}(x) z^n = \sum_{n=1}^\infty \sum_{i=1}^n c_n^{(N)}(x) z^n
    \le \sum_{n=1}^\infty \sum_{i=1}^n (c_i^{(N)}*c_{n-i}^{(N)})(x) z^iz^{n-i},
\end{equation}
since $c_n^{(N)}(x) \le (c_i^{(N)}*c_{n-i}^{(N)})(x)$ follows by relaxing the self-avoidance constraint
between the first $i$ and last $n-i$ steps.
After interchange of sums, the right-hand side rearranges
to $(H_z*G_z)(x)$.  This is in fact the essential step in the proof of the upper bound
of \eqref{e:diffeq}.

\begin{lemma}
\label{lem:Pizder}
There is a $\lambda_0>0$ such that for any $\lambda\in (0,\lambda_0]$,
and for all $z \in \C$ with $|z| \le z_N$ and for
all $k \in \Q^N$,
\begin{align}
\label{e:dPi}
    |\partial_z\hat\Pi_z(k)| \prec N\beta_{|z|}.
\end{align}
\end{lemma}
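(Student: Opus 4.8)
The plan is to differentiate the lace expansion itself, since the bound $\|\Pi_z\|_1\prec\beta_z$ of Lemma~\ref{lem:Pibds} cannot be differentiated directly. For $z\neq0$, writing $t=|z|$ and using $\pi^{(M)}_m(x)\ge0$ (so that $|\hat\pi_m(k)|\le\|\pi_m\|_1\le\sum_M\|\pi^{(M)}_m\|_1$), I would bound
\[
  |\partial_z\hat\Pi_z(k)|=\Big|\frac1z\sum_{m\ge2}m\,\hat\pi_m(k)\,z^m\Big|\le\frac1t\sum_{M\ge1}\|\delta^1_t\Pi^{(M)}_t\|_1 ,
\]
where $\delta^1_t$ is the fractional derivative \eqref{e:depdef}, so that $\|\delta^1_t\Pi^{(M)}_t\|_1=\sum_m m\|\pi^{(M)}_m\|_1 t^m$. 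The sum over $M$ is controlled by Proposition~\ref{prop:Pidots} with $\epsilon=1$: the $M=1$ term is $\le N\|\delta^1_t(tH_t)\|_\infty$, and for $M\ge2$ one has $\|\delta^1_t\Pi^{(M)}_t\|_1\le(2M-1)\|\delta^1_tH_t\|_\infty\,\|H_t*G_t\|_\infty^{M-1}$.

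Next I would turn the $t$-derivatives of $H_t$ into convolutions. By \eqref{e:Gderivbds} with $j=1$, $\delta^1_tH_t(x)=t\,\partial_tH_t(x)\le(H_t*G_t)(x)$, so $\|\delta^1_tH_t\|_\infty\le\|H_t*G_t\|_\infty$, and since $\delta^1_t(tH_t)=tH_t+t\,\delta^1_tH_t$ also $\|\delta^1_t(tH_t)\|_\infty\le t(\|H_t\|_\infty+\|H_t*G_t\|_\infty)$. Because the bootstrap of Section~\ref{sec:bootstrap} is complete, Lemmas~\ref{lem:TDa} and \ref{lem:Pibds} hold with $K=2$ for all $t\in(0,z_N]$; this gives $\|H_t\|_\infty\prec\beta_t$, $\|H_t\|_2^2\prec\beta_t$, and hence $\|H_t*G_t\|_\infty\le\|H_t\|_\infty+\|H_t\|_2^2\prec\beta_t$ as in \eqref{e:HstarG}. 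Summing the geometric series over $M\ge2$ (legitimate once $\lambda_0$ is small and $N$ large, as then $\beta_t\le\beta_{z_N}=N^{-1}+\lambda^2$ is small) yields $\sum_M\|\delta^1_t\Pi^{(M)}_t\|_1\prec tN\beta_t+\beta_t^2$, and therefore
\[
  |\partial_z\hat\Pi_z(k)|\prec N\beta_{|z|}+\frac{\beta_{|z|}^2}{|z|}.
\]

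The only genuine obstacle is the term $\beta_{|z|}^2/|z|$, which misbehaves only near $z=0$; I would split on the size of $|z|$, using $z_N\asymp N^{-1}$ (in particular $z_N\ge\tfrac1{2N}$ for large $N$, which follows from $\chi_N(z)\le(1-Nz)^{-1}$ and $\chi_N(z_N)=\lambda V^{1/2}\ge2$). For $\tfrac1{16N}\le|z|\le z_N$ one has $\beta_{|z|}^2/|z|\le16N\beta_{|z|}^2\le16N\beta_{|z|}$, using $\beta_{|z|}\le\beta_{z_N}\prec1$, so $|\partial_z\hat\Pi_z(k)|\prec N\beta_{|z|}$. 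For $|z|<\tfrac1{16N}$ I would instead argue from the power series: with $\psi_m:=\sum_M\|\pi^{(M)}_m\|_1\ge\|\pi_m\|_1$, the bound $\sum_M\|\Pi^{(M)}_s\|_1\le\bar c\,\beta_s$ of Lemma~\ref{lem:Pibds} gives $\psi_m\le s^{-m}\bar c\,\beta_s$ for every $s\in(0,z_N)$; taking $s=\tfrac1{8N}<z_N$, for which $\chi_N(s)\le2$ and hence $\beta_s\asymp N^{-1}$, gives $\psi_m\le O(N^{-1})(8N)^m$. Since $8N|z|<\tfrac12$ this produces $|\partial_z\hat\Pi_z(k)|\le\sum_{m\ge2}m\psi_m|z|^{m-1}\le O(N^{-1})|z|^{-1}\sum_{m\ge2}m(8N|z|)^m=O(N|z|)=O(1)$, which is $\prec N\beta_{|z|}$ because $N\beta_{|z|}\ge1$. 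Together with the trivial case $z=0$ (where $\partial_z\hat\Pi_z(k)=0$ since $\hat\Pi_z(k)=O(z^2)$), this covers all $|z|\le z_N$, uniformly in $k\in\Q^N$.
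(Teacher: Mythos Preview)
Your argument is correct and largely tracks the paper's: both reduce to real $t=|z|$, invoke Proposition~\ref{prop:Pidots} with $\epsilon=1$, use \eqref{e:Gderivbds} to bound $\delta_t^1 H_t\le H_t*G_t$, and feed in \eqref{e:HstarG} and Lemma~\ref{lem:TDa}. The genuine difference is in how the factor $1/t$ is absorbed for small $t$. The paper pushes the division by $z$ inside each $M\ge 2$ term, so it must bound $\|\partial_z H_z\|_\infty$ itself; for $z\le \tfrac{1}{2N}$ this is done by a direct random-walk comparison ($H_z\le zN\,D*C_z$ term-by-term, then differentiate and appeal to Lemma~\ref{lem:Dsum}), yielding $\|\partial_z H_z\|_\infty\prec N\beta_z$ uniformly on $[0,z_N]$. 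You instead keep the factor $t$ until the end, obtain $|\partial_z\hat\Pi_z(k)|\prec N\beta_{|z|}+\beta_{|z|}^2/|z|$, and dispose of the second term near $z=0$ by a coefficient bound: evaluate $\sum_M\|\Pi_s^{(M)}\|_1\le\bar c\,\beta_s$ at the fixed point $s=\tfrac{1}{8N}$ (where $\beta_s\asymp N^{-1}$) to get $\psi_m\le O(N^{-1})(8N)^m$, then sum the power series for $|z|<\tfrac{1}{16N}$. This is a clean black-box use of Lemma~\ref{lem:Pibds} that sidesteps the random-walk input; the paper's route is more localized (it upgrades the $H$ estimate once and for all) and reuses Lemma~\ref{lem:Dsum} which is already in hand.
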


\begin{proof}
As in \eqref{e:PizC}, it suffices to prove that there is a constant $c$ such that, for all real $z \in [0,z_N]$,
\begin{align}
\label{e:dPiM}
    \sum_{M=1}^\infty \|\partial_z \Pi_z^{(M)}\|_1 & \le cN\beta_z
    .
\end{align}
Since $\delta_z^1 = z \partial_z$, it follows  from Proposition~\ref{prop:Pidots}
with $\epsilon=1$ that for
$z \ge 0$ and $M=1$,
\begin{equation}
\label{e:Pi1bd-bis}
    \|  \partial_z  \Pi^{(1)}_z \|_1
    \le N\|\partial_z (zH_z)\|_\infty
    ,
\end{equation}
and for $z \ge 0$ and $M \ge 2$,
\begin{align}
\label{e:Pizder-bis}
    \|  \partial_z   \Pi^{(M)}_z \|_1
    &\le
    (2M-1)
    \|\partial_z H_z\|_\infty
    \|H_z * G_z \|_\infty^{M-1}
    \nnb & \le
    (2M-1)
    \|\partial_z  H_z\|_\infty
    (2c_2 \beta_z)^{M-1}
    .
\end{align}
where we used the bound $\|H_z * G_z \|_\infty \le  2c_2 \beta_z$
from \eqref{e:HstarG} (with $K=2$) for the last inequality.

By \eqref{e:Gderivbds} and Lemma~\ref{lem:TDa},
\begin{align}
\label{e:zdH}
    \partial_z (zH_z(x)) & = H_z(x) + z\partial_z H_z(x)
    \le
    H_z(x) + (H_z*G_z)(x) \prec \beta_z,
\end{align}
and with \eqref{e:Pizder-bis}
this gives an $O(N\beta_z)$ bound for the $M=1$ term in \eqref{e:dPiM}.

For $M \ge 2$, a slight manoeuvre  is needed to deal with small $z$ due to the fact
that we desire a bound on $\partial_zH_z$ whereas \eqref{e:Gderivbds} provides a
bound on $z\partial_zH_z$.
Suppose that $z \in [0,\frac{1}{2N}]$.  In this case,
as in \eqref{e:HDG} we use $H_z(x) \le zN(D*C_z)(x)$.  This inequality in fact
holds term-by-term as power series, so it is preserved upon differentiation and
\begin{align}
    \partial_z H_z(x)
    & \le N(D*C_z)(x) + zN\partial_z (D*C_z)(x).
\end{align}
Since $C_z(x)= \delta_{0,x}+ zN(D*C_z)(x)$,
and since $\hat C_z(k) \le \hat C_z(0)\le 2$ for $z \le \frac{1}{2N}$,
the first term on the above right-hand side
obeys
\begin{align}
    N(D*C_z)(x) & \le N \Big( D(x) + zN(D*D*C_z)(x) \Big)
    \nnb &
    \le 1 + \frac 12 N \|\hat D(k)^2 \hat C_z(k)\|_{\hat 1}
    \prec  1 +  N \|\hat D(k)^2 \|_{\hat 1} \prec N\beta_z,
\end{align}
since the norm on the right-hand side is of order $N^{-1}$ by Lemma~\ref{lem:Dsum}.
Similarly, with \eqref{e:Cphatdef} we see that
\begin{align}
    zN\partial_z (D*C_z)(x)
    =
    zN\frac{1}{V} \sum_{k \in \Q^N} \frac{N\hat D(k)^2(-1)^{k\cdot x}}{[1-zN\hat D(k)]^2}
    \prec
    \frac{N}{V} + N\cdot N^{-1} \prec N \beta_z,
\label{e:Hzder-small}
\end{align}
where we separated the $k=0$ term and used Lemma~\ref{lem:Dsum} to bound the sum over
nonzero $k$.
On the other hand,
for $z \in [\frac{1}{2N},z_N]$ it follows from \eqref{e:zdH}
that
\begin{equation}
\label{e:Hzder}
    \partial_z H_z(x) \le 2Nz \partial_z H_z(x) \prec N \beta_z.
\end{equation}
Thus, using the above bound $N\beta_z$ for the $M=1$ term, together
with the above considerations to bound $\|\partial_zH_z\|_\infty$  by $N\beta_z$
when $M \ge 2$, altogether we find from \eqref{e:Pizder-bis} that
\begin{align}
\label{e:PiMzder-pf}
    \sum_{M=1}^\infty \|\partial_z \Pi_z^{(M)}\|_1 & \prec
    N\beta_z
    +
    N\beta_z  \sum_{M=2}^\infty M\, (2c_2\beta_z)^{M-1}
    \prec N \beta_z
    ,
\end{align}
where we used small $\lambda_0$ to bound the last sum.
\end{proof}

We can now easily prove \eqref{e:muratio}, as follows.
As a first and elementary observation,
since the generating function for self-avoiding walks is smaller than the
generating function for all walks, we have $\chi_N(z) \le \hat C_z(0) = (1-zN)^{-1}$
for $z < \frac 1N$,
and hence $z_N$ is larger than the value $p_{z_N}=N^{-1}(1-\lambda^{-1}V^{-1/2})$ for which $(1-p_{z_N}N)^{-1}=\lambda V^{1/2}$ (recall Figure~\ref{fig:pofz}).
This shows that $z_N \ge \frac 34 N^{-1}$
(we take $V$ large depending on $\lambda$) and therefore
\begin{equation}
\label{e:zNlam}
    \frac 34 N^{-1}  \le z_N \le 2N^{-1}
\end{equation}
since we have seen in Lemma~\ref{lem:43} that $z_N N\le 2$.

Since $F_N(z_N(\lambda)) = \lambda^{-1} V^{-1/2}$, the chain rule
and the formula for $F_N(z)$ in \eqref{e:chiPi} give
\begin{equation}
    z_N'(\lambda) = \frac{1}{-F_N'(z_N(\lambda))} V^{-1/2} \frac{1}{\lambda^2}
    = \frac{1}{N+\partial_z \hat\Pi_{z}(0)|_{z_N(\lambda)}} V^{-1/2} \frac{1}{\lambda^2}.
\end{equation}
The first fraction on the right-hand side is at most $2N^{-1}$ (for small $\lambda_0$)
by Lemma~\ref{lem:Pizder},
so for $\lambda' \le \lambda_1< \lambda_2 \le \lambda_0$,
\begin{equation}
    z_N(\lambda_2) - z_N(\lambda_1) = \int_{\lambda_1}^{\lambda_2} z_N'(\lambda) \D \lambda
    \prec
    \frac{V^{-1/2}}{N} \int_{\lambda_1}^{\lambda_2} \frac{1}{\lambda^2} \D \lambda
    =
    \frac{V^{-1/2}}{N} \frac{\lambda_2-\lambda_1}{\lambda_1\lambda_2}
\end{equation}
and hence, as claimed in \eqref{e:muratio},
\begin{equation}
\label{e:zratio}
    \frac{z_N(\lambda_2)}{z_N(\lambda_1)} - 1
    \prec
    \frac{V^{-1/2}}{Nz_N(\lambda_1)} \frac{\lambda_2-\lambda_1}{\lambda_1\lambda_2}
    \prec
     \frac{\lambda_2-\lambda_1}{\lambda_1\lambda_2}
     V^{-1/2}  .
\end{equation}
In the last step, we used the lower bound of \eqref{e:zNlam}.
As usual, the constant in \eqref{e:zratio} deteriorates as $\lambda_1,\lambda_2$
decrease, so depends on $\lambda'$.

\subsection{Proofs of Propositions~\ref{prop:FzN} and \ref{prop:alphabeta} }
\label{sec:manypfs}

We are now in a position to prove
Propositions~\ref{prop:FzN} and \ref{prop:alphabeta}.
The following proposition concerns the susceptibility
in the
complex plane.

\begin{prop}
\label{prop:chiasy-bis}
There is a $\lambda_0>0$ such that for any $\lambda\in (0, \lambda_0]$,
and for any $w_N \in \C$ with $|w_N|\le z_N$ and  $\lim_{N\to\infty}V^{1/2}|1-w_N/z_N| =\infty$,
\begin{equation}
\label{e:chiasy-bis}
    |\chi_N(w_N)|
    \asymp \frac{1}{|1-w_N/z_N|}  .
\end{equation}
In addition, for $z \in \C$ with $|z| \le z_N$, if $N$ is large then
\begin{align}
    |\chi_N(z)| &\le
    \frac{2}{ |1-z/z_N|}
    .
\label{e:Flower1}
\end{align}
\end{prop}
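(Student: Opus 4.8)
The plan is to work throughout with the reciprocal $F_N(z)=1/\chi_N(z)=1-zN-\hat\Pi_z(0)$ from \eqref{e:chiPi}, and to compare its values on the disk $\{|z|\le z_N\}$ with its value $F_N(z_N)=\lambda^{-1}V^{-1/2}$ at the critical point. The first step is the master estimate
\begin{equation}
\label{e:masterplan}
    F_N(z) = \lambda^{-1}V^{-1/2} + Nz_N(1-z/z_N) + E_N(z),
    \qquad
    |E_N(z)| \prec \beta\,|1-z/z_N|,
\end{equation}
valid for all $z\in\C$ with $|z|\le z_N$, where $\beta=N^{-1}+\lambda^2$ is small for small $\lambda_0$. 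To obtain this, write $F_N(z)-F_N(z_N)=-(z-z_N)N-\bigl(\hat\Pi_z(0)-\hat\Pi_{z_N}(0)\bigr)$ and estimate the $\Pi$-difference by integrating $\partial_w\hat\Pi_w(0)$ along the straight segment from $z_N$ to $z$. Since the closed disk $\{|w|\le z_N\}$ is convex, the whole segment lies in it, so Lemma~\ref{lem:Pizder} gives $|\partial_w\hat\Pi_w(0)|\prec N\beta_{|w|}\le N\beta$, hence $|\hat\Pi_z(0)-\hat\Pi_{z_N}(0)|\prec N\beta\,|z-z_N| = Nz_N\beta\,|1-z/z_N|\asymp\beta\,|1-z/z_N|$ using $Nz_N\asymp1$ from \eqref{e:zNlam}; and the term $-(z-z_N)N$ equals $Nz_N(1-z/z_N)$.

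Next I would prove \eqref{e:Flower1}, which is equivalent to $|F_N(z)|\ge\tfrac12|1-z/z_N|$ on the disk. The key geometric observation is that $|z|\le z_N$ forces $\Re(z/z_N)\le|z/z_N|\le1$, hence $\Re(1-z/z_N)\ge0$. Writing $a=\lambda^{-1}V^{-1/2}\ge0$, $b=Nz_N\in[\tfrac34,2]$ (by \eqref{e:zNlam}), and $u=1-z/z_N$ with $\Re u\ge0$, one has
\begin{equation}
    |a+bu|^2=(a+b\Re u)^2+(b\Im u)^2\ge(b\Re u)^2+(b\Im u)^2=b^2|u|^2,
\end{equation}
so that by \eqref{e:masterplan}, $|F_N(z)|\ge b|u|-|E_N(z)|\ge Nz_N(1-C\beta)\,|1-z/z_N|\ge\tfrac12|1-z/z_N|$ once $\lambda_0$ is small and $N$ large. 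This gives $|\chi_N(z)|\le2/|1-z/z_N|$.

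Finally, for \eqref{e:chiasy-bis}: the bound $|\chi_N(w_N)|\prec|1-w_N/z_N|^{-1}$ is \eqref{e:Flower1} applied at $w_N$. For the matching lower bound on $\chi_N$ I would use \eqref{e:masterplan} and the triangle inequality: $|F_N(w_N)|\le\lambda^{-1}V^{-1/2}+Nz_N|1-w_N/z_N|+C\beta|1-w_N/z_N|$; dividing by $|1-w_N/z_N|$, the last two terms are $O(1)$, while the first is $\bigl(\lambda V^{1/2}|1-w_N/z_N|\bigr)^{-1}\to0$ by hypothesis. Hence $|F_N(w_N)|\prec|1-w_N/z_N|$ for $N$ large, i.e.\ $|\chi_N(w_N)|\succ|1-w_N/z_N|^{-1}$, and \eqref{e:chiasy-bis} follows. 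The only place any care is needed is the interplay between the complex variable $z$ and the real critical point $z_N$: one must route the comparison of $\hat\Pi_z(0)$ with $\hat\Pi_{z_N}(0)$ through a path lying in the disk where Lemma~\ref{lem:Pizder} applies, and then exploit $\Re(1-z/z_N)\ge0$ so that the positive constant $\lambda^{-1}V^{-1/2}$ cannot partially cancel the linear term $Nz_N(1-z/z_N)$ in the lower bound on $|F_N|$. Everything else reduces to the triangle inequality together with $Nz_N\asymp1$ and the smallness of $\beta$.
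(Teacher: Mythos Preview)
Your proposal is correct and follows essentially the same route as the paper: derive the master estimate \eqref{e:masterplan} by integrating $\partial_w\hat\Pi_w(0)$ along the segment from $z_N$ to $z$ using Lemma~\ref{lem:Pizder}, then combine with $Nz_N\asymp 1$ from \eqref{e:zNlam}. Your geometric observation $|a+bu|\ge b|u|$ for $a\ge 0$, $b>0$, $\Re u\ge 0$ is exactly the paper's statement that ``any point $z/z_N$ in the unit disk is closer to $1$ than it is to $1+\epsilon$'' (with $\epsilon=a/b$), just unpacked differently; and your organisation---proving \eqref{e:Flower1} first and then reading off the $\prec$ half of \eqref{e:chiasy-bis} from it---is a harmless reordering of the paper's argument.
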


\begin{proof}
Let $|z|\le z_N$.
By the Fundamental Theorem of Calculus and \eqref{e:chiPi},
\begin{equation}
\label{e:FTCz}
    F_{N}(z) =
    F_{N}(z_N)
    +
    z_N N (1-z/z_N) + \int_{z}^{z_N} \partial_w \hat\Pi_w(0) \D w,
\end{equation}
with the integral along the line segment joining $z$ to $z_N$.
By Lemma~\ref{lem:Pizder}, the integral on the right-hand side is
$O[N\beta_{z_N}|z_N-z|] = O[(N^{-1}+\lambda^2)|1-z/z_N|]$, since $z_N \le 2N^{-1}$.
This gives
\begin{equation}
\label{e:Fnzlb1}
    F_{N}(z) =
    \lambda^{-1}V^{-1/2}
    +
    z_N N(1-z/z_N)  +O[(N^{-1}+\lambda^2)|1-z/z_N|]  .
\end{equation}
Since $z_NN \ge \frac 34$,
the last term is comparable to the middle term but with much smaller prefactor.
When we set $z=w_N$ with the assumption that
$\lim_{N\to\infty}V^{1/2}|1-w_N/z_N| =\infty$,
the term $\lambda^{-1}V^{-1/2}$ becomes negligible and \eqref{e:chiasy-bis} follows.

Also, it follows from \eqref{e:Fnzlb1} that, with $\epsilon = (z_NN\lambda V^{1/2})^{-1}$,
\begin{align}
\label{e:Fnzlb2}
    |F_{N}(z)| &\ge
    z_NN \big|\epsilon
    +
    1-z/z_N  \big| - O[(N^{-1}+\lambda^2)|1-z/z_N|]
    \nnb & \ge
     z_NN | 1-z/z_N  | - O[(N^{-1}+\lambda^2)|1-z/z_N|]
    ,
\end{align}
with the first inequality a consequence of the triangle inequality and the
second due to the
geometric fact that any point $z/z_N$ in the  unit disk is closer to $1$ than
it is to $1+\epsilon$.
Since $z_NN \ge \frac 34$, this gives \eqref{e:Flower1}
and completes the proof.
\end{proof}

The proofs of Propositions~\ref{prop:FzN} and \ref{prop:alphabeta}
now follow easily.

\begin{proof}[Proof of Proposition~\ref{prop:FzN}]
This is an immediate consequence of \eqref{e:zNlam} (for the bound $Nz_N \le 2$),
of \eqref{e:Flower1} (for the lower bound on $|F_N(z)|$ in the disk $|z|\le z_N$),
and of the fact that $F_N'(z) = -N - \partial_z \hat\Pi_z(0)$ together with
Lemma~\ref{lem:Pizder} which implies that $|\partial_z \hat\Pi_z(0)| \prec
N\beta_{z_N} \prec 1+ N\lambda^2$ uniformly in $|z|\le z_N$ (for the bound
$|F_N'(z)| \le 2 N$).
\end{proof}

\begin{proof}[Proof of Proposition~\ref{prop:alphabeta}]
Let $p \in (0,\frac 12)$ and recall that $\zeta_p=z_N(1-V^{-p})$.
Proposition~\ref{prop:alphabeta} asserts that
\begin{equation}
\label{e:zetapasy}
    \zeta_p = N^{-1}[1+O(N^{-1})], \qquad
    F_N(\zeta_p) \asymp V^{-p}, \qquad   F_N'(\zeta_p) = -N +O(1).
\end{equation}
For the second of these three statements, we first observe that by
definition $V^{1/2}(1-\zeta_p/z_N)=V^{1/2-p} \to \infty$.  It then follows from
\eqref{e:chiasy-bis} that
\begin{equation}
\label{e:chip4}
    \chi_N(\zeta_p) \asymp   V^p,
\end{equation}
which is equivalent to the desired relation $F_N(\zeta_p) \asymp V^{-p}$.
By the definition of $\beta_z$ in \eqref{e:betazdef}, this also implies that
\begin{equation}
\label{e:betap}
    \beta_{\zeta_p} \prec V^{2p-1}+N^{-1} \prec N^{-1}.
\end{equation}
By
\eqref{e:chiPi}
\begin{equation}
    \chi_N(\zeta_p)^{-1} =  1 - N\zeta_p -\hat\Pi_{\zeta_p}(0),
\end{equation}
so by Lemma~\ref{lem:Pibds}, \eqref{e:chip4} and \eqref{e:betap},
\begin{equation}
    \zeta_p =  N^{-1} [1  -\hat\Pi_{\zeta_p}(0) -\chi_N(\zeta_p)^{-1}]
    =  N^{-1} [1+  O(  N^{-1})],
\end{equation}
which proves the first statement of \eqref{e:zetapasy}.
Finally, since
\begin{equation}
    F_N'(\zeta_p) = - N - \partial_z \hat\Pi_z(0)\big|_{z=\zeta_p},
\end{equation}
the third statement of \eqref{e:zetapasy} follows from Lemma~\ref{lem:Pizder} and \eqref{e:betap}.
\end{proof}

\subsection{Fractional derivatives}
\label{sec:fracder}

The proof of Proposition~\ref{prop:F}
uses the fractional derivative methods introduced in
\cite{HS92a} (see also \cite[Section~6.3]{MS93}), and we begin with a summary
of what is needed from that theory.

The fractional derivative $\delta_z^\epsilon f(z)=\sum_{n=1}^\infty n^\epsilon a_n z^n$
defined in \eqref{e:depdef}
converges absolutely at least strictly
within the circle of convergence of $f(z)$.  For
$\rho>0$ and for $j =1,2$ we define
\begin{equation}
\label{e:fnorm}
	A_j^{(\epsilon)}(\rho) = \sum_{n=j}^\infty n^\epsilon |a_n| \rho^n.
\end{equation}
The following lemma
provides an error estimate analogous to the error
estimate in Taylor's theorem.  It is a restatement of \cite[Lemma~6.3.2]{MS93},
apart from the replacement of $A_1^{(\epsilon)}(\rho)$ in \cite[Lemma~6.3.2]{MS93}
by $A_2^{(\epsilon)}(\rho)$ in \eqref{e:Taylepbound}.
The replacement is justified since the
terms $a_0+a_1z$ in $f(z)$ cancel on the left-hand side of
\eqref{e:Taylepbound} and hence we can assume $a_0=a_1=0$ there.

\begin{lemma}
\label{lem:Taylepsilon}
Let $\epsilon \in (0,1)$, $f (z) = \sum_{n=0}^\infty a_n z^n$, and
$\rho >0$.  If
$A_1^{(\epsilon)}(\rho) < \infty$
(so in particular $f(z)$ converges for $|z|\leq \rho$),
then for any $z\in \C$ with $|z| \leq \rho$,
\begin{equation}
\label{e:Taylep0}
	|f(z) - f(\rho)  | \leq
	2^{1-\epsilon} A_1^{(\epsilon)}(\rho) |1-z/\rho|^{\epsilon}.
\end{equation}
If
$A_{2}^{(1+\epsilon)}(\rho) < \infty$
(so in particular $f'(z) = \sum_{n=1}^\infty na_nz^{n-1}$ converges for
$|z|\leq \rho$),
then for any $z\in \C$ with  $|z| \leq \rho$,
\begin{equation}
\label{e:Taylepbound}
	| f(z) - f(\rho) - f'(\rho) (z-\rho) | \leq
	\frac{2^{1-\epsilon}}{1+\epsilon}  A_{2}^{(1+\epsilon)}(\rho)
	|1-z/\rho|^{1+\epsilon}.
\end{equation}
\end{lemma}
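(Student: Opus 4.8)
The plan is to reduce everything to a single elementary interpolation inequality, which is then applied termwise to the power series; this is precisely the structure of the proof of \cite[Lemma~6.3.2]{MS93}, the only new ingredient being a trivial bookkeeping adjustment in the second bound. First I would record the basic estimate: for every integer $n\ge 1$ and every $w\in\C$ with $|w|\le 1$,
\[
    |1-w^n| \le 2^{1-\epsilon}\, n^{\epsilon}\, |1-w|^{\epsilon} .
\]
To prove it, combine the two crude bounds $|1-w^n|\le 1+|w|^n\le 2$ and $|1-w^n| = |1-w|\,|1+w+\cdots+w^{n-1}| \le n\,|1-w|$ to obtain $|1-w^n|\le\min\{2,\,n|1-w|\}$, and then apply the pointwise inequality $\min\{2,t\}\le 2^{1-\epsilon}t^{\epsilon}$ valid for all $t\ge 0$ (check $t\le 2$ and $t>2$ separately; here $\epsilon\in(0,1)$ is used) with $t=n|1-w|$.

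For \eqref{e:Taylep0} I would write $f(z)-f(\rho)=\sum_{n\ge 1}a_n(z^n-\rho^n)$, which is absolutely convergent for $|z|\le\rho$ by the hypothesis $A_1^{(\epsilon)}(\rho)<\infty$. Setting $w=z/\rho$ and applying the basic estimate gives $|z^n-\rho^n|\le 2^{1-\epsilon}n^{\epsilon}\rho^n|1-z/\rho|^{\epsilon}$ termwise, and summing yields $|f(z)-f(\rho)|\le 2^{1-\epsilon}A_1^{(\epsilon)}(\rho)|1-z/\rho|^{\epsilon}$, as claimed.

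For \eqref{e:Taylepbound}, since $a_0+a_1z$ cancels on the left-hand side I may assume $a_0=a_1=0$, so $f(z)=\sum_{n\ge 2}a_nz^n$; under $A_2^{(1+\epsilon)}(\rho)<\infty$ both this series and $f'(z)=\sum_{n\ge 2}na_nz^{n-1}$ converge absolutely for $|z|\le\rho$ (since $\sum_{n\ge 2}n|a_n|\rho^{n-1}\le\rho^{-1}A_2^{(1+\epsilon)}(\rho)$). The key algebraic identity is
\[
    w^n-1-n(w-1) = (w-1)\sum_{k=1}^{n-1}(w^k-1)\qquad(n\ge 2),
\]
obtained by subtracting $n(w-1)=(w-1)\sum_{k=0}^{n-1}1$ from $w^n-1=(w-1)\sum_{k=0}^{n-1}w^k$. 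Applying the basic estimate to each $|w^k-1|$ and bounding $\sum_{k=1}^{n-1}k^{\epsilon}\le\int_0^n x^{\epsilon}\D x=\frac{n^{1+\epsilon}}{1+\epsilon}$ gives
\[
    |w^n-1-n(w-1)| \le 2^{1-\epsilon}|w-1|^{1+\epsilon}\sum_{k=1}^{n-1}k^{\epsilon} \le \frac{2^{1-\epsilon}}{1+\epsilon}\,n^{1+\epsilon}\,|w-1|^{1+\epsilon}.
\]
Summing $f(z)-f(\rho)-f'(\rho)(z-\rho)=\sum_{n\ge 2}a_n\rho^n\big[(z/\rho)^n-1-n(z/\rho-1)\big]$ termwise against this bound produces the factor $A_2^{(1+\epsilon)}(\rho)$ and completes the proof.

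There is no serious obstacle; the only points that require a little care are the verification of $\min\{2,t\}\le 2^{1-\epsilon}t^{\epsilon}$ and the justification---via absolute convergence of $A_1^{(\epsilon)}(\rho)$, respectively $A_2^{(1+\epsilon)}(\rho)$---that the termwise manipulations, and differentiation under the summation sign, are legitimate throughout the closed disk $|z|\le\rho$, including on the boundary circle.
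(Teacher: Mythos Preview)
Your proof is correct and follows exactly the approach of \cite[Lemma~6.3.2]{MS93} that the paper cites, including the observation that $a_0+a_1z$ cancels in \eqref{e:Taylepbound} so one may replace $A_1^{(1+\epsilon)}(\rho)$ by $A_2^{(1+\epsilon)}(\rho)$. The interpolation inequality $|1-w^n|\le 2^{1-\epsilon}n^\epsilon|1-w|^\epsilon$, the algebraic identity for $w^n-1-n(w-1)$, and the integral bound on $\sum_{k=1}^{n-1}k^\epsilon$ are all handled cleanly.
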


For $\epsilon \in (0,1)$, the following
elementary lemma provides an integral formula for $\delta_z^\epsilon f$
in terms of $f'$.  The statement and proof of the lemma involve the Gamma function
$\Gamma(x)=\int_0^\infty t^{x-1}e^{-t}\D t$.

\begin{lemma}
\label{lem:frderiv}
Let $f(z) = \sum_{n=0}^\infty a_n z^n$ have radius of convergence
at least $\rho$.  Let $\epsilon \in (0,1)$ and $\gamma_\epsilon = 1/\Gamma(1-\epsilon)$.
For any  $z\in [0,\rho)$ (and also for $z=\rho$ if $a_n \ge 0$ for all $n$),
\begin{equation}
\label{e:frderividentity}
	\delta_z^\epsilon f(z) =
    \gamma_\epsilon
	z
	\int_0^\infty f'(ze^{-t})
	e^{-t} t^{-\epsilon} \D t.
\end{equation}
\end{lemma}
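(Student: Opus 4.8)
The plan is to prove the identity by expanding $f'$ as a power series, interchanging the sum with the integral, and evaluating the resulting elementary integrals in terms of the Gamma function.

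First I would write $f'(w) = \sum_{n=1}^\infty n a_n w^{n-1}$, which converges for $|w| < \rho$ (and for $|w| = \rho$ when $a_n \ge 0$). Substituting $w = z e^{-t}$ and multiplying by $\gamma_\epsilon\, z\, e^{-t} t^{-\epsilon}$ gives, formally,
\begin{equation}
    \gamma_\epsilon z f'(ze^{-t}) e^{-t} t^{-\epsilon}
    = \gamma_\epsilon \sum_{n=1}^\infty n a_n z^n e^{-nt} t^{-\epsilon}.
\end{equation}
Next I would interchange the sum and the integral $\int_0^\infty \cdot\, \D t$. For $z \in [0,\rho)$ this is justified by absolute convergence: choosing $\rho' \in (z,\rho)$ we have $|a_n| \le C (\rho')^{-n}$ for some constant $C$, so $\sum_{n=1}^\infty n |a_n| z^n \int_0^\infty e^{-nt} t^{-\epsilon} \D t < \infty$ once we note (see the next step) that the inner integral equals $n^{\epsilon-1}\Gamma(1-\epsilon)$ and $\sum_n n^\epsilon (z/\rho')^n < \infty$. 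For $z = \rho$ with all $a_n \ge 0$, every term is nonnegative and the interchange is permitted by Tonelli's theorem (the two sides are then simultaneously finite or infinite, and finiteness of $\delta_\rho^\epsilon f(\rho)$ is part of the hypothesis that the radius of convergence is at least $\rho$, together with $\epsilon < 1$).

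I would then evaluate $\int_0^\infty e^{-nt} t^{-\epsilon} \D t$ by the substitution $s = nt$, which gives $n^{\epsilon - 1}\int_0^\infty e^{-s} s^{-\epsilon}\D s = n^{\epsilon-1}\Gamma(1-\epsilon)$; this integral is finite precisely because $\epsilon \in (0,1)$, so that $s^{-\epsilon}$ is integrable at $0$ and $e^{-s}$ controls the tail. Collecting the pieces,
\begin{equation}
    \gamma_\epsilon z \int_0^\infty f'(ze^{-t}) e^{-t} t^{-\epsilon} \D t
    = \gamma_\epsilon \Gamma(1-\epsilon) \sum_{n=1}^\infty n^\epsilon a_n z^n
    = \sum_{n=1}^\infty n^\epsilon a_n z^n = \delta_z^\epsilon f(z),
\end{equation}
using $\gamma_\epsilon = 1/\Gamma(1-\epsilon)$, which is the claimed identity.

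The only real obstacle is the rigorous justification of the sum–integral interchange, and in particular the behaviour of the integrand near $t = 0$, where the factor $t^{-\epsilon}$ is singular; this is handled cleanly by the restriction $\epsilon \in (0,1)$ (giving an integrable singularity) and, at the endpoint $z = \rho$, by the sign hypothesis $a_n \ge 0$ which allows Tonelli's theorem in place of dominated convergence. Everything else is a routine computation.
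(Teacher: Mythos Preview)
Your proof is correct and more self-contained than the paper's. The paper simply cites \cite[Lemma~6.3.1]{MS93} for the identity
\[
	\delta_z^\epsilon f(z) =
	C_{\epsilon}\, z
	\int_0^\infty f'(ze^{-\lambda^{1/(1-\epsilon)}})
	e^{-\lambda^{1/(1-\epsilon)}} \D \lambda,
	\qquad C_\epsilon = \frac{1}{(1-\epsilon)\Gamma(1-\epsilon)},
\]
and then makes the change of variables $t = \lambda^{1/(1-\epsilon)}$. You instead carry out the direct term-by-term computation, which is essentially what the proof in \cite{MS93} does anyway; your version has the advantage of not relying on an external reference.

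One small correction: your parenthetical claim that finiteness of $\delta_\rho^\epsilon f(\rho)$ follows from the hypothesis on the radius of convergence is not right (e.g.\ $a_n = 1/(n\log^2 n)$, $\rho=1$). But this does not matter for the argument: Tonelli gives equality of the two sides as elements of $[0,\infty]$ regardless of finiteness, and that is all the lemma asserts at $z=\rho$.
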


\begin{proof}
It is proved in \cite[Lemma~6.3.1]{MS93} that
\begin{equation}
\label{e:frderividentityMS}
	\delta_z^\epsilon f(z) =
	C_{\epsilon}\, z
	\int_0^\infty f'(ze^{-\lambda^{1/(1-\epsilon)}})
	e^{-\lambda^{1/(1-\epsilon)}} \D \lambda
\end{equation}
with $C_\epsilon = [(1-\epsilon)\Gamma(1-\epsilon)]^{-1}$.
We make the change of variables $t=\lambda^{1/(1-\epsilon)}$ to obtain
\eqref{e:frderividentity}.
\end{proof}

\subsection{Fractional derivatives of $\Pi$: proof of Proposition~\ref{prop:F}}
\label{sec:pfpropF}

We now prove Proposition~\ref{prop:F}.
The proof is based on an estimate for the $(1+a)^{\rm th}$
derivative of $\hat\Pi_z$ at $z=\zeta_p$.  This in turn requires a fractional-derivative bound on
$H_z$ and we begin with the crucial lemma that provides this bound.
The proof of Lemma~\ref{lem:Hfracder} requires delicate attention to the zero mode.
Constants in estimates are permitted to depend on $p$ and $a$ as well as on $\lambda$.

\begin{lemma}
\label{lem:Hfracder}
There is a $\lambda_0>0$ such that for any $\lambda\in (0, \lambda_0]$,
and for any $p \in (0,\frac 12)$ and $a\in (0,1)$,
\begin{align}
\label{e:Hfracder1}
    N\|\delta_z^{1+a}|_{z=\zeta_p}(zH_z)\|_\infty
    & \prec N^{-1} +   V^{(2+a)p-1},
    \\
\label{e:Hfracder2}
    \|\delta_z^{1+a}|_{z=\zeta_p} H_z\|_\infty & \prec  N^{-1} +  V^{(2+a)p-1}.
\end{align}
\end{lemma}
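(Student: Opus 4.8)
The plan is to reduce $\delta_z^{1+a}|_{z=\zeta_p}H_z$ to an integral, over $w\in(0,\zeta_p]$, of the logarithmic second derivative $(w\partial_w)^2H_w$ via the fractional-derivative identity of Lemma~\ref{lem:frderiv}, to bound the integrand by convolutions using \eqref{e:Gderivbds}, and then to split the integral into a part near $w=\zeta_p$ (which produces the term $V^{(2+a)p-1}$) and a harmless part near $w=0$. Concretely, using the semigroup relation $\delta_z^{1+a}=\delta_z^a\circ\delta_z^1$ and applying Lemma~\ref{lem:frderiv} with $\epsilon=a$ to $g(z)=z\partial_zH_z(x)=\sum_{n\ge1}nc_n^{(N)}(x)z^n$ (a polynomial in $z$, at fixed $x$, with nonnegative coefficients), and noting that $zg'(ze^{-t})e^{-t}$ at $z=\zeta_p$ equals $\sum_nn^2c_n^{(N)}(x)(\zeta_pe^{-t})^n$, one obtains, with the convention $w=\zeta_pe^{-t}$,
\begin{equation}
    \delta_z^{1+a}|_{z=\zeta_p}H_z(x)
    =\gamma_a\int_0^\infty\big[(w\partial_w)^2H_w\big](x)\,t^{-a}\D t .
\end{equation}
The coefficientwise bound $(w\partial_w)^2H_w\le(H_w*G_w)+2(H_w*H_w*G_w)$ from \eqref{e:Gderivbds} reduces \eqref{e:Hfracder2} to showing $\int_0^\infty\|(w\partial_w)^2H_w\|_\infty\,t^{-a}\D t\prec N^{-1}+V^{(2+a)p-1}$, and \eqref{e:Hfracder1} then follows from \eqref{e:Hfracder2}, since $\delta_z^{1+a}|_{z=\zeta_p}(zH_z)(x)=\zeta_p\sum_n(n+1)^{1+a}c_n^{(N)}(x)\zeta_p^n\le2^{1+a}\zeta_p\,\delta_z^{1+a}|_{z=\zeta_p}H_z(x)$ and $N\zeta_p\prec1$ by Proposition~\ref{prop:alphabeta}.

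I would split the integral at $t=T:=\log(2N\zeta_p)$, a bounded positive constant since $N\zeta_p=1+O(N^{-1})$; this is the threshold $w=\tfrac1{2N}$. For $t\ge T$, where $w\le\tfrac1{2N}$, iterating $H_w\le wN(D*G_w)$, using $\hat G_w(k)\le\chi_N(w)\le2$, and invoking Lemma~\ref{lem:Dsum} give $\|H_w\|_\infty\prec w$ and $\|H_w\|_2^2\prec w$, hence $\|(w\partial_w)^2H_w\|_\infty\prec\|H_w*G_w\|_\infty+\|H_w*H_w*G_w\|_\infty\prec w$, so $\int_T^\infty\zeta_pe^{-t}t^{-a}\D t\prec\zeta_p\prec N^{-1}$.

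The substantial range is $t\in[0,T]$, where $w\in[\tfrac1{2N},\zeta_p]$, and here the key estimate is $\|(w\partial_w)^2H_w\|_\infty\prec N^{-1}+\chi_N(w)^3/V$. To prove it I would pass to Fourier space via $\|H_w^{*j}\|_\infty\le V^{-1}\sum_k|\hat H_w(k)|^j$ for $j=2,3$, separate the zero mode (contributing $(\chi_N(w)-1)^j/V\prec\chi_N(w)^j/V$), and for $k\ne0$ exploit $\hat H_w(k)=\hat g_w(k)\hat G_w(k)$ with $\hat g_w(k)=wN\hat D(k)+\hat\Pi_w(k)$, the infrared bound $\hat G_w(k)\le2\hat C_{p_w}(k)$, the bound $|\hat\Pi_w(k)|\le\|\Pi_w\|_1\prec\beta_w$ of Lemma~\ref{lem:Pibds}, and $wN\le2$ from \eqref{e:zNlam}, to get $|\hat H_w(k)|\prec(|\hat D(k)|+\beta_w)\hat C_{p_w}(k)$; raising to the $j$th power, summing over $k\ne0$, and applying \eqref{e:Dijsum} bounds the non-zero-mode part by $\prec N^{-j/2}+\beta_w^j\prec N^{-1}$ (the remaining single convolution obeys $\|H_w*G_w\|_\infty\prec\beta_w$ by Lemma~\ref{lem:TDa} and \eqref{e:HstarG}). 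Inserting the key estimate, bounding $\chi_N(w)\le2(1-w/z_N)^{-1}$ by \eqref{e:Flower1}, and using $1-(1-V^{-p})e^{-t}\succ t+V^{-p}$ for $t\in[0,T]$, the dangerous part of the integral becomes
\begin{equation}
    \int_0^T\frac{\chi_N(\zeta_pe^{-t})^3}{V}\,t^{-a}\D t
    \prec\frac1V\int_0^T(t+V^{-p})^{-3}\,t^{-a}\D t
    \prec V^{(2+a)p-1},
\end{equation}
the last step by the substitution $t=V^{-p}s$ (the exponent is $3p+ap-p-1$, and $\int_0^\infty(s+1)^{-3}s^{-a}\D s$ is finite exactly because $a<1$), while $\int_0^TN^{-1}t^{-a}\D t\prec N^{-1}$ as $T$ is bounded. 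Adding the two ranges proves \eqref{e:Hfracder2}, hence \eqref{e:Hfracder1}.

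The main obstacle is the uniform estimate of $\|(w\partial_w)^2H_w\|_\infty$ on $[\tfrac1{2N},\zeta_p]$: bounding each convolution factor crudely by $\beta_w$ fails, because in Fourier space the triple convolution $H_w^{*3}$ would then contribute an $O(1)$ term from the non-zero modes, which overwhelms the target bound when $p$ is small. One must instead use that $\hat H_w(k)$ carries an extra factor $\hat D(k)$ relative to $\hat C_{p_w}(k)$ — up to a $\beta_w$ correction from $\hat\Pi_w$ — and call on the random-walk sums \eqref{e:Dijsum} to bring that contribution down to $O(N^{-1})$. This separate bookkeeping of the zero mode and the non-zero modes, together with the $t^{-a}$ weight near $\zeta_p$ that produces $V^{(2+a)p-1}$ and forces $p<\tfrac12$, is the heart of the argument.
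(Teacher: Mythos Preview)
Your proposal is correct and follows essentially the same route as the paper: the integral representation via Lemma~\ref{lem:frderiv}, the reduction to second derivatives via \eqref{e:Gderivbds}, the split at $w=\tfrac{1}{2N}$, the Fourier analysis with the zero mode isolated and the nonzero modes controlled through $\hat H_w=\hat g_w\hat G_w$ and Lemma~\ref{lem:Dsum}, and the final $t$-integral producing $V^{(2+a)p-1}$ are all exactly as in the paper. Your organisation differs only cosmetically: you work with $(w\partial_w)^2H_w$ rather than $\partial_z^2H_z$ (absorbing the $\zeta_p$ and $e^{-t}$ factors into the operator), you split $H_w*H_w*G_w=H_w^{*2}+H_w^{*3}$ rather than keeping $\hat H_w^2\hat G_w$ intact, and you deduce \eqref{e:Hfracder1} from \eqref{e:Hfracder2} via the elementary inequality $(n+1)^{1+a}\le 2^{1+a}n^{1+a}$ together with $N\zeta_p\prec 1$, which is in fact a small simplification over the paper's parallel treatment of the two bounds.
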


\begin{proof}
The values of $p \in (0,\frac 12)$ and $a\in (0,1)$ are fixed throughout the proof.
To simplify the notation, we will write simply $H_z$ in place of $H_z(x)$;
all upper bounds are uniform in $x \in \Q^N$.  Also, all derivatives $\partial_z$ and
$\delta_z$ are with respect to $z$.  Since we need to evaluate these derivatives
at both $\zeta_p$ and at $\zeta_pe^{-t}$, to lighten the notation we write
$\partial_{\zeta_p}$ and $\delta_{\zeta_p}$ for derivatives evaluated at
$\zeta_p$, and we introduce $\eta_t = \zeta_pe^{-t}$ and similarly write
$\partial_{\eta_t}$ and $\delta_{\eta_t}$.

For \eqref{e:Hfracder1} we use the general fact that
$\delta^{1+a}_zf(z)=\delta_z^a [z\partial_zf(z)]$ and
then \eqref{e:frderividentity} to see that
\begin{align}
\label{e:Hfracint}
    \delta_{\zeta_p}^{1+a} (zH_z)
    =
    \delta_{\zeta_p}^{a}[z\partial_z(zH_z)]
    & =
    \gamma_a \zeta_p \int_0^\infty \partial_{\eta_t} [z\partial_z(zH_z)]
    e^{-t} t^{-a} \D t.
\end{align}
The derivative on the right-hand side satisfies
\begin{align}
    \partial_z [z\partial_z(zH_z)]
    & =
    H_z + 3z \partial_z H_z + z^2 \partial_z^2 H_z
    \nnb & =
    c_1^{(N)}(x)z + 3c_1^{(N)}(x)z + z^2\sum_{n=2}^\infty
    (1 + 3n  +n(n-1))c_n^{(N)}(x)z^{n-2},
\end{align}
and hence since $c_1^{(N)}(x) \le 1$, and since $\eta_t \le \zeta_p \sim N^{-1}$ by \eqref{e:zetapasy},
\begin{align}
    \partial_{\eta_t} [z\partial_z(zH_z)]
    & \prec
    N^{-1}  + N^{-2} \partial_{\eta_t} ^2 H_z .
\end{align}
Therefore, by \eqref{e:Hfracint} and again using $\zeta_p\prec N^{-1}$,
we have the preliminary estimate
\begin{align}
    N\delta_{\zeta_p}^{1+a} (zH_z)
    & \prec
    N^{-1} +
    N^{-2} \int_0^\infty
    \partial_{\eta_t}^2 H_z \;
    e^{-t} t^{-a} \D t.
\end{align}

For \eqref{e:Hfracder2}, and for $z \le \zeta_p \prec N^{-1}$, as above
and with \eqref{e:Hzder} we have
\begin{align}
    \partial_z(z\partial_z H_z) = \partial_z H_z +z \partial_z^2 H_z
    \prec 1 + z \partial_z^2 H_z
    \prec
    1 + N^{-1} \partial_z^2 H_z,
\end{align}
so again (with $\zeta_p \prec N^{-1}$) we find that
\begin{align}
    \delta_{\zeta_p}^{1+a} H_z|_{z=\zeta_p} & =
    \delta_{\zeta_p}^a (z \partial_z H_z )  =
    \gamma_a \zeta_p \int_0^\infty \partial_{\eta_t} (z\partial_z H_z )  e^{-t}t^{-a} \D t
    \nnb &
    \prec
    N^{-1} +
    N^{-2} \int_0^\infty
    \partial_{\eta_t}^2 H_z \;
    e^{-t} t^{-a} \D t
    .
\label{e:dH2}
\end{align}
To complete the proof of \eqref{e:Hfracder1}--\eqref{e:Hfracder2},
it therefore suffices to prove that
\begin{align}
    \int_0^\infty  \partial_{\eta_t}^2  H_z   e^{-t} t^{-a} \D t
    & \prec N
    +
    N^2 V^{(2+a)p-1}.
\label{e:Hjder}
\end{align}
We consider small $t$ and
large $t$ separately.  The delicate part is small $t$.

For $z \le \frac{1}{2N}$, we first observe that since the number of $n$-step self-avoiding
walks is bounded above by the number of $n$-step simple random walks, we
have  $\partial_z^2 H_z(x)  \le  \partial_z^2 C_z(x)$.  We rewrite $C_z(x)$
using the inverse Fourier transform \eqref{e:IFT} in conjunction with the formula
for $\hat C_z(k)$ in \eqref{e:Cphatdef}, and thereby obtain
\begin{align}
    \partial_z^2 H_z(x) & \le  \partial_z^2 C_z(x)
     =
    \partial_z^2 \frac{1}{V} \sum_{k \in \Q^N} \frac{(-1)^{k\cdot x}}{1-zN\hat {D}(k)}
    \nnb &
    = \frac{2N^2}{V} \sum_{k \in \Q^N} \frac{\hat{D}(k)^2 (-1)^{k\cdot x}}{(1-zN\hat {D}(k))^3}
    \le \frac{2N^2}{V} \sum_{k \in \Q^N}  \frac{\hat{D}(k)^2}{(1-\frac 12)^3}
    \prec N,
\end{align}
where we used $zN|\hat D(k)| \le zN \le \frac 12$ in the penultimate step
and used \eqref{e:tranprob} (with $x=0$) in the last step.
Thus in \eqref{e:Hjder} the integral over $t$ such that $\eta_t \le \frac{1}{2N}$
is bounded by $N$ as required.  Such $t$ include those for which $e^{-t} \le \frac 14$
since $N\zeta_p \le 2$.
It therefore suffices to prove that
\begin{align}
     \int_0^{\log 4} \partial_{\eta_t}^2 H_z \;  t^{-a} \D t
      & \prec
      N+  N^2 V^{(2+a)p-1}
     ,
\label{e:fracint4}
\end{align}
where
we have neglected a now unimportant exponential factor in the integrand using $e^{-t} \le 1$.

The derivative in the integrand of \eqref{e:fracint4} can be bounded using
\eqref{e:Gderivbds}, with the result that
\begin{align}
     \int_0^{\log 4} \partial_{\eta_t}^2 H_z(x) \,  t^{-a} \D t
      & \prec
      \int_0^{\log 4}
     (\zeta_pe^{-t})^{-2}(H_{\eta_t}* H_{\eta_t}*G_{\eta_t})(x) \, t^{-a} \D t
     \nnb & \prec
     N^{2} \int_0^{\log 4}
     (H_{\eta_t}* H_{\eta_t}*G_{\eta_t})(x) \, t^{-a} \D t
     .
\label{e:fracint}
\end{align}
Now we use $\|f\|_\infty \le \|\hat f\|_{\hat 1}$ to see that
\begin{align}
\label{e:HHGsum}
    (H_{\eta_t}* H_{\eta_t}*G_{\eta_t})(x)
    & \le
    \frac 1V \sum_{k \in \Q^N} \hat H_{\eta_t}(k)^2  \hat G_{\eta_t}(k)
    .
\end{align}
Recall from \eqref{e:Fzhatdef} that we write $\hat F_z(k)=1/\hat G_z(k)$.
As in \eqref{e:gdef}, we define $\hat g_z(k)$ by
\begin{align}
    \hat H_z(k) = \hat G_z(k) - 1 =
    \frac{zN\hat D(k)+\hat\Pi_z(k)}{\hat F_z(k)}
    =
    \frac{\hat g_z(k)}{\hat F_z(k)} = \hat g_z(k) \hat G_z(k).
\end{align}
For $z\le \zeta_p$, since $zN \prec 1$ and $|\hat\Pi_z(k)| \prec \beta_{\zeta_p}
\prec N^{-1}$ (recall \eqref{e:betap}),
\begin{equation}
    \hat g_z(k)^2 \prec \hat D(k)^2 + \hat\Pi_z(k)^2 \prec \hat D(k)^2 +N^{-2}.
\end{equation}
We know from the bootstrap proof in Section~\ref{sec:bootstrap} that $f_2(z) \le 2$
for all $z \le z_N$, so
\begin{equation}
    \hat G_z(k) \le 2 \hat C_{p_z}(k) = \frac{2}{1- p_zN\hat D(k)}.
\end{equation}
The sum over nonzero $k$ in \eqref{e:HHGsum} is therefore at most
\begin{align}
    \frac 1V \sum_{k \neq 0} \hat g_{\eta_t}(k)^2  \hat G_{\eta_t}(k)^3
    & \prec
    \frac 1V \sum_{k \neq 0} \frac{\hat D(k)^2}{[1-p_{\eta_t}N \hat D(k)]^3} +
    \frac 1V \sum_{k \neq 0} \frac{N^{-2}}{[1-p_{\eta_t} N\hat D(k)]^3}
    \nnb &
    \prec N^{-1}+N^{-2} \prec N^{-1},
\end{align}
where we used Lemma~\ref{lem:Dsum} for the second line.
When we insert this into the right-hand side of \eqref{e:fracint} the result is $N^{+1}$
which is consistent with \eqref{e:fracint4}.

It remains to consider the zero mode.
The $k=0$ term in the sum in \eqref{e:HHGsum} is
\begin{equation}
    \frac 1V  \hat g_{\eta_t}(0)^2\chi_N(\eta_t)^3
    \prec
    \frac 1V \left( 1+N^{-2} \right) \chi_N(\eta_t)^3,
\end{equation}
so it now suffices to prove that
\begin{align}
    \int_0^{\log 4}
    \chi_N(\eta_t)^3 t^{-a} \D t \prec V^{(2+a)p }.
\end{align}
By \eqref{e:Flower1} and the fact that $\eta_t = z_N(1-V^{-p})e^{-t}$ by definition,
for $t \le \log 4$ we have
\begin{align}
    \chi_N(\eta_t) & \le \frac{2}{1-\eta_t/z_N}
    =
    \frac{2}{V^{-p}e^{-t} + 1-e^{-t}}
    \prec
    \frac{1}{V^{-p} + t}.
\end{align}
It follows that (with the change of variables $t=s V^{-p}$)
\begin{align}
    \int_0^{\log 4} \chi_N(\eta_t)^3
	 t^{-a} \D t
    &  \prec
    \int_0^{\log 4} \frac{1}{(V^{-p} + t)^3}
	t^{-a} \D t
    \prec
    \frac{1}{(V^{-p})^{2+a}}
    \int_0^{\infty} \frac{1}{( 1+ s)^3} s^{-a} \D s
    \prec V^{(2+a)p}
    .
\end{align}
This completes the proof.
\end{proof}

\begin{cor}
\label{cor:Pizder}
There is a $\lambda_0>0$ such that for any $\lambda\in (0, \lambda_0]$,
and for any $p \in (0,\frac 12)$ and $a\in (0,1)$,
\begin{align}
\label{e:deltaPi}
    \sum_{M=1}^\infty \|\delta_{\zeta_p}^{1+a}   \Pi_z^{(M)} \|_1
    & \prec
    N^{-1} +V^{(2+a)p-1} .
\end{align}
\end{cor}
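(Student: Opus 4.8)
The plan is to combine the diagrammatic fractional-derivative bounds of Proposition~\ref{prop:Pidots}, applied with $\epsilon = 1+a$ (legitimate since $a\in(0,1)$ gives $\epsilon \ge 1$), with the two key estimates of Lemma~\ref{lem:Hfracder} and the smallness of $\beta_{\zeta_p}$ recorded in \eqref{e:betap}. All of the genuine analytic work — in particular the delicate treatment of the zero mode — has already been done in Lemma~\ref{lem:Hfracder}, so what remains is assembly plus the standard lace-expansion bookkeeping for the sum over $M$.

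First, for $M=1$: the second bound in \eqref{e:Pi1bd} gives $\|\delta_{\zeta_p}^{1+a}\Pi_z^{(1)}\|_1 \le N\|\delta_z^{1+a}|_{z=\zeta_p}(zH_z)\|_\infty$, and the right-hand side is $\prec N^{-1}+V^{(2+a)p-1}$ directly by \eqref{e:Hfracder1}. Next, for $M\ge 2$, inequality \eqref{e:Pizder} gives $\|\delta_{\zeta_p}^{1+a}\Pi_z^{(M)}\|_1 \le (2M-1)^{1+a}\,\|\delta_z^{1+a}|_{z=\zeta_p}H_z\|_\infty\,\|H_{\zeta_p}*G_{\zeta_p}\|_\infty^{M-1}$. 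We bound $\|H_{\zeta_p}*G_{\zeta_p}\|_\infty \le 2c_2\beta_{\zeta_p}$ using \eqref{e:HstarG} with $K=2$, which is valid because the bootstrap argument of Section~\ref{sec:bootstrap} gives $f(z)\le 2$ for all $z\le z_N$, in particular at $z=\zeta_p$; and $\beta_{\zeta_p}\prec N^{-1}$ by \eqref{e:betap}. Hence for $N$ large and $\lambda_0$ small the ratio $2c_2\beta_{\zeta_p}$ is bounded away from $1$, so the series $\sum_{M\ge 2}(2M-1)^{1+a}(2c_2\beta_{\zeta_p})^{M-1}$ converges and is $\prec \beta_{\zeta_p}$ (the $M=2$ term dominates). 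Therefore $\sum_{M\ge 2}\|\delta_{\zeta_p}^{1+a}\Pi_z^{(M)}\|_1 \prec \beta_{\zeta_p}\,\|\delta_z^{1+a}|_{z=\zeta_p}H_z\|_\infty \prec N^{-1}\bigl(N^{-1}+V^{(2+a)p-1}\bigr)$ by \eqref{e:Hfracder2}.

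Adding the $M=1$ contribution to the $M\ge 2$ sum yields $\sum_{M=1}^\infty \|\delta_{\zeta_p}^{1+a}\Pi_z^{(M)}\|_1 \prec \bigl(N^{-1}+V^{(2+a)p-1}\bigr) + N^{-1}\bigl(N^{-1}+V^{(2+a)p-1}\bigr) \prec N^{-1}+V^{(2+a)p-1}$, which is exactly \eqref{e:deltaPi}. The only point requiring even mild care is the convergence of the sum over $M$, handled as usual by choosing $\lambda_0$ small so that $\beta_{\zeta_p}$ is small; there is no serious obstacle beyond that, since the hard estimate is Lemma~\ref{lem:Hfracder}, assumed here.
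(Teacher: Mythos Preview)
Your proof is correct and follows essentially the same approach as the paper's own proof: invoke Proposition~\ref{prop:Pidots} with $\epsilon=1+a$, bound $\|H_{\zeta_p}*G_{\zeta_p}\|_\infty$ via \eqref{e:HstarG}, apply Lemma~\ref{lem:Hfracder} for the fractional derivatives of $H_z$, and use small $\lambda_0$ to sum over $M$. Your write-up is in fact slightly more explicit than the paper's in tracking the $M\ge 2$ contribution and noting the extra factor $\beta_{\zeta_p}\prec N^{-1}$ it picks up.
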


\begin{proof}
It was shown in Proposition~\ref{prop:Pidots} that for
$z \ge 0$  and $\epsilon \ge 1$ we have
\begin{equation}
\label{e:Pi1bd-bis2}
    \|  \delta_z^\epsilon   \Pi^{(1)}_z \|_1
    \le N\|\delta_z^\epsilon (zH_z)\|_\infty,
\end{equation}
and for $M \ge 2$,
\begin{align}
\label{e:Pizder-bis2}
    \|  \delta_z^\epsilon   \Pi^{(M)}_z \|_1
    &\le
    (2M-1)^{\epsilon}
    \|\delta_z^{\epsilon} H_z\|_\infty
    \|H_z * G_z \|_\infty^{M-1}
    .
\end{align}
Also, by \eqref{e:HstarG},
\begin{equation}
    \|H_z * G_z \|_\infty \le  2c_K \beta_z .
\end{equation}
Then the desired result follows by using Lemma~\ref{lem:Hfracder}
to estimate the fractional derivatives of $H_z$, with small $\lambda_0$
used to control the sum over $M$.
\end{proof}

We are now ready to prove Proposition~\ref{prop:F} which we restate here for
convenience as Proposition~\ref{prop:F-bis}.
Recall that by definition
\begin{equation}
    F_N(z) = \Phi_N(z) + R_N(z)
\end{equation}
where $\Phi_N(z)$ is the linear function
\begin{equation}
    \Phi_N(z) = F_N(\zeta_p) + F_N'(\zeta_p)(z-\zeta_p).
\end{equation}

\begin{prop}
\label{prop:F-bis}
There is a $\lambda_0>0$ such that for any $\lambda\in (0, \lambda_0]$,
and for any $p \in (0,\frac 12)$, any $a\in (0,1)$,
and any $z\in\C$ with $|z| \le \zeta_p$,
\begin{align}
\label{e:frac-der-OK-bis}
    |R_N(z)| &\prec  (N^{-1}+ V^{(2+a)p-1}) |1-z/\zeta_p|^{1+a}   ,
\\
\label{e:varphiderbd-bis}
    |R_N'(z)|  &\prec (1+NV^{(2+a)p-1})  |1-z/\zeta_p|^a
    .
\end{align}
\end{prop}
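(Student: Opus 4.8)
The plan is to recognise $R_N$ and $R_N'$ as fractional Taylor remainders of the scalar generating function $\hat\Pi_z(0)$ and of its derivative, and then to feed in the bound of Corollary~\ref{cor:Pizder}. By \eqref{e:chiPi} we have $F_N(z) = 1 - zN - \hat\Pi_z(0)$, and the affine part $1-zN$ coincides with its own linear Taylor polynomial at $\zeta_p$, so
\[
    R_N(z) = -\Big( \hat\Pi_z(0) - \hat\Pi_{\zeta_p}(0) - \partial_z\hat\Pi_z(0)\big|_{z=\zeta_p}\,(z-\zeta_p) \Big),
    \qquad
    R_N'(z) = F_N'(z) - F_N'(\zeta_p).
\]
Thus $R_N$ is the second-order Taylor remainder of $g(z) := \hat\Pi_z(0) = \sum_{m\ge 2}\hat\pi_m(0)z^m$ at $\rho=\zeta_p$, where $\hat\pi_m(0) := \sum_{x\in\Q^N}\pi_m(x)$ is the coefficient of $z^m$ in $\hat\Pi_z(0)$, while $R_N'$ is the zeroth-order remainder (the increment) of $F_N'(z) = -N-\partial_z\hat\Pi_z(0)$ at $\zeta_p$.

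First I would apply Lemma~\ref{lem:Taylepsilon} with $\epsilon = a \in (0,1)$ and $\rho = \zeta_p$: its second bound, applied to $f = g$, gives $|R_N(z)| \le \tfrac{2^{1-a}}{1+a} A_2^{(1+a)}(\zeta_p)\,|1-z/\zeta_p|^{1+a}$ with $A_2^{(1+a)}(\zeta_p) = \sum_{m\ge 2}m^{1+a}|\hat\pi_m(0)|\zeta_p^m$; and its first bound, applied to $f = F_N'$, gives $|R_N'(z)| \le 2^{1-a}A_1^{(a)}(\zeta_p)\,|1-z/\zeta_p|^{a}$, where, since the coefficient of $z^n$ in $F_N'$ is $-(n+1)\hat\pi_{n+1}(0)$ for $n\ge 1$ (the constant $-N$ not contributing to $A_1$), $A_1^{(a)}(\zeta_p) = \sum_{n\ge 1}n^a(n+1)|\hat\pi_{n+1}(0)|\zeta_p^n$. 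The finiteness hypotheses $A_1^{(a)}(\zeta_p)<\infty$, $A_2^{(1+a)}(\zeta_p)<\infty$ of Lemma~\ref{lem:Taylepsilon} are supplied by Corollary~\ref{cor:Pizder}, since $\zeta_p<z_N$ lies strictly inside the disk of convergence of $\Pi_z$.

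The key step is to estimate the two $A$-quantities via Corollary~\ref{cor:Pizder}. Since every $\pi_m^{(M)}(x)\ge 0$ and $\pi_m = \sum_M(-1)^M\pi_m^{(M)}$, we have $|\hat\pi_m(0)| \le \sum_M\|\pi_m^{(M)}\|_1$, so, interchanging the (nonnegative) sums over $m$ and $M$,
\[
    A_2^{(1+a)}(\zeta_p) \le \sum_M \sum_{m\ge 2} m^{1+a}\|\pi_m^{(M)}\|_1\zeta_p^m = \sum_M \big\|\delta_{\zeta_p}^{1+a}\Pi_z^{(M)}\big\|_1 \prec N^{-1} + V^{(2+a)p-1},
\]
which gives \eqref{e:frac-der-OK-bis}. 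For $A_1^{(a)}$, setting $m=n+1$, bounding $n^a\le m^a$, and using $\zeta_p\asymp N^{-1}$ from Proposition~\ref{prop:alphabeta} (so $\zeta_p^{-1}\prec N$),
\[
    A_1^{(a)}(\zeta_p) \le \zeta_p^{-1}\sum_{m\ge 2}m^{1+a}|\hat\pi_m(0)|\zeta_p^m \prec N\big(N^{-1}+V^{(2+a)p-1}\big) = 1 + NV^{(2+a)p-1},
\]
which gives \eqref{e:varphiderbd-bis}.

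The substantive analytic work has already been absorbed into Corollary~\ref{cor:Pizder}, and in particular into the delicate zero-mode handling of Lemma~\ref{lem:Hfracder} that controls the $(1+a)$-th fractional derivative of $H_z$ at $\zeta_p$; given that, the argument here is bookkeeping. The points that need care are: (i) checking that the affine part of $F_N$ cancels, so that $R_N$ and $R_N'$ genuinely are remainders of $\hat\Pi_z(0)$ and of $F_N'$; (ii) the combinatorial inequality $|\hat\pi_m(0)| \le \sum_M\|\pi_m^{(M)}\|_1$ that transfers the $M$-indexed $\ell^1$ estimates of Corollary~\ref{cor:Pizder} to the scalar series $\hat\Pi_z(0)$; and (iii) the factor $\zeta_p^{-1}\prec N$ that explains the discrepancy between the prefactors $N^{-1}+V^{(2+a)p-1}$ in \eqref{e:frac-der-OK-bis} and $1+NV^{(2+a)p-1}$ in \eqref{e:varphiderbd-bis}.
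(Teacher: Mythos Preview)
Your proof is correct and follows essentially the same approach as the paper: both recognise that the affine part $1-zN$ of $F_N$ drops out so that $R_N$ and $R_N'$ are Taylor remainders governed by $\hat\Pi_z(0)$, apply Lemma~\ref{lem:Taylepsilon} with $\epsilon=a$ and $\rho=\zeta_p$, and then bound the resulting $A$-quantities via Corollary~\ref{cor:Pizder} using $|\hat\pi_m(0)|\le\sum_M\|\pi_m^{(M)}\|_1$ and $\zeta_p^{-1}\prec N$. Your exposition is in fact slightly more explicit about the passage from $|\hat\pi_m(0)|$ to the $M$-indexed sum than the paper's, but the substance is identical.
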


\begin{proof}
For \eqref{e:frac-der-OK-bis} we
apply the fractional Taylor error estimate \eqref{e:Taylepbound} to
\begin{equation}
    f(z)=F_N(z)= 1-zN - \hat\Pi_{z}(0)
\end{equation}
with $\epsilon=a$ and $\rho=\zeta_p$.
The linear part $1-zN$ on the right-hand side does not contribute to
$A_{2}^{(1+a)}(\zeta_p)$ defined by \eqref{e:fnorm}, which therefore only involves
$\hat\Pi_{\zeta_p}(0)$ and according to Corollary~\ref{cor:Pizder} is
bounded by $N^{-1}+V^{(2+a)p-1}$.  Thus the claimed bound on $R_N(z)$ follows from
 \eqref{e:Taylepbound}.

For \eqref{e:varphiderbd-bis}, we first use the definition of
$R_N(z)$ to see that
\begin{equation}
    R_N'(z) = F_N'(z) - F_N'(\zeta_p)
    = \partial_{\zeta_p} \hat\Pi_z(0) - \partial_z \hat\Pi_z(0) .
\end{equation}
Since
\begin{equation}
    \partial_z \hat\Pi_z(0) = \sum_{m=1}^\infty (m+1) \sum_{x\in\Q^N} \pi_{m+1}(x) z^m,
\end{equation}
by \eqref{e:Taylep0} it suffices to have the upper bound
\begin{align}
    \sum_{m=1}^\infty m^a (m+1) \|\pi_{m+1}\|_1 \zeta_p^m
    & \prec
   N \sum_{m=1}^\infty (m+1)^{1+a} \|\pi_{m+1}\|_1 \zeta_p^{m+1}
   \prec
   1+NV^{(2+a)p-1}
    ,
\end{align}
which follows from Corollary~\ref{cor:Pizder}.  This completes the proof.
\end{proof}

\section{$1/N$ expansion: proof of Theorem~\ref{thm:expansion}}
\label{sec:expansion}

In this section we prove Theorem~\ref{thm:expansion}, which in particular states that
for $m \geq 1$ and $c>0$
(independent of $N$ but possibly depending on $m$), and for any $z$ such that
$\chi_N(z) \in [cN^{m}, \lambda_0 V^{1/2}]$, there are integers
$a_n$, which are universal constants that
are independent of the particular choice of $z$, such that
\begin{equation}
\label{e:aeqn-bis}
    z = \sum_{n=1}^m a_n N^{-n} + O(N^{-m-1})
    \quad
    \quad
    \mbox{as $N \to \infty$}.
\end{equation}
As usual, $\lambda_0$ is assumed to be sufficiently small.
The constant in the error term depends on $m,\lambda_0,c$, but does not
depend otherwise on $z$.
Theorem~\ref{thm:expansion} also includes a similar statement concerning an asymptotic
expansion for the amplitude $A_N$, and identifies the first five coefficients in each expansion.

\subsection{Independence of $z$}
\label{sec:z-indep}

We first establish the independence of $z$ for the coefficients $a_n$, assuming they exist.

\begin{prop}
\label{prop:p}
Let $c>0$, and let $\lambda_0>0$ be sufficiently small.
Let $m \ge 1$.
If $z$ obeys \eqref{e:aeqn-bis} for some
$z$ such that $\chi_N(z)\in
[cN^{m}, \lambda_0V^{1/2}]$, then \eqref{e:aeqn-bis} is valid for every
such $z$.
\end{prop}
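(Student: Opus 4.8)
The plan is to prove Proposition~\ref{prop:p} by showing that any two admissible values of $z$ can differ by at most $O(N^{-m-1})$; since the coefficients of an asymptotic series in powers of $N^{-1}$ are uniquely determined, this is exactly what is needed. So I would fix two real values $z^{(1)},z^{(2)}\ge 0$ with $\chi_N(z^{(i)})\in[cN^m,\lambda_0V^{1/2}]$. The upper bound $\chi_N(z^{(i)})\le\lambda_0V^{1/2}$ forces $z^{(i)}\le z_N(\lambda_0)$, so every lace-expansion estimate from Sections~\ref{sec:le-conv}--\ref{sec:pfF} applies with $\lambda=\lambda_0$; in particular $\beta_z\le N^{-1}+\lambda_0^2$ on all of $[0,z_N(\lambda_0)]$. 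The lower bound $\chi_N(z^{(i)})\ge cN^m$ gives $F_N(z^{(i)})=1/\chi_N(z^{(i)})\le c^{-1}N^{-m}$, hence $F_N(z^{(1)})-F_N(z^{(2)})=O(N^{-m})$.

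Next I would use the identity \eqref{e:chiPi} in the rearranged form
\begin{equation}
    z = \frac 1N\bigl(1 - \hat\Pi_z(0) - F_N(z)\bigr),
\end{equation}
and subtract its instances at $z^{(1)}$ and $z^{(2)}$ to get
\begin{equation}
    z^{(1)} - z^{(2)} = \frac 1N\bigl(\hat\Pi_{z^{(2)}}(0) - \hat\Pi_{z^{(1)}}(0)\bigr) + \frac 1N\bigl(F_N(z^{(2)}) - F_N(z^{(1)})\bigr).
\end{equation}
The last term is $O(N^{-m-1})$ by the previous paragraph. For the $\hat\Pi$ difference I would write $\hat\Pi_{z^{(2)}}(0)-\hat\Pi_{z^{(1)}}(0)=\int_{z^{(1)}}^{z^{(2)}}\partial_u\hat\Pi_u(0)\,\D u$ along the real segment joining the two points, which lies inside $[0,z_N(\lambda_0)]$, and apply Lemma~\ref{lem:Pizder} together with $\beta_u\le N^{-1}+\lambda_0^2$ to bound $|\partial_u\hat\Pi_u(0)|\prec N\beta_u\prec 1+N\lambda_0^2$ uniformly on that segment. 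This yields $|\hat\Pi_{z^{(2)}}(0)-\hat\Pi_{z^{(1)}}(0)|\prec(1+N\lambda_0^2)\,|z^{(1)}-z^{(2)}|$, so the first term above is $\prec(N^{-1}+\lambda_0^2)\,|z^{(1)}-z^{(2)}|$.

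Combining these gives a self-referential bound: there is a constant $C$, coming from Lemma~\ref{lem:Pizder} and otherwise depending only on $m$ and $c$, with
\begin{equation}
    |z^{(1)} - z^{(2)}| \le C(N^{-1}+\lambda_0^2)\,|z^{(1)} - z^{(2)}| + O(N^{-m-1}).
\end{equation}
Choosing $\lambda_0$ small enough that $C\lambda_0^2\le\tfrac14$ and $N$ large enough that $CN^{-1}\le\tfrac14$, I can absorb the first term on the right into the left-hand side, obtaining $|z^{(1)}-z^{(2)}|=O(N^{-m-1})$. Consequently, if $z^{(1)}=\sum_{n=1}^m a_nN^{-n}+O(N^{-m-1})$ then $z^{(2)}$ has the same expansion with the same $a_n$, which proves Proposition~\ref{prop:p}. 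I expect the only real point to watch is making sure the coefficient multiplying $|z^{(1)}-z^{(2)}|$ is genuinely below $1$: this is precisely why the \emph{derivative} estimate of Lemma~\ref{lem:Pizder}, rather than the mere size estimate of Lemma~\ref{lem:Pibds}, is required here, and why $\lambda_0$ must be taken sufficiently small.
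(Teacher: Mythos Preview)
Your argument is correct, but it takes a different route from the paper's proof. The paper observes that it suffices to bound the two endpoint values $z'$ and $z_N$ defined by $\chi_N(z')=cN^m$ and $\chi_N(z_N)=\lambda_0 V^{1/2}$, and then applies the already-established susceptibility bound \eqref{e:Flower1}, namely $\chi_N(z)\le 2|1-z/z_N|^{-1}$, at $z=z'$ to get $1-z'/z_N\le 2/\chi_N(z')=2c^{-1}N^{-m}$; together with $z_N\le 2N^{-1}$ this gives $z_N-z'\le 4c^{-1}N^{-m-1}$ in two lines, without any absorption step or direct appeal to Lemma~\ref{lem:Pizder}.

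Your approach instead unfolds the mechanism behind \eqref{e:Flower1}: you use the lace-expansion identity $z=N^{-1}(1-\hat\Pi_z(0)-F_N(z))$ and control the $\hat\Pi$ increment via the derivative bound of Lemma~\ref{lem:Pizder}, then absorb the resulting $(N^{-1}+\lambda_0^2)|z^{(1)}-z^{(2)}|$ term. This is sound (the implicit constant from Lemma~\ref{lem:Pizder} does not worsen as $\lambda_0$ decreases, so the choice $C\lambda_0^2\le\tfrac14$ is not circular), and it makes explicit \emph{why} two $z$-values with large susceptibility must be close: $F_N$ is small at both and has derivative close to $-N$. The paper's proof is shorter because that work has already been done in proving \eqref{e:Flower1}; your proof is self-contained but redoes a piece of that argument.
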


\begin{proof}
Let $z'$ and $z_N$ be the solutions to $\chi_N(z') =cN^{m}$
and $\chi_N(z_N)=\lambda_0 V^{1/2}$.
For large $N$, $z' \leq z_N$.
It suffices to prove that
\begin{equation}
    z_N-z' \le O(N^{-m-1}).
\end{equation}
By \eqref{e:Flower1},
\begin{align}
    1-z'/z_N & \le \frac{2}{\chi_N(z')} = \frac{2}{cN^m},
\end{align}
and therefore, since $z_N \le 2 N^{-1}$ by \eqref{e:zNlam},
\begin{equation}
        z_N-z' \le z_N \frac{2}{cN^m} \le\frac{4}{cN^{m+1}}.
\end{equation}
This completes the proof.
\end{proof}

\subsection{Lace expansion estimates}
\label{sec:morePi}

To simplify the notation, for $k \ge 2$ and $M \ge 1$
we define
\begin{equation}
    \pi_k^{(M)} = \sum_{x\in\Q^N} \pi_k^{(M)}(x).
\end{equation}
Thus $\pi_k^{(M)}$ counts the number of $k$-step $M$-loop lace graphs depicted in
Figure~\ref{fig:lacetop}, with no restriction to end at a specific vertex $x$
and with the mutual avoidance of subwalks dictated by the
compatible edges in \eqref{e:PiMdef}.

The following proposition  prepares two estimates for the proof of
Theorem~\ref{thm:expansion}.
Its first inequality
will be used to see that  we can neglect large $M$  in the computation of the asymptotic expansions
of Theorem~\ref{thm:expansion} to a certain order,
and the second says that for small $M$
we can neglect large $k$.

\begin{prop}
\label{prop:Piforexpansion}
Let $\lambda_0>0$ be sufficiently small and let $p \in (0,\frac 12)$
and $\zeta_p=z_N(\lambda_0)(1-V^{-p})$.
For each $j \geq 1$ there is
a constant $C_j$, independent of $N$, such that
\begin{equation}
\label{e:d-N}
    \sum_{k=2}^\infty \sum_{M=j}^\infty k\pi_k^{(M)} \zeta_p^k \leq C_j N^{-j}.
\end{equation}
For each $j \geq 2$ and $M \geq 1$,
there is a constant $C_{M,j}$, independent of $N$, such that
\begin{equation}
\label{e:d-j}
    \sum_{k=j}^\infty k\pi_k^{(M)} \zeta_p^k \leq C_{M,j} N^{-\lceil j/2 \rceil}
    .
\end{equation}
\end{prop}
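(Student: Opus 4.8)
The plan is to rewrite both sums as values of $\delta_z^1=z\partial_z$ applied to the lace-expansion generating functions, handle \eqref{e:d-N} by a geometric sum over $M$ built from Proposition~\ref{prop:Pidots}, and handle \eqref{e:d-j} by splitting the $k$-sum at a suitable threshold, using random-walk comparison below it and the lace-expansion estimates above it. Since $\pi_k^{(M)}(x)\ge0$, for real $z\ge0$ we have $\|\Pi_z^{(M)}\|_1=\sum_{k\ge2}\pi_k^{(M)}z^k$ and hence $\sum_k k\pi_k^{(M)}z^k=\|\delta_z^1\Pi_z^{(M)}\|_1$; thus the left side of \eqref{e:d-N} is $\sum_{M\ge j}\|\delta_z^1\Pi_z^{(M)}\|_1$ at $z=\zeta_p$, and the left side of \eqref{e:d-j} is the tail $\sum_{k\ge j}$ of $\|\delta_z^1\Pi_z^{(M)}\|_1|_{\zeta_p}$. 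The two ingredients I would use repeatedly are: (i) by Proposition~\ref{prop:Pidots} with $\epsilon=1$, together with \eqref{e:Gderivbds} (the case $j=1$, which gives $\|\delta_z^1H_z\|_\infty=\|z\partial_zH_z\|_\infty\le\|H_z*G_z\|_\infty$), one has $\|\delta_z^1\Pi_z^{(M)}\|_1\le(2M-1)\|H_z*G_z\|_\infty^{M}$ for $M\ge2$ and $\|\delta_z^1\Pi_z^{(1)}\|_1\le N\|\delta_z^1(zH_z)\|_\infty$; and (ii) by \eqref{e:HstarG} and \eqref{e:betap}, $\|H_{\zeta_p}*G_{\zeta_p}\|_\infty\prec\beta_{\zeta_p}\prec N^{-1}$, while $\zeta_p\sim N^{-1}$ and $N\zeta_p\le2$ by Proposition~\ref{prop:alphabeta}.

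For \eqref{e:d-N}: for $M\ge2$ the above gives $\|\delta_z^1\Pi_z^{(M)}\|_1|_{\zeta_p}\le(2M-1)(CN^{-1})^{M}$ for a fixed $C$, so for $N$ large enough that $CN^{-1}<\tfrac12$ the geometric series $\sum_{M\ge j}(2M-1)(CN^{-1})^M$ is at most $C_jN^{-j}$. When $j=1$ one must also add the $M=1$ term: since $z\partial_z(zH_z)=zH_z+z\cdot(z\partial_zH_z)\le\zeta_p\big(H_{\zeta_p}+H_{\zeta_p}*G_{\zeta_p}\big)$ by \eqref{e:Gderivbds}, one gets $\|\delta_z^1\Pi_z^{(1)}\|_1|_{\zeta_p}\prec N\cdot N^{-1}\cdot N^{-1}=N^{-1}$, of the same order as the $M\ge2$ contribution. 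This proves \eqref{e:d-N}.

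For \eqref{e:d-j} I would split at a constant threshold $K=K(M,j)$ and treat $j\le k\le K$ and $k>K$ separately. On the finite range, relaxing all mutual avoidances and self-avoidances in \eqref{e:PiMdef} (equivalently, expanding the $k$-th coefficient in \eqref{e:PiMconv}) bounds $\pi_k^{(M)}$ by a sum over the $C(M,k)$ ways of distributing $k$ steps among the $2M-1$ subwalks of the $M$-loop diagram of convolutions of the functions $c_{m_i}(\cdot)$; replacing each $c_m(\cdot)$ by $N^mD^{*m}(\cdot)$ (walks dominate self-avoiding walks), estimating the resulting random-walk diagram by Lemma~\ref{lem:convol_bound} (distinguished line taken to be the innermost one, so every pairing is admissible), and then applying \eqref{e:tranprob} gives $\pi_k^{(M)}\le C(M,k)N^{k-\lceil k/2\rceil}$. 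Since $N\zeta_p\le2$ this yields $\pi_k^{(M)}\zeta_p^k\le C(M,k)2^kN^{-\lceil k/2\rceil}$, and as $\lceil k/2\rceil\ge\lceil j/2\rceil$ for $k\ge j$ we get $\sum_{j\le k\le K}k\pi_k^{(M)}\zeta_p^k\le C(M,j,K)N^{-\lceil j/2\rceil}$.

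The main obstacle is the tail $k>K$, where the naive random-walk comparison fails: $\sum_{m>L}c_m(x)\zeta_p^m\le\sum_{m>L}2^mc_mN^{-\lceil m/2\rceil}$ diverges because the constants $c_m$ of \eqref{e:tranprob} grow like $m^m$, so one must use the convergence of the lace expansion. If $\lceil j/2\rceil\le M$ nothing more is needed, since $\sum_{k>K}k\pi_k^{(M)}\zeta_p^k\le\|\delta_z^1\Pi_z^{(M)}\|_1|_{\zeta_p}\prec N^{-M}\le N^{-\lceil j/2\rceil}$ by (i)--(ii). For $\lceil j/2\rceil>M$ one needs extra powers of $N^{-1/2}$: on the tail at least one of the $2M-1$ subwalks has length exceeding $L:=K/(2M-1)$, so a union bound together with Lemma~\ref{lem:convol_bound} (again with an admissible distinguished line) bounds $\sum_{k>K}\pi_k^{(M)}\zeta_p^k$ by a multiple of $N^{-(M-1)}\big(\|H_{\zeta_p}^{>L}\|_\infty+\|H_{\zeta_p}^{>L}*H_{\zeta_p}\|_\infty\big)$, where $H_z^{>L}(x)=\sum_{m>L}c_m(x)z^m$ (for $M=1$ the analogous reduction comes directly from \eqref{e:Pi1}). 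The heart of the matter is then to show that $\|H_{\zeta_p}^{>L}\|_\infty$ (and $\|H_{\zeta_p}^{>L}\|_2$) is smaller than $N^{-1}$ by a power of $N$ that grows with $L$; I would attack this by splitting $H_{\zeta_p}^{>L}$ once more at an $N$-dependent threshold $L_1\asymp\log N/\log\log N$, applying \eqref{e:tranprob} to the degrees in $(L,L_1]$ (harmless there, as $2^mc_m\le N^{1/2}$ for $m\le L_1$) and, for the degrees above $L_1$, using the uniform bound $c_n^{(N)}\zeta_p^n\le K(1-V^{-p})^n$ — a consequence of Theorem~\ref{thm:cnbd} since $\zeta_p<z_N(\lambda_0)=\mu_N(\lambda_0)^{-1}$ — together with the fractional-derivative estimate of Lemma~\ref{lem:Hfracder}. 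With $K$, hence $L$, chosen large enough in terms of $M$ and $j$, this tail becomes at most $C_{M,j}N^{-\lceil j/2\rceil}$, and the proof is complete.
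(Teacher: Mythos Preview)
Your treatment of \eqref{e:d-N} is correct and is essentially the paper's argument: both use Proposition~\ref{prop:Pidots} with $\epsilon=1$, the bound $\|H_{\zeta_p}*G_{\zeta_p}\|_\infty\prec\beta_{\zeta_p}\prec N^{-1}$, and sum the resulting geometric series in $M$.

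For \eqref{e:d-j} there is a genuine gap in your tail estimate when $\lceil j/2\rceil>M$. Your plan requires $\|H_{\zeta_p}^{>L}\|_\infty\prec N^{-r}$ with $r=\lceil j/2\rceil-M+1$ arbitrarily large. On the range $L<m\le L_1$ your random-walk comparison is fine, but on $m>L_1$ neither of the two tools you invoke delivers more than one power of $N^{-1}$. Theorem~\ref{thm:cnbd} gives $c_m(x)\zeta_p^m\le K(1-V^{-p})^m$, which for $m$ polynomial in $N$ is essentially $1$ and carries no decay in $N$ at all. Lemma~\ref{lem:Hfracder} gives only $\sum_m m^{1+a}c_m(x)\zeta_p^m\prec N^{-1}$ with $a<1$, so a Markov-type tail bound yields $\sum_{m>L_1}c_m(x)\zeta_p^m\prec L_1^{-(1+a)}N^{-1}$, and since $L_1\asymp\log N/\log\log N$ this is merely $N^{-1}$ up to logarithms. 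Thus for, say, $M=1$ and $j=6$ (needing $N^{-3}$) your argument cannot close.

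The paper avoids the threshold split entirely. Its key observation is that the restriction $k\ge j$ can be encoded by forcing at least $j$ explicit random-walk steps into the diagram: one relaxes self-avoidance on the first $j$ steps (for $M=1$) or distributes $j_1+\cdots+j_M\ge j$ forced steps among the $M$ loops (for $M\ge2$), obtaining an upper bound of the form $(D^{*j_i}*G_{\zeta_p}*G_{\zeta_p})(0)$ per loop. The whole infinite sum over $k$ is then controlled in one stroke by the Fourier estimate
\[
\|\hat D^{\,i}\hat G_{\zeta_p}\|_{\hat 2}^2
=\frac{\chi_N(\zeta_p)^2}{V}+\frac1V\sum_{k\neq0}\frac{\hat D(k)^{2i}}{[1-p_{\zeta_p}N\hat D(k)]^2}
\prec V^{2p-1}+N^{-i}\prec N^{-i},
\]
which combines the zero-mode smallness $\chi_N(\zeta_p)^2/V\asymp V^{2p-1}$ (here $p<\tfrac12$ is essential) with Lemma~\ref{lem:Dsum} for the nonzero modes. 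Cauchy--Schwarz in Fourier space then yields $\|D^{*j_i}*G_{\zeta_p}*G_{\zeta_p}\|_\infty\prec N^{-\lceil j_i/2\rceil}$, and superadditivity of $\lceil\cdot/2\rceil$ gives the exponent $\lceil j/2\rceil$. This is the idea you are missing; with it, no separate treatment of large $k$ is needed.
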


\begin{proof}
The inequality \eqref{e:d-N} follows from the proof of Lemma~\ref{lem:Pizder},
together with the facts that here we have an extra factor $\zeta_p \prec N^{-1}$
compared to $\partial_z\Pi_z^{(M)}$ and that $\beta_{\zeta_p} \prec N^{-1}$
by \eqref{e:betap}.
Indeed, the inequality \eqref{e:PiMzder-pf} is a term-by-term bound
which gives
\begin{align}
    \|\partial_z \Pi_z^{(M)}\|_1 & \prec
    N\beta_z M(2c_2\beta_z)^{M-1}
    ,
\end{align}
and hence (with $j$-dependent constants)
\begin{align}
    \sum_{k=2}^\infty \sum_{M=j}^\infty k\pi_k^{(M)} \zeta_p^k
    =
    \sum_{M=j}^\infty \zeta_p \|\partial_z \Pi_z^{(M)}\|_1 \prec
    \zeta_p \beta_{\zeta_p}^{j-1} \prec N^{-j}.
\end{align}
It remains to prove  \eqref{e:d-j}.
Constants are permitted to depend on $M$ and $j$ in the rest of the proof.

For \eqref{e:d-j}, we first obtain preliminary estimates using Lemma~\ref{lem:Dsum}.
The bound on
the random walk transition probability from \eqref{e:tranprob} asserts that for any integer $i \ge 0$ we have
\begin{equation}
\label{e:Dinf}
    \|D^{*i}\|_\infty \leq  c_i N^{-\lceil i/2 \rceil}.
\end{equation}
To avoid a notational clash, we temporarily write the value $p_z$
defined by \eqref{e:pzdef} as $q_z$.
Then, by using the bound $\hat G_{\zeta_p}(k) \le 2 \hat C_{q_{\zeta_p}}(k)$ obtained
from the bootstrap estimate $f_2(z) \le 2$ (recall \eqref{e:f3def}), we see that
\begin{align}
    \|\hat D^i \hat G_{\zeta_p}\|_{\hat 2}^2
    \le 4 \|\hat D^i \hat C_{q_{\zeta_p}}\|_{\hat 2}^2
    .
\end{align}
By extracting the zero mode, we
can conclude from  the fact that
$\hat C_{q_{\zeta_p}}(0)= \chi_N(\zeta_p)$, together with \eqref{e:chip4} and the second bound of
Lemma~\ref{lem:Dsum}  that, for any integer $i \ge 0$,
\begin{align}
\label{e:DGbd}
    \|\hat D^i \hat G_{\zeta_p}\|_{\hat 2}^2
    &\le
    \frac{4}{V} \chi_N(\zeta_p)^2
    +
    \frac{4}{V}\sum_{k \neq 0} \frac{\hat D(k)^{2i}}{[1-q_{\zeta_p}N\hat D(k)]^2}
    \prec \frac{1}{V^{1-2p}} + N^{-i} \prec N^{-i}.
\end{align}

Consider first the case $M=1$ of \eqref{e:d-j}.
Recall that $\Pi_z^{(1)}(x)$ is nonzero only for $x=0$,
in which case it is the generating function for self-avoiding returns to the origin.
We are interested in returns that take  $k \ge j$ steps, and the factor $k$ in
\eqref{e:d-j} counts the number of ways to label the distinct $k$ vertices of the
walk forming the return.  By relaxing the avoidance constraint for the first $j$ steps,
and also the avoidance between the remaining two parts of the walk separated by the
labelled vertex (in the worst case that it is not among the first $j$ steps) then we see that
\begin{equation}
    \sum_{k=j}^\infty k\pi_k^{(M)} \zeta_p^k
    \le ((\zeta_p N D)^{*j}*G_{\zeta_p}*G_{\zeta_p})(0).
\end{equation}
One contribution to the right-hand side arises from the zero-step walks in each factor
of $G_{\zeta_p}$, and in other contributions there is at least one step from those factors, so
(as in \eqref{e:HDG})
\begin{equation}
\label{e:DGG0}
    \sum_{k=j}^\infty k\pi_k^{(M)} \zeta_p^k
    \le (\zeta_p N D)^{*j}(0) + ((\zeta_p N D)^{*(j+1)}*G_{\zeta_p}*G_{\zeta_p})(0).
\end{equation}
By \eqref{e:Dinf} and the fact that $\zeta_p N \prec 1$,
the first term is of order $N^{-\ceil{j/2}}$.
Via the inverse Fourier transform and the Cauchy--Schwarz inequality,
by \eqref{e:DGbd} the second term is
at most
\begin{align}
    (\zeta_p N)^{j+1}\|D^{*(j+1)}*G_{\zeta_p}*G_{\zeta_p}\|_\infty
    & \prec
     \|\hat D^{j+1}\hat G_{\zeta_p}\|_{\hat 2} \|\hat G_{\zeta_p}\|_{\hat 2}
    \prec
    N^{-(j+1)/2}
    \prec
    N^{-\lceil j/2\rceil},
\label{e:DGG}
\end{align}
and this proves \eqref{e:d-j} for the case $M=1$.

For $M\ge 2$, by \eqref{e:Pizder-bis} and \eqref{e:Gderivbds} we have
\begin{align}
    \sum_{k=2}^\infty k\pi_k^{(M)} z^k
    =
    \|z\partial_z\Pi_z^{(M)}\|_1
    & \le
    (2M-1)
    \|z\partial_z H_z\|_\infty
    \|H_z * G_z \|_\infty^{M-1}
    \nnb & \le (2M-1) \|H_z * G_z\|_\infty^M
    .
\end{align}
The effect of the restriction $k \ge j$ on this bound is to require that
the factors $\|H_z * G_z\|_\infty$ become modified to ensure that at least $j$
steps are taken in total, so there is a restriction that the $i^{\rm th}$ factor
must take at least $j_i$ steps with $j_1+\cdots j_M \ge j$.
As in the case $M=1$, with this restriction the $i^{\rm th}$ factor of $\|H_z*G_z\|_\infty$
can be replaced via an upper bound by $\|D^{*j_i}*G_z*G_z\|_\infty$.
Then, as in \eqref{e:DGG0}--\eqref{e:DGG}, we find that the unrestricted upper bound is replaced by
\begin{equation}
    \sum_{k=j}^\infty k\pi_k^{(M)} \zeta_p^k
    \leq
    C_{M,j} N^{-\sum_{i=1}^M \ceil{j_i/2}}
    \leq C_{M,j} N^{-\lceil j/2 \rceil}
    .
\end{equation}
This completes the proof.
\end{proof}

\subsection{Asymptotic expansion to all orders}
\label{sec:pfexp}

We now prove the part of Theorem~\ref{thm:expansion} concerning existence
of the asymptotic expansions for $\mu_N$ and $A_N$ to all orders with integer coefficients.
The numerical computation of coefficients is deferred to Section~\ref{sec:coefficients}.

\begin{theorem}
\label{thm-expexists}
Let $\lambda_0>0$ be sufficiently small.
Let $m \in \N$, fix $c>0$
(independent of $N$ but possibly depending on $m$), and suppose that $z$ obeys
$\chi_N(z) \in [cN^{m}, \lambda_0 V^{1/2}]$.
Then there are integers
$a_n$ for $n \in \N$, which are universal constants that do not depend on the particular choice
of $z$, such that
\begin{equation}
\label{e:zmainasy}
    z = \sum_{n=1}^m a_n N^{-n} + O(N^{-m-1})
    .
\end{equation}
The constant in the error term depends on $m,\lambda_0,c$, but does not
depend otherwise on $z$.
The same is true for the amplitude $A_N$.
\end{theorem}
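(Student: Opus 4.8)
The plan is to prove the expansion \eqref{e:zmainasy} for the single convenient choice $z=\zeta_p=z_N(\lambda_0)(1-V^{-p})$ with an arbitrary fixed $p\in(0,\tfrac12)$, and then invoke Proposition~\ref{prop:p} to deduce it for every $z$ with $\chi_N(z)\in[cN^{m},\lambda_0V^{1/2}]$; this transfer is legitimate because $\chi_N(\zeta_p)\asymp V^{p}$ lies in $[cN^{m},\lambda_0V^{1/2}]$ for every $m$ once $N$ is large. Working at $\zeta_p$ rather than at $z_N$ has two advantages: the term $F_N(\zeta_p)=1/\chi_N(\zeta_p)\asymp V^{-p}$ is then smaller than any power of $N^{-1}$, and Proposition~\ref{prop:Piforexpansion} (whose proof already exploits $\beta_{\zeta_p}\prec N^{-1}$) is available. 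The whole argument rests on the identity \eqref{e:chiPi}, which I write as
\begin{equation}
    \zeta_p N = 1 - F_N(\zeta_p) - \hat\Pi_{\zeta_p}(0), \qquad
    \hat\Pi_{\zeta_p}(0) = \sum_{k\ge 2}\pi_k\,\zeta_p^k, \qquad
    \pi_k = \sum_{M=1}^{k-1}(-1)^M\pi_k^{(M)},
\end{equation}
where $\pi_k^{(M)}=\sum_{x\in\Q^N}\pi_k^{(M)}(x)$.

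The structural input is a combinatorial lemma: for each fixed $k\ge2$ and $M\ge1$, the integer $\pi_k^{(M)}$ is a polynomial in $N$ with integer coefficients of degree at most $\floor{k/2}$, and hence so is $\pi_k$. Polynomiality is seen by grouping the $k$-step walks $\omega$ from the origin appearing in \eqref{e:PiMdef} according to the partition $\sigma$ of the step set $\{1,\dots,k\}$ that records which coordinate is flipped at each step: for a fixed $\sigma$ the entire coincidence pattern of $\omega$ is determined — position equality $\omega(s)=\omega(t)$ at times $s<t$ holds precisely when each block of $\sigma$ meets $\{s+1,\dots,t\}$ in an even number of indices — so whether $\omega$ satisfies the lace and compatible-edge constraints depends only on $\sigma$; the number of walks with fiber partition exactly $\sigma$ equals $(N)_{|\sigma|}=N(N-1)\cdots(N-|\sigma|+1)$, so $\pi_k^{(M)}=\sum_{\sigma}(N)_{|\sigma|}$ summed over the admissible $\sigma$, an integer polynomial in $N$. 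The degree bound comes from \eqref{e:d-j}: taking $j=k$ and using $\zeta_p\ge\tfrac12N^{-1}$ (which follows from \eqref{e:zNlam}) gives $\pi_k^{(M)}=O(N^{\floor{k/2}})$. Consequently the term $\pi_k^{(M)}\zeta_p^k$ is $O(N^{-\ceil{k/2}})$, and for each order $N^{-n}$ only the finitely many pairs $(k,M)$ with $\ceil{k/2}\le n$ are relevant.

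Using these facts I would define the formal power series $\widehat\zeta(u)=\sum_{n\ge1}a_nu^n\in\Z[[u]]$ as the unique solution of the formal fixed-point equation obtained by putting $N=u^{-1}$, $\zeta=\widehat\zeta(u)$ into $\zeta N=1-\sum_{k\ge2}\pi_k(N)\zeta^k$; this makes sense because $\pi_k(u^{-1})\widehat\zeta(u)^k\in u^{\ceil{k/2}}\Z[[u]]$ (so the series converges coefficientwise), and the resulting recursion — $a_1=1$, and $a_{n+1}$ an explicit integer polynomial in $a_1,\dots,a_n$ and the coefficients of the $\pi_k^{(M)}$ — is triangular with integer output. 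The claim is that $\zeta_p=\sum_{n=1}^{r}a_nN^{-n}+O(N^{-r-1})$ for every $r\ge1$, proved by induction on $r$. The base case $r=1$ is $\zeta_p=N^{-1}+O(N^{-2})$, which is the first statement of Proposition~\ref{prop:alphabeta}. For the step $r\to r+1$, insert the order-$r$ expansion of $\zeta_p$ into the right-hand side of the identity: $F_N(\zeta_p)=O(N^{-r-1})$; the tail $\sum_{M\ge r+1}$ is $O(N^{-r-1})$ by \eqref{e:d-N} with $j=r+1$; for each $M\le r$ the tail $\sum_{k\ge 2r+1}$ is $O(N^{-r-1})$ by \eqref{e:d-j} with $j=2r+1$; and in the remaining finite sum each $\zeta_p^k$ may be replaced by $\big(\sum_{n=1}^{r}a_nN^{-n}\big)^k$ up to an error $O(N^{-k-r})$, which after multiplication by the factor $\pi_k^{(M)}=O(N^{\floor{k/2}})$ is still $O(N^{-r-1})$. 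This reduces the identity to $\zeta_p N=1-L(N)+O(N^{-r-1})$ with $L$ an integer Laurent polynomial whose powers of $N$ are all negative and whose coefficient of $N^{-n}$ depends only on $a_1,\dots,a_{n-1}$; it therefore coincides with the formal recursion, and comparing coefficients yields $\zeta_p=\sum_{n=1}^{r+1}a_nN^{-n}+O(N^{-r-2})$. This completes the induction and hence \eqref{e:zmainasy}.

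For the amplitude, recall from Corollary~\ref{cor:varphin} that $A_N=1/\alpha_N$ with $\alpha_N=F_N(\zeta_p)-\zeta_pF_N'(\zeta_p)$; since $F_N(z)=1-zN-\hat\Pi_z(0)$, a direct computation gives $\alpha_N=1-\hat\Pi_{\zeta_p}(0)+\zeta_p\,\partial_z\hat\Pi_z(0)\big|_{\zeta_p}=1+\sum_{k\ge2}(k-1)\pi_k\zeta_p^k$, which has exactly the shape of $\hat\Pi_{\zeta_p}(0)$, the harmless extra factor $k-1\le k$ affecting none of the bounds. Setting $\widehat\alpha(u)=1+\sum_{k\ge2}(k-1)\pi_k(u^{-1})\widehat\zeta(u)^k\in\Z[[u]]$ (with $\widehat\alpha(0)=1$), the same truncation-and-substitution argument — now feeding in the already established $\zeta_p=\widehat\zeta(N^{-1})+O(N^{-r-1})$ valid for all $r$ — gives $\alpha_N=\sum_{n=0}^{m}[\widehat\alpha]_nN^{-n}+O(N^{-m-1})$ with integer coefficients. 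Since $\widehat\alpha(0)=1$ the formal inverse $\widehat A=1/\widehat\alpha\in\Z[[u]]$ exists with integer coefficients, and dividing the expansions (legitimate because $\alpha_N\to1$) gives $A_N=\sum_{n=0}^{m}[\widehat A]_nN^{-n}+O(N^{-m-1})$. All three expansions are universal because $p$ and $\lambda_0$ enter only through $\zeta_p$, whose expansion is the universal series $\widehat\zeta(N^{-1})$. I expect the principal obstacle to be the combinatorial lemma of the second paragraph — establishing that each $\pi_k^{(M)}$ is an integer polynomial in $N$ of degree at most $\floor{k/2}$ — together with the bookkeeping in the induction, which must apply both inequalities of Proposition~\ref{prop:Piforexpansion} and the degree bound simultaneously to keep every truncation and Taylor-substitution error at size $O(N^{-r-1})$. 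The quantitative identification of the first five coefficients in \eqref{e:zNexpansion} and \eqref{e:ANexpansion} then reduces to inserting the small-$M$, small-$k$ lace-graph enumerations into the recursion, carried out in Section~\ref{sec:coefficients}.
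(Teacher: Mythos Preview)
Your proof is correct and follows essentially the same approach as the paper: reduce to $\zeta_p$ via Proposition~\ref{prop:p}, use the identity $\zeta_p N = 1-\hat\Pi_{\zeta_p}(0)+O(V^{-p})$, truncate via Proposition~\ref{prop:Piforexpansion}, establish that each $\pi_k^{(M)}$ is an integer polynomial in $N$ (your partition-of-steps argument is exactly the paper's ``dimensions explored'' decomposition \eqref{e:pidelta} in different language), and iterate. The only differences are cosmetic: you package the iteration as a formal fixed-point in $\Z[[u]]$, you extract the sharper degree bound $\floor{k/2}$ from \eqref{e:d-j} rather than the paper's combinatorial $k-1$, and for $A_N$ you write $\alpha_N=1+\sum_{k\ge 2}(k-1)\pi_k\zeta_p^k$ whereas the paper uses the equivalent form \eqref{e:Ad}.
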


\begin{proof}
We first consider the expansion for $z$.
Fix $p \in (0,\frac 12)$.
By Proposition~\ref{prop:p}, since $\chi_N(\zeta_p) \asymp V^p$
 by Proposition~\ref{prop:alphabeta},
it suffices to prove that $\zeta_p$ has an asymptotic
expansion to all orders with integer coefficients.
By \eqref{e:chiPi} we have
\begin{equation}
    \label{e:pcform}
    N\zeta_p
    = 1 - \hat \Pi_{\zeta_p}(0)
    +O(V^{-p}) .
\end{equation}
The last term on the right-hand side is negligible compared to any fixed inverse power
of $N$ so it plays no role.

We follow the basic approach of \cite{HS95} but incorporate the significant
simplifications used in \cite{CLS07}.
To lighten the notation we write $s=N^{-1}$.
We will prove by induction on $m\ge 1$ that there are integers $a_n$  such that
\begin{equation}
\label{e:as.2}
	\zeta_p = \sum_{n=1}^{m }a_n s^n + O(s^{m+1}) .
\end{equation}
The integers $a_n$ will be shown to be universal constants.
The starting point is
\begin{equation}
\label{e:IH}
	\zeta_p
    =
    s[1-\hat\Pi_{\zeta_p}(0)]+O(sV^{-p})
    =
    s \bigg[ 1 - \sum_{M=1}^\infty (-1)^M
    \sum_{i=2}^\infty    \pi_{i}^{(M)} \zeta_p^i  \bigg] +O(sV^{-p}).
\end{equation}
Since $\hat\Pi_{\zeta_p}(0) = O(s)$ (e.g., by \eqref{e:d-N} with $j=1$),
\eqref{e:IH} gives the base case $m=1$ for the induction, with $a_1=1$.

To advance the induction,
we assume now that \eqref{e:as.2} holds for some $m\ge 1$
and we will prove that it holds for $m+1$.
It follows from Proposition~\ref{prop:Piforexpansion}
that for $j \ge 1$,
\begin{equation}
\label{e:d-N-bis}
    \sum_{k=2}^\infty \sum_{M=j}^\infty \pi_k^{(M)} \zeta_p^k \leq C_j s^{j},
\end{equation}
and that for $j \geq 2$ and $M \geq 1$,
\begin{equation}
\label{e:d-j-bis}
    \sum_{k=j}^\infty \pi_k^{(M)} \zeta_p^k \leq C_{M,j} s^{\ceil{j/2}}
    .
\end{equation}
(We do not yet need the
factor $k$ included in the sums
of Proposition~\ref{prop:Piforexpansion} to advance this induction,
that factor is needed only later for $A_N$.
Of course the bounds remain valid without that factor since the left-hand sides
are smaller without it.)
With \eqref{e:d-N-bis}--\eqref{e:d-j-bis}, we see from
\eqref{e:IH} that
\begin{align}
\label{e:bt.st3}
	\zeta_p & =
    s \bigg[ 1 - \sum_{M=1}^{m} (-1)^M
	\sum_{k=2}^{2m} \pi_k^{(M)} \zeta_p^k\bigg]  +O(s^{m+2})
    =
    s \bigg[ 1 -
	\sum_{k=2}^{2m} b_{N,k} \zeta_p^k\bigg]  +O(s^{m+2})
    ,
\end{align}
with
\begin{equation}
\label{e:bNk}
    b_{N,k} = \sum_{M=1}^{m} (-1)^M  \pi_k^{(M)}.
\end{equation}

We classify contributions to $\pi_k^{(M)}$
according to the total
number $\delta$ of dimensions explored by a $k$-step walk $\omega$.
Let
$\pi_{k,\delta}^{(M)}$ denote the contribution to $\pi_k^{(M)}$ due to walks
starting at $0$ which
explore exactly $\delta$ dimensions, with the first step taken
with the first coordinate,
the first subsequent step involving a new coordinate
taken with the second coordinate,
the first subsequent step with a new coordinate taken with the third coordinate, and so on.
Then, because the number of dimensions cannot exceed $k-1$ (because the last step
must close a loop), it follows by symmetry that
\begin{equation}
\label{e:pidelta}
    \pi_k^{(M)} = \sum_{\delta =1}^{k-1}   \pi_{k,\delta}^{(M)}
     \prod_{j=0}^{\delta-1} (N-j).
\end{equation}
By definition, $\pi_{k,\delta}^{(M)}$ is a nonnegative integer which is independent
of $N$, a universal number which counts certain lace diagrams.
With \eqref{e:bNk}, we find that
\begin{equation}
\label{e:coeff.2}
    b_{N,k} = \sum_{q = 1}^{k-1} \beta_{q,k} N^q
\end{equation}
with integer coefficients $\beta_{q,k}$ which are independent of $N$.

By the induction hypothesis \eqref{e:as.2},
\begin{equation}
	\zeta_p^k =
	\bigg[ \sum_{n=1}^{m} a_n s^n + O(s^{m+1})
	\bigg]^k
	= s^k \bigg[ \sum_{n=0}^{m-1} \gamma_{n,k} s^n + O(s^{m})
	\bigg]
\end{equation}
with $N$-independent integer coefficients $\gamma_{n,k}$.
With \eqref{e:bt.st3} and \eqref{e:coeff.2}, this gives
\begin{align}
	\zeta_p & =
    s
    \Bigg( 1-
	\sum_{k=2}^{2m} \sum_{q = 1}^{k-1} \beta_{q,k} s^{-q} s^k
    \bigg[ \sum_{n=0}^{m-1} \gamma_{n,k} s^n + O(s^{m}) \bigg] \Bigg)
    +O(s^{m+2})
    .
\end{align}
The only term in the above product which can give rise to an non-integer
power or coefficient is the $O(s^{m})$ term.  However this term
is multiplied by $s^{1 -q+k} \leq  s^2$, and hence gives rise to a
contribution which is $O(s^{m+2})$.  Therefore
\begin{equation}
\label{e:Pi.con}
	\zeta_p
	= \sum_{n=1}^{m+1} d_{n} s^n + O(s^{m+2}),
\end{equation}
with integer coefficients $d_{n}$,
which must agree with $a_n$
for $n \leq m$.  This gives
\eqref{e:as.2} with $m$ replaced by $m+1$, so the induction is advanced
and the proof of \eqref{e:zmainasy} is complete.

Recall that the amplitude
\begin{equation}
    A_N
    =
    \frac{1}{F_N(\zeta_p) - \zeta_pF_N'(\zeta_p)}
    =
    \frac{1}{F_N(\zeta_p)+ \zeta_p N + \zeta_p\partial_z\hat\Pi_{\zeta_p}(0)}
\end{equation}
was defined in the proof of Corollary~\ref{cor:varphin}.
The existence of the expansion for $A_N$
then follows similarly from \eqref{e:d-N}--\eqref{e:d-j}
(now we do need their factor $k$), by substitution of the expansion for $\zeta_p$
into
\begin{eqnarray}
\label{e:Ad}
    \frac{1}{A_N} = \zeta_p N +
    \sum_{k=2}^{2m} \sum_{M=1}^m (-1)^M k\pi_k^{(M)} \zeta_p^k
    +O(N^{-m-1})
    .
\end{eqnarray}
Again the coefficients in the expansion are universal integers.
\end{proof}

\subsection{Coefficient computation}
\label{sec:coefficients}

The expansion coefficients for $z$ in \eqref{e:zmainasy}
(equivalently, for $\zeta_p$) can be computed
from \eqref{e:bt.st3} and \eqref{e:pidelta} once suitably many of
the lace graph counts $\pi_{k,\delta}^{(M)}$
are known.
To compute to within an error of order $s^6$, only $M \le 4$
and $k \le 8$ are needed, by the first equality of \eqref{e:bt.st3}.
Also, we can restrict to $k-\delta \le 4$ because
in a term $\pi_{k,\delta}^{(M)}\zeta_p^k$ the largest possible contribution
is of order
$N^\delta s^k=s^{k-\delta}$ and if $k-\delta \ge 5$ then this contributes to $\zeta_p$ only at order $s^6$, due to the prefactor $s$ in \eqref{e:bt.st3}.

To discuss the
enumeration of lace graphs, we elaborate on the definition of an $M$-loop diagram
as follows.  Recall the definition of compatible edges from Definition~\ref{def:lace_presc}.
If the $2M-1$ subwalks in the $M$-loop diagram of Figure~\ref{fig:lacetop} are  labelled
in the order they occur in a walk as
$1,2,\ldots, 2M-1$, then the subwalks are mutually avoiding (apart from the
required intersections) due to the compatible edges, with the following patterns:
$[123]$ for $M=2$;
$[1234], [345]$ for $M=3$;
$[1234], [3456], [567]$ for $M=4$.
This means, e.g., for $M=4$, that subwalks $1,2,3,4$
are mutually avoiding apart from the enforced intersections explicitly
depicted in Figure~\ref{fig:lacetop},
as are subwalks $3,4,5,6$ and subwalks $5,6,7$.  However, subwalks not
grouped together are permitted to intersect, e.g., for $M=4$,
subwalks $1,2$ are permitted
to intersect subwalks $5,6,7$, and subwalks $3$ and $4$ can intersect subwalk $7$.

Extensive computer assisted
enumerations of lace graphs for $\Z^d$ are given in \cite{CLS06arXiv}.
Lace graphs on the hypercube are a subset of those on $\Z^d$.
The relevant nonzero counts
are found to be:
\begin{align}
    \pi_{2,1}^{(1)} & = 1,
    \qquad
    \pi_{4,2}^{(1)}  = 1,
    \qquad
    \pi_{6,3}^{(1)}  = 4,
    \qquad
    \pi_{8,4}^{(1)}  = 27,
\end{align}
\begin{align}
    \pi_{3,1}^{(2)}  = 1,
    \qquad
    \pi_{5,2}^{(2)}  = 3,
    \qquad
    \pi_{7,3}^{(2)}  = 15
    ,
    \qquad
    \pi_{4,1}^{(3)}  = 1,
    \qquad
    \pi_{6,2}^{(3)}  = 5,
    \qquad
    \pi_{5,1}^{(4)}  = 1.
\end{align}
These numbers arise by inspection from
Tables~18, 20, 22, 24 of \cite{CLS06arXiv}
by discounting configurations which
are possible on $\Z^d$ but not on $\Q^N$,
as well as from the trivial counts $\pi_{k,1}^{(M)}=\delta_{k,M+1}$.
E.g., $\pi_{6,2}^{(1)}=3$ on $\Z^d$ but it is zero on
$\Q^N$ where the 2-dimensional subspace contains only $4$ vertices.
With computer assistance, it would be possible to enumerate more lace graphs on $\Q^N$ and
thereby compute more terms in the asymptotic expansion for $z_N$.

To compute the expansion coefficients for $z_N=z_N(\lambda_0)$ (recall
Proposition~\ref{prop:p}), as in \eqref{e:bt.st3} we use
\begin{align}
\label{e:iteration1}
	z_N & =
    s \left [ 1 - \sum_{M=1}^{m} (-1)^M
	\sum_{k=2}^{2m} \pi_k^{(M)} z_N^k\right]  +O(s^{m+2})
\end{align}
iteratively with
\begin{equation}
\label{e:iteration2}
    \pi_k^{(M)} = \sum_{\delta =1}^{k-1}   \pi_{k,\delta}^{(M)}
     \prod_{j=0}^{\delta-1} (N-j)
\end{equation}
as follows.
We insert $z_N = s + O(s^2)$ into \eqref{e:iteration1} with $m=1$ and obtain
\begin{align}
    z_N & =
        s \left [ 1 - (-1)^1 \pi_2^{(1)}z_N^2 \right]  +O(s^{3})
        \nnb & =
        s\left[1 + N \cdot 1 \cdot s^2 \right] +O(s^{3})
        =
        s + s^2 + O(s^3).
\end{align}
Another iteration gives
\begin{align}
    z_N & =
        s \left [ 1 - (-1)^1 (\pi_2^{(1)}z_N^2 + \pi_4^{(1)}z_N^4)
        - (-1)^2  \pi_3^{(2)} z_N^3 \right]  +O(s^{4})
        \nnb & =
        s\left[1 + (N \cdot 1 \cdot (s+s^2)^2 + N(N-1)\cdot 1 \cdot s^4)
        - N \cdot 1 \cdot s^3  \right] +O(s^{4})
        \nnb &
        = s + s^2 + 2s^3 + O(s^4).
\end{align}
Continuing in this way, we find that
\begin{align}
    z_N & = s + s^2 + 2s^3 + 7s^4 + 39 s^5 + O(s^6).
\end{align}

Now we can simply substitute the expansion for $z_N$ into \eqref{e:Ad}
to get the
expansion for $A_N^{-1}$ (and hence for $A_N$)
up to and including terms of order $s^{4}$.  The result is
\begin{equation}
    A_N = 1 + s + 4s^2 + 26s^3 + 231s^4 +O(s^5).
\end{equation}

\section*{Acknowledgements}

I am grateful to Emmanuel Michta for his influence on this paper through
our collaboration on \cite{MS22} and for helpful comments on a preliminary
version, to Yucheng Liu for
assistance with the numerical computation of coefficients in the expansions for
$\mu_N$ and $A_N$ reported in Section~\ref{sec:coefficients}, and to Yuliang Shi
for comments on a preliminary version.
This work was supported in part by NSERC of Canada.


\begin{thebibliography}{10}

\bibitem{AN84}
M.~Aizenman and C.M. Newman.
\newblock Tree graph inequalities and critical behavior in percolation models.
\newblock {\em J. Stat. Phys.}, {\bf 36}:107--143, (1984).

\bibitem{AKS82}
M.~Ajtai, J.~Koml\'os, and E.~Szemer\'edi.
\newblock Largest random component of a $k$-cube.
\newblock {\em Combinatorica}, {\bf 2}:1--7, (1982).

\bibitem{AJ90}
S.E. Alm and S.~Janson.
\newblock Random self-avoiding walks on one-dimensional lattices.
\newblock {\em Commun. Statist. Stochastic Models}, {\bf 6}:169--212, (1990).

\bibitem{AS00}
N.~Alon and J.H. Spencer.
\newblock {\em The Probabilistic Method}.
\newblock Wiley, New York, 2nd edition, (2000).

\bibitem{BKL92}
B.~Bollob\'as, Y.~Kohayakawa, and T.~{\L}uczak.
\newblock The evolution of random subgraphs of the cube.
\newblock {\em Random Struct.\ Alg.}, {\bf 3}:55--90, (1992).

\bibitem{BCHSS05a}
C.~Borgs, J.T. Chayes, R.~van~der Hofstad, G.~Slade, and J.~Spencer.
\newblock Random subgraphs of finite graphs: {I}. {The} scaling window under
  the triangle condition.
\newblock {\em Random Struct. Alg.}, {\bf 27}:137--184, (2005).

\bibitem{BCHSS05b}
C.~Borgs, J.T. Chayes, R.~van~der Hofstad, G.~Slade, and J.~Spencer.
\newblock Random subgraphs of finite graphs: {II}. {The} lace expansion and the
  triangle condition.
\newblock {\em Ann. Probab.}, {\bf 33}:1886--1944, (2005).

\bibitem{BCHSS04c}
C.~Borgs, J.T. Chayes, R.~van~der Hofstad, G.~Slade, and J.~Spencer.
\newblock Random subgraphs of finite graphs: {III}. {The} phase transition for
  the $n$-cube.
\newblock {\em Combinatorica}, {\bf 26}:395--410, (2006).

\bibitem{BGJ05}
M.~Bousquet-M\'{e}lou, A.J. Guttmann, and I.~Jensen.
\newblock Self-avoiding walks crossing a square.
\newblock {\em J. Phys. A: Math. Gen.}, {\bf 38}:9158--9181, (2005).

\bibitem{BFF84}
A.~Bovier, G.~Felder, and J.~Fr\"{o}hlich.
\newblock On the critical properties of the {Edwards} and the self-avoiding
  walk model of polymer chains.
\newblock {\em Nucl. Phys. B}, {\bf 230} [FS10]:119--147, (1984).

\bibitem{BS85}
D.C. Brydges and T.~Spencer.
\newblock Self-avoiding walk in 5 or more dimensions.
\newblock {\em Commun. Math. Phys.}, {\bf 97}:125--148, (1985).

\bibitem{CLS07}
N.~Clisby, R.~Liang, and G.~Slade.
\newblock Self-avoiding walk enumeration via the lace expansion.
\newblock {\em J.\ Phys. A: Math.\ Theor.}, {\bf 40}:10973--11017, (2007).

\bibitem{CLS06arXiv}
N.~Clisby, R.~Liang, and G.~Slade.
\newblock Self-avoiding walk enumeration via the lace expansion: tables.
\newblock Unpublished. \url{http://www.math.ubc.ca/~slade/se_tables.pdf},
  (2007).

\bibitem{DGGNZ19}
Y.~Deng, T.M. Garoni, J.~Grimm, A.~Nasrawi, and Z.~Zhou.
\newblock The length of self-avoiding walks on the complete graph.
\newblock {\em J. Stat. Mech: Theory Exp.}, 103206, (2019).

\bibitem{DS98}
E.~Derbez and G.~Slade.
\newblock The scaling limit of lattice trees in high dimensions.
\newblock {\em Commun.\ Math.\ Phys.}, {\bf 193}:69--104, (1998).

\bibitem{DGM90}
P.~Diaconis, R.L. Graham, and J.A. Morrison.
\newblock Asymptotic analysis of a random walk on a hypercube with many
  dimensions.
\newblock {\em Random Struct.\ Alg.}, {\bf 1}:51--72, (1990).

\bibitem{DKY14}
H.~Duminil-Copin, G.~Kozma, and A.~Yadin.
\newblock Supercritical self-avoiding walks are space-filling.
\newblock {\em Ann.\ Inst.\ H.\ Poincar\'e Probab.\ Statist.}, {\bf
  50}:315--326, (2014).

\bibitem{D-CS12h}
H.~Duminil-Copin and S.~Smirnov.
\newblock The connective constant of the hexagonal lattice equals
  $\sqrt{2+\sqrt2}$.
\newblock {\em Ann. Math.}, {\bf 175}:1653--1665, (2012).

\bibitem{FG64}
M.E. Fisher and D.S. Gaunt.
\newblock {Ising} model and self-avoiding walks on hypercubical lattices and
  ``high-density'' expansions.
\newblock {\em Phys. Rev.}, {\bf 133}:A224--A239, (1964).

\bibitem{FO90}
P.~Flajolet and A.~Odlyzko.
\newblock Singularity analysis of generating functions.
\newblock {\em SIAM J. Disc. Math.}, {\bf 3}:216--240, (1990).

\bibitem{Gaun86}
D.S. Gaunt.
\newblock $1/d$ expansions for critical amplitudes.
\newblock {\em J. Phys. A: Math. Gen.}, {\bf 19}:L149--L153, (1986).

\bibitem{GF74}
P.R. Gerber and M.E. Fisher.
\newblock Critical temperatures of classical $n$-vector models on hypercubic
  lattices.
\newblock {\em Phys. Rev. B}, {\bf 10}:4697--4703, (1974).

\bibitem{GI95}
S.E. Golowich and J.Z. Imbrie.
\newblock The broken supersymmetry phase of a self-avoiding random walk.
\newblock {\em Commun.\ Math.\ Phys.}, {\bf 168}:265--319, (1995).

\bibitem{Grah10}
B.T. Graham.
\newblock Borel-type bounds for the self-avoiding walk connective constant.
\newblock {\em J. Phys. A: Math. Theor.}, {\bf 43}:235001, (2010).

\bibitem{GL19}
G.R. Grimmett and Z.~Li.
\newblock Self-avoiding walks and connective constants.
\newblock In V.~Sidoravicius, editor, {\em Sojourns in Probability Theory and
  Statistical Physics, III}, pages 215--241. Springer Proceedings in
  Mathematics and Statistics {\bf 300}, (2019).

\bibitem{HS92a}
T.~Hara and G.~Slade.
\newblock Self-avoiding walk in five or more dimensions. {I.} {The} critical
  behaviour.
\newblock {\em Commun.\ Math.\ Phys.}, {\bf 147}:101--136, (1992).

\bibitem{HS95}
T.~Hara and G.~Slade.
\newblock The self-avoiding-walk and percolation critical points in high
  dimensions.
\newblock {\em Combin. Probab. Comput.}, {\bf 4}:197--215, (1995).

\bibitem{HH17book}
M.~Heydenreich and R.~van~der Hofstad.
\newblock {\em Progress in High-Dimensional Percolation and Random Graphs}.
\newblock Springer International Publishing Switzerland, (2017).

\bibitem{HN17}
R.~van~der Hofstad and A.~Nachmias.
\newblock Hypercube percolation.
\newblock {\em J. Eur. Math. Soc.}, {\bf 19}:725--814, (2017).

\bibitem{HS05}
R.~van~der Hofstad and G.~Slade.
\newblock Asymptotic expansions in $n^{-1}$ for percolation critical values on
  the $n$-cube and $\mathbb{Z}^n$.
\newblock {\em Random Struct.\ Alg.}, {\bf 27}:331--357, (2005).

\bibitem{HS06}
R.~van~der Hofstad and G.~Slade.
\newblock Expansion in $n^{-1}$ for percolation critical values on the $n$-cube
  and $\mathbb{Z}^n$: the first three terms.
\newblock {\em Combin.\ Probab.\ Comput.}, {\bf 15}:695--713, (2006).

\bibitem{Hugh95}
B.D. Hughes.
\newblock {\em Random Walks and Random Environments. Volume 1: Random Walks}.
\newblock Oxford University Press, Oxford, (1995).

\bibitem{HN20}
T.~Hulshof and A.~Nachmias.
\newblock Slightly subcritical hypercube percolation.
\newblock {\em Random Struct.\ Alg.}, {\bf 56}:557--593, (2020).

\bibitem{Hutc19}
T.~Hutchcroft.
\newblock Self-avoiding walk on nonunimodular transitive graphs.
\newblock {\em Ann. Probab.}, {\bf 47}:2801--2829, (2019).

\bibitem{KS01}
H.~Kleinert and V.~Schulte-Frohlinde.
\newblock {\em Critical Properties of $\phi^4$-Theories}.
\newblock World Scientific, Singapore, (2001).

\bibitem{LL20}
F.~Lehner and C.~Lindorfer.
\newblock Self-avoiding walks and multiple context-free languages.
\newblock Preprint, \url{https://arxiv.org/pdf/2010.06974}, (2020).

\bibitem{MS93}
N.~Madras and G.~Slade.
\newblock {\em The Self-Avoiding Walk}.
\newblock Birkh{\"a}user, Boston, (1993).

\bibitem{MW05}
N.~Madras and C.~Wu.
\newblock Self-avoiding walks on hyperbolic graphs.
\newblock {\em Combin. Probab. Comput.}, {\bf 14}:523--548, (2005).

\bibitem{MS13}
Y.~Mej\'ia~Miranda and G.~Slade.
\newblock Expansion in high dimension for the growth constants of lattice trees
  and lattice animals.
\newblock {\em Combin. Probab. Comput.}, {\bf 22}:527--565, (2013).

\bibitem{Mich22}
E.~Michta.
\newblock The scaling limit of the weakly self-avoiding walk on a
  high-dimensional torus.
\newblock Preprint, \url{https://arxiv.org/pdf/2203.07695}, (2022).

\bibitem{MS22}
E.~Michta and G.~Slade.
\newblock Weakly self-avoiding walk on a high-dimensional torus.
\newblock Preprint, \url{https://arxiv.org/pdf/2107.14170}, (2021).

\bibitem{NFID92}
A.M. Nemirovsky, K.F. Freed, T.~Ishinabe, and J.F. Douglas.
\newblock Marriage of exact enumeration and $1/d$ expansion methods: {Lattice}
  model of dilute polymers.
\newblock {\em J. Stat. Phys.}, {\bf 67}:1083--1108, (1992).

\bibitem{Nien82}
B.~Nienhuis.
\newblock Exact critical exponents of the ${O}(n)$ models in two dimensions.
\newblock {\em Phys. Rev. Lett.}, {\bf 49}:1062--1065, (1982).

\bibitem{Pana19}
C.~Panagiotis.
\newblock Self-avoiding walks and polygons on hyperbolic graphs.
\newblock Preprint, \url{https://arxiv.org/pdf/1908.00127.pdf}, (2019).

\bibitem{Slad06}
G.~Slade.
\newblock {\em The Lace Expansion and its Applications.}
\newblock Springer, Berlin, (2006).
\newblock Lecture Notes in Mathematics Vol. 1879. Ecole d'Et\'{e} de
  Probabilit\'{e}s de Saint--Flour XXXIV--2004.

\bibitem{Slad20_wsaw}
G.~Slade.
\newblock The near-critical two-point function and the torus plateau for weakly
  self-avoiding walk in high dimensions.
\newblock Preprint, \url{https://arxiv.org/pdf/2008.00080}, (2020).

\bibitem{Slad20}
G.~Slade.
\newblock Self-avoiding walk on the complete graph.
\newblock {\em J. Math. Soc. Japan}, {\bf 72}:1189--1200, (2020).

\bibitem{Slad22_lace}
G.~Slade.
\newblock A simple convergence proof for the lace expansion.
\newblock {\em Ann.\ I.\ Henri Poincar\'{e} Probab.\ Statist.}, {\bf
  58}:26--33, (2022).

\bibitem{Soka80}
A.D. Sokal.
\newblock An improvement of {Watson's} theorem on {Borel} summability.
\newblock {\em J. Math. Phys.}, {\bf 21}:261--263, (1980).

\bibitem{Yadi16}
A.~Yadin.
\newblock Self-avoiding walks on finite graphs of large girth.
\newblock {\em ALEA, Lat. Am. J. Probab. Math. Stat.}, {\bf 13}:521--544,
  (2016).

\end{thebibliography}


\end{document}